\newcommand{\dref}[1]{\ref{discrete-#1} in~\cite{Caprace-Monod_discrete}}
\newtheorem{prop}{Proposition}[section]
\newtheorem{thm}[prop]{Theorem}
\newtheorem*{thm*}{Theorem}
\newtheorem{addendum}[prop]{Addendum}
\newtheorem*{addendum*}{Addendum}
\newtheorem{cor}[prop]{Corollary}
\newtheorem{lem}[prop]{Lemma}
\newtheorem*{convention*}{Convention}
\theoremstyle{definition}
\newtheorem*{defn*}{Definition}
\newtheorem{remark}[prop]{Remark}
\newtheorem{remarks}[prop]{Remarks}
\newtheorem*{scholium*}{Scholium}
\theoremstyle{remark}
\newtheorem{example}[prop]{Example}
\newtheorem*{example*}{Example}
\numberwithin{equation}{section}
\newcommand{\sepa}{\nobreak\vskip1.5mm\nobreak%
\begin{center}%
\vrule height 1pt width.2\hsize%
\,.\,%
\vrule height 1pt width.2\hsize%
\end{center}%
\vskip5mm}
\newcommand{\vareps}{\varepsilon}
\newcommand{\ro}{\varrho}
\newcommand{\teta}{\vartheta}
\newcommand{\GG}{\mathbf{G}}
\newcommand{\LL}{\mathbf{L}}
\newcommand{\NN}{\mathbf{N}}
\newcommand{\PP}{\mathbf{P}}
\newcommand{\QQ}{\mathbf{Q}}
\newcommand{\RR}{\mathbf{R}}
\newcommand{\TT}{\mathbf{T}}
\newcommand{\ZZ}{\mathbf{Z}}
\newcommand{\SL}{\mathrm{SL}}
\newcommand{\la}{\langle}
\newcommand{\ra}{\rangle}
\newcommand{\inv}{^{-1}}
\newcommand{\opp}{^\mathrm{op}}
\newcommand{\norma}{\mathscr{N}}
\newcommand{\centra}{\mathscr{Z}}
\newcommand{\QZ}{\mathscr{Q\!Z}}
\newcommand{\se}{\subseteq}
\newcommand{\wt}{\widetilde}
\newcommand{\lra}{\longrightarrow}
\def\bs#1.{
              \def\temp{#1}
              \ifx\temp\empty
                   \mathcal{B}
              \else
                   \mathcal{B}(#1)
              \fi
}
\newcommand{\cat}{{\upshape CAT(0)}\xspace}
\newcommand{\tangle}[2]
{\angle_\mathrm{T}(#1,#2)}
\newcommand{\aangle}[3]
{\angle_{#1}(#2,#3)}
\newcommand{\cangle}[3]
{\overline{\angle}_{#1}(#2,#3)}
\DeclareMathOperator{\Stab}{Stab}
\DeclareMathOperator{\Isom}{Is}
  \DeclareMathOperator{\soc}{soc}
\newcommand{\bd}{\partial} 
\def\Sym{\mathop{\mathrm{Sym}}\nolimits}
\def\min{\mathop{\mathrm{min}}\nolimits}
\begin{document}
\title[Isometry groups of non-positively curved spaces: structure theory]{Isometry groups of non-positively curved spaces:\\ structure theory}
\author[Pierre-Emmanuel Caprace]{Pierre-Emmanuel Caprace*}
\address{UCL, 1348 Louvain-la-Neuve, Belgium}
\email{pe.caprace@uclouvain.be}
\thanks{*F.N.R.S. Research Associate}
\author[Nicolas Monod]{Nicolas Monod$^\ddagger$}
\address{EPFL, 1015 Lausanne, Switzerland}
\email{nicolas.monod@epfl.ch}
\thanks{$^\ddagger$Supported in part by the Swiss National Science Foundation}
\keywords{Non-positive curvature, \cat space, locally compact group, lattice}
\begin{abstract}
We develop the structure theory of full isometry groups of locally compact non-positively curved metric spaces.
Amongst the discussed themes are de Rham decompositions, normal subgroup structure and characterising properties
of symmetric spaces and Bruhat--Tits buildings. Applications to discrete groups and further developments on
non-positively curved lattices are exposed in a companion paper~\cite{Caprace-Monod_discrete}.
\end{abstract}
\maketitle
\let\languagename\relax  

\section{Introduction}

Non-positively curved metric spaces were introduced by A.~D.~Alexandrov~\cite{Alexandrov} and popularised by
M.~Gromov, who called them \cat spaces. Their theory offers a wide gateway to a form of
generalised differential geometry, whose objects encompass Riemannian manifolds of non-positive sectional
curvature as well as large families of singular spaces including Euclidean buildings and many other polyhedral
complexes. It has found a wide range of applications to various fields, including semi-simple algebraic and
arithmetic groups, and geometric group theory.

A recurrent theme in this area is the interplay between the geometry of a locally compact
\cat space $X$ and the algebraic properties of a discrete group $\Gamma$ acting properly on $X$ by isometries.
This interaction is expected to be especially rich and tight when the $\Gamma$-action is cocompact; the pair
$(X, \Gamma)$ is then called a \textbf{\cat group}\index{CAT(0) group@\cat group}. The purpose of the present paper
and its companion~\cite{Caprace-Monod_discrete} is to highlight the rôle of a third entity through which
the interaction between $X$ and $\Gamma$ transits: namely the full isometry group $\Isom(X)$ of
$X$. The topology of uniform convergence on compacta makes $\Isom(X)$ a locally compact second countable group
which is thus canonically endowed with Haar measures. It therefore makes sense to consider lattices in
$\Isom(X)$, \emph{i.e.} discrete subgroups of finite invariant covolume; we call such pairs $(X, \Gamma)$
\textbf{\cat lattices}\index{CAT(0) lattice@\cat lattice} (thus \cat groups are precisely \emph{uniform} \cat
lattices). This immediately suggests the following two-step programme:

\begin{itemize}
\item[(I)] To develop the basic structure theory of the locally compact group $\Isom(X)$ and deduce consequences
on the overall geometry of the underlying proper \cat space $X$. This is the main purpose of the present paper.

\item[(II)] To study \cat lattices and thus in particular \cat groups by building upon the structure results
of the present paper, using new geometric density and superrigidity techniques. This is carried out in
the subsequent paper~\cite{Caprace-Monod_discrete}.
\end{itemize}

We now proceed to describe the main results of this first part in more detail. First, in~\S\,\ref{sec:intro:complete},
we present results in the special case of \textbf{geodesically complete}\index{geodesically complete} \cat spaces,
\emph{i.e.} spaces in which every geodesic segment can be extended to a bi-infinite geodesic line~--- which need
not be unique. Important examples of geodesically complete spaces are provided by Bruhat--Tits buildings and of
course Hadamard manifolds, \emph{e.g.} symmetric spaces.

\smallskip

The second and longer part of the Introduction, \S\,\ref{sec:intro:general}, will present results valid for arbitrary locally compact \cat spaces.
In either case, the entire contents of the Introduction rely on more general, more detailed but probably also more cumbrous statements
proved in the core of the text.

\subsection{Spaces with extensible geodesics}\label{sec:intro:complete}
The conclusions of several results become especially clear and perhaps more striking in the special case of
geodesically complete \cat spaces. Beyond Euclidean buildings and Hadamard manifolds, we recall that a
complete \cat space that is also a homology manifold
has automatically extensible geodesics~\cite[II.5.12]{Bridson-Haefliger}. Note also that it is always possible
to artificially make a \cat space geodesically complete by gluing rays, though it is not always possible to
preserve properness (consider a compact but total set in an infinite-dimensional Hilbert space).

\bigskip

\noindent\textbf{Decomposing \cat spaces into products of symmetric spaces and locally finite cell complexes.}
Prototypical examples of locally compact \cat spaces are mainly provided by the following two  sources.
\begin{itemize}
\item[---] Riemannian manifolds of non-positive sectional curvature, whose most prominent
representatives are the Riemannian symmetric spaces of non-compact type. These spaces are \textbf{regular} in
the sense that any two geodesic segments intersect in at most one point. The full isometry group of such a space
is a Lie group.

\item[---] Polyhedral complexes of piecewise constant non-positive curvature, such as trees or Euclidean
buildings. These spaces are \textbf{singular} in the sense that geodesics do branch. The subgroup of the
isometry group which preserves the cell structure is totally disconnected.
\end{itemize}

The following result seems to indicate that a \cat space often  splits as a product of spaces belonging to these
two families.

\begin{thm}\label{thm:GeodesicallyComplete}
Let $X$ be a proper geodesically complete \cat space whose isometry group acts cocompactly without fixed point
at infinity. Then $X$ admits an $\Isom(X)$-equivariant splitting
$$X = M \times \RR^n \times Y,$$
where $M$ is a symmetric space of non-compact type and the isometry group $\Isom(Y)$ is totally disconnected
and acts by semi-simple isometries on $Y$ (each factor may be trivial).

Furthermore, the space $Y$ admits an $\Isom(Y)$-equivariant locally
finite decomposition into convex cells, where the cell supporting a point $y \in Y$ is defined as the fixed
point set of the isotropy group $\Isom(Y)_y$.
\end{thm}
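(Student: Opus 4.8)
The plan is to obtain the decomposition by running a de Rham--type splitting in two stages, feeding the identity components into the Gleason--Yamabe structure theory, and then building the cell decomposition on the totally disconnected factor directly. First I would note that $X$ is \emph{minimal}: for a closed convex $\Isom(X)$-invariant set $\emptyset\neq C\subsetneq X$, the function $d(\cdot,C)$ is convex, $\Isom(X)$-invariant and bounded (by cocompactness), hence constant on every bi-infinite geodesic and therefore --- geodesics being extensible --- constant, so that $C=X$. Next, by the de Rham decomposition of a proper \cat space with cocompact isometry group, the maximal Euclidean factor being canonical, $\Isom(X)$ respects a splitting $X=\RR^{n}\times X'$ with $\Isom(X)=\Isom(\RR^{n})\times\Isom(X')$ and $X'$ free of Euclidean factors; since $\partial_\infty X=\partial_\infty\RR^{n}\ast\partial_\infty X'$, any point of $\partial_\infty X'$ fixed by $\Isom(X')$ would be fixed by all of $\Isom(X)$, so $\Isom(X')$ has no fixed point at infinity either. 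Refining $X'=X_{1}\times\cdots\times X_{k}$ into irreducible de Rham factors, a finite-index subgroup of $\Isom(X')$ stabilises each $X_{i}$, so $G_{i}:=\Isom(X_{i})$ acts cocompactly, properly and (by the minimality argument) minimally on the irreducible, proper, geodesically complete \cat space $X_{i}$, which carries no Euclidean factor and --- by the standard behaviour of fixed points at infinity under the de Rham decomposition --- no fixed point at infinity.

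The heart of the first assertion is a dichotomy: for each $i$, \emph{either $G_{i}$ is totally disconnected, or $X_{i}$ is an irreducible symmetric space of non-compact type.} Set $S:=G_{i}^{\circ}$. The maximal compact normal subgroup of $S$ is characteristic, hence normalised by $G_{i}$, so its non-empty closed convex $G_{i}$-invariant fixed-point set is all of $X_{i}$ by minimality, which forces that subgroup to be trivial; by Gleason--Yamabe, $S$ is then a connected Lie group. Its solvable radical $R$ is a closed amenable subgroup, normal in $S$ and --- being characteristic --- in $G_{i}$; since the amenable radical of the isometry group of a minimal proper \cat space with no Euclidean factor and no fixed point at infinity is trivial, $R=\{1\}$, so $S$ is a semisimple Lie group with trivial centre and no compact factor. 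If $S=\{1\}$ then $G_{i}$ has trivial identity component, i.e.\ it is totally disconnected. If $S\neq\{1\}$, then for a point $p$ fixed by a maximal compact subgroup $K\leq S$ the $S$-stabiliser of $p$ is compact and contains $K$, hence equals $K$; one identifies the orbit $S\cdot p\cong S/K$ with the Riemannian symmetric space of $S$, shows that it is a closed convex $S$-invariant subset of $X_{i}$, and --- using minimality together with the structure of $G_{i}$ (in particular an analysis of the centraliser of $S$ in $G_{i}$) --- concludes that it fills $X_{i}$; irreducibility then makes $S$ almost simple, so $X_{i}$ is an irreducible symmetric space of non-compact type. \emph{This recognition step --- that the symmetric-space orbit is convex and exhausts $X_{i}$ --- is where I expect the real work to lie}, and in the paper it would presumably be quoted from the core structure theorems rather than redone.

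To assemble the splitting, let $M$ be the product of the symmetric factors $X_{i}$ (so $M$ is a symmetric space of non-compact type) and $Y$ the product of the remaining ones. An irreducible symmetric space of non-compact type is neither flat nor has a totally disconnected isometry group, so no isometry of $X$ can interchange the three blocks; using that the isometry group of a product of irreducible \cat spaces is the product of the factor groups extended by the permutations of isometric factors, one obtains $\Isom(X)=\Isom(\RR^{n})\times\Isom(M)\times\Isom(Y)$, so the splitting $X=M\times\RR^{n}\times Y$ is $\Isom(X)$-equivariant. Moreover $\Isom(Y)$ is a product of totally disconnected groups extended by a finite permutation group, hence totally disconnected, and, acting cocompactly on the geodesically complete space $Y$, it acts by semisimple isometries --- again a core technical input for totally disconnected groups.

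For the cell structure, put $G=\Isom(Y)$; every point-stabiliser $G_{y}$ is compact, and I would first show the action is \emph{smooth}, i.e.\ each $G_{y}$ is open. Van Dantzig gives a compact open $U\leq G$, which fixes a point $y_{0}$ (Cartan), so $G_{y_{0}}\supseteq U$ is open; the orbit $Gy_{0}$ is therefore discrete, cocompact (as $G$ is), hence locally finite since $Y$ is proper, so any $G_{y}$ permutes the finite set $Gy_{0}\cap\overline{B}(y,R)$ (with $R$ a covering radius), a finite-index subgroup of $G_{y}$ fixes it pointwise and thus lies in a finite intersection of open point-stabilisers, whence $G_{y}$ is open. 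The cell supporting $y$ is then $C(y):=\Fix(G_{y})$, which is convex (fixed-point sets in \cat spaces are convex) and satisfies $g\cdot C(y)=\Fix(gG_{y}g\inv)=\Fix(G_{gy})=C(gy)$, so the family $\{C(y)\}_{y\in Y}$ is $\Isom(Y)$-equivariant; that it is a genuine partition into convex cells, and that it is locally finite, are then standard for such fixed-point-set complexes, using that $Y$ is proper and of finite covering dimension (so balls are compact and the decompositions induced on the spaces of directions are finite). The single hardest point in the whole argument is the symmetric-space recognition in the second paragraph; the remainder is standard \cat geometry together with the structure theory of locally compact --- and of totally disconnected locally compact --- groups.
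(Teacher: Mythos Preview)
Your overall architecture matches the paper's: reduce to minimal irreducible factors via de~Rham, invoke Gleason--Yamabe on the identity component, and then treat the totally disconnected part. Two points deserve comment.

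\medskip

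\textbf{Smoothness.} Your argument that every stabiliser $G_y$ is open has a genuine gap. You correctly observe that $G_y$ permutes the finite set $Gy_0\cap\overline{B}(y,R)$, so a finite-index subgroup $H\leq G_y$ fixes each $g_i y_0$ in this set and hence $H\leq\bigcap_i G_{g_i y_0}$, an open subgroup of $G$. But being \emph{contained} in an open subgroup does not make $H$ open in $G$; you would need the reverse inclusion, or at least that some open subgroup of $G$ fixes $y$. Your argument never uses geodesic completeness, yet the paper stresses (and gives an example in the companion article) that smoothness can fail for cocompact totally disconnected actions on proper \cat spaces that are not geodesically complete. The paper's proof of smoothness (Theorem~\ref{thm:smooth}) hinges on a different mechanism: the set of points with open stabiliser is convex and dense by minimality, hence exhausts $X$ as an increasing union of fixed-point sets of compact open subgroups; the key step, Lemma~\ref{lem:GeodComplete:dense}, shows that in a geodesically complete proper \cat space any such increasing union that is dense must swallow every bounded set. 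That lemma is where geodesic completeness enters essentially.

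\medskip

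\textbf{Symmetric-space recognition.} You are right that this is the crux, and right to flag that ``the orbit $S\cdot p$ is convex and exhausts $X_i$'' is not something one can simply assert. The paper handles this via Theorem~\ref{thm:algebraic}, whose proof is substantial: one first shows the action is \emph{algebraic} (split-torus elements act as semi-simple isometries), then builds a $\norma_G(T)$-equivariant homothety from the model flat to a $T$-flat in $X_i$ by exploiting the affine Weyl group action, promotes this to a $G$-equivariant Tits isometry on the boundary, and finally --- using geodesic completeness --- deduces the isometry with $X_{\mathrm{model}}$ (in higher rank via Leeb's rigidity, in rank one by a direct argument on spaces of directions). Your sketch of taking an $S/K$-orbit and arguing convexity directly is a different route and would need its own justification; the paper does not proceed that way.

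\medskip

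The local finiteness of the cell decomposition is not quite ``standard''; the paper gives a specific argument (Proposition~\ref{prop:EquivariantPartition}) using that pointwise stabilisers of bounded sets are open, which again rests on the smoothness theorem.
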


If $X$ is regular, then $\Isom(Y)$ is discrete. In other words, the space $Y$ has branching geodesics as soon as
$\Isom(Y)$ is non-discrete. We refer to Theorem~\ref{thm:Decomposition} and Addendum~\ref{addendum} below for a
version of the above without the assumption of extensibility of geodesics.

%

We emphasize that the `cells' provided by Theorem~\ref{thm:GeodesicallyComplete} need not be compact; in fact if
$\Isom(Y)$ acts freely on $Y$ then the decomposition in question becomes trivial and consists of a single cell,
namely the whole of $Y$. Conversely the cell decomposition is non-trivial provided $\Isom(Y)$ does not act
freely. The most obvious way for the $\Isom(Y)$-action  not to be free is if $\Isom(Y)$ is not discrete. A
strong version of the latter condition is that \emph{no open subgroup of fixes a point at infinity}; this holds
notably for symmetric spaces and Bruhat--Tits buildings. A quite immediate consequence of this condition is that
the above cells are then necessarily compact. We shall show that much additional structure can be derived from
it (see Section~\ref{sec:NoOpenStabiliser} below).

\sepa

\noindent\textbf{Smoothness.} The cell decomposition of the third factor in Theorem~\ref{thm:GeodesicallyComplete} is
derived from the following \textbf{smoothness}\index{smooth} result for isometric actions of totally
disconnected groups.

\begin{thm}\label{thm:smooth:intro}
Let $X$ be a geodesically complete proper \cat space $X$ and $G<\Isom(G)$ a totally disconnected (closed)
subgroup acting minimally.

The the pointwise stabiliser in $G$ of every bounded set is open.
\end{thm}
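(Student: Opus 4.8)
The plan is to reduce, using that fixed‑point sets are convex, to producing a single open subgroup fixing a prescribed closed ball pointwise, and then to obtain such a subgroup as the pointwise stabiliser of a well‑chosen \emph{finite} set of points. For the reduction: for any $B$ the pointwise stabiliser $G_{(B)}$ is a subgroup of $G$, and since an isometry fixing two points fixes the geodesic joining them, the fixed‑point set of any isometry is closed and convex; hence $G_{(B)}=G_{(\overline{\mathrm{conv}}(B))}$. When $B$ is bounded, $\overline{\mathrm{conv}}(B)$ lies in some closed ball $\bar B(x_0,r)$ and $G_{(B)}\supseteq G_{(\bar B(x_0,r))}$, so (being a subgroup) $G_{(B)}$ is open as soon as $G_{(\bar B(x_0,r))}$ contains an open subgroup. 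By van Dantzig's theorem $G$ has a compact open subgroup $K$, which, being compact, fixes a point $c$ — e.g. the circumcentre of $Kx_0$ — and therefore preserves $\bar B(c,R)$ for $R=r+d(c,x_0)$. It thus suffices to exhibit an open subgroup of $G$ fixing $\bar B(c,R)$ pointwise.

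Next I would isolate the relevant orbit $Z:=Gc$ and record its two key features: its points have open stabilisers, and $\overline{\mathrm{conv}}(Z)=X$. For the first, the normal core $N=\bigcap_{g\in G}gKg\inv$ is a compact normal subgroup, so $\Fix(N)$ is a non‑empty (compactness), closed, convex, $G$‑invariant subset; minimality forces $\Fix(N)=X$, i.e. $N=\{1\}$. Hence for each $g\in G$ the stabiliser $G_{gc}$ contains the open subgroup $gKg\inv$ and is open, and a routine properness argument (the sets $\{g:d(gc,c)\le\vareps\}$ are compact and decrease to the open subgroup $G_c$, hence coincide with it for $\vareps$ small) shows $Z$ is uniformly discrete, in particular closed. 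For the second, $\overline{\mathrm{conv}}(Z)$ is a non‑empty closed convex $G$‑invariant subset, so equals $X$ by minimality; in particular $\mathrm{conv}(Z)$ is dense in $X$.

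It now suffices to produce a finite $F\subseteq Z$ with $\bar B(c,R)\subseteq\mathrm{conv}(F)$: then $G_{(F)}=\bigcap_{z\in F}G_z$ is a finite intersection of open subgroups, hence open, and fixes $\mathrm{conv}(F)\supseteq\bar B(c,R)$ pointwise. To find $F$, I would first show that every $x\in X$ lies in the \emph{interior} of $\mathrm{conv}(F_x)$ for some finite $F_x\subseteq Z$: using geodesic completeness, choose geodesic segments through $x$ whose directions surround $x$ in the space of directions $\Sigma_x$ (finitely many suffice, by compactness of $\Sigma_x$ — valid for proper geodesically complete spaces); then $x$ lies strictly inside the convex hull of their endpoints, one perturbs these endpoints into the dense set $\mathrm{conv}(Z)$, absorbs them into $\mathrm{conv}(F_x)$ for a finite $F_x\subseteq Z$, and keeps $x$ in the interior. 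Covering the compact ball $\bar B(c,R)$ by finitely many such interiors and taking $F$ to be the union of the corresponding $F_x$'s finishes the argument, since $\bigcup_i\mathrm{conv}(F_{x_i})\subseteq\mathrm{conv}(F)$.

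The main obstacle is concentrated in this last step: making rigorous the control of convex hulls of the uniformly discrete (and non‑dense) orbit $Z$ via geodesic extension — chiefly the claim that each point of $X$ is absorbed into the interior of the convex hull of finitely many points of $Z$, together with the perturbation stability of "lying in the interior of a convex hull". This is precisely where geodesic completeness is indispensable (it fails, for instance, for the ray $[0,\infty)$), and where the compactness of spaces of directions, and the finite‑dimensionality available in this setting, do the work.
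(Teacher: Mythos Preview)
Your first two paragraphs are sound and overlap with the paper's setup: van Dantzig gives a compact open subgroup $K$ with a fixed point, minimality forces the compact normal core $\bigcap_g gKg^{-1}$ to be trivial, and the set of points with open stabiliser is convex, $G$-invariant and dense.

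The gap is in your third paragraph, and it is not merely a matter of rigour. The assertion that a point $x$ lies in the \emph{interior} of $\mathrm{conv}(F_x)$ for some finite $F_x$ depends on properties of convex hulls of finite sets that simply do not hold in general \cat spaces. Even granting compactness of $\Sigma_x$ (which is true here but not trivial), there is no mechanism by which ``directions surrounding $x$ in $\Sigma_x$'' forces $x$ into the interior of the convex hull of the corresponding endpoints: convex hulls of finite sets in \cat spaces need not be closed, need not have non-empty interior, and are not controlled by the tangent-cone picture. The perturbation step compounds the problem, since there is no continuity of $F\mapsto\mathrm{conv}(F)$ available. You have correctly located the obstacle, but the tools you invoke (compactness of links, finite-dimensionality) do not supply what is needed.

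The paper bypasses convex hulls of finite sets entirely. It observes that the set $C$ of points with open stabiliser is dense, and then writes $C$ as an increasing union of \emph{closed} convex sets $C_n=\Fix(Q_n)$, where $\{Q_n\}$ is a descending chain of compact open subgroups with trivial intersection (using second countability and the triviality of the core). The key lemma is purely geometric: in a proper geodesically complete \cat space, any increasing sequence of closed convex subsets with dense union eventually contains every bounded set. Its proof uses geodesic extension in a single, robust way: if some ball $B(x,r)$ meets $X\setminus C_n$ for all $n$, extend the segment from the projection of a bad point onto $C_n$ through that point to escape $C_n$ by a definite amount; this manufactures infinitely many $r$-separated points in a fixed ball, contradicting properness. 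Note that if you replace your ``interior of $\mathrm{conv}(F_x)$'' step by this lemma (applied to $C_n=\overline{\mathrm{conv}}(F_n)$ for an exhaustion of $Z$ by finite sets), your argument goes through --- but that is exactly the paper's idea.
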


This property, which is familiar from classical examples, \emph{does in general fail} without geodesic completeness
(see Remark~\dref{rem:non-smooth}).
It is an important ingredient for the considerations of Section~\ref{sec:NoOpenStabiliser} alluded to above, as
well as for angle rigidity results regarding both the Alexandrov angle (Proposition~\ref{prop:angle_rigidity})
and the Tits angle (Proposition~\ref{prop:discrete:orbits}).

\sepa

\noindent\textbf{A characterisation of symmetric spaces and Euclidean buildings.} In symmetric spaces and
Bruhat--Tits buildings, the stabilisers of points at infinity are exactly the parabolic subgroups; as such, they
are cocompact. This cocompactness holds further for all Bass--Serre trees, namely bi-regular trees. Combining
our results with work of B.~Leeb~\cite{Leeb} and A.~Lytchak~\cite{Lytchak_RigidityJoins}, we establish a
corresponding characterisation.

\begin{thm}\label{thm:cocompact:stabilisers}
Let $X$ be a geodesically complete proper \cat space. Suppose that the stabiliser of every point at infinity
acts cocompactly on $X$.

Then $X$ is isometric to a product of symmetric spaces, Euclidean buildings and Bass--Serre trees.
\end{thm}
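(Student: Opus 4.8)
The plan is to strip off, in turn, the symmetric factor and the Euclidean factor predicted by Theorem~\ref{thm:GeodesicallyComplete}, and then to reduce to a situation covered by Leeb's characterisation theorem. First observe that if the stabiliser of some $\xi\in\partial_\infty X$ acts cocompactly, then $\Isom(X)$ itself does, since $\Isom(X)\backslash X$ is a continuous image of $\Stab(\xi)\backslash X$; and then $X$ is automatically minimal, because a proper closed convex $\Isom(X)$-invariant subset would be coarsely dense (by cocompactness), which is impossible in a geodesically complete space (extend a geodesic leaving the subset: its distance to the subset tends to infinity). Using the de Rham-type decomposition of the text (Theorem~\ref{thm:Decomposition} together with Addendum~\ref{addendum}) I would write, $\Isom(X)$-equivariantly, $X=\RR^m\times X_0$ with $X_0$ free of Euclidean factors and $\Isom(X_0)$ without fixed point at infinity. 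The factor $\RR^m$ is covered by the conclusion (a Euclidean space is a symmetric space, and also a Euclidean building). Moreover $X_0$ inherits the hypothesis: a point of $\partial_\infty X_0$ is a point of $\partial_\infty X$, its stabiliser in $\Isom(X)$ respects the equivariant splitting and projects onto a cocompact subgroup of its stabiliser in $\Isom(X_0)$.

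Now apply Theorem~\ref{thm:GeodesicallyComplete} to $X_0$: it splits equivariantly as $M\times Y$, with $M$ a symmetric space of non-compact type (covered by the conclusion) and $\Isom(Y)$ totally disconnected, acting cocompactly, minimally and semisimply, and carrying the locally finite decomposition of $Y$ into convex cells; the action is moreover smooth by Theorem~\ref{thm:smooth:intro}. As before, $Y$ inherits the cocompact-stabiliser hypothesis. Decomposing $Y$ into its irreducible factors (the canonical product decomposition of the text), each of which again has totally disconnected cocompact isometry group, is minimal, has no fixed point at infinity, is smooth, carries the cell decomposition and has cocompact boundary stabilisers, we are reduced to proving the following: if $Y$ is irreducible and not reduced to a point, with all these properties, then $Y$ is a Euclidean building or a Bass--Serre tree. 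Note that, having no Euclidean factor, such a $Y$ is not flat.

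The remaining task is to analyse $\partial_T Y$. The aim is to show that $\partial_T Y$ is a thick irreducible spherical building, of some rank $r\geq 1$, and then to conclude: if $r\geq 2$, Leeb's characterisation~\cite{Leeb} shows that $Y$ is an irreducible symmetric space of non-compact type or a Euclidean building, the former being excluded because $\Isom(Y)$ is totally disconnected, so $Y$ is a Euclidean building; if $r=1$, one argues that $\GeomDim Y=1$, so that $Y$ is a tree on which $\Isom(Y)$ acts cocompactly, hence a Bass--Serre tree. Irreducibility of $\partial_T Y$ follows from that of $Y$: by Lytchak's rigidity of joins~\cite{Lytchak_RigidityJoins} a nontrivial spherical-join decomposition of $\partial_T Y$ would yield a product decomposition of $Y$. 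The simplicial structure on $\partial_T Y$ is read off from the links of the cells, and thickness comes from the branching of geodesics in $Y$ (Addendum~\ref{addendum}, valid since $\Isom(Y)$ is non-discrete).

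The hard part will be the verification that $\partial_T Y$ genuinely satisfies the building axioms --- most notably that any two of its simplices lie in a common apartment --- together with the complementary rank-one statement that, when $\dim\partial_T Y=0$, the space $Y$ is one-dimensional. Both are precisely what the cocompact-stabiliser hypothesis --- applied to the \emph{full} isometry group --- is designed to deliver: cocompactness of $\Stab(\xi)$, combined with geodesic completeness, produces flats through prescribed asymptotic directions and geodesic lines joining prescribed opposite boundary points (supplying the apartments), and it rules out parabolic subgroups with unbounded ``unipotent'' part as well as rank-one phenomena such as hyperbolic buildings, whose full isometry groups fail to have cocompact boundary stabilisers. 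I would organise this verification by induction on $\dim\partial_T Y$, passing to links --- which are again proper, geodesically complete $\CAT{1}$ spaces whose full isometry groups are totally disconnected and to which the analogous spherical analysis applies --- so that the dimension drops at each step; the smoothness result (Theorem~\ref{thm:smooth:intro}) and the Tits-angle rigidity (Proposition~\ref{prop:discrete:orbits}) are the technical engine making this induction run.
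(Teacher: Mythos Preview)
Your overall architecture matches the paper's: strip the Euclidean factor, reduce to an irreducible factor $Y$ with totally disconnected cocompact isometry group, show $\bd Y$ is a spherical building, then invoke Leeb~\cite{Leeb} in higher rank and handle trees separately in rank one. One preliminary point you gloss over: the claim that $\Isom(X_0)$ has no fixed point at infinity cannot be extracted from Theorem~\ref{thm:Decomposition}, which takes it as a hypothesis. The paper supplies it via Proposition~\ref{prop:StabIsMinimal}, using that the cocompact stabilisers act minimally (Lemma~\ref{lem:cocompact:minimal}).

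The substantive divergence is in the building step, and here your proposal has a genuine gap. The paper does \emph{not} induct on links or assemble apartments from the cell decomposition. It proceeds directly: Lemma~\ref{lem:antipode} shows that all Tits-antipodes of any $\xi$ lie in a single $G$-orbit; Proposition~\ref{prop:discrete:orbits} (which you do cite) shows this orbit is Tits-discrete; Proposition~\ref{prop:spheres_at_infinity} and Corollary~\ref{cor:bnd_geod_complete} show that $\bd Y$ is geodesically complete. These are exactly the hypotheses of Proposition~4.5 in~\cite{Lytchak_RigidityJoins}, which delivers the building structure in one stroke. Your link induction is not obviously workable: links are \CAT{1} rather than \cat, and the cocompact-stabiliser hypothesis has no evident analogue there. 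The claim that ``the simplicial structure is read off from the links of the cells'' and that ``thickness comes from branching'' are unsupported; neither is used in the paper, and Addendum~\ref{addendum} says nothing about thickness or non-discreteness of $\Isom(Y)$.

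In the rank-one case you stop too early. The paper first proves $\dim Y=1$ by showing that every sphere in $Y$ is finite (using that $G_x$ is open, has finitely many orbits on $\bd Y$, and applying smoothness), and then gives a separate argument via the Busemann character to show $G$ is \emph{edge-transitive}. Cocompactness alone does not make a tree a Bass--Serre tree.
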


The Euclidean buildings appearing in the preceding statement admit an automorphism group that is strongly
transitive, \emph{i.e.} acts transitively on pairs $(c, A)$ where $c$ is a chamber and $A$ an apartment
containing $c$. This property characterises the Bruhat--Tits buildings, except perhaps for some two-dimensional
cases where this is a known open question.

\smallskip

The above characterisation is of a different nature and independent of the characterisations using lattices that
will be presented in~\cite{Caprace-Monod_discrete}.

\sepa

\noindent\textbf{Actions of simple algebraic groups.} Both for the general theory and for the geometric
superrigidity/arithmeticity statements of the companion paper~\cite{Caprace-Monod_discrete}, it is important to understand how
algebraic groups act on \cat spaces.

\begin{thm}\label{thm:algebraic:intro}
Let $k$ be a local field and $\GG$ be an absolutely almost simple simply connected $k$-group. Let $X$ be a
\cat space (not reduced to a point) on which $G = \GG(k)$ acts continuously and cocompactly by isometries.

Then $X$ is isometric to $X_{\mathrm{model}}$, the Riemannian symmetric space or Bruhat--Tits building associated with $G$.
\end{thm}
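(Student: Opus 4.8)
\textit{Strategy.} The plan is to feed the $G$-action into the structure theory developed above, reducing to the two ``pure'' situations---$X$ a symmetric space of non-compact type, or $X$ a Euclidean building (in rank one, a Bass--Serre tree)---and then to pin $X$ down as $X_{\mathrm{model}}$ in each. I would assume $\GG$ is $k$-isotropic throughout (if $\GG$ is anisotropic then $G$ is compact, so $X$ is bounded, admits an $\Isom(X)$-fixed point, and $X_{\mathrm{model}}$ is a point---a degenerate situation). First I would replace $X$ by its completion. Since $\GG$ is simply connected and $k$ is local, the Kneser--Tits theorem yields $G=\GG(k)^{+}$, so $G$ is perfect, and $G/Z(G)$ is abstractly simple by Tits' simplicity theorem; hence $G$ admits no non-trivial continuous homomorphism to an abelian or to a compact group. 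It follows that $G$ acts trivially on the Euclidean factor of $X$ and, arguing as in the general theory, that $G$ fixes no point at infinity. The structure theorem for cocompact actions with no fixed point at infinity---Theorem~\ref{thm:Decomposition} with Addendum~\ref{addendum}, the versions of Theorem~\ref{thm:GeodesicallyComplete} that do not require extensibility of geodesics---then provides an $\Isom(X)$-equivariant, hence $G$-equivariant, splitting $X=M\times\RR^{n}\times Y$ in which $M$ is symmetric of non-compact type and $\Isom(Y)$ is totally disconnected, acts minimally and by semi-simple isometries, and preserves the canonical cell decomposition of $Y$.

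\textit{Locating $G$; the Archimedean case.} Because $\GG$ is \emph{absolutely} almost simple, $M$ is irreducible and $Y$ carries no $G$-invariant product decomposition; combined with the simplicity of $G/Z(G)$, this forces the image of $G$ to be concentrated in a single irreducible de Rham factor---two non-trivial projections would have the same kernel and thus yield a diagonal embedding, which cannot be cocompact---and cocompactness then reduces the remaining factors to points. If $k$ is Archimedean, $G=\GG(k)$ is a connected semisimple Lie group, hence has trivial image in the totally disconnected group $\Isom(Y)$, and $X=M$. The continuous homomorphism $G\to\Isom(M)$ has closed image $H'$, a connected semisimple Lie subgroup acting cocompactly on $M$; its symmetric space then embeds in $M$ as a totally geodesic, cobounded submanifold, and coboundedness forces equality of dimensions. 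Hence $H'=\Isom(M)^{\circ}$ and $G$ acts transitively, so $M\cong G/K$ with $K$ a maximal compact subgroup; as $M$ is irreducible, the $G$-invariant metric it carries is proportional to the symmetric one, and $X$ is isometric to $X_{\mathrm{model}}$.

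\textit{The non-Archimedean case.} If $k$ is non-Archimedean, $G$ is totally disconnected, so its image in the Lie group $\Isom(M)$ is a discrete---hence trivial---subgroup, and $X=Y$ is a proper \cat space with $\Isom(Y)$ totally disconnected, on which $G$ acts cocompactly and minimally; moreover this action is proper (the point at which, I expect, some work is needed: one must control the lattice of open subgroups of $\GG(k)$), with open (Theorem~\ref{thm:smooth:intro}) and hence compact stabilisers of bounded sets. As $G$ also acts properly and cocompactly on $\Delta:=X_{\mathrm{model}}$, its Bruhat--Tits building (the kernel of the action being the finite centre of $G$), the Milnor--\v{S}varc lemma produces a $G$-equivariant quasi-isometry $Y\to\Delta$, whence a $G$-equivariant homeomorphism $\partial_\infty Y\to\partial_\infty\Delta$. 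In $\Delta$ the stabiliser of any point at infinity contains a conjugate of a minimal parabolic subgroup---which is amenable, hence fixes a point at infinity by the Adams--Ballmann theorem, and is cocompact in $G$ by the Iwasawa decomposition---and therefore acts cocompactly on $\Delta$; transporting this across the homeomorphism, the stabiliser in $\Isom(Y)$ of \emph{every} point at infinity of $Y$ acts cocompactly on $Y$. Theorem~\ref{thm:cocompact:stabilisers} then shows $Y$ is a product of symmetric spaces, Euclidean buildings and Bass--Serre trees, and irreducibility together with total disconnectedness of $\Isom(Y)$ leaves exactly one Euclidean building (a bi-regular tree in rank one). Finally the boundary identification endows $\partial_\infty Y$ with the spherical building of $\GG$ over $k$ and $Y$ with a strongly transitive $G$-action transported from $\Delta$; the classification of Euclidean buildings with prescribed Moufang building at infinity (and matching of valences in the tree case) forces $Y=\Delta=X_{\mathrm{model}}$.

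\textit{The main obstacle.} I expect the real work to lie entirely in the non-Archimedean analysis: verifying that the $G$-action is genuinely proper by controlling the open subgroups of $\GG(k)$; handling the possible failure of geodesic completeness of $Y$, so that either the non-geodesically-complete forms of Theorems~\ref{thm:Decomposition} and~\ref{thm:cocompact:stabilisers} are used or geodesic completeness is restored without destroying properness; and, at the very end, recognising the abstract Euclidean building $Y$ as the specific Bruhat--Tits building of $\GG$ over $k$, i.e.\ matching the labellings and the Moufang structure at infinity. The Archimedean steps, by contrast, are comparatively soft.
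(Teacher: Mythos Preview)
Your overall architecture---reduce via the structure theory to a single irreducible factor, then treat the Archimedean and non-Archimedean cases separately---is reasonable, but both branches have genuine gaps, and they stem from the same missing ingredient: an \emph{a priori} identification of $\bd X$ with $\bd X_{\mathrm{model}}$.

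\textbf{Archimedean case: circularity.} You invoke the decomposition $X=M\times\RR^n\times Y$ with ``$M$ symmetric of non-compact type''. But Theorem~\ref{thm:Decomposition} and Addendum~\ref{addendum} only give you irreducible factors $X_i$ whose isometry groups are almost connected simple Lie groups; the assertion that each $X_i$ is a \emph{symmetric space} is precisely Theorem~\ref{thm:algebraic}(iii), which is what you are proving. (Indeed, the ``End of proof of Theorem~\ref{thm:GeodesicallyComplete}'' explicitly cites Theorem~\ref{thm:algebraic}(iii) to get this.) Your attempted direct argument---that the symmetric space $G/K$ ``embeds in $M$ as a totally geodesic, cobounded submanifold''---is a Karpelevich--Mostow statement that presupposes $M$ is already a Riemannian manifold of non-positive curvature; for a bare proper \cat space there is no reason the orbit map $G/K\to M$ should be isometric or totally geodesic, and ``equality of dimensions'' has no meaning.

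\textbf{Non-Archimedean case: boundaries are not quasi-isometry invariants.} You write that Milnor--\v{S}varc gives a $G$-equivariant quasi-isometry $Y\to\Delta$, ``whence a $G$-equivariant homeomorphism $\bd Y\to\bd\Delta$''. This inference is false for \cat spaces: the visual boundary is not a quasi-isometry invariant (Croke--Kleiner~\cite{Croke-Kleiner}). Without that equivariant bijection you cannot transport cocompactness of stabilisers from $\bd\Delta$ to $\bd Y$, so your appeal to Theorem~\ref{thm:cocompact:stabilisers} is unsupported. (In rank one the spaces are Gromov-hyperbolic and the step would go through, but in higher rank it does not.) Note also that the totally disconnected branch of Theorem~\ref{thm:cocompact:stabilisers} ultimately rests on Leeb's theorem, so even if the circularity is avoided there, you would not be giving a genuinely independent proof.

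\textbf{What the paper does instead.} The paper does not route through the structure theorems at all. It works directly with the $KTK$ decomposition of $G$: it shows that a maximal split torus $T$ stabilises a flat $F\subseteq X$, uses the affine Weyl group action to produce an $\norma_G(T)$-equivariant homothety $F_{\mathrm{model}}\to F$, and then extends this to a $G$-equivariant Tits-isometry $\bd X_{\mathrm{model}}\to\bd X$ (surjective because every $\xi\in\bd X$ lies in the limit set of a conjugate of $T$). Only \emph{after} this boundary identification does geodesic completeness enter, via Leeb's theorem in higher rank and via direct arguments (non-branching, transitivity) in the Archimedean and tree cases. So contrary to your final paragraph, the Archimedean case is not ``comparatively soft'': the hard step---constructing $\bd\alpha$---is common to both.
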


A stronger and much more detailed statement is provided below as Theorem~\ref{thm:algebraic}. In particular, a modification
of the statement holds without extensibility of geodesics and the cocompactness assumption can be relaxed. However, we also
show there by means of two examples that some assumptions remain necessary.

(As a point of terminology, we do not choose a particular scaling factor on $X_{\mathrm{model}}$,
so that the isometry of our statement could become a homothety for another model.)

\sepa

\subsection{General case}\label{sec:intro:general}
 When dealing with \cat space in the highest possible level of generality, one has
to face several technical difficulties caused by the flexibility of the \cat condition. For example, given a
\cat space $X$, there are many ways to deform it in order to construct another space $Y$, non-isometric to $X$,
but with the property that $X$ and $Y$ have isomorphic isometry groups or/and identical boundaries. Amongst the
simplest constructions, one can form (possibly warped) products with compact \cat spaces or grow hair
equivariantly along a discrete orbit. Much wilder (non-quasi-isometric) examples can be constructed for instance
by considering warped products with the very vast family of \cat spaces having no isometries and a unique point
at infinity.

In order to address these issues, we introduce the following terminology.

\smallskip

\noindent\textbf{Minimality.}
\begin{flushright}
\begin{greektext}
\itshape
\ldots <'osa tis >`an e>'ipoi sfa'iras >egk'wmia, a>ut`a ta\~uta ka`i fal'akras >egk'wmia diex'erqetai.
\upshape

\smallskip

Sun'esios Kurena'iou, \emph{Fal'akras >egk'wmion}%
\end{greektext}%
.\footnote{Synesius of Cyrene,
\begin{greektext}
\emph{Fal'akras >egk'wmion}
\end{greektext}
(known as \emph{Calvitii encomium}), end of Chapter~8 (at 72A in the page numbering from Denis Pétau's 1633 edition).
The \emph{Encomium} was written around 402; we used the 1834 edition by J.~G.~Krabinger (Ch.~G.~L\"oflund,
Stuttgart). The above excerpt translates roughly to: \itshape
as much praise as is given to the spheres is due to the bald head too\upshape.}
\end{flushright}

\smallskip

An isometric action of a group $G$ on a \cat space $X$ is said to be \textbf{minimal}\index{minimal!action} if
there is no non-empty $G$-invariant closed convex subset $X'\subsetneq X$; the space $X$  is itself called
\textbf{minimal}\index{minimal!\cat space} if its full isometry group acts minimally. A \cat space $X$ is called
\textbf{boundary-minimal}\index{boundary-minimal|see{minimal}}\index{minimal!boundary-} if it possesses no
closed convex subset $Y \subsetneq X$ such that $\bd Y = \bd X$. Here is how these notions relate to one
another.

\begin{prop}\label{prop:boundary-minimal}
Let $X$ be a proper \cat space.
\begin{enumerate}
\item Assume $\bd X$ finite-dimensional. If $X$ is minimal, then it is boundary-minimal.

\item Assume $\Isom(X)$ has full limit set. If $X$ is boundary-minimal, then it is minimal.

\item If $X$ is cocompact and geodesically complete, then it is both minimal and boundary-minimal.
\end{enumerate}
\end{prop}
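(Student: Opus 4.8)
The plan is to handle the three assertions in the order \,(iii), (ii), (i)\,: the first two are essentially formal once the right mechanism is isolated, while (i) carries the technical weight.

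\smallskip

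\emph{Items (iii) and (ii).} For (iii) I would first establish boundary-minimality using \emph{only} geodesic completeness. Suppose $Y \subsetneq X$ is a nonempty closed convex subset with $\bd Y = \bd X$; choose $x \in X \setminus Y$, let $p = \proj_Y(x)$ and put $\delta = d(x,p) > 0$. By geodesic completeness, extend $[p,x]$ to a geodesic ray $\rho\colon [0,\infty) \to X$ with $\rho(0) = p$ and $\rho(\delta) = x$, and set $\xi = \rho(\infty) \in \bd X = \bd Y$. Since $p \in Y$ and $\xi \in \bd Y$, there is a geodesic ray from $p$ to $\xi$ lying inside $Y$; but in a \cat space the ray joining a given point to a given point at infinity is unique (two such rays are asymptotic, so the convex function measuring their distance vanishes at the origin and is bounded, hence vanishes), so this ray equals $\rho$, whence $x = \rho(\delta) \in Y$ — a contradiction. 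Thus $X$ is boundary-minimal. For minimality, let $Y \subseteq X$ be nonempty, closed, convex and $\Isom(X)$-invariant: cocompactness makes $Y$ coarsely dense (some bounded neighbourhood of $Y$ is all of $X$), hence $\bd Y = \bd X$, and the boundary-minimality just proved gives $Y = X$. For (ii), let $Y \subseteq X$ be nonempty, closed, convex and $\Isom(X)$-invariant and fix $y \in Y$; then $\Isom(X)\cdot y \subseteq Y$, so the limit set of $\Isom(X)$ — the set of accumulation points of this orbit in $\bd X$ — is contained in $\bd Y$ (the closure of $Y$ in $X \cup \bd X$ meets $\bd X$ exactly in $\bd Y$). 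By hypothesis this limit set is all of $\bd X$, so $\bd Y = \bd X$, and boundary-minimality of $X$ yields $Y = X$.

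\smallskip

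\emph{Strategy for (i).} The idea is to exhibit a \emph{canonical} — hence $\Isom(X)$-invariant — closed convex subset with full boundary and invoke minimality. Let $\mathcal Y$ be the family of all nonempty closed convex subsets $Y \subseteq X$ with $\bd Y = \bd X$; it contains $X$, and $\Isom(X)$ permutes it since $\bd(gY) = g(\bd Y) = \bd X$. I claim $Z := \bigcap_{Y \in \mathcal Y} Y$ again lies in $\mathcal Y$. Granting this, $Z$ depends only on the metric of $X$, hence is $\Isom(X)$-invariant; minimality of $X$ then forces $Z = X$, but $Z \subseteq Y$ for every $Y \in \mathcal Y$, so $\mathcal Y = \{X\}$ — which is exactly boundary-minimality. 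To prove $Z \in \mathcal Y$ I would show $\mathcal Y$ is stable under intersections, in two steps: (a) $Y_1 \cap Y_2 \in \mathcal Y$ whenever $Y_1, Y_2 \in \mathcal Y$, so that $\mathcal Y$ is downward directed; and (b) the intersection of any downward directed subfamily of closed convex subsets all of boundary $\bd X$ is nonempty and again has boundary $\bd X$. Applying (b) to the directed family $\mathcal Y$ itself then gives $Z \in \mathcal Y$.

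\smallskip

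\emph{The main obstacle.} Steps (a) and (b) are precisely where finite-dimensionality of $\bd X$ is indispensable, and they are the crux: without it a nested sequence in $\mathcal Y$ may have empty intersection, or an intersection whose boundary is strictly smaller than $\bd X$. The substitute for compactness is the theory of convex functions and of filtering families of closed convex subsets in \cat spaces with finite-dimensional boundary, in the spirit of the work of Caprace and Lytchak: a convex function on such a space either attains its infimum or decreases to it along some ray pointing at a boundary point, and a filtering family of closed convex subsets with empty intersection must recede towards a distinguished point lying in the intersection of their boundaries. In step (a) one applies this to the convex function $x \mapsto \max\{d(x,Y_1), d(x,Y_2)\}$ and uses that each $Y_i$ already contains a ray to every point of $\bd X$ to exclude the escaping alternative; step (b) is handled by the same circle of ideas. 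Once (a) and (b) are secured, the remaining verifications are routine.
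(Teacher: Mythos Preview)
Your arguments for~(ii) and~(iii) are correct. In fact your treatment of~(iii) is more elementary than the paper's: the paper deduces~(iii) from~(i) together with Kleiner's theorem that cocompact spaces have finite-dimensional boundary, whereas you obtain boundary-minimality directly from geodesic completeness alone via the uniqueness of rays, and then deduce minimality. That is a genuine simplification.

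For~(i), however, there is a real gap. Your steps~(a) and~(b) assert that the family $\mathcal Y$ of closed convex subsets with full boundary is closed under (directed) intersections. This is false in general: take $X = Z \times [0,1]$ for any boundary-minimal $Z$; then $Y_1 = Z \times \{0\}$ and $Y_2 = Z \times \{1\}$ both lie in $\mathcal Y$ but are disjoint. Your proposed mechanism --- analysing the convex function $f(x) = \max\{d(x,Y_1), d(x,Y_2)\}$ and ``excluding the escaping alternative'' --- does not help: in this example $f$ attains its infimum $1/2 > 0$, so there is no escape to infinity to rule out, yet $Y_1 \cap Y_2 = \varnothing$. The fact that each $Y_i$ contains rays to every boundary point only shows that $f$ is bounded (hence non-increasing) along every ray, not that $\inf f = 0$. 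Of course this particular $X$ is not minimal, so it does not contradict~(i); the point is that your argument for~(a) and~(b) nowhere invokes minimality of $X$, and therefore cannot possibly go through as written.

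What is missing is precisely the step that carries the paper's proof: one first shows, using minimality, that $\bd X$ has circumradius~$>\pi/2$. The argument is via Busemann characters: were the circumradius~$\leq \pi/2$, the canonical circumcentre $\xi$ (from Balser--Lytchak, using finite-dimensionality) would be $\Isom(X)$-fixed, and an element with non-trivial Busemann character at $\xi$ would be ballistic with attracting point at Tits distance~$>\pi/2$ from $\xi$ --- impossible; hence the Busemann character vanishes on $\Isom(X)$, so every horoball at $\xi$ is invariant, contradicting minimality. Once circumradius~$>\pi/2$ is known, Proposition~\ref{prop:ConvexCore} produces a canonical closed convex subset $Z_0 \cong Z \times Z'$ with $Z$ boundary-minimal and $\bd Z = \bd X$; the circumcentre of the bounded factor $Z'$ singles out a canonical $\Isom(X)$-invariant fibre, which equals $X$ by minimality. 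This circumradius/Busemann step is the essential idea you are missing; the filtering-family machinery you invoke is used in the paper, but only \emph{after} circumradius~$>\pi/2$ has been secured.
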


We emphasize that it is \emph{not} true in general that a minimal \cat space is geodesically complete, even if
one assumes that the isometry group acts cocompactly and without global fixed point at infinity.

\sepa

\noindent\textbf{Group decompositions.} We now turn to structure results on the locally compact isometry group
$\Isom(X)$ of a proper \cat space $X$ in a broad generality; we shall mostly ask that no point at infinity be
fixed simultaneously by all isometries of $X$. This non-degeneracy assumption will be shown to hold
automatically in the presence of lattices (see Theorem~\dref{thm:Lattice=>NoFixedPointAtInfty}).

The the following result was the starting point of this work.

\begin{thm}\label{thm:Decomposition}
Let $X$ be a proper \cat space with finite-dimensional Tits boundary. Assume that $\Isom(X)$ has no global fixed
point in $\bd X$.

Then there is a canonical closed convex $\Isom(X)$-stable subset $X' \se X$ such that $G= \Isom(X')$ has a
finite index open characteristic subgroup $G^*\lhd G$ which admits a canonical decomposition
\begin{equation}\label{eq:GroupDecomposition}
G^*  \cong\ S_1\times \cdots \times S_p \times \big(\RR^n\rtimes \mathbf{O}(n)\big) \times D_1\times \cdots
\times D_q  \kern10mm(p,q,n\geq 0)
\end{equation}
where $S_i$ are almost connected simple Lie groups with trivial centre and $D_j$ are totally disconnected
irreducible groups with trivial amenable radical. Any product decomposition of $G^*$ is a regrouping of the
factors in~\eqref{eq:GroupDecomposition}.

Moreover, all non-trivial normal, subnormal or ascending subgroups $N<D_j$ are still irreducible with trivial
amenable radical and trivial centraliser in $D_j$. (These properties also hold for lattices in $N$ and their
normal, subnormal or ascending subgroups, see~\cite{Caprace-Monod_discrete}.)
\end{thm}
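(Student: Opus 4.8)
The plan is to build the canonical subspace $X'$ and the decomposition in several stages, following the de Rham–type philosophy of splitting off a Euclidean factor, then a symmetric factor, then analysing the residue. First I would replace $X$ by a canonical minimal nonempty closed convex $\Isom(X)$-invariant subset: since $\bd X$ is finite-dimensional and $\Isom(X)$ has no global fixed point at infinity, the union of all minimal invariant convex subsets exists and is itself closed, convex, invariant, and splits as $X_{\min}\times Z$ with $X_{\min}$ minimal; I would argue that one can take $X' = X_{\min}$, or rather a canonical such space obtained via the intersection of all closed convex subsets with the same boundary (using Proposition~\ref{prop:boundary-minimal} to relate minimality and boundary-minimality once we know the limit set is full, which follows from the no-fixed-point-at-infinity hypothesis via a Busemann/circumcenter argument). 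The key point to extract is that $G = \Isom(X')$ acts minimally and cocompactly-in-spirit enough to run a canonical product decomposition.

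Next, with $X'$ minimal and $\Isom(X')$ without fixed point at infinity, I would invoke the canonical maximal Euclidean–de Rham decomposition $X' = \RR^n \times X''$, where the $\RR^n$-factor is the canonical Euclidean factor (Adams–Ball–Tits style), yielding the factor $\RR^n \rtimes \mathbf{O}(n)$ in~\eqref{eq:GroupDecomposition} after passing to the finite-index subgroup $G^*$ stabilising the decomposition and acting on $X''$ without Euclidean factor. Then on $X''$ I would split off the ``connected'' part: the connected component $G_0$ of $G = \Isom(X'')$ is, by the solution to Hilbert's fifth problem combined with the splitting theorem for actions of connected groups on \cat spaces, a product of a semisimple Lie group (acting on a symmetric factor) and a compact part; after absorbing the compact part into $X''$ (minimality kills it) we obtain the simple Lie factors $S_1,\dots,S_p$, each centreless and almost connected, acting on its own symmetric-space factor. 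The complementary factor is $D = D(X'')$, a totally disconnected group acting on the remaining space; decomposing it into irreducible factors $D_1 \times \cdots \times D_q$ uses the finite-dimensionality of the boundary to bound the number of factors and the splitting theorem to make the decomposition geometric and canonical, with triviality of the amenable radical of each $D_j$ coming from the fact that a nontrivial amenable normal subgroup would fix a point at infinity or produce a Euclidean/bounded factor, contradicting minimality and the structure already extracted.

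For the uniqueness clause — that every product decomposition of $G^*$ regroups the factors of~\eqref{eq:GroupDecomposition} — I would use that each $S_i$ is simple, the Euclidean factor is characterised as the amenable radical together with its action, and each $D_j$ is irreducible with trivial centre; a product decomposition of $G^*$ induces a decomposition of $X''$ by the canonicity of the splitting theorem, and the de Rham–type uniqueness for \cat products then forces the regrouping. Finally, for the last paragraph — that every nontrivial normal, subnormal, or ascending subgroup $N \lhd D_j$ is again irreducible with trivial amenable radical and trivial centraliser in $D_j$ — I would argue as follows: the centraliser of $N$ in $D_j$ is a closed normal subgroup; if it and $N$ were both nontrivial this would contradict irreducibility of $D_j$ (a commuting pair of normal subgroups yields a product decomposition of the associated space, by the splitting theorem, contradicting that $D_j$ has no such decomposition); triviality of the amenable radical of $N$ follows because an amenable subgroup characteristic in $N$ is normalised by $D_j$, hence normal, hence trivial; and irreducibility of $N$ is then a consequence of the fact that a nontrivial product decomposition of $N$ would, via its canonical geometric realisation and the normality of the factors' closures in $D_j$, again contradict irreducibility of $D_j$. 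The ascending case is handled by a direct limit / Noetherian argument on the lattice of closed invariant convex subsets, which is well-founded because $\bd X$ is finite-dimensional.

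The main obstacle I expect is the passage from $\Isom(X)$ to $\Isom(X')$ and the proof that $X'$ (and hence $G$, $G^*$) is genuinely canonical and that $G^*$ can be taken \emph{open} of finite index: one must show that stabilising the canonical product decomposition is a finite-index \emph{open} condition, which requires that the decomposition is locally rigid — i.e., that $\Isom(X')$ permutes the finitely many factors of a fixed type through a finite quotient — and this in turn leans on finite-dimensionality of the boundary to guarantee finiteness of the number of factors, plus a careful analysis of why no exotic (neither semisimple-Lie nor totally-disconnected-irreducible) factor can survive in a minimal cocompact-enough space without fixed point at infinity. Controlling the amenable radical throughout — showing it is exactly the $\RR^n \rtimes \mathbf{O}(n)$ piece and nothing more — is the technical heart, and is where the hypothesis ``no global fixed point at infinity'' is used most essentially.
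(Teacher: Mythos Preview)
Your overall architecture inverts the paper's: you try to decompose the \emph{group} first (connected component via Hilbert's fifth problem, then totally disconnected irreducibles) and read off the space decomposition from the splitting theorem, whereas the paper first establishes a canonical de Rham decomposition of the \emph{space} $X'$ into irreducibles (Theorem~\ref{thm:preliminary}/Corollary~\ref{cor:preliminary}), and only then shows that the full isometry group of each non-Euclidean irreducible factor is either an almost connected simple Lie group or totally disconnected (Theorem~\ref{thm:IrreducibleGroup}). Your route can in principle be made to work, but it forces you to verify minimality and no-fixed-point-at-infinity for various subgroups (e.g.\ $G^\circ$, the $D$-part) \emph{before} you have the space decomposition, which is awkward; the paper's order avoids this. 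Two incidental errors: full limit set does \emph{not} follow from the absence of a global fixed point at infinity, and nothing cocompact-like is needed or used.

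The genuine gap is in your treatment of the ``Moreover'' clause. The paper's engine here is Theorem~\ref{thm:geometric_simplicity}: on an irreducible factor $Y_j$, any non-trivial normal subgroup $N\lhd D_j$ \emph{still acts minimally and without fixed point in $\bd Y_j$}. This geometric fact is what drives everything---once you have it, trivial centraliser follows (displacement functions of centralising elements are convex $N$-invariant, hence constant, hence Clifford, hence trivial since $Y_j$ has no Euclidean factor), and irreducibility of $N$ follows by applying the splitting theorem to $N$ itself acting on $Y_j$. You instead attempt to deduce these properties algebraically from irreducibility of $D_j$, but this does not work: a commuting pair of closed normal subgroups of $D_j$ need not produce a finite-index open direct product of $D_j$, and direct factors of $N$ are not in general normal in $D_j$, so your argument for irreducibility of $N$ breaks. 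Your amenable-radical argument (characteristic in $N$, hence normal in $D_j$, hence trivial) is fine, but the rest needs the minimality-inheritance lemma. For the ascending case the paper does a transfinite induction (Proposition~\ref{prop:EasyBootstrap} and Theorem~\ref{thm:bootstrap}) using compactness of $\bd X$ at limit ordinals; your ``Noetherian argument on closed invariant convex subsets'' is not the right mechanism.
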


\noindent
(A topological group is called \textbf{irreducible}\index{irreducible!group} if no finite index open
subgroup splits non-trivially as a direct product of closed subgroups. The \textbf{amenable
radical}\index{amenable radical|see{radical}}\index{radical!amenable} of a locally compact group is the largest
amenable normal subgroup; it is indeed a \emph{radical} since the class of amenable locally compact groups is
stable under group extensions.)

\begin{remarks}\
\begin{enumerate}
\item The finite-dimensionality assumption holds automatically when $X$ has a cocompact group of
isometries~\cite[Theorem~C]{Kleiner}. It is also automatic for uniquely geodesic spaces, \emph{e.g.}~manifolds
(Proposition~\ref{prop:dimension:unique:geod}).

\item The conclusion fails in various ways if $G$ fixes a point in $\bd X$.

\item The quotient $G/G^*$ is just a group of permutations of possibly isomorphic factors in the decomposition.
In particular, $G=G^* \rtimes G/G^*$.

\item The canonical continuous homomorphism $\Isom(X) \to \Isom(X') = G$ is proper, but its image sometimes has
infinite covolume.
\end{enumerate}
\end{remarks}

In Theorem~\ref{thm:Decomposition}, the condition that $\Isom(X)$ has no global fixed point at infinity ensures
the existence of a closed convex $\Isom(X)$-invariant subset $Y \se X$ on which $\Isom(X)$ acts minimally (see
Proposition~\ref{prop:EasyDichotomy}). The set of these minimal convex subsets possesses a canonical element,
which is precisely the space $X'$ which appears in Theorem~\ref{thm:Decomposition}.
Proposition~\ref{prop:boundary-minimal} explains why the distinction between $X$ and $X'$ did not appear in
Theorem~\ref{thm:GeodesicallyComplete}.

\sepa

\noindent\textbf{De Rham decompositions.}
It is known that product decompositions of isometry groups acting
minimally and without global fixed point at infinity induce a splitting of the space (for cocompact Hadamard
manifolds, this is the Lawson--Yau~\cite{Lawson-Yau} and Gromoll--Wolf~\cite{Gromoll-Wolf} theorem; in general
and for more references, see~\cite{Monod_superrigid}). It is therefore natural that
Theorem~\ref{thm:Decomposition} is supplemented by a geometric statement.

\begin{addendum}\label{addendum}
In Theorem~\ref{thm:Decomposition}, there is a canonical isometric decomposition
\begin{equation}\label{eq:SpaceDecomposition}
X' \cong\ X_1\times \cdots \times X_p \times \RR^n \times Y_1\times \cdots \times Y_q
\end{equation}
where $G^*$ acts componentwise according to~\eqref{eq:GroupDecomposition} and $G/G^*$ permutes any isometric
factors. All $X_i$ and $Y_j$ are irreducible and minimal.
\end{addendum}

As it turns out, a geometric decomposition is the first of two independent steps in the proof of
Theorem~\ref{thm:Decomposition}. In fact, we begin with an analogue of the de Rham decomposition~\cite{deRham52}
whose proof uses (a modification of) arguments from the generalised de Rham theorem of
Foertsch--Lytchak~\cite{FoertschLytchak06}. In purely geometrical terms, we have the following statement.

\begin{thm}\label{thm:deRham}
Let $X$ be a proper boundary-minimal \cat space with $\bd X$ finite-dimensional. Then $X$ admits a canonical maximal
isometric splitting
\begin{equation*}
X \cong\ \RR^n \times X_1\times \cdots \times X_m \kern10mm(n,m\geq 0)
\end{equation*}
with each $X_i$ irreducible and~$\neq \RR^0, \RR^1$. Every isometry of $X$ preserves this decomposition upon
permuting possibly isometric factors $X_i$. Moreover, if $X$ is minimal, so is every $X_i$.
\end{thm}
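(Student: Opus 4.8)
The plan is to read the decomposition of $X$ off a canonical join decomposition of its Tits boundary and then to reconstruct the product factors inside $X$, following the strategy of the Foertsch--Lytchak de Rham theorem~\cite{FoertschLytchak06} but adapting it to a space which is a priori neither geodesically complete nor cocompact.

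First I would record the structure of $\bd X$. Being the Tits boundary of a proper \cat space it is a complete \CAT{1} space, and by hypothesis it is finite-dimensional; Lytchak's structure theory of finite-dimensional \CAT{1} spaces then yields a \emph{canonical} spherical join decomposition
\begin{equation*}
\bd X \;\cong\; S^{n-1} \ast B_1 \ast \cdots \ast B_m,
\end{equation*}
in which $S^{n-1}$ is a round sphere, each $B_i$ is irreducible (admits no join splitting), and no $B_i$ is a point or a round sphere. Finite-dimensionality is precisely what forces this refinement to terminate, so that $m$ is finite. Since the decomposition is intrinsic to $\bd X$, every isometry of $X$ induces an isometry of $\bd X$ which permutes the join factors and matches isometric ones.

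The heart of the proof is then to realise this asymptotic picture metrically. I would first split off the Euclidean factor: the sphere $S^{n-1}$ should be the boundary of an $n$-flat $F \se X$, and boundary-minimality forces $F$ to be a direct factor, giving $X \cong \RR^n \times X_0$ with $\bd X_0 \cong B_1 \ast \cdots \ast B_m$ and hence with no spherical join factor. Here boundary-minimality is essential: it is exactly what prevents the Euclidean directions seen at infinity from being hidden in a proper convex subspace with the same boundary, and, more broadly, it is the hypothesis under which the relevant parallel-set and flat-strip constructions are compelled to exhaust the space. Next, for each $i$ one must produce a closed convex subspace $X_i \se X_0$ with $\bd X_i = B_i$, together with an isometry $X_0 \cong X_1 \times \cdots \times X_m$ intertwining the two decompositions. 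This is the step I expect to be the main obstacle, because a join splitting of the boundary does not by itself yield a metric product: one has to run a version of the Foertsch--Lytchak argument --- isolating $X_i$ from the rays pointing into $B_i$, and exploiting that rays pointing into distinct $B_i$ span flats (their directions being orthogonal in $\bd X_0$) through the behaviour of the associated Busemann functions --- all the while checking that the convex subsets so constructed are all of $X_0$, which is once more where boundary-minimality, together with properness (to keep $\bd X$ compact and the relevant infima attained), is used decisively.

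It then remains to collect the conclusions. Each $X_i$ is irreducible, since a non-trivial metric splitting of $X_i$ would, for a boundary-minimal space, induce a join splitting of $\bd X_i = B_i$, contradicting irreducibility of $B_i$; and $X_i \neq \RR^0, \RR^1$ because a factor isometric to $\RR$ would contribute an $S^0$ join factor to $\bd X_0$, which was arranged to have none, all Euclidean directions having been absorbed into $\RR^n$. Maximality of the splitting and its uniqueness follow from the canonicity of each step, which also yields the equivariance statement: any isometry of $X$ preserves the decomposition up to permuting mutually isometric $X_i$. Finally, if $X$ is minimal then so is each $X_i$: were some $X_i$ not minimal, its canonical minimal closed convex invariant subset $X_i'$ --- whose existence rests on the fixed-point-at-infinity dichotomy of Proposition~\ref{prop:EasyDichotomy} --- would, by canonicity, assemble with the analogous data on the other factors into a proper closed convex $\Isom(X)$-invariant subset $\RR^n \times \prod_i X_i' \subsetneq X$, contradicting minimality of $X$.
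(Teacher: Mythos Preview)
Your strategy diverges from the paper's in a fundamental way, and the divergence creates a genuine gap.

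The paper does \emph{not} attempt to read the product structure off a join decomposition of $\bd X$. Instead it proceeds in the opposite order. After splitting off the Euclidean factor (which is already canonical by~\cite[II.6.15]{Bridson-Haefliger}), the remaining space is boundary-minimal and therefore has no non-trivial compact factor; consequently \emph{some} maximal product decomposition into irreducible non-compact factors exists for elementary reasons. The entire content of the argument is then \emph{uniqueness}: any two such maximal decompositions coincide up to permutation. This is where the Foertsch--Lytchak machinery enters, but adapted: since the space need not have finite affine rank, the paper replaces affine subspaces by \emph{c\^ones} (closed convex Euclidean c\^ones), proves these are rectangular with respect to any product decomposition, and establishes a dichotomy (Lemma~\ref{lem:FL_Dichotomy}) saying that each irreducible fibre meets any product slice either trivially or fully. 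Boundary-minimality is used precisely here, to force certain convex subsets with full boundary to equal the whole factor.

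Your plan, by contrast, begins with a canonical join decomposition of $\bd X$ and then seeks to lift it to a metric product decomposition of $X$. The step you flag as ``the main obstacle'' is indeed the crux, and your sketch does not supply it: that rays into orthogonal join factors span flat strips is fine, but promoting this local flatness to a global product structure $X_0 \cong X_1 \times \cdots \times X_m$ is exactly the hard splitting theorem one is trying to prove, and it does not follow from Busemann-function considerations alone. In effect you have traded an easy existence statement for a difficult one, while still needing Foertsch--Lytchak-type arguments anyway. The paper's route---cheap existence, then uniqueness via rectangular c\^ones---avoids this entirely and is what makes the proof go through without ever invoking a join decomposition of the boundary.
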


To apply this theorem, it is desirable to know conditions ensuring boundary-minimality. In addition to the
conditions provided by Proposition~\ref{prop:boundary-minimal}, we show that a canonical boundary-minimal
subspace exists as soon as the boundary has circumradius~$>\pi/2$ (Corollary~\ref{cor:CanonicalFullSubset}).

\medskip

In the second part of the proof of Theorem~\ref{thm:Decomposition},
we analyse the \textbf{irreducible} case where $X$ admits no
isometric splitting, resulting in Theorem~\ref{thm:geometric_simplicity} to which we shall now turn.
Combining these two steps, we then prove the unique
decomposition of the \emph{groups}, using also the splitting theorem from~\cite{Monod_superrigid}.

\sepa

\noindent\textbf{Geometry of normal subgroups.}
In É.~Cartan's correspondence between symmetric spaces and
semi-simple Lie groups as well as in Bruhat--Tits theory, irreducible factors of the space correspond to
\emph{simple} groups. For general \cat spaces and groups, simplicity fails of course very dramatically (free
groups are perhaps the simplest, and yet most non-simple, \cat groups). Nonetheless, we establish a geometric
weakening of simplicity.

\begin{thm}\label{thm:geometric_simplicity}
Let $X\neq \RR$ be an irreducible proper \cat space with finite-dimensional Tits boundary and $G<\Isom(X)$ any
subgroup whose action is minimal and does not have a global fixed point in $\bd X$.

Then every non-trivial normal subgroup $N\lhd G$ still acts minimally and without fixed point in $\bd X$.
Moreover, the amenable radical of $N$ and the centraliser $\centra_{\Isom(G)}(N)$ are both trivial; $N$ does not
split as a product.
\end{thm}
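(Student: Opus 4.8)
The plan is to establish the four assertions — minimality of $N$, no fixed point at infinity for $N$, triviality of the amenable radical $\rad(N)$, triviality of the centraliser $\centra_{\Isom(X)}(N)$, and indecomposability of $N$ — roughly in that order, bootstrapping each from the previous ones.

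First I would handle minimality. Given a nontrivial $N \lhd G$, consider the canonical minimal nonempty closed convex $N$-invariant subset. To produce one I need $N$ to have no fixed point at infinity; so the first two claims are intertwined and should be proved together. The key structural input is that $G$ acts minimally and has no global fixed point in $\bd X$, so by Proposition~\ref{prop:EasyDichotomy}-type reasoning (or directly) one knows $X$ itself is the canonical minimal set for $G$. Now if $N$ fixed a point $\xi \in \bd X$, then since $N$ is normal, $G$ permutes the (nonempty, $G$-invariant) set of $N$-fixed points at infinity; because $\bd X$ is finite-dimensional, the circumcentre of this set is a $G$-fixed point at infinity, contradicting the hypothesis on $G$. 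Hence $N$ has no fixed point at infinity, so $N$ admits a minimal invariant closed convex subset $Y \subseteq X$; and the union $GY$ (or rather the closed convex hull, or the canonical minimal $N$-set, which is $G$-invariant by normality and uniqueness) is $G$-invariant, closed, convex, hence equals $X$ by minimality of $G$. Then irreducibility of $X$ together with the splitting theorem (the de Rham-type uniqueness, Theorem~\ref{thm:deRham}, or the boundary-minimality package) forces $Y = X$, i.e.\ $N$ is already minimal. Here I would need to be careful that "canonical minimal $N$-invariant subset" is well-defined and unique — this uses finite-dimensionality of $\bd X$ and is presumably established in the core of the paper for Theorem~\ref{thm:Decomposition}; I would cite it.

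Next, $\rad(N)$: the amenable radical of $N$ is a characteristic subgroup of $N$, hence normal in $G$. If it were nontrivial, then by the previous paragraph (applied to the normal subgroup $\rad(N) \lhd G$) it would act minimally on $X$ without fixed point at infinity. But an amenable group acting minimally on a proper \cat space with finite-dimensional boundary and no fixed point at infinity must... stabilise a Euclidean flat / fix a point at infinity — this is the standard amenability dichotomy (Adams--Ballmann type), which in the irreducible case $X \neq \RR$ yields a contradiction: the only possibility compatible with minimality would be $X \cong \RR^n$, and irreducibility plus $X \neq \RR$ rules this out. So $\rad(N) = 1$. For the centraliser $Z := \centra_{\Isom(X)}(N)$: $Z$ is normalised by $G$ (since $G$ normalises $N$), so $\overline{Z}$ is a normal subgroup of (the closure of) $G$; if $Z \neq 1$ then $\langle N, Z\rangle$ has a nontrivial centre-like splitting, and the minimal sets of $N$ and of $Z$ would give a nontrivial product decomposition of $X$ (the commuting-actions-split-the-space principle, i.e.\ the result cited from \cite{Monod_superrigid} / Lawson--Yau--Gromoll--Wolf in the introduction), contradicting irreducibility; alternatively $Z$ acts minimally by the first paragraph and $N \times Z$ (almost) acts on $X$ forcing a splitting. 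Either way irreducibility of $X$ kills it. Finally, indecomposability of $N$: if $N \cong N_1 \times N_2$ nontrivially, the factors centralise each other, so $N_2 \subseteq \centra_{\Isom(X)}(N_1)$; but $N_1$ is normal... — wait, $N_1$ need not be normal in $G$. Instead: $N_1, N_2$ are commuting subgroups of $\Isom(X)$ with $N_1 \neq 1 \neq N_2$, neither fixing a point at infinity (each is a retract of $N$, hence normal in $N$, hence — running the circumcentre argument inside $N$ using that $N$ has no fixed point at infinity — fixes no point at infinity, and by the same bootstrapping acts minimally on $X$). Two commuting minimal actions on an irreducible proper \cat space with finite-dimensional boundary force a nontrivial splitting of $X$, contradiction. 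So $N$ does not split.

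The main obstacle I anticipate is the passage "commuting minimal actions $\Rightarrow$ product splitting" and the parallel "amenable minimal action $\Rightarrow$ fixed point at infinity or Euclidean factor": these are exactly the places where finite-dimensionality of $\bd X$ is essential and where one must invoke (and correctly quote the hypotheses of) the splitting theorem from \cite{Monod_superrigid} and the Adams--Ballmann fixed-point theorem. A secondary subtlety is ensuring that the various "canonical minimal invariant subset" constructions are genuinely canonical (hence $G$-invariant when the group in question is normal in $G$) — this is what makes the bootstrap from $G$ to $N$, and from $N$ to its factors, go through; I expect this to have been set up earlier in the paper and would reference it rather than reprove it.
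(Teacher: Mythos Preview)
There is a genuine gap in your argument for ``$N$ has no fixed point in $\bd X$''. You write that the $G$-invariant set $(\bd X)^N$ of $N$-fixed points has a circumcentre ``because $\bd X$ is finite-dimensional''. But finite-dimensionality alone does not guarantee a canonical circumcentre: Proposition~\ref{prop:BalserLytchak} requires in addition that the intrinsic circumradius be~$\leq \pi/2$, and nothing you have said rules out $(\bd X)^N$ being, say, all of $\bd X$ or some large convex set of circumradius~$>\pi/2$. So this step does not go through, and since you made the whole chain depend on it (minimality of $N$, then the bootstrap to factors $N_i$), the argument does not close.

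The paper proceeds in the opposite order and thereby avoids this trap. First, minimality of $N$ is obtained directly from the dichotomy Theorem~\ref{thm:dichotomy}: if $N$ had no minimal set then by~(A.ii) the normaliser $\norma_G(N)=G$ would fix a point in $\bd X$, contrary to hypothesis; and then the splitting~(B.i)--(B.ii) combined with irreducibility forces the minimal $N$-set to be all of $X$. Only after minimality is in hand does the paper prove that $N$ fixes no point at infinity, and by a different mechanism: the commutator subgroup $N'=[N,N]$ is also normal in $G$, hence also minimal (and nontrivial, since otherwise $N$ would be abelian and centralise itself, but the centraliser is shown trivial); if $N$ fixed $\xi\in\bd X$ then $N'$, lying in the kernel of the Busemann character at $\xi$, would preserve every horoball centred at $\xi$, contradicting minimality of $N'$. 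For the centraliser the paper gives a one-line argument you might prefer to your splitting approach: the displacement function of any $g\in\centra_{\Isom(X)}(N)$ is convex and $N$-invariant, hence constant by minimality, so $g$ is a Clifford translation and thus trivial since $X$ has no Euclidean factor. Finally, indecomposability of $N$ follows immediately from the splitting theorem of~\cite{Monod_superrigid} applied to the now-established minimal fixed-point-free $N$-action, with no need to bootstrap the argument to the factors.
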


This result can for instance be combined with the solution to Hilbert's fifth problem in order to understand
the connected component of the isometry group.

\begin{cor}\label{cor:Isom(irreducible)}
$\Isom(X)$ is either totally disconnected or an almost connected simple Lie group with trivial centre.

The same holds for any closed subgroup acting minimally and without fixed point at infinity.
\end{cor}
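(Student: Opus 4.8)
The plan is to run the elementary dichotomy on the identity component. Write $G$ for $\Isom(X)$ (respectively for the given closed subgroup), assumed to act minimally and without fixed point in $\bd X$, and let $G^\circ$ be its identity component. If $G^\circ$ is trivial, then $G$ is totally disconnected and there is nothing to prove; so assume $G^\circ \neq 1$. Since $G^\circ$ is a non-trivial normal subgroup of $G$, Theorem~\ref{thm:geometric_simplicity} applies to $N = G^\circ$ and yields five facts that are all we shall need: $G^\circ$ acts minimally on $X$, has no fixed point in $\bd X$, has trivial amenable radical, does not split as a direct product of two non-trivial closed subgroups, and satisfies $\centra_{\Isom(X)}(G^\circ) = 1$.

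First I would identify $G^\circ$. By the solution to Hilbert's fifth problem (Gleason--Yamabe), the connected group $G^\circ$ has arbitrarily small compact normal subgroups $K \lhd G^\circ$ with $G^\circ/K$ a Lie group; but such a $K$ is amenable, hence trivial by triviality of the amenable radical, so $G^\circ$ is itself a connected Lie group. Its solvable radical --- connected, solvable, normal, hence amenable --- is therefore trivial, so $G^\circ$ is semisimple; its compact factors form a connected compact normal subgroup, hence are trivial, so $G^\circ$ is of non-compact type; and its centre, a closed abelian normal subgroup, is trivial. Since a centre-free connected semisimple Lie group is the direct product of its simple factors, the non-splitting of $G^\circ$ forces it to have exactly one factor, i.e.\ $G^\circ$ is a connected centre-free simple Lie group of non-compact type.

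Next I would recover $G$ from $G^\circ$ using triviality of the centraliser. Conjugation gives a continuous homomorphism $G \to \Aut(G^\circ)$ whose kernel is $\centra_G(G^\circ) \se \centra_{\Isom(X)}(G^\circ) = 1$, so $G$ embeds into the Lie group $\Aut(G^\circ)$. As $G^\circ$ is semisimple, $\Inn(G^\circ) = G^\circ / \centra_{G^\circ}(G^\circ) = G^\circ$ is the identity component of $\Aut(G^\circ)$ and $\Aut(G^\circ)/\Inn(G^\circ) = \Out(G^\circ)$ is finite, so $\Inn(G^\circ)$ is open in $\Aut(G^\circ)$. The image of $G$ contains the image of $G^\circ$, namely $\Inn(G^\circ)$, hence is an open --- and therefore closed --- subgroup of $\Aut(G^\circ)$, in particular a Lie group; since $G$ is second countable (being a closed subgroup of the second countable group $\Isom(X)$), the open mapping theorem upgrades the continuous bijection onto this image to a topological isomorphism. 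Thus $G$ is a Lie group with $G^\circ$ as identity component, and $G/G^\circ$ embeds in the finite group $\Out(G^\circ)$, so $G$ is almost connected; finally $Z(G) \se \centra_G(G^\circ) = 1$, so $G$ has trivial centre.

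The only genuinely delicate point is this last passage from ``$G^\circ$ is a Lie group'' to ``$G$ is a Lie group'': one must be sure that the abstract conjugation action of $G$ on $G^\circ$ is continuous for the Lie-group topology on $\Aut(G^\circ)$, and invoke that a continuous bijective homomorphism between $\sigma$-compact locally compact groups is a topological isomorphism. Everything else is a routine dismantling of the amenable radical of $G^\circ$ by means of Gleason--Yamabe, together with the standard structure of centre-free semisimple Lie groups.
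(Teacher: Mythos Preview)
Your proof is correct. The route differs from the paper's in a way worth noting: the paper does not analyse $G^\circ$ directly but instead first observes that $G$ itself has trivial amenable radical (Theorem~\ref{thm:geometric_simplicity} with $N=G$) and then invokes a black-box consequence of Hilbert's fifth problem (Theorem~\ref{thm:folklore}) producing a finite-index open normal subgroup $G^\dag = L \times D$ with $L$ connected semisimple centre-free and $D$ totally disconnected; a second application of Theorem~\ref{thm:geometric_simplicity}, this time to $N=G^\dag$, forces $G^\dag$ not to split, so one of the two factors is trivial and, when $G^\dag=L$, also forces $L$ simple. You instead apply Theorem~\ref{thm:geometric_simplicity} directly to $N=G^\circ$, unpack Gleason--Yamabe by hand on the connected group $G^\circ$, and then control $G/G^\circ$ via the embedding $G\hookrightarrow\Aut(G^\circ)$ together with finiteness of $\Out(G^\circ)$. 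Your argument is more self-contained (it avoids citing the folklore $L\times D$ theorem) at the cost of making explicit the passage from ``$G^\circ$ is Lie'' to ``$G$ is Lie''~--- a step the paper gets for free because $G^\dag$ is already open. Both proofs rest on the same two pillars (Hilbert's fifth problem and Theorem~\ref{thm:geometric_simplicity}); they simply organise the logic differently.
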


A more elementary application of Theorem~\ref{thm:geometric_simplicity} uses the fact that elements with a
discrete conjugacy class have open centraliser.

\begin{cor}\label{cor:NonDiscrete}
If $G$ is non-discrete, $N$ cannot be a finitely generated discrete subgroup.
\end{cor}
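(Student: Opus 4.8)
The plan is to contradict Theorem~\ref{thm:geometric_simplicity}, which asserts that the centraliser of $N$ in $\Isom(X)$ is trivial, by showing that a finitely generated discrete normal subgroup of $G$ necessarily has an \emph{open}, hence non-trivial, centraliser in $G$. The first step is to reduce to the case where $G$ is closed in $\Isom(X)$. Passing from $G$ to its closure $\overline G$ preserves the hypotheses of Theorem~\ref{thm:geometric_simplicity}: minimality of the action and the absence of a global fixed point in $\bd X$ are inherited by $\overline G$, since a $\overline G$-invariant closed convex subset (resp.\ a $\overline G$-fixed boundary point) is already $G$-invariant (resp.\ $G$-fixed). Moreover, a discrete subgroup $N$ is closed in $\Isom(X)$, so its normaliser in $\Isom(X)$ is closed and hence contains $\overline G$; thus $N$ remains a non-trivial normal subgroup after the reduction, and $\overline G$ is still non-discrete. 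From now on $G$ is locally compact and second countable, in particular a Baire space, and $N$ is a countable (being finitely generated) discrete closed subgroup.

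The second step uses normality to obtain open centralisers of individual elements. For $n \in N$ the conjugacy class $n^G = \{gng\inv : g \in G\}$ is contained in $N$, hence is a countable discrete closed subset of $G$. The centraliser $Z_G(n)$ is the preimage of $\{n\}$ under the continuous map $g \mapsto gng\inv$, so it is a closed subgroup, and $[G : Z_G(n)] = |n^G|$ is countable. Covering $G$ by the countably many cosets of the closed subgroup $Z_G(n)$ and invoking the Baire category theorem, one coset has non-empty interior; translating, $Z_G(n)$ has non-empty interior and is therefore open.

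Now fix a finite generating set $n_1, \dots, n_k$ of $N$. An element of $G$ centralises $N$ precisely when it centralises each $n_i$, so $Z_G(N) = \bigcap_{i=1}^k Z_G(n_i)$ is a \emph{finite} intersection of open subgroups, and therefore open. Since $G$ is non-discrete, an open subgroup cannot reduce to $\{e\}$ (otherwise $\{e\}$ would be open and $G$ discrete), so $Z_G(N) \neq \{e\}$. But $Z_G(N)$ is contained in the centraliser of $N$ in $\Isom(X)$, which is trivial by Theorem~\ref{thm:geometric_simplicity} --- a contradiction. Hence the non-trivial normal subgroup $N$ cannot be both finitely generated and discrete.

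I do not expect a serious obstacle: this is an ``elementary application'' in the advertised sense, resting on the classical fact that a discrete conjugacy class forces an open centraliser. The points that require a little care are the passage to the closure of $G$ (so that the Baire argument applies at all) and the observation that finite generation of $N$ is exactly what converts the a priori countable intersection $\bigcap_{n \in N} Z_G(n)$ into a finite one; without it a countable intersection of open subgroups need not be open, and the statement genuinely fails for infinitely generated $N$.
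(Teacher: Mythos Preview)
Your proof is correct and follows the same overall strategy as the paper: show that the centraliser $\centra_G(N)$ is open, hence non-trivial since $G$ is non-discrete, contradicting the triviality of $\centra_{\Isom(X)}(N)$ from Theorem~\ref{thm:geometric_simplicity}. The paper's argument is the single sentence that ``the centraliser of any element of a discrete normal subgroup is open'', and then finite generation finishes as you do.

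The one difference worth noting is your detour through the Baire category theorem, which forces the preliminary reduction to $\overline G$. This is unnecessary: for any $n\in N$ the conjugation map $\phi:G\to G$, $g\mapsto gng^{-1}$, is continuous with image contained in the discrete set $N$, so if $U\subseteq G$ is open with $U\cap N=\{n\}$ then $\phi^{-1}(U)=\phi^{-1}(\{n\})=\centra_G(n)$ is open in $G$ directly. This works for an arbitrary (not necessarily closed or Baire) subgroup $G<\Isom(X)$, so the passage to the closure can be dropped entirely. Your Baire argument is valid, just heavier than needed.
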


A feature of Theorem~\ref{thm:geometric_simplicity} is that is can be iterated and thus applies to subnormal
subgroups. Recall that more generally a subgroup $H<G$ is \textbf{ascending}\index{ascending} if there is a
(possibly transfinite) chain of normal subgroups starting with $H$ and abutting to $G$. Using limiting
arguments, we bootstrap Theorem~\ref{thm:geometric_simplicity} and show:

\begin{thm}\label{thm:bootstrap}
Let $N<G$ be any non-trivial ascending subgroup. Then all conclusions of Theorem~\ref{thm:geometric_simplicity}
hold for $N$.
\end{thm}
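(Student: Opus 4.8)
The plan is to argue by transfinite induction on the length of an ascending chain, and to reduce everything to a single geometric statement. Fix a chain $N=N_0\lhd N_1\lhd\cdots\lhd N_\gamma=G$ with $N$ non-trivial; we may assume it is \emph{continuous}, i.e. $N_\lambda=\bigcup_{\alpha<\lambda}N_\alpha$ at limit stages. First I would observe that the whole theorem follows once we know that \emph{$N$ acts minimally and without global fixed point in $\bd X$}: indeed, applying Theorem~\ref{thm:geometric_simplicity} with $N$ itself in the role of the ambient group and the normal subgroup $N\lhd N$ then yields the triviality of the amenable radical and of the centraliser of $N$ and the absence of a product decomposition. Moreover we may assume $N$ and $G$ closed, since a closed convex set invariant under a group is invariant under its closure (so minimality is unaffected), $\Fix_{\bd X}$ is unaffected by closure, and passing to closures preserves ascendancy.

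Next, the induction on $\gamma$. The case $\gamma=0$ is trivial and $\gamma=1$ is exactly Theorem~\ref{thm:geometric_simplicity}. If $\gamma=\delta+1$, then $N_\delta\lhd G$, so Theorem~\ref{thm:geometric_simplicity} shows $N_\delta$ acts minimally and without fixed point in $\bd X$; the subchain $N_0\lhd\cdots\lhd N_\delta$ has length $\delta<\gamma$ and $N_\delta$ now serves as ambient group, so the induction hypothesis applies to $N_0$. (In particular every \emph{finite} subnormal chain is dispatched by iterating Theorem~\ref{thm:geometric_simplicity}, as already noted in the text.) So the real work is the limit case, where $\gamma$ is a limit ordinal and $G=\overline{\bigcup_{\alpha<\gamma}N_\alpha}$. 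Here it suffices to produce a \emph{single} $\alpha<\gamma$ for which $N_\alpha$ acts minimally and without fixed point in $\bd X$: the subchain up to $N_\alpha$ has length $\alpha+1<\gamma$, so the induction hypothesis then concludes. For the fixed-point condition this is easy: the sets $\Fix_{\bd X}(N_\alpha)$ are closed in the \emph{compact} space $\bd X$ (properness of $X$), decrease along the chain, and have empty intersection $\Fix_{\bd X}(G)=\varnothing$, so by compactness $\Fix_{\bd X}(N_{\alpha_0})=\varnothing$ for some $\alpha_0<\gamma$, hence for all $\alpha\ge\alpha_0$.

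For minimality, I would, for each $\alpha\ge\alpha_0$, consider the set $Z_\alpha\se X$ obtained as the union of all minimal non-empty closed convex $N_\alpha$-invariant subsets; since $N_\alpha$ has no fixed point in $\bd X$ and $\bd X$ is finite-dimensional, $Z_\alpha$ is non-empty, closed and convex, and by its very description it is invariant under the normaliser of $N_\alpha$, hence in particular under $N_{\alpha+1}$ (see Proposition~\ref{prop:EasyDichotomy} and the minimal-set structure used for Theorems~\ref{thm:deRham} and \ref{thm:geometric_simplicity}). A short argument gives $Z_{\alpha+1}\se Z_\alpha$: a minimal $N_{\alpha+1}$-invariant closed convex set $M$ is $N_\alpha$-invariant, so it contains a minimal $N_\alpha$-invariant closed convex subset, which lies in $Z_\alpha$; then $M\cap Z_\alpha$ is a non-empty closed convex $N_{\alpha+1}$-invariant subset of $M$, so $M\cap Z_\alpha=M$ by minimality. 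Using the fact that a descending chain of non-empty closed convex subsets invariant under a group with no fixed point in $\bd X$ has non-empty intersection, one propagates this down the chain (keeping everything invariant under the fixed group $N_{\alpha_0}$) to obtain a non-empty closed convex $Z_\infty=\bigcap_{\alpha<\gamma}Z_\alpha$; being invariant under $\bigcup_\alpha N_\alpha$ it is $G$-invariant, hence $Z_\infty=X$ by minimality of the $G$-action. Thus $Z_\alpha=X$ for all $\alpha\ge\alpha_0$, and since $X$ is irreducible, $X\ne\RR$ and $N_{\alpha_0}\ne 1$, the splitting of $Z_\alpha$ as a minimal set times its Euclidean factor of parallels forces that factor to be trivial and $X$ to be the unique minimal $N_{\alpha_0}$-invariant subset, i.e. $N_{\alpha_0}$ acts minimally. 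This is the required $\alpha$.

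I expect the genuine obstacle to be precisely the last step in the limit case. Because normality is not transitive, the subspace $Z_\alpha$ is only guaranteed invariant under the \emph{next} term $N_{\alpha+1}$ of the chain, not under later terms, so both the monotonicity $Z_\beta\se Z_\alpha$ and the $G$-invariance of $Z_\infty$ require careful transfinite bookkeeping through the limit ordinals below $\gamma$, repeatedly invoking compactness of $\bd X$ and the non-emptiness of descending chains of invariant convex subsets to keep the intersections coherent and non-empty. All the remaining ingredients — existence and normaliser-invariance of $Z_\alpha$, the parallel structure of minimal invariant subsets and their Euclidean factor — are already available from the structure theory underlying Theorems~\ref{thm:deRham} and \ref{thm:geometric_simplicity}.
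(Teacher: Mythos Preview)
Your overall architecture is the same as the paper's: reduce to showing $N$ acts minimally and without fixed point at infinity, run a transfinite induction, dispatch successors via Theorem~\ref{thm:geometric_simplicity}, and at a limit $\gamma$ first find $\alpha_0<\gamma$ with $(\bd X)^{N_{\alpha_0}}=\varnothing$ by compactness. The divergence is in how you then obtain minimality of $N_{\alpha_0}$, and here there is a genuine gap that you yourself flag but do not close.

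Your monotonicity argument $Z_{\alpha+1}\subseteq Z_\alpha$ uses that $Z_\alpha$ is $N_{\alpha+1}$-invariant (via $\norma(N_\alpha)$). But for $\alpha<\beta$ with $\beta$ not a successor of $\alpha$, there is no reason for $Z_\alpha$ to be $N_\beta$-invariant, so neither $Z_\beta\subseteq Z_\alpha$ nor the $G$-invariance of $\bigcap_\alpha Z_\alpha$ follows. ``Careful bookkeeping'' does not fix this: what is missing is a way to compare $N_\beta$-minimal sets with $N_{\alpha_0}$-minimal sets across arbitrary $\beta$. If you try to use your induction hypothesis for Theorem~\ref{thm:bootstrap} on the shorter chain $N_{\alpha_0}\lhd\cdots\lhd N_\beta$ acting on an $N_\beta$-minimal set $M$, you need $M$ irreducible, which it need not be.

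The paper's way out is to isolate the minimality statement as Proposition~\ref{prop:EasyBootstrap}, stated \emph{without} irreducibility or finite-dimensional boundary, and proved by its own transfinite induction. That proposition's induction hypothesis can then be applied to $N_0$ ascending in $N_\beta$ acting minimally on any $N_\beta$-minimal set $M$ (no irreducibility needed), giving that $M$ is already $N_0$-minimal. This yields the uniform product structure $Z_\beta=Y_0\times C_\beta$ with a single $Y_0$ and compact $C_0$; picking $c\in\bigcap_\beta C_\beta$ produces a $G$-invariant fibre $Y_0\times\{c\}$, whence $Y_0=X$. In the proof of Theorem~\ref{thm:bootstrap} itself, once $\alpha_0$ is found one simply invokes Proposition~\ref{prop:EasyBootstrap} for $N_{\alpha_0}$ ascending in $G$, and then the induction hypothesis finishes. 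The essential idea you are missing is precisely to drop irreducibility in the auxiliary statement so that the induction can descend into minimal subspaces.
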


\sepa

\noindent\textbf{A few cases of superrigidity.} Combining the preceding general structure results with some of
Margulis' theorems, we obtain the following superrigidity statement.

\begin{thm}\label{thm:superrigidity}
Let $X$ be a proper \cat space whose isometry group acts cocompactly and without global fixed point at infinity.
Let $\Gamma = \SL_n(\ZZ)$ with $n \geq 3$ and $G = \SL_n(\RR)$.

For any isometric $\Gamma$-action on $X$ there is a non-empty $\Gamma$-invariant closed convex subset $Y\se X$
on which the $\Gamma$-action extends uniquely to a continuous isometric action of $G$.

\smallskip\nobreak\noindent
$($The corresponding statement applies to all those lattices in semi-simple Lie groups that have virtually
bounded generation by unipotents.$)$
\end{thm}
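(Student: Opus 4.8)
The plan is to apply Margulis superrigidity to obtain a continuous homomorphism $G \to \Isom(X)$ extending (a finite-index restriction of) the $\Gamma$-action, then to produce the invariant subset $Y$ by running the canonical minimal-set machinery of the paper, and finally to argue uniqueness via geometric density of $\Gamma$ in $G$. First I would note that by hypothesis $\Isom(X)$ is a locally compact second countable group with no global fixed point at infinity, so by Proposition~\ref{prop:EasyDichotomy} and Theorem~\ref{thm:Decomposition} one has good control on its structure; in particular its amenable radical is a compact-by-(Euclidean) piece and the rest is a product of simple Lie groups and totally disconnected irreducible groups. The $\Gamma = \SL_n(\ZZ)$-action gives a homomorphism $\varrho\colon \Gamma \to \Isom(X)$ whose image $\overline{\varrho(\Gamma)}$ is a closed subgroup $H \le \Isom(X)$; since $\Gamma$ has Kazhdan's property~(T) for $n\geq 3$, $\varrho(\Gamma)$ fixes no point at infinity unless it fixes a point in $X$ (a bounded orbit argument, using that $\bd X$ is finite-dimensional so that a $\Gamma$-fixed point at infinity would yield, via the barycentre of the fixed set or a canonical point, an actual fixed point or a nontrivial homomorphism to $\RR$, impossible since $\Gamma$ is perfect). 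In the case that $\Gamma$ fixes a point in $X$, the conclusion is vacuous (take $Y$ the fixed-point set, on which $\Gamma$ acts trivially and $G$ acts trivially too). So assume $\varrho(\Gamma)$ has no fixed point at infinity.

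Next I would invoke Margulis superrigidity in the form applicable to continuous images in locally compact groups. The target $H = \overline{\varrho(\Gamma)}$ is a compactly generated locally compact group; its structure is constrained by the fact that it embeds in $\Isom(X)$ and acts with no fixed point at infinity on a minimal invariant convex subset. By the solution of Hilbert's fifth problem together with Corollary~\ref{cor:Isom(irreducible)} applied factor-by-factor to the de Rham/group decomposition, $H$ is (up to finite index and compact kernel) a product of a connected semi-simple Lie group and a totally disconnected group. Superrigidity then forces the totally disconnected part to be trivial or to receive $\varrho$ with precompact image — and precompact image would again yield a fixed point in $X$. Hence modulo passing to the canonical minimal subset we may assume $H$ is a connected semi-simple Lie group without compact factors, and $\varrho\colon \Gamma \to H$ has Zariski-dense (equivalently, finite covolume in the absence of extra factors) image. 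Margulis' theorem then provides a continuous homomorphism $\tilde\varrho\colon G \to H \le \Isom(X)$ with $\tilde\varrho|_\Gamma = \varrho$ (one uses here that $\SL_n(\RR)$ has no compact factors and that the relevant local field is $\RR$; for the parenthetical generalisation to lattices with virtually bounded generation by unipotents, one replaces the appeal to Margulis by the standard argument that a homomorphism from such a lattice to a topological group extends as soon as the unipotents' images extend, which they do by the Lie-theoretic exponential).

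Having produced the continuous isometric $G$-action on $X$, I would then apply Proposition~\ref{prop:EasyDichotomy} to the $G$-action: since $G = \SL_n(\RR)$ has property~(T) and is perfect, it has no fixed point at infinity unless it has a fixed point in $X$ (same argument as above), and in the no-fixed-point case $G$ stabilises a non-empty closed convex subset $Y$ on which it acts minimally. This $Y$ is automatically $\Gamma$-invariant since $\Gamma \le G$. For uniqueness of the extension on $Y$: if $G$ acted on a $\Gamma$-invariant convex $Y' \se X$ in two ways both restricting to $\varrho$, the two actions would agree on the subgroup generated by $\Gamma$, which is Zariski-dense, hence — because the $G$-action on $\Isom(Y')$ is by a continuous homomorphism into a group where $\Gamma$ is geometrically dense in the sense that its closure is all of $G$'s image — they agree on all of $G$ by continuity. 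The main obstacle I anticipate is the bookkeeping in the first two paragraphs: one must rule out, cleanly and without circularity, the various degenerate possibilities (precompact image, fixed point at infinity in some factor, a totally disconnected factor absorbing part of $\varrho(\Gamma)$) so that Margulis' theorem applies to a genuinely semi-simple Lie target; this is exactly where property~(T) of $\Gamma$, the finite-dimensionality of $\bd X$, and the canonical minimal-subset construction of Theorem~\ref{thm:Decomposition} all have to be combined carefully.
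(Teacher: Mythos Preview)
Your proposal has a genuine gap and misses the key mechanism of the paper's argument.

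First, the dichotomy you set up in the opening paragraph is false: you claim that property~(T) forces $\varrho(\Gamma)$ either to fix a point in $X$ or to have no fixed point in $\bd X$. But a $\Gamma$-fixed point $\xi\in\bd X$ only gives, via the Busemann character and perfectness of $\Gamma$, that every horoball at $\xi$ is $\Gamma$-invariant; this does \emph{not} produce a fixed point in $X$, and indeed the paper explicitly remarks (just after the theorem statement) that $\Gamma$ may well fix points in $\bd X$. So your reduction to the case ``no fixed point at infinity'' is unjustified, and with it the whole passage to the canonical $\Gamma$-minimal set and to the closure $H=\overline{\varrho(\Gamma)}$.

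Second, and more seriously, your treatment of the totally disconnected part is where the argument breaks. You assert that ``superrigidity forces the totally disconnected part to be trivial or to receive $\varrho$ with precompact image'', but Margulis superrigidity in its classical form needs an algebraic target over a local field; an arbitrary totally disconnected factor of $\Isom(X)$ (or of $H$) is not of this type, and you give no substitute. The paper handles this by a completely different and more elementary route: it decomposes not $H$ but the \emph{space} $X'$ (via Theorem~\ref{thm:Decomposition}/Addendum~\ref{addendum}, using the cocompactness of $\Isom(X)$), observes that on each totally disconnected factor $Y_j$ the action of $D_j$ is by semi-simple isometries (Corollary~\ref{cor:BasicTotDisc}), and then applies Lemma~\ref{lem:FixedPointSemisimpleActions}: unipotents in $\SL_n(\ZZ)$ are exponentially distorted, hence have translation length zero, hence are elliptic on $Y_j$, and bounded generation by unipotents then gives a global $\Gamma$-fixed point $y_j\in Y_j$. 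This is the role of the bounded-generation hypothesis in the statement---it is the main engine, not a side remark for the parenthetical generalisation. Once $\Gamma$ stabilises the slice $Z=X_1\times\cdots\times X_p\times\{y_0\}\times\cdots\times\{y_q\}$, one is in a genuine semi-simple Lie target $\Isom(Z)$ and Margulis applies.

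In short: you are trying to push Margulis through an abstract locally compact target, which does not work as stated; the paper instead uses the geometric structure theorem for $X$ together with the distortion of unipotents to \emph{kill} the non-Lie factors before ever invoking Margulis.
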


Observe that the above theorem has no assumptions whatsoever on the action; cocompactness is an assumption on
the given \cat space. It can happen that $\Gamma$ fixes points in $\bd X$, but its action on $Y$ is without
fixed points at infinity and minimal (as we shall establish in the proof).

The assumption on bounded generation holds conjecturally for all non-uniform irreducible lattices in higher rank
semi-simple Lie groups (but always fails in rank one). It is known to hold for arithmetic groups in split or
quasi-split algebraic groups of a number field $K$ of $K$-rank~$\geq 2$ by~\cite{Tavgen}, as well as in a few
cases of isotropic but non-quasi-split groups~\cite{ErovenkoRapinchuk}; see also~\cite{Witte07}.

\smallskip

More generally, Theorem~\ref{thm:superrigidity} holds for (S-)arithmetic groups provided the arithmetic subgroup
(given by integers at infinite places) satisfies the above bounded generation property. For instance, the
$\SL_n$ example is as follows:

\begin{thm}\label{thm:superrigidity_bis}
Let $X$ be a proper \cat space whose isometry group acts cocompactly and without global fixed point at infinity.
Let $m$ be an integer with distinct prime factors $p_1, \ldots p_k$ and set
$$\Gamma = \SL_n(\ZZ[{\scriptstyle\frac{1}{m}}]), \kern5mm
G = \SL_n(\RR) \times \SL_n(\QQ_{p_1}) \times \cdots \times  \SL_n(\QQ_{p_k}),$$
where $n\geq 3$. Then for any isometric $\Gamma$-action on $X$ there is a non-empty $\Gamma$-invariant closed
convex subset $Y\se X$ on which the $\Gamma$-action extends uniquely to a continuous isometric action of $G$.
\end{thm}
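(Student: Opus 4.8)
The plan is to follow the proof of Theorem~\ref{thm:superrigidity} almost verbatim, with the single real place replaced by the finite set $S=\{\infty,p_1,\dots,p_k\}$: then $\Gamma=\SL_n(\ZZ[\tfrac{1}{m}])$ is an irreducible $S$-arithmetic lattice in the higher-rank semisimple group $G=\SL_n(\RR)\times\SL_n(\QQ_{p_1})\times\cdots\times\SL_n(\QQ_{p_k})$, so that Margulis' superrigidity theorem is available over each of the local fields $\RR,\QQ_{p_1},\dots,\QQ_{p_k}$, while Margulis' normal subgroup theorem tells us that all proper quotients of $\Gamma$ are finite. The bounded-generation hypothesis transfers from the arithmetic subgroup to $\Gamma$: since each elementary root subgroup $E_{ij}(\ZZ[\tfrac{1}{m}])$ is a union of $\Gamma$-conjugates, by diagonal matrices, of $E_{ij}(\ZZ)$, virtual bounded generation of $\SL_n(\ZZ)$ by unipotents (recall $n\geq 3$) yields a bounded product decomposition $\Gamma=U_1\cdots U_\ell$ into finitely many abelian — hence amenable — root subgroups $U_r$; in particular $\Gamma$ is finitely generated and enjoys Kazhdan's property~(T).

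Next I would feed in the hypotheses on $X$. As $\Isom(X)$ acts cocompactly without global fixed point at infinity, $\bd X$ is finite-dimensional, and Theorem~\ref{thm:Decomposition} together with Addendum~\ref{addendum} produce a canonical — hence $\Gamma$-invariant — closed convex subset $X'\se X$, on which $\Isom(X')$ still acts cocompactly (via the equivariant nearest-point projection $X\to X'$), splitting isometrically as $X'\cong\RR^N\times X_1\times\cdots\times X_p\times Y_1\times\cdots\times Y_q$, with a finite-index subgroup of $\Isom(X')$ acting componentwise through $(\RR^N\rtimes\mathbf{O}(N))\times S_1\times\cdots\times S_p\times D_1\times\cdots\times D_q$, where the $S_i$ are almost connected simple Lie groups, the $D_j$ are totally disconnected, and each $X_i$, $Y_j$ is irreducible and minimal. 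Passing to a finite-index normal subgroup of $\Gamma$ and back is legitimate — proper quotients of $\Gamma$ are finite, and a Zariski-density plus triviality-of-centraliser argument (cf.\ Theorem~\ref{thm:geometric_simplicity}) lets the conclusion descend to $\Gamma$ — so I may assume $\Gamma$ preserves every factor and lands in the above finite-index subgroup, obtaining homomorphisms $\alpha\colon\Gamma\to\RR^N\rtimes\mathbf{O}(N)$, $\sigma_i\colon\Gamma\to S_i$ and $\delta_j\colon\Gamma\to D_j$. Property~(T) gives property~(FH), hence a fixed point for the affine action $\alpha$ on the Hilbert space $\RR^N$; the residual linear action $\Gamma\to\mathbf{O}(N)$ has finite image (its identity component cannot be a torus, as finite-index subgroups of $\Gamma$ have finite abelianisation by~(T), nor a non-trivial compact semisimple group, by Margulis' theorem since $G$ has no compact factors), so $\Gamma$ fixes a point in the $\RR^N$-factor, to whose slice I restrict; likewise, whenever $\sigma_i(\Gamma)$ or $\delta_j(\Gamma)$ is relatively compact, $\Gamma$ has a bounded orbit, hence a fixed point, in the corresponding factor, and I restrict to it.

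What survives is a closed convex $\Gamma$-invariant subset $Y\se X'$, the product of the factors carrying an unbounded $\Gamma$-action. On a surviving Lie factor $X_i$, the image $\sigma_i(\Gamma)$ is unbounded and, modulo the (trivial) amenable radical and using minimality, Zariski-dense in $S_i$, so Margulis' theorem extends $\sigma_i$ to a continuous homomorphism $G\to S_i$; this factors through a single simple factor of $G$, necessarily the Archimedean one (a $p$-adic simple group admits no non-trivial continuous homomorphism to a connected Lie group), whence $S_i$ is a quotient of $\SL_n(\RR)$ acting continuously and cocompactly on $X_i$, and Theorem~\ref{thm:algebraic} identifies $X_i$ with the symmetric space of $\SL_n(\RR)$, with $\sigma_i$ the restriction of the linear action. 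On a surviving totally disconnected factor $Y_j$ the analogous goal is to realise $\delta_j$ through one of the projections $G\to\SL_n(\QQ_{p_l})$ and to identify $Y_j$ — again via Theorem~\ref{thm:algebraic} — with the Bruhat--Tits building of $\SL_n(\QQ_{p_l})$; the ingredients are the Adams--Ballmann alternative for the amenable subgroups $\delta_j(U_r)$, the smoothness Theorem~\ref{thm:smooth:intro}, the structural constraints on $D_j=\Isom(Y_j)$ from Theorem~\ref{thm:geometric_simplicity}, and Margulis' superrigidity over $\QQ_{p_l}$. Reassembling, $G$ acts on $Y$ factorwise through its simple factors (those not occurring acting trivially) and restricts to the given $\Gamma$-action; moreover $\Gamma$ acts on $Y$ minimally and without fixed point in $\bd X$, since the ambient group does and $\Gamma$ maps onto it with dense or cocompact image.

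The step I expect to be the main obstacle is, exactly as for Theorem~\ref{thm:superrigidity}, the totally disconnected case: the structure theory delivers the factors $Y_j$ only as a priori exotic totally disconnected \cat spaces, whereas Margulis' superrigidity applies to homomorphisms into algebraic groups, so one must first prove that the rigidity of $\Gamma$ — bounded generation by unipotents, property~(T), and its higher-rank $S$-arithmetic structure — forces each relevant $Y_j$ to be a genuine Bruhat--Tits building with $\Gamma$ acting through $\SL_n(\QQ_{p_l})$. This is precisely where the bounded-generation hypothesis is indispensable: it is the tool that circumvents the absence of an a priori algebraic target, allowing one to reconstruct the $p$-adic action — and the building itself — from the amenable root subgroups $U_r$ and the incidence pattern of their fixed points at infinity, at which point Theorem~\ref{thm:algebraic} closes the argument.
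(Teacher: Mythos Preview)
Your architecture is sound through the decomposition and the Lie and Euclidean factors (if more elaborate than needed), and you correctly locate the crux at the totally disconnected factors $Y_j$. But that is precisely where your proposal has a genuine gap: you assert that one must show each surviving $Y_j$ is a Bruhat--Tits building for some $\SL_n(\QQ_{p_l})$, yet you do not prove this, and the ingredients you list do not assemble into an argument. Margulis' superrigidity needs an algebraic target, which $D_j$ is not known to be; the smoothness Theorem~\ref{thm:smooth:intro} requires geodesic completeness, which is not assumed; and there is in fact no reason the ambient $Y_j$ should be buildings---the eventual $G$-invariant set may well be a proper subspace.

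The paper bypasses this entirely via a much more elementary mechanism that you are missing. One never identifies $Y_j$. Instead, observe that the arithmetic subgroup $\SL_n(\ZZ)<\Gamma$ has a fixed point in each $Y_j$: it is boundedly generated by unipotents, which are distorted, hence have zero translation length, hence are elliptic on $Y_j$ by Corollary~\ref{cor:BasicTotDisc}\eqref{pt:BasicTotDisc:elliptic} (only cocompactness of $D_j$ is used), so Lemma~\ref{lem:FixedPointSemisimpleActions} applies. Once $\SL_n(\ZZ)$ fixes a point, the elementary density argument of Lemma~\ref{lem:elementary:superrigidity} extends the $\Gamma$-action on the convex hull of that orbit to a continuous action of $H=\SL_n(\QQ_{p_1})\times\cdots\times\SL_n(\QQ_{p_k})$, because $\SL_n(\ZZ)=\Gamma\cap\prod_l\SL_n(\ZZ_{p_l})$ with the latter open in $H$ and $\Gamma$ dense. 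This is exactly Proposition~\ref{prop:superrigidity:SLnQp}, and it is the sole change from the proof of Theorem~\ref{thm:superrigidity}: one replaces the fixed-point Lemma~\ref{lem:FixedPointSemisimpleActions} for $\Gamma$ on the $Y_j$ by Proposition~\ref{prop:superrigidity:SLnQp}, then runs Margulis on the symmetric-space part $Z=X_1\times\cdots\times X_p$ as before, and lets $\SL_n(\RR)$ act trivially on the $Y_j$-pieces and $H$ trivially on $Z$ to assemble the $G$-action.
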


We point out that a fixed point property for similar groups acting on low-dimensional \cat cell complexes
was established by B.~Farb~\cite{Farb:Helly}.

\medskip

Some of our general results also allow us to improve on the generality of the \cat superrigidity theorem for irreducible
lattices in arbitrary products of locally compact groups proved in~\cite{Monod_superrigid}. For actions on proper \cat
spaces, the results of \emph{loc.\ cit.} establish an unrestricted superrigidity \emph{on the boundary} but require,
in order to deduce superrigidity on the space itself, the assumption that the action be \emph{reduced} (or
alternatively ``indecomposable'').

We prove that, as soon as the boundary is finite-dimensional,
any action without global fixed point at infinity is always reduced after suitably passing to subspaces and direct factors.
It follows that the superrigidity theorem for arbitrary products holds in that generality,
see Theorem~\ref{thm:Monod:superrigidity} below.

\newpage\tableofcontents\newpage

\section{Notation and preliminaries}\label{sec:notation}
A metric space is \textbf{proper}\index{proper space} if every closed
ball is compact.

We refer to Bridson and Haefliger~\cite{Bridson-Haefliger} for background on \cat spaces. We recall that the
\textbf{comparison angle}\index{angle!comparison} $\cangle pxy$ determined by three points $p,x,y$ in any metric
space is defined purely in terms of the corresponding three distances by looking at the corresponding Euclidean
triangle. In other words, it is defined by
$$d^2(x,y) = d^2(p,x) + d^2(p,y)- 2 d(p,y)d(p,y)\cos\cangle pxy.$$
The \textbf{Alexandrov angle}\index{angle!Alexandrov} $\aangle pxy$ in a \cat space $X$ is the non-increasing
limit of the comparison angle near $p$ along the geodesic segments $[p,x]$ and $[p,y]$,
see~\cite[II.3.1]{Bridson-Haefliger}. In particular, $\aangle pxy \leq \cangle pxy$. Likewise, geodesic rays
from $p$ determine the Alexandrov angle $\aangle p\xi\eta$ for $\xi, \eta\in\bd X$. The \textbf{Tits
angle}\index{angle!Tits}\index{Tits!angle|see{angle}} $\tangle\xi\eta$ is defined as the supremum of $\aangle p\xi\eta$ over all $p\in X$ and
has several useful characterisations given in Proposition~II.9.8 of~\cite{Bridson-Haefliger}.

\medskip

Recall that to any point at infinity $\xi \in \bd X$ is associated the \textbf{Busemann function}\index{Busemann!function}
$$B_\xi : X \times X \to \RR : (x, y) \mapsto B_{\xi, x}(y)$$
defined by $B_{\xi, x}(y) = \lim_{t \to\infty} (d(\ro(t), y) - d(\ro(t),x))$, where $\ro : [0, \infty) \to X$
is any geodesic ray pointing towards $\xi$. The Busemann function does not depend on the choice of $\ro$ and
satisfies the following:
$$\begin{array}{rcl}
B_{\xi, x}(y) &=& - B_{\xi, y}(x)\\
B_{\xi, x}(z) & = & B_{\xi, x}(y) + B_{\xi, y}(z) \kern10mm\text{(the ``cocycle relation'')}\\
B_{\xi, x}(y) &\leq& d(x, y).
\end{array}$$
Combining the definition of the Busemann function and of the comparison angle, we find that if $r$ is the geodesic ray
pointing towards $\xi$ with $r(0)=x$, then for any $y\neq x$ we have
$$\lim_{t\to\infty}\cos\cangle x{r(t)}y = -\frac{B_{\xi, x}(y)}{d(x,y)} \kern10mm\text{(the ``asymptotic angle formula'')}.%
\index{angle!asymptotic formula}$$
By abuse of language, one refers to \textbf{a} Busemann function when it is more convenient to consider the convex
$1$-Lipschitz function $b_\xi: X\to \RR$ defined by $B_{\xi, x}$ for some (usually implicit) choice of base-point $x\in X$.
We shall simply denote such a function by $b_\xi$ in lower case; they all differ by a constant only in view of the cocycle relation.

\medskip

The boundary at infinity $\bd X$ is endowed with the cône topology~\cite[II.8.6]{Bridson-Haefliger}\index{cone@cône!topology}
as well as with the (much finer) topology defined by the Tits
angle. The former is often implicitly understood, but when referring to dimension or radius, the topology and distance
defined by the Tits angle are considered (this is sometimes emphasised by referring to the ``Tits boundary'').\index{Tits!boundary}
The later distance is not to be confused with the associated length metric called ``Tits distance''\index{Tits!distance}
in the literature; we will not need this concept (except in the discussions at the beginning of Section~\ref{sect:cocompact}).

\medskip

Recall that any complete \cat space splits off a canonical maximal Hilbertian factor%
\index{factor!(pseudo-)Euclidean|see{Euclidean}}\index{Euclidean!factor}
(Euclidean in the proper case studied here) and any isometry decomposes accordingly, see Theorem~II.6.15(6) in~\cite{Bridson-Haefliger}.

\medskip

Normalisers and centralisers in a group $G$ are respectively denoted by $\norma_G$ and $\centra_G$.
When some group $G$ acts on a set and $x$ is a member of this set, the stabiliser of $x$ in $G$
is denoted by $\Stab_G(x)$ or by the shorthand $G_x$.
For the notation regarding algebraic groups, we follow the standard notation as in~\cite{Margulis}.

\bigskip

Finally, we present two remarks that will never be used below but give some context on certain frequent assumptions.

\medskip
The first remark is the following \cat version of the Hopf--Rinow theorem: \emph{Every geodesically complete
locally compact \cat space is proper}. Surprisingly, we could not find this statement in the literature (though
a different statement is often referred to as the Hopf--Rinow theorem, see~\cite[I.3.7]{Bridson-Haefliger}). As
pointed out orally by A.~Lytchak, the above result is readily established by following the strategy of proof
of~\cite[I.3.7]{Bridson-Haefliger} and extending geodesics.

\medskip
The second fact is that if a proper \cat space is finite-dimensional (in the sense of~\cite{Kleiner}), then so
is its Tits boundary (generalising for instance Proposition~\ref{prop:dimension:unique:geod} below). The
argument is given in~\cite[Proposition~2.1]{Caprace-Lytchak} and may be outlined as follows. For any sphere $S$
in the space $X$, the ``visual map'' $\bd X\to S$ is Tits-continuous; if it were injective, the result would
follow. However, it becomes injective after replacing $S$ with the ultraproduct of spheres of unbounded radius
by the very definition of the boundary; the ultraproduct construction preserves the bound on the dimension,
finishing the proof.


\section{Convex subsets of the Tits boundary}\label{sec:TitsBd}

\subsection{Boundary subsets of small radius}\label{sec:SmallRadius}
Given a metric space $X$ and a subset $Z \subseteq X$, one defines the \textbf{circumradius}\index{circumradius}
of $Z$ in $X$ as
$$\inf_{x \in X}\, \sup_{z \in Z}\, d(x,z).$$
A point $x$ realising the infimum is called a \textbf{circumcentre}\index{circumcentre} of $Z$ in $X$. The
\textbf{intrinsic circumradius}\index{circumradius!intrinsic} of $Z$ is its circumradius in $Z$ itself; one
defines similarly an \textbf{intrinsic circumcentre}\index{circumcentre!intrinsic}. It is called
\textbf{canonical}\index{circumcentre!canonical} if it is fixed by every isometry of $X$ which stabilises $Z$.
We shall make frequent use of the following construction of circumcentres, due to A.~Balser and
A.~Lytchak~\cite[Proposition~1.4]{BalserLytchak_Centers}:

\begin{prop}\label{prop:BalserLytchak}
Let $X$ be a complete CAT(1) space and $Y \subseteq X$ be a finite-dimensional closed convex subset. If $Y$ has
intrinsic circumradius~$\leq \pi/2$, then the set $C(Y)$ of intrinsic circumcentres of $Y$ has a unique
circumcentre, which is therefore a canonical (intrinsic) circumcentre of $Y$.\hfill\qedsymbol
\end{prop}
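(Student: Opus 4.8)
The plan is to reduce the statement to the benign regime where the intrinsic circumradius is \emph{strictly} below $\pi/2$ — there the curvature bound forces uniqueness by a convexity argument — and to bring finite-dimensionality into play only in order to enter that regime.

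First I would isolate the soft lemma: \emph{if $W$ is a complete convex subset of a complete $\CAT{1}$ space with $r:=\rad_W(W)<\pi/2$, then $W$ has exactly one intrinsic circumcentre.} Write $\rho(w)=\sup_{z\in W}d(w,z)$, a $1$-Lipschitz convex function on $W$ with $\min_W\rho=r$. Given $w_1,w_2\in W$ realising the minimum, the midpoint $m$ of $[w_1,w_2]$ lies in $W$ and is well defined since $d(w_1,w_2)\le r<\pi$. For every $z\in W$, the $\CAT{1}$ comparison inequality together with the elementary identity $\cos d(\overline m,\overline z)=\frac{\cos d(\overline w_1,\overline z)+\cos d(\overline w_2,\overline z)}{2\cos(d(w_1,w_2)/2)}$ in a comparison triangle on the round $2$-sphere gives $\cos d(m,z)\ge\frac{\cos d(w_1,z)+\cos d(w_2,z)}{2\cos(d(w_1,w_2)/2)}\ge\frac{\cos r}{\cos(d(w_1,w_2)/2)}$. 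As $\cos r>0$, if $w_1\ne w_2$ the right-hand side is a constant strictly exceeding $\cos r$, so $\rho(m)<r$, contradicting $m\in W$ (on which $\rho\ge r$); this proves uniqueness. Feeding a minimising sequence $(w_n)$ into the same estimate yields $\cos(d(w_n,w_m)/2)\ge\frac{\cos\rho(w_n)+\cos\rho(w_m)}{2\cos r}\to1$, so $(w_n)$ is Cauchy and, by completeness of $W$ and continuity of $\rho$, a circumcentre exists. Thus $C(W)$ is a single point.

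Now let $Y$ be as in the proposition and put $r=\rad_Y(Y)\le\pi/2$. If $r<\pi/2$, the lemma applied to $W=Y$ already finishes the proof — and finite-dimensionality plays no role. So assume $r=\pi/2$. The midpoint estimate with $\cos r=0$ shows that $\cos d(m,z)\ge0$, i.e. $d(m,z)\le\pi/2$, for the midpoint $m$ of any two intrinsic circumcentres and all $z\in Y$; hence $C(Y)$ is closed and convex, and $\diam C(Y)\le\pi/2$ since any $y_2\in C(Y)$ is a point of $Y$ at distance $\le\rho(y_1)=\pi/2$ from any $y_1\in C(Y)$. It then suffices to know that $C(Y)$ is non-empty and that $\rad_{C(Y)}(C(Y))<\pi/2$ \emph{strictly}: granting that, the lemma applied to $W=C(Y)$ (a complete convex subset of the $\CAT{1}$ space $X$) produces a single intrinsic circumcentre of $C(Y)$, which is the unique circumcentre claimed, and which lies in $C(Y)\subseteq Y$ and is therefore an intrinsic circumcentre of $Y$.

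This last step is where I expect the genuine difficulty, and where finite-dimensionality is indispensable. One needs a Jung-type inequality for finite-dimensional $\CAT{1}$ spaces — an $N$-dimensional subset of diameter $\le\pi/2$ has circumradius at most $\arccos(1/\sqrt{N+1})<\pi/2$, the extremal configuration being a regular spherical simplex with edge $\pi/2$ — together with the existence of a circumcentre in that range; both come from compactness/convexity available in finite dimensions but false in general, the obstruction being already visible in the closed convex hull of an orthonormal subset of a Hilbert sphere, a set of diameter $\pi/2$ every point of which is a circumcentre. Finally, canonicity is automatic, since the assignment $Y\mapsto C(Y)\mapsto C(C(Y))$ is defined purely in terms of the metric, so any isometry of $X$ stabilising $Y$ stabilises $C(Y)$ and hence fixes the unique point of $C(C(Y))$.
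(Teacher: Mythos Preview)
The paper does not prove this proposition: note the \verb|\hfill\qedsymbol| after the statement. It is quoted verbatim from Balser--Lytchak~\cite[Proposition~1.4]{BalserLytchak_Centers}, and all the work is outsourced to that reference. So the relevant comparison is between your argument and theirs.

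Your outline matches the structure of the Balser--Lytchak argument: the uniqueness/existence lemma for $r<\pi/2$ via the spherical midpoint inequality is standard and correct, and your observation that $C(Y)$ is closed, convex, and of diameter~$\le\pi/2$ when $r=\pi/2$ is exactly how they set up the reduction. Your honest flagging of the remaining step is also accurate: proving that $C(Y)\ne\varnothing$ and that $\rad_{C(Y)}C(Y)<\pi/2$ is precisely where the content of their paper lies, and it is the only place finite-dimensionality enters.

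That said, this final step is not disposed of in your write-up---you invoke a Jung-type inequality but do not prove it, and the existence of an intrinsic circumcentre at $r=\pi/2$ is likewise only asserted. A Jung theorem for $\CAT{\kappa}$ spaces does exist (Lang--Schroeder), but one must check that it applies in the form you need: you want the \emph{intrinsic} circumradius of $C(Y)$, viewed as a finite-dimensional $\CAT{1}$ space in its own right, bounded strictly below $\pi/2$ by its dimension and diameter. Balser--Lytchak do not go via Jung; they give a direct argument showing that a finite-dimensional convex $\CAT{1}$ set of radius~$\pi/2$ has a non-empty centre set of \emph{strictly smaller} radius (Lemma~1.3 and the surrounding discussion in their paper). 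So your proposal is a correct reduction, but it stops at the doorstep of the actual theorem; either supply the Jung argument in full, or cite Balser--Lytchak for the missing lemma, at which point you may as well cite them for the whole proposition, as the present paper does.
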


Let now $X$ be a proper \cat space.

\begin{prop}\label{prop:NestedSequence}
Let $X_0 \supset X_1 \supset \dots$ be a nested sequence of non-empty closed convex subsets of $X$ such that
$\bigcap_n X_n$ is empty. Then the intersection $\bigcap_n \bd X_n$ is a non-empty closed convex subset of $\bd
X$ of intrinsic circumradius at most~$\pi/2$.

In particular, if the Tits boundary is finite-dimensional, then $\bigcap_n \bd X_n$ has a canonical intrinsic
circumcentre.
\end{prop}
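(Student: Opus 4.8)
The plan is to manufacture the required boundary point as the ``direction at infinity'' of the nearest-point projections of a fixed basepoint, and then to recognise it as an intrinsic circumcentre.

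\emph{Non-emptiness.} Fix $o\in X_0$ and put $x_n:=\proj_{X_n}(o)$, which is well defined since $X_n$ is a non-empty closed convex subset of the proper (hence complete) \cat space $X$. As $(X_n)$ is nested, $d(o,x_n)=d(o,X_n)$ is non-decreasing in $n$; it tends to $\infty$, for otherwise the $x_n$ would lie in a fixed closed ball and a subsequential limit, belonging to every $X_m$ by nestedness and closedness, would lie in $\bigcap_nX_n=\varnothing$. By Arzel\`a--Ascoli, after passing to a subsequence the geodesic segments $[o,x_n]$ converge uniformly on compact sets to a geodesic ray $\sigma$ from $o$; set $\xi:=\sigma(\infty)$. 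For $m\geq n$ the convex function $t\mapsto d\big(\sigma_m(t),X_n\big)$, where $\sigma_m$ parametrises $[o,x_m]$, equals $d(o,X_n)$ at $t=0$ and $0$ at its other endpoint (because $x_m\in X_m\subseteq X_n$), hence is $\leq d(o,X_n)$ throughout; letting $m\to\infty$ gives $d\big(\sigma(t),X_n\big)\leq d(o,X_n)$ for all $t$. Thus $\sigma$ stays within bounded distance of the closed convex set $X_n$, so $\xi\in\bd X_n$; as $n$ was arbitrary, $\xi\in\bigcap_n\bd X_n$, which is therefore non-empty.

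\emph{Closed, convex, circumradius.} Each $\bd X_n$ is the Tits boundary of the complete \cat space $X_n$, hence a complete --- so closed --- and $\pi$-convex subset of the $\CAT 1$ space $\bd X$; whence $Z:=\bigcap_n\bd X_n$ is closed and convex in $\bd X$. The crux is to check that the point $\xi$ above satisfies $\tangle\xi\eta\leq\pi/2$ for every $\eta\in Z$, so that $\xi$ is an intrinsic circumcentre of $Z$. Here I would combine two tools: (i) the nearest-point projection onto a closed convex set $C$ obeys $\aangle{\proj_C(p)}{p}{z}\geq\pi/2$ for all $z\in C$; applying this with $C=X_n$, $p=o$ and letting $z$ run along geodesic rays inside $X_n$ pointing to $\xi$ and to a given $\eta\in Z$ pins down the asymptotic position of $x_n$ relative to these rays; and (ii) for any $\zeta\in\bd X$ the boundary of a horoball centred at $\zeta$ equals $\{\eta\in\bd X:\tangle\zeta\eta\leq\pi/2\}$ --- equivalently, a Busemann function $b_\zeta$ is non-increasing along every geodesic ray pointing to such an $\eta$. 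Feeding these into a \cat comparison argument in the (possibly degenerate) configuration formed by $o$, $x_n$ and far-out points of the two rays, and exploiting $d(o,x_n)\to\infty$, should yield $\tangle\xi\eta\leq\pi/2$. I expect this last comparison estimate --- squeezing out the uniform bound $\pi/2$ from the orthogonality relations at $x_n$ as the configuration escapes to infinity --- to be the real obstacle; the alternative of arguing by contradiction, using the hypothesis ``$Z$ has intrinsic circumradius $>\pi/2$'' to recover a point of $\bigcap_nX_n$, runs into the same difficulty.

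\emph{The ``in particular''.} If moreover $\bd X$ is finite-dimensional, then $Z$ is a finite-dimensional closed convex subset of the complete $\CAT 1$ space $\bd X$ of intrinsic circumradius $\leq\pi/2$, so Proposition~\ref{prop:BalserLytchak}, applied to $Z\subseteq\bd X$, furnishes a canonical intrinsic circumcentre of $Z$, namely the unique circumcentre of the set of intrinsic circumcentres of $Z$. It is fixed by every isometry of $\bd X$ stabilising $Z$, hence in particular by every isometry of $X$ preserving the family $(X_n)$, since such an isometry induces an isometry of $\bd X$ fixing $Z$.
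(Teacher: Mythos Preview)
Your setup is exactly the paper's: project a basepoint $o$ to get $x_n$, extract a limit $\xi\in\bigcap_n\bd X_n$, and then try to show $\tangle\xi\eta\le\pi/2$ for every $\eta$ in the intersection. The gap is that you never complete this last estimate; you propose a route through Busemann functions and horoballs, call it ``the real obstacle'', and leave it unresolved. In fact the step is immediate from the very projection inequality you quote in~(i), and no horoball argument is needed.

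Here is the missing line. Choose any sequence $y_n\in X_n$ with $y_n\to\eta$ (possible since $\eta\in\bd X_n$ for every $n$). By the projection property applied with $C=X_n$, $p=o$, $z=y_n$, the Alexandrov angle satisfies $\aangle{x_n}{o}{y_n}\ge\pi/2$, hence the comparison angle does too: $\cangle{x_n}{o}{y_n}\ge\pi/2$. But the three comparison angles of the triangle $(o,x_n,y_n)$ sum to~$\pi$, so $\cangle{o}{x_n}{y_n}\le\pi/2$ for all $n$. Since $x_n\to\xi$ and $y_n\to\eta$ with both sequences going to infinity, the characterisation of the Tits angle via comparison angles at a fixed vertex (see~\cite[II.9.16]{Bridson-Haefliger}) gives $\tangle\xi\eta\le\limsup_n\cangle{o}{x_n}{y_n}\le\pi/2$, and you are done.

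So the obstacle you anticipated dissolves: the orthogonality at $x_n$ translates instantly into the bound $\pi/2$ at $o$ by the Euclidean angle sum, with no asymptotic squeezing required. The rest of your argument (non-emptiness, closedness/convexity, the appeal to Proposition~\ref{prop:BalserLytchak}) matches the paper.
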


\begin{proof}
Pick any $x\in X$ and let $x_n$ be its projection to $X_n$. The assumption $\bigcap_n X_n=\varnothing$ implies
that $x_n$ goes to infinity. Upon extracting, we can assume that it converges to some point $\xi\in\bd X$;
observe that $\xi\in\bigcap_n \bd X_n$. We claim that any $\eta\in \bigcap_n \bd X_n$ satisfies $\tangle\xi\eta
\leq \pi/2$. The proposition then follows because (i)~the boundary of any closed convex set is closed and
$\pi$-convex~\cite[II.9.13]{Bridson-Haefliger} and (ii)~each $\bd X_n$ is non-empty since otherwise $X_n$ would
be bounded, contradicting $\bigcap_n X_n=\varnothing$. When $\bd X$ has finite dimension, there is a canonical
intrinsic circumcentre by Proposition~\ref{prop:BalserLytchak}.

For the claim, observe that there exists a sequence of points $y_n\in X_n$ converging to $\eta$. It suffices to
prove that the comparison angle $\cangle x{x_n}{y_n}$ is bounded by~$\pi/2$ for all $n$,
see~\cite[II.9.16]{Bridson-Haefliger}. This follows from
$$\cangle{x_n}x{y_n} \geq \aangle{x_n}x{y_n} \geq\pi/2,$$
where the second inequality holds by the properties of the projection on a convex
set~\cite[II.2.4(3)]{Bridson-Haefliger}.
\end{proof}

The combination of the preceding two propositions has the following consequence, which improves the results
established by Fujiwara, Nagano and Shioya (Theorems~1.1 and~1.3 in~\cite{Fujiwara_et_al}).

\begin{cor}\label{cor:FixedPointParabolic}
Let $g$ be a parabolic isometry of $X$. The following assertions hold:
\begin{enumerate}
\item The fixed point set of $g$ in $\bd X$ has intrinsic circumradius at most~$\pi/2$.

\item If $\bd X$ finite-dimensional, then the centraliser $\centra_{\Isom(X)}(g)$ has a canonical global
fixed point in $\bd X$.\label{pt:FixedPointParabolic:centra}

\item For any subgroup $H < \Isom(X)$ containing $g$, the (possibly empty) fixed point set of $H$ in $\bd
X$ has circumradius at most~$\pi/2$.\hfill\qed
\end{enumerate}
\end{cor}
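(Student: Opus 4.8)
The plan is to identify $\Fix_{\bd X}(g)$ with the ``boundary part'' of a canonical nested family of convex sublevel sets, and then feed this into Proposition~\ref{prop:NestedSequence}.

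\textbf{Setting up the sublevel sets.} Write $\phi(x)=d(x,gx)$ for the displacement function of $g$; it is convex and $g$-invariant, and since $g$ is parabolic its infimum $\ell=\inf_X\phi$ is not attained. Hence, for each $\epsilon>0$, the sublevel set $X_\epsilon=\{x\in X:\phi(x)\le\ell+\epsilon\}$ is a non-empty closed convex $g$-invariant subset of $X$; the family $(X_\epsilon)_{\epsilon>0}$ is nested, and $\bigcap_{\epsilon>0}X_\epsilon=\{x:\phi(x)=\ell\}=\varnothing$. In particular the decreasing sequence $(X_{1/n})_{n\ge1}$ satisfies the hypotheses of Proposition~\ref{prop:NestedSequence}.

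\textbf{The key identity $\Fix_{\bd X}(g)=\bigcap_{n\ge1}\bd X_{1/n}$.} For the inclusion ``$\subseteq$'', let $g\xi=\xi$ and $\epsilon>0$, choose $y\in X$ with $\phi(y)<\ell+\epsilon$, and let $\rho$ be the ray from $y$ to $\xi$. Then $g\rho$ is the ray from $gy$ to $g\xi=\xi$, so $\rho$ and $g\rho$ are asymptotic; thus $t\mapsto d(\rho(t),g\rho(t))$ is convex and bounded, hence non-increasing, so it never exceeds $\phi(y)<\ell+\epsilon$. Therefore $\rho$ lies in $X_\epsilon$ and $\xi\in\bd X_\epsilon$; as $\epsilon>0$ was arbitrary, $\xi\in\bigcap_n\bd X_{1/n}$. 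For ``$\supseteq$'', if $\xi\in\bd X_\epsilon$ then some ray $\rho$ towards $\xi$ lies in $X_\epsilon$, so $d(\rho(t),g\rho(t))\le\ell+\epsilon$ for all $t$; hence $\rho$ and $g\rho$ are asymptotic, and $\xi=\rho(\infty)=(g\rho)(\infty)=g\xi$.

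\textbf{Conclusion.} By Proposition~\ref{prop:NestedSequence}, $\bigcap_n\bd X_{1/n}$ is a non-empty closed convex subset of $\bd X$ of intrinsic circumradius at most $\pi/2$; by the identity above this set is $\Fix_{\bd X}(g)$, which is assertion~(1). When $\bd X$ is finite-dimensional, the same proposition (or Proposition~\ref{prop:BalserLytchak}) provides a canonical intrinsic circumcentre $c\in\Fix_{\bd X}(g)$; any $h\in\centra_{\Isom(X)}(g)$ commutes with $g$, hence stabilises $\Fix_{\bd X}(g)$, and acts on the Tits boundary by isometries, so by canonicity it fixes $c$, proving~(2). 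Finally, for~(3) we have $\Fix_{\bd X}(H)\subseteq\Fix_{\bd X}(g)$, so the circumradius in $\bd X$ of the former is bounded above by the intrinsic circumradius of the latter, i.e.\ by $\pi/2$.

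\textbf{Main obstacle.} The substance is concentrated in the displayed identity; once it is in place, the three assertions are immediate. It rests on only two standard facts about proper \cat spaces --- that asymptotic geodesic rays remain at non-increasing (in particular bounded) distance, and that the boundary of a closed convex subset consists precisely of the endpoints of the rays it contains --- together with the convexity and $g$-invariance of the displacement function, so the proof is essentially an assembly. The one subtlety worth flagging is that assertion~(1) genuinely needs the \emph{equality}, not merely the inclusion $\Fix_{\bd X}(g)\subseteq\bigcap_n\bd X_{1/n}$: otherwise the canonical circumcentre of $\bigcap_n\bd X_{1/n}$ might fail to lie in $\Fix_{\bd X}(g)$, and hence would not witness its \emph{intrinsic} circumradius.
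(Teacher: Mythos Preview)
Your proof is correct and follows exactly the approach intended by the paper: the corollary is stated as an immediate consequence of Propositions~\ref{prop:BalserLytchak} and~\ref{prop:NestedSequence}, and you have supplied precisely the missing link, namely the identification of $\Fix_{\bd X}(g)$ with $\bigcap_n \bd X_{1/n}$ for the sublevel sets of the displacement function. Your justification of both inclusions via the convexity and boundedness of $t\mapsto d(\rho(t),g\rho(t))$ for asymptotic rays is the standard one, and your remark that the \emph{equality} (not just one inclusion) is needed for the intrinsic-circumradius conclusion in~(1) is well observed.
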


Here is another immediate consequence.

\begin{cor}\label{cor:IncreasingCompact}
Let $G$ be a topological group with a continuous action by isometries on $X$ without global fixed point. Suppose
that $G$ is the union of an increasing sequence of compact subgroups and that $\bd X$ is finite-dimensional.
Then there is a canonical $G$-fixed point in $\bd X$, fixed by all isometries normalising $G$.
\end{cor}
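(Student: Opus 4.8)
The plan is to build a nested sequence of convex subsets of $X$ to which Proposition~\ref{prop:NestedSequence} applies. Write $G = \bigcup_n K_n$ with $K_1 \le K_2 \le \cdots$ compact. Since $X$ is a complete \cat space, each $K_n$ fixes a point of $X$ (the circumcentre of any $K_n$-orbit, which is bounded by continuity and compactness), so $X_n := \Fix_X(K_n)$ is a non-empty closed convex subset; moreover $X_n \supseteq X_{n+1}$ because the sequence $(K_n)$ is increasing, and $\bigcap_n X_n = \Fix_X(G) = \varnothing$ since $G$ has no global fixed point. Proposition~\ref{prop:NestedSequence} then gives that $C := \bigcap_n \bd X_n$ is a non-empty closed convex subset of $\bd X$ of intrinsic circumradius at most $\pi/2$, and, $\bd X$ being finite-dimensional, it carries a canonical intrinsic circumcentre $\xi$. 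This $\xi$ is the point we are after.

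It then remains to check that $\xi$ is fixed by $G$ and by every isometry normalising $G$, and the key step is the identification $C = \Fix_{\bd X}(G)$. For the inclusion $C \subseteq \Fix_{\bd X}(G)$: given $g \in G$ we have $g \in K_m$ for some $m$, hence $g$ fixes $X_n$ — and therefore also $\bd X_n$ — pointwise for every $n \ge m$; as $C \subseteq \bd X_n$, it follows that $g$ fixes $C$ pointwise. For the reverse inclusion, let $\eta \in \bd X$ be fixed by all of $G$ and fix $n$: choosing a $K_n$-fixed point $c \in X$ as above, the geodesic ray $[c,\eta)$ is the \emph{unique} ray issuing from $c$ in the class of $\eta$, hence it is $K_n$-invariant and so lies in $\Fix_X(K_n) = X_n$; thus $\eta \in \bd X_n$, and since $n$ was arbitrary, $\eta \in C$. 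In particular $G$ fixes every point of $C$, so $\xi$ is a $G$-fixed point.

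Finally, $\Fix_{\bd X}(G)$ is invariant under $\norma_{\Isom(X)}(G)$: if $h$ normalises $G$ and $\eta \in \Fix_{\bd X}(G)$, then for every $g \in G$ one has $g(h\eta) = h\big((h\inv g h)\eta\big) = h\eta$, and likewise for $h\inv$, so $h(C) = C$. Since every isometry of $X$ induces an isometry of $\bd X$ for the Tits metric, any isometry of $X$ that stabilises $C$ fixes its canonical intrinsic circumcentre $\xi$; hence $\xi$ is fixed by $G$ and by every isometry of $X$ normalising $G$, as required. The only step that is not bookkeeping — and thus the main obstacle — is the inclusion $\Fix_{\bd X}(K_n) \subseteq \bd X_n$ for the compact group $K_n$, i.e.\ that a boundary point fixed by a compact group is the endpoint of a ray fixed by that group; this is precisely what the circumcentre trick together with uniqueness of geodesic rays with prescribed origin and endpoint in a \cat space delivers.
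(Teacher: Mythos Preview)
Your proof is correct and follows the same approach as the paper: apply Proposition~\ref{prop:NestedSequence} to the nested sequence of fixed-point sets $X^{K_n}$. The paper's proof is a two-line sketch that leaves implicit precisely the details you supply; in particular, your identification $C = \Fix_{\bd X}(G)$ is a clean way to make the normaliser-invariance of $C$ transparent (since $\Fix_{\bd X}(G)$ depends only on $G$ as a set of isometries, not on the chosen exhaustion $(K_n)$), whereas arguing directly that $\norma_{\Isom(X)}(G)$ permutes the sets $\bd X_n$ cofinally would require some care about whether conjugation by $h$ respects the topology on $G$.
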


\begin{proof}
Consider the sequence of fixed point sets $X^{K_n}$ of the compact subgroups $K_n$. Its intersection is empty by
assumption and thus Proposition~\ref{prop:NestedSequence} applies.
\end{proof}

Finally, we record the following elementary fact, which may also be deduced by means of
Proposition~\ref{prop:NestedSequence}:

\begin{lem}\label{lem:RadiusHoroball}
Let $\xi \in \bd X$. Given any closed horoball $B$ centred at $\xi$, the boundary $\bd B$ coincides with the
ball of Tits radius~$\pi/2$ centred at $\xi$ in $\bd X$.
\end{lem}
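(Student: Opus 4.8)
The plan is to reduce the lemma to an asymptotic identity for the Busemann function $b_\xi$ along geodesic rays, and then to read off both inclusions from elementary convexity. Fix a Busemann function $b_\xi$ centred at $\xi$ and write the given closed horoball as $B=\{x\in X:\ b_\xi(x)\le c\}$. Since $X$ is proper and $B$ is closed and convex, $\bd B$ is exactly the set of endpoints of the geodesic rays contained in $B$ (cf.\ \cite[\S II.9]{Bridson-Haefliger}). The identity I want is
\[
\lim_{t\to\infty}\frac{b_\xi(\rho(t))}{t}\ =\ -\cos\tangle{\xi}{\eta}\qquad(\star)
\]
for \emph{every} geodesic ray $\rho$ pointing towards an arbitrary $\eta\in\bd X$. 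Note that the limit exists because $t\mapsto b_\xi(\rho(t))$ is convex and $1$-Lipschitz; more precisely, for any real convex function $f$ on $[0,\infty)$ one has $\lim_{t\to\infty}f(t)/t=\sup_{t\ge 0}f'_+(t)$, so that $\lim_{t\to\infty}f(t)/t\le 0$ forces $f$ to be non-increasing.

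Granting $(\star)$, the lemma follows at once. If $\eta\in\bd B$, pick a ray $\rho\se B$ with $\rho(\infty)=\eta$; then $b_\xi\circ\rho$ is convex and bounded above by $c$, hence non-increasing, so $\lim_t b_\xi(\rho(t))/t\le 0$, and $(\star)$ gives $\cos\tangle{\xi}{\eta}\ge 0$, i.e.\ $\tangle{\xi}{\eta}\le\pi/2$. Conversely, if $\tangle{\xi}{\eta}\le\pi/2$, choose any $p\in B$ and let $\rho$ be the ray from $p$ to $\eta$; then $(\star)$ gives $\lim_t b_\xi(\rho(t))/t=-\cos\tangle{\xi}{\eta}\le 0$, so $b_\xi\circ\rho$ is non-increasing, whence $b_\xi(\rho(t))\le b_\xi(p)\le c$ for all $t$. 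Thus $\rho\se B$, so $\eta\in\bd B$. Therefore $\bd B$ is precisely the set of $\eta$ with $\tangle{\xi}{\eta}\le\pi/2$, i.e.\ the closed ball of Tits radius $\pi/2$ about $\xi$.

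It remains to prove $(\star)$. Normalise $b_\xi=B_{\xi,x}$ with $x=\rho(0)$, and let $r$ be the geodesic ray from $x$ towards $\xi$; neither side of $(\star)$ changes if we move the base-point (which only shifts $b_\xi$ by a constant) or replace $\rho$ by another ray towards $\eta$ (two such rays remain at bounded distance). For each fixed $t$, the asymptotic angle formula recalled in Section~\ref{sec:notation} yields $\lim_{s\to\infty}\cos\cangle{x}{r(s)}{\rho(t)}=-b_\xi(\rho(t))/t$. On the other hand, the comparison angle $\cangle{x}{r(s)}{\rho(t)}$ is non-decreasing in each of $s$ and $t$ (the standard monotonicity of comparison angles along geodesics, cf.\ \cite[II.3.1]{Bridson-Haefliger}), so its supremum over all $s,t\ge 0$ is attained in the limit $s,t\to\infty$; by one of the characterisations of the Tits angle in \cite[II.9.8]{Bridson-Haefliger}, that supremum equals $\tangle{\xi}{\eta}$. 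Letting $t\to\infty$ in the preceding equality and using continuity of $\cos$ gives $(\star)$.

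The real work is thus $(\star)$, and I expect its inequality ``$\le$'' to be the main obstacle: this is the half that supplies the sharp constant needed for the inclusion $\{\eta:\tangle{\xi}{\eta}\le\pi/2\}\se\bd B$. The opposite inequality is much softer — it reduces to the elementary \cat comparison bound $b_\xi(\rho(t))\ge b_\xi(\rho(0))-t\cos\aangle{\rho(0)}{\xi}{\eta}$ — and the inclusion $\bd B\se\{\eta:\tangle{\xi}{\eta}\le\pi/2\}$ it yields can alternatively be obtained, as the parenthetical in the statement suggests, from Proposition~\ref{prop:NestedSequence} applied to the horoballs $B_n=\{b_\xi\le -n\}$: these are nested with empty intersection and all have boundary $\bd B$ (push a ray in $B_n$ a bounded distance towards $\xi$ to obtain one in $B_{n+1}$), so $\bd B=\bigcap_n\bd B_n$ has intrinsic circumradius at most $\pi/2$ and, by the proof of that proposition, lies within Tits distance $\pi/2$ of $\xi$.
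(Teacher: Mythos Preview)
Your proof is correct and rests on the same core computation as the paper's: the asymptotic angle formula linking the slope of $b_\xi$ along a ray to the Tits angle with $\xi$. Your packaging is slightly different~--- you isolate the single identity $(\star)$ and read off both inclusions from convexity of $b_\xi\circ\rho$, whereas the paper handles the two inclusions separately (one via Proposition~\ref{prop:NestedSequence}, exactly as you sketch in your final paragraph, and the other by a direct argument with the asymptotic angle formula that is essentially the borderline case of $(\star)$); but the underlying idea is the same.
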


\begin{proof}
Any two horoballs centred at the same point at infinity lie at bounded Hausdorff distance from one another.
Therefore, they have the same boundary at infinity. In particular, the boundary $\bd B$ of the given horoball
coincides with the intersection of the boundaries of all horoballs centred at $\xi$. By
Proposition~\ref{prop:NestedSequence}, this is of circumradius at most~$\pi/2$; in fact the proof of that proposition shows precisely that
the set is contained in the ball of radius at most~$\pi/2$ around $\xi$.

Conversely, let $\eta \in \bd X$ be a point which does not belong to $\bd B$. We claim that
$\tangle{\xi}{\eta} \geq \pi/2$. This shows that every point of $\bd X$ at Tits distance less
than~$\pi/2$ from $\xi$ belongs to $\bd B$. Since the latter is closed, it follows that $\bd B$ contains
the closed ball of Tits radius~$\pi/2$

We turn to the claim. Let $b_\xi$ be a Busemann function centred at $\xi$.
Since every geodesic ray pointing towards $\eta$ escapes every horoball centred at $\xi$,
there exists a ray $\ro: [0, \infty) \to X$ pointing to $\eta$ such that  $b_\xi(\ro(0)) = 0$ and
$b_\xi(\ro(t)) > 0$ for all $t > 0$ (actually, this increases to infinity by convexity).
Let $c: [0, \infty) \to X$ be the geodesic ray emanating from $\ro(0)$
and pointing to $\xi$. We have $\tangle{\xi}{\eta} = \lim_{t,s\to\infty}\cangle{\ro(0)}{\ro(t)}{c(s)}$,
see~\cite[II.9.8]{Bridson-Haefliger}. Therefore the claim follows from the asymptotic angle formula
(Section~\ref{sec:notation}) by taking $y=c(s)$ with $s$ large enough.
\end{proof}

\subsection{Subspaces with boundary of large radius}
\label{sec:largeRadius}
As before, let $X$ be a proper \cat space. The following result improves Proposition~2.2 in~\cite{Leeb}:

\begin{prop}\label{prop:ConvexCore}
Let $Y \subseteq X$ be a closed convex subset such that $\bd Y$ has intrinsic circumradius~$>\pi/2$. Then there
exists a closed convex subset $Z\se X$ with $\bd Z= \bd Y$ which is minimal for these properties. Moreover, the
union $Z_0$ of all such minimal subspaces is closed, convex and splits as a product $Z_0 \cong Z \times Z'$.
\end{prop}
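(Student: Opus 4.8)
The plan is to first establish existence of a minimal subspace $Z$ with $\bd Z = \bd Y$ by a Zorn's lemma argument, then to analyze the union $Z_0$ of all such minimal subspaces and extract the product decomposition. For existence: consider the family $\mathcal{F}$ of closed convex subsets $W \se Y$ with $\bd W = \bd Y$, ordered by reverse inclusion. Given a chain in $\mathcal{F}$, its intersection $W_\infty$ is closed and convex; the key point is that $W_\infty$ is non-empty and still has $\bd W_\infty = \bd Y$. This is exactly where the hypothesis that $\bd Y$ has intrinsic circumradius $> \pi/2$ is used: if $W_\infty$ were empty, Proposition~\ref{prop:NestedSequence} (applied to a countable cofinal subsequence of the chain, using properness to reduce to the countable case) would force $\bigcap \bd W_n = \bd Y$ to have intrinsic circumradius $\le \pi/2$, a contradiction. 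A similar argument shows $\bd W_\infty$ cannot be a proper subset of $\bd Y$: if it were, then since each $\bd W_n = \bd Y$, the nested intersection argument again caps the circumradius of the limit boundary — one must be slightly careful, using that $\bd W_\infty = \bigcap_n \bd W_n$ for a nested sequence of closed convex sets (the boundary of a decreasing intersection of closed convex subsets of a proper space is the intersection of the boundaries, when the intersection is non-empty). So Zorn applies and minimal elements $Z$ exist.

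For the structure of $Z_0$: given two minimal subspaces $Z, Z_1$ with $\bd Z = \bd Z_1 = \bd Y$, I want to relate them by a parallelism/product phenomenon. The standard tool is the Sandwich Lemma: since $Z$ and $Z_1$ have the same boundary, the function $x \mapsto d(x, Z_1)$ on $Z$ is convex and bounded (its sublevel sets are closed convex with full boundary $\bd Y$, hence by minimality of $Z$ must be all of $Z$ once non-empty — giving boundedness), and symmetrically. A bounded convex function on $Z$ need not be constant, but one argues that the convex hull of $Z \cup Z_1$, intersected appropriately, yields a subset isometric to a product $Z \times [0, a]$ with $Z \times \{0\} = Z$ and $Z \times \{a\} = Z_1$ — this is the classical parallel-sets argument (cf.\ Bridson–Haefliger II.2). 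More precisely, one shows $Z_1$ is a ``parallel copy'' of $Z$. Then $Z_0$, the union of all such parallel minimal copies, is a union of parallel convex sets all isometric to $Z$ and all with boundary $\bd Y$; the set of such parallels is itself convex (the midpoint set of two parallel minimal subspaces is again minimal with the same boundary) and closed, and carries a natural metric. One then invokes the structure theory of parallel sets: the union of a convex family of mutually parallel convex subsets isometric to $Z$ splits canonically as $Z \times Z'$, where $Z'$ is the ``transversal'' space parametrizing the parallels — this uses that $Z$ is itself minimal hence has no Euclidean-type ambiguity allowing the splitting to fail, and that $Z'$ inherits completeness and convexity.

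The main obstacle I anticipate is the last step: proving that $Z_0$ genuinely splits as an isometric product rather than merely being a ``bundle'' of parallel copies of $Z$ glued in some twisted way. The resolution is the theorem on parallel sets in \cat spaces (sometimes attributed to the circle of ideas around the Sandwich Lemma / flat strip theorem): if $P(Z)$ denotes the union of all convex subsets parallel to $Z$ in an ambient \cat space, then $P(Z) \cong Z \times C_Z$ for a canonical closed convex $C_Z$. I would apply this with $Z_0 = P(Z) \cap (\text{relevant invariant part})$, after checking $Z_0$ equals the set of points lying on some minimal full-boundary subspace, which by the parallelism established above is contained in $P(Z)$; conversely any convex subset of $P(Z)$ of the form $Z \times \{pt\}$ is parallel to $Z$, has boundary $\bd Z = \bd Y$, and is minimal (a proper full-boundary convex subset would, by projecting, produce one inside $Z$). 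A secondary subtlety is the reduction to countable chains in the Zorn argument, handled by the fact that a proper metric space is separable and any decreasing family of closed convex sets with empty (resp.\ given) intersection admits a countable cofinal subfamily with the same intersection.
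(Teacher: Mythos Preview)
Your existence argument via Zorn's lemma is correct and essentially matches the paper's (the paper phrases it as a direct contradiction rather than Zorn, but the content is the same: a chain with empty intersection would, via a countable cofinal subsequence and Proposition~\ref{prop:NestedSequence}, force the circumradius of $\bd Y$ to be at most $\pi/2$). Your sublevel-set argument for $d(\cdot, Z_1)$ on $Z$ is also right, and in fact yields that this distance is \emph{constant}, not merely bounded --- this constancy (plus the Sandwich Lemma) is exactly what gives the isometry $Z \to Z_1$ via nearest-point projection, as in the paper.

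The gap is in your resolution of the ``twisted bundle'' obstacle. There is no black-box theorem asserting $P(Z) \cong Z \times C_Z$ for an arbitrary closed convex $Z$ in a \cat space: the standard parallel-set decomposition (Bridson--Haefliger~II.2.14) is stated for geodesic \emph{lines}, where holonomy is automatically trivial. For general $Z$, the natural map $\alpha: Z_0 \to Z \times Z'$, $x \mapsto (p_Z(x), Z_x)$, is a bijection, but proving it is an \emph{isometry} requires showing that for any three minimal subspaces $Z_1, Z_2, Z_3$ the composition $p_{Z_1} \circ p_{Z_3} \circ p_{Z_2}|_{Z_1}$ is the identity on $Z_1$ (this is the paper's Lemma~\ref{lem:holonomy}). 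Here minimality does the real work, and not as a mere edge-case exclusion: the displacement function of this composite is bounded and convex on $Z_1$, hence --- by the very sublevel-set argument you already used --- constant, so the composite is a Clifford translation of $Z_1$; one then checks it restricts to the identity on every geodesic line (via~\cite[II.2.15]{Bridson-Haefliger}) and is therefore trivial. Without spelling out this holonomy argument, the product splitting is not established; your appeal to a general parallel-sets theorem does not suffice.
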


\begin{proof}
If no minimal such $Z$ existed, there would be a chain of such subsets with empty intersection. The distance to
a base-point must then go to infinity and thus the chain contains a countable sequence to which we apply
Proposition~\ref{prop:NestedSequence}, contradicting the assumption on the circumradius.

Let $Z'$ denote the set of all such minimal sets and $Z_0 = \bigcup Z'$ be its union. As in~\cite[p.~10]{Leeb}
one observes that for any $Z_1, Z_2 \in Z'$, the distance $z \mapsto d(z, Z_2)$ is constant on $Z_1$ and that
the nearest point projection $p_{Z_2}$ restricted to $Z_1$ defines an isometry $Z_1 \to Z_2$. By the Sandwich
Lemma~\cite[II.2.12]{Bridson-Haefliger}, this implies that $Z_0$ is convex and that the map $Z' \times Z' \to
\RR_+ : (Z_1, Z_2) \mapsto d(Z_1, Z_2)$ is a geodesic metric on $Z'$. As in~\cite[Section~4.3]{Monod_superrigid},
this yields a bijection $\alpha : Z_0 \to Z \times Z': x \mapsto (p_Z(x), Z_x)$, where $Z_x$ is the unique
element of $Z'$ containing $x$. The product of metric spaces $Z \times Z'$ is given the product metric. In order
to establish that $\alpha$ is an isometry, it remains as in~\cite[Proposition~38]{Monod_superrigid}, to trivialise
``holonomy''; it the current setting, this is achieved by Lemma~\ref{lem:holonomy}, which thus concludes the
proof of Proposition~\ref{prop:ConvexCore}. (Notice that $Z_0$ is indeed closed since otherwise we could extend
$\alpha^{-1}$ to the completion of $Z\times Z'$.)
\end{proof}

\begin{lem}\label{lem:holonomy}
For all $Z_1, Z_2,Z_3 \in Z'$, we have $p_{Z_1} \circ p_{Z_3} \circ p_{Z_2} |_{Z_1} = \mathrm{Id}_{Z_1}$.
\end{lem}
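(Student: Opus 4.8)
The plan is to prove that $f := p_{Z_1} \circ p_{Z_3} \circ p_{Z_2}|_{Z_1}$ is the identity map of $Z_1$. Note first that $f$ is indeed an isometry of $Z_1$ onto itself, being a composite of the isometries $p_{Z_j}|_{Z_i} \colon Z_i \to Z_j$ between the pairwise parallel sets $Z_i$ that were produced in the proof of Proposition~\ref{prop:ConvexCore}. The first step is to observe that $f$ acts trivially on $\bd Z_1 = \bd Y$: for $Z_i, Z_j \in Z'$ the function $d(\cdot, Z_j)$ is constant on $Z_i$, so for any geodesic ray $\rho$ in $Z_i$ the rays $\rho$ and $p_{Z_j} \circ \rho$ stay at constant distance, hence are asymptotic; thus $p_{Z_j}|_{Z_i}$ induces the identity on $\bd Z_i = \bd Z_j = \bd Y$, and therefore so does $f$.

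Next I claim $f$ is a Clifford translation of $Z_1$, i.e.\ has constant displacement. Since $Z_1$ is a proper \cat space and $f$ fixes every point of $\bd Z_1$, if $f$ were parabolic then Corollary~\ref{cor:FixedPointParabolic} would force the fixed-point set of $f$ in $\bd Z_1$, which is all of $\bd Z_1 = \bd Y$, to have intrinsic circumradius $\leq \pi/2$, contradicting the standing hypothesis of Proposition~\ref{prop:ConvexCore} that $\bd Y$ has circumradius $> \pi/2$. So $f$ is semi-simple and $\mathrm{Min}(f)$ is non-empty; it is also closed and convex. For any $\eta \in \bd Z_1$ and $x \in \mathrm{Min}(f)$, the geodesic ray $\rho$ from $x$ towards $\eta$ lies in $\mathrm{Min}(f)$: the convex function $t \mapsto d(\rho(t), f\rho(t))$ is bounded above, since $\rho$ and $f\rho$ are asymptotic to $\eta = f\eta$, hence non-increasing, while it starts at $|f|$ and is always $\geq |f|$, so it is constant. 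Thus $\bd \mathrm{Min}(f) = \bd Z_1 = \bd Y$, and minimality of $Z_1$ within $Z'$ gives $\mathrm{Min}(f) = Z_1$.

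If $|f| = 0$ then $f = \mathrm{id}$ and we are done, so assume $|f| > 0$; the task is to exclude non-trivial Clifford translations among these holonomies. Write $Z_i = \RR^n \times Z_i^{\mathrm{reg}}$ with $\RR^n$ the maximal Euclidean factor and $Z_i^{\mathrm{reg}}$ free of Euclidean factor. The spaces $Z_i$ are pairwise isometric via the maps $p_{Z_j}|_{Z_i}$, and the Euclidean factor is canonical (see~\cite[II.6.15(6)]{Bridson-Haefliger} recalled in Section~\ref{sec:notation}), so each $p_{Z_j}|_{Z_i}$ splits as a product $A_{ij} \times \bar p_{ij}$ of an affine isometry of $\RR^n$ with an isometry $Z_i^{\mathrm{reg}} \to Z_j^{\mathrm{reg}}$; hence $f = A \times \bar f$ with $A = A_{31} A_{23} A_{12}$ and $\bar f = \bar p_{31} \bar p_{23} \bar p_{12}$. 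Since $f$ is a Clifford translation, so are $A$ (whence $A$ is a translation of $\RR^n$) and $\bar f$; but $Z_1^{\mathrm{reg}}$ has no Euclidean factor and therefore admits no non-trivial Clifford translation, so $\bar f = \mathrm{id}$. It remains to show $A = \mathrm{id}$. The Euclidean factors $F_i := \RR^n \times \{\ast\} \subset Z_i$ are pairwise parallel flats ($p_{Z_j}$ carries $F_i$ onto $F_j$), and on $F_i$ the map $p_{Z_j}$ coincides with the nearest-point projection $F_i \to F_j$; so $A$ is, under the natural identifications, the holonomy $p_{F_1} \circ p_{F_3} \circ p_{F_2}|_{F_1}$ of three pairwise parallel flats. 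Since the convex hull of finitely many pairwise parallel flats is a Euclidean prism $\RR^n \times \Delta$, this composite is the identity by elementary Euclidean geometry: a nearest-point projection between parallel affine subspaces of a Euclidean space is a translation, and these translations cancel around the loop. Hence $A = \mathrm{id}$, so $f = \mathrm{id}$.

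The crux is the last paragraph. Boundary-triviality of $f$ together with minimality only pins $f$ down up to a Clifford translation, and the large circumradius of $\bd Y$ does not by itself rule out Euclidean translation directions; the role of minimality is precisely to annihilate the $Z^{\mathrm{reg}}$-component of the holonomy, leaving only its Euclidean component, which is harmless. The one genuinely geometric input there is that the convex hull of the three parallel flats $F_i$ is isometric to a prism $\RR^n \times \Delta$, which one gets by iterating the flat strip theorem.
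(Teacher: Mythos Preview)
Your argument that $f$ is a Clifford translation is correct, though more elaborate than needed: the paper simply observes that the displacement function $x\mapsto d(x,fx)$ is convex and \emph{bounded} (by $d(Z_1,Z_2)+d(Z_2,Z_3)+d(Z_3,Z_1)$), so its sublevel sets are closed convex subsets of $Z_1$ with full boundary, hence equal to $Z_1$ by minimality; thus the displacement is constant. Your route via Corollary~\ref{cor:FixedPointParabolic} and $\mathrm{Min}(f)$ works too.

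The genuine gap is in your last paragraph. The assertion that the convex hull of three pairwise parallel $n$-flats is a \emph{Euclidean} prism $\RR^n\times\Delta$ is false in general, and ``iterating the flat strip theorem'' does not give it: take $X=\RR^n\times T$ with $T$ a tripod; the three flats $\RR^n\times\{\text{leaf}_i\}$ are pairwise parallel, but their convex hull is $\RR^n\times T$, not a Euclidean solid. So the appeal to ``elementary Euclidean geometry'' is unjustified. What \emph{is} true is that the parallel set of any one of these flats splits as $\RR^n\times Y$ for some \cat space $Y$, and in those coordinates each nearest-point projection between parallel $\RR^n$-fibres has the form $(v,y_i)\mapsto(v,y_j)$; the composite is then visibly the identity on the $\RR^n$-coordinate. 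That is the correct fix, and it is exactly the mechanism the paper uses---only the paper applies it directly to a single geodesic line rather than to an $n$-flat, which lets it bypass the whole Euclidean-factor decomposition.

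Concretely, the paper argues: if $|f|>0$, choose an axis $c$ of the Clifford translation $f$. The four lines $c$, $p_{Z_2}c$, $p_{Z_3}p_{Z_2}c$, $fc$ lie in the parallel set $P(c)\cong\RR\times Y$ of $c$, and the nearest-point property forces each projection to preserve the $\RR$-coordinate (this is what the flat strip Lemma~II.2.15 in~\cite{Bridson-Haefliger} buys you). Hence $f|_c$ preserves the $\RR$-coordinate; but on an axis $f$ acts as translation by $|f|$ in the $\RR$-coordinate, so $|f|=0$. This replaces your entire third paragraph by a few lines and avoids both the splitting $Z_i=\RR^n\times Z_i^{\mathrm{reg}}$ and the (false) Euclidean-prism claim.
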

\begin{proof}[Proof of Lemma~\ref{lem:holonomy}]
Let $\vartheta : Z_1 \to Z_1$ be the isometry defined by $p_{Z_1} \circ p_{Z_3} \circ p_{Z_2} |_{Z_1}$ and let
$f$ be its displacement function. Then $f : Z_1 \to \RR$ is a non-negative convex function which is bounded
above by $d(Z_1, Z_2)+d(Z_2, Z_3) + d(Z_3, Z_1)$. In particular, the restriction of $f$ to any geodesic ray in
$Z_1$ is non-increasing. Therefore, a sublevel set of $f$ is a closed convex subset $Z$ of $Z_1$ with full
boundary, namely $\bd Z= \bd Z_1$. By definition, the subspace $Z_1$ is minimal with respect to the property
that $\bd Z_1 = \bd Y$ and hence we deduce $Z= Z_1$. It follows that the convex function $f$ is constant. In
other words, the isometry $\vartheta$ is a Clifford translation. If it is not trivial, then $Z_1$ would contain
a $\teta$-stable geodesic line on which $\teta$ acts by translation. But by
\cite[Lemma~II.2.15]{Bridson-Haefliger}, the restriction of $\vartheta$ to any geodesic line is the identity.
Therefore $\vartheta$ is trivial, as desired.
\end{proof}

Let $\Gamma$ be a group acting on $X$ by isometries. Following~\cite[Definition~5]{Monod_superrigid}, we say
that the $\Gamma$-action is \textbf{reduced}\index{reduced action} if there is no unbounded closed convex subset
$Y \subsetneq X$ such that $g.Y$ is at finite Hausdorff distance from $Y$ for all $g \in \Gamma$.

\begin{cor}\label{cor:ReducedAction}
Let $X$ be a proper irreducible \cat space with finite-dimensional Tits boundary, and $\Gamma < \Isom(X)$ be a
subgroup acting minimally without fixed point at infinity. Then the $\Gamma$-action is reduced.
\end{cor}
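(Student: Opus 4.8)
The plan is to argue by contradiction. Suppose the $\Gamma$-action is not reduced, so that there is an unbounded closed convex subset $Y \subsetneq X$ with $g.Y$ at finite Hausdorff distance from $Y$ for every $g \in \Gamma$. Since two subsets at finite Hausdorff distance have the same boundary at infinity, this yields $g.\bd Y = \bd(g.Y) = \bd Y$, so the non-empty subset $\bd Y \se \bd X$ is $\Gamma$-invariant for the Tits metric; it is moreover closed and $\pi$-convex by~\cite[II.9.13]{Bridson-Haefliger} and finite-dimensional, since $\bd X$ is. I would then split the argument according to the intrinsic circumradius of $\bd Y$.

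If the intrinsic circumradius of $\bd Y$ is at most $\pi/2$, I would apply Proposition~\ref{prop:BalserLytchak} to $\bd Y$ inside the complete $\CAT{1}$ space $\bd X$: it produces a canonical intrinsic circumcentre $\xi \in \bd Y \se \bd X$. Being canonical, $\xi$ is fixed by every isometry of $\bd X$ stabilising $\bd Y$, hence in particular by $\Gamma$ (which acts on $\bd X$ by Tits isometries preserving $\bd Y$). This contradicts the hypothesis that $\Gamma$ has no fixed point at infinity. Note that this half of the argument uses neither minimality nor irreducibility, only properness and finite-dimensionality of $\bd X$.

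If instead the intrinsic circumradius of $\bd Y$ exceeds $\pi/2$, I would invoke Proposition~\ref{prop:ConvexCore} applied to $Y$: there is a closed convex $Z \se X$ which is minimal subject to $\bd Z = \bd Y$, and the union $Z_0$ of all such minimal subspaces is closed, convex, and splits isometrically as $Z_0 \cong Z \times Z'$. Any $g\in\Gamma$ permutes these minimal subspaces (it is an isometry carrying $\bd Y$ onto itself and thus preserves both the property of having boundary $\bd Y$ and inclusion-minimality of it), so $Z_0$ is a non-empty closed convex $\Gamma$-invariant subset; minimality of the $\Gamma$-action forces $Z_0 = X$. Irreducibility of $X$ then makes one of the factors $Z, Z'$ a single point. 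If $Z$ is a point, then $\bd Y = \bd Z = \varnothing$, contradicting that $Y$ is unbounded. If $Z'$ is a point, then $X = Z_0 = Z$, so that $X$ itself is minimal among closed convex subsets of boundary $\bd Y$; but $Y \subsetneq X$ is such a subset, a contradiction.

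The two substantial ingredients are thus Propositions~\ref{prop:BalserLytchak} and~\ref{prop:ConvexCore}, and the only thing that needs checking is that the objects they deliver are $\Gamma$-equivariant — which holds automatically because each is manufactured canonically out of $\bd Y$ alone, and $\bd Y$ is $\Gamma$-invariant. The one genuine use of irreducibility, and so the conceptual crux of the proof, is eliminating the surviving possibility in the second case, namely $X \cong Z \times Z'$ with a non-trivial transverse factor $Z'$; everything else reduces to routine facts about boundaries of convex sets and Hausdorff distance.
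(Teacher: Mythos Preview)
Your proof is correct and follows the same route as the paper: the paper dispatches the small-radius case in one line by citing Proposition~\ref{prop:BalserLytchak} (your first case), then applies Proposition~\ref{prop:ConvexCore} to obtain the $\Gamma$-invariant splitting $Z_0 = Z \times Z'$, concludes $Z_0 = X$ by minimality, and uses irreducibility to force $X = Z$ and hence $X = Y$. Your case analysis at the end is slightly more explicit than the paper's (you treat the possibility that $Z$ is a point separately), but the argument is the same.
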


\begin{proof}
Suppose for a contradiction that the $\Gamma$-action on $X$ is not reduced. Then there exists an unbounded
closed convex subset $Y \subsetneq X$ such that $g.Y$ is at finite Hausdorff distance from $Y$ for all $g \in
\Gamma$. In particular $\bd Y$ is $\Gamma$-invariant. By Proposition~\ref{prop:BalserLytchak}, it must have
intrinsic circumradius $> \pi/2$. Proposition~\ref{prop:ConvexCore} therefore yields a canonical closed convex
subset $Z_0 = Z \times Z'$ with $\bd (Z \times \{z'\}) = \bd Y$ for all $z' \in Z'$; clearly $Z_0$ is
$\Gamma$-invariant and hence we have $Z_0 = X$ by minimality. Since $X$ is irreducible by assumption, we deduce
$X = Z$ and hence $X = Y$, as desired.
\end{proof}

\subsection{Minimal actions and boundary-minimal spaces}
Boundary-minimality and minimality, as defined in the Introduction, are two possible ways for a
\cat space to be ``non-degenerate'', as illustrated by the following.

\begin{lem}\label{lem:ConvexFunctions}
Let $X$ be a complete \cat space.
\begin{enumerate}
\item A group $G < \Isom(X)$ acts minimally if and only if any continuous convex $G$-invariant function on
$X$ is constant.

\item If $X$ is boundary-minimal then any bounded convex function on  $X$ is constant.
\end{enumerate}
\end{lem}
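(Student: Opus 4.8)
The plan is to treat the two statements separately but with a common mechanism: turning a non-constant convex function into a proper non-empty invariant (or bounded) closed convex subset via sublevel sets.

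For~(i), one direction is immediate: if $G$ acts minimally and $f$ is a continuous convex $G$-invariant function, then for each $t$ the sublevel set $\{x \in X : f(x) \le t\}$ is closed, convex and $G$-invariant, hence either empty or all of $X$; letting $t$ range shows $f$ is constant (if $f$ is bounded below, take $t$ just above $\inf f$ to get a non-empty proper closed convex invariant set, a contradiction, so $f$ attains no values below any threshold — more carefully, the sets are nested and their union is $X$, their intersection over $t \to \inf f$ is the non-empty closed convex invariant set $\{f = \inf f\}$ when the infimum is attained, forcing $f \equiv \inf f$; when the infimum is not attained the intersection is empty, but then some sublevel set is a proper non-empty closed convex invariant subset). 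Conversely, suppose every continuous convex $G$-invariant function is constant but $G$ does not act minimally, so there is a non-empty closed convex $G$-invariant $Y \subsetneq X$. Then $x \mapsto d(x, Y)$ is a continuous convex $G$-invariant function (convexity of the distance to a convex set is \cite[II.2.5]{Bridson-Haefliger}), and it is non-constant since it vanishes on $Y$ but is positive off $Y$ — contradiction.

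For~(ii), suppose $f$ is a bounded convex function on $X$ which is not constant, say $f(x_0) < f(x_1)$ for some points $x_0, x_1$. Pick $t$ with $f(x_0) < t < f(x_1)$ and consider the sublevel set $Z = \{x : f(x) \le t\}$: it is closed, convex, non-empty, and a proper subset of $X$ since $x_1 \notin Z$. The key point is that $\bd Z = \bd X$. Indeed, given any geodesic ray $\ro : [0,\infty) \to X$, the function $t \mapsto f(\ro(t))$ is convex and bounded above (since $f$ is bounded), hence non-increasing; therefore $\ro$ eventually enters, and stays in, every sublevel set of $f$ that it meets at all — in particular the tail of any ray issuing from a point of $Z$ lies in $Z$, and more generally any ray is at bounded Hausdorff distance from a ray inside $Z$ (translate its basepoint into $Z$ and use that $f$ is non-increasing along it). This shows $\bd Z = \bd X$, contradicting boundary-minimality of $X$. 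Hence $f$ is constant.

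The only mildly delicate point — and the one I would be most careful about — is the boundary computation in~(ii): one must check that $\bd Z = \bd X$ rather than merely $\bd Z \neq \varnothing$. The clean way is to observe that for a point $y \in Z$ and any $\xi \in \bd X$, the ray from $y$ to $\xi$ has $f$ non-increasing along it (bounded convex function on a ray), so the whole ray stays in $Z$; thus every point of $\bd X$ is represented by a ray in $Z$, giving $\bd X \subseteq \bd Z$, and the reverse inclusion is automatic. This uses only the standard fact that a convex function bounded above on a geodesic ray is non-increasing, together with convexity of sublevel sets.
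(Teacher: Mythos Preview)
Your proof is correct and follows essentially the same approach as the paper: sublevel sets for~(i) (with the distance-to-a-convex-set function for the converse), and for~(ii) the observation that a bounded convex function is non-increasing along every ray, so sublevel sets have full boundary. Your argument for~(ii) is in fact more explicit than the paper's, which simply points back to the proof of Lemma~\ref{lem:holonomy} where this ray argument appears; your parenthetical in~(i) is more convoluted than necessary (just take $t=f(x_0)$ with $f(x_0)<f(x_1)$), but not wrong.
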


\begin{proof}
Necessity in the first assertion follows immediately by considering sub-level sets (see~\cite[Lemma~37]{Monod_superrigid}).
Sufficiency is due to the fact that the distance to a closed convex set is a convex continuous
function~\cite[II.2.5]{Bridson-Haefliger}.
The second assertion was established in the proof of Lemma~\ref{lem:holonomy}.
\end{proof}

Proposition~\ref{prop:ConvexCore} has the following important consequence:

\begin{cor}\label{cor:CanonicalFullSubset}
Let $X$ be a proper \cat space. If $\bd X$ has circumradius $> \pi/2$, then $X$ possesses a canonical closed
convex subspace $Y \subseteq X$ such that $Y$ is boundary-minimal and $\bd Y = \bd X$.
\end{cor}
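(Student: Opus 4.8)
The plan is to feed the pair $(X,X)$ itself into Proposition~\ref{prop:ConvexCore}. The circumradius hypothesis on $\bd X$ is exactly the statement that $\bd X$ has intrinsic circumradius~$>\pi/2$, so the proposition applies with $Y=X$: it produces a closed convex $Z\se X$ with $\bd Z=\bd X$ that is minimal for this property, and it exhibits the union $Z_0$ of \emph{all} such minimal subspaces as a closed convex subset of $X$ which splits isometrically as $Z_0\cong Z\times Z'$, where $Z$ is one such minimal subspace, $Z'$ is the set of all of them metrised by $(W_1,W_2)\mapsto d(W_1,W_2)$, and $\alpha : Z_0\to Z\times Z'$, $x\mapsto (p_Z(x),Z_x)$ is the splitting map. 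Note first that any minimal subspace $W$ in this family is automatically \emph{boundary-minimal}: a closed convex $W'\subsetneq W$ with $\bd W'=\bd W=\bd X$ would be a strictly smaller member of the family, contradicting minimality. Hence the entire task is to single out one such $W$ canonically.

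To do so I would first show that $Z'$ is bounded. Since $Z\se Z_0\se X$ and $\bd Z=\bd X$, we have $\bd Z_0=\bd X$; on the other hand $\bd Z_0=\bd(Z\times Z')$ is the spherical join of $\bd Z$ and $\bd Z'$ \cite{Bridson-Haefliger}. If $\bd Z'$ contained a point $\eta$, then $\eta$ would lie in $\bd Z_0=\bd X=\bd Z$ and yet be at Tits distance~$\pi/2$ from every point of $\bd Z$, in particular from itself, which is absurd; hence $\bd Z'=\varnothing$. Now $Z'$ is isometric to the slice $\{z_0\}\times Z'$ of $Z_0$, hence to a closed convex subset of the proper space $X$, so it is itself proper and complete; and a proper complete \cat space with empty boundary is bounded (otherwise an escaping sequence would, by properness, give rise via its geodesic segments to a geodesic ray, hence to a boundary point). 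Consequently $Z'$ has a unique circumcentre $c\in Z'$, fixed by every isometry of $Z'$. I then define $Y$ to be the minimal subspace of $X$ represented by $c$, that is, $Y=\alpha^{-1}(Z\times\{c\})$. By construction $Y$ is a closed convex subset of $X$ with $\bd Y=\bd X$, and it is boundary-minimal by the remark of the previous paragraph.

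Finally, canonicity. Any $g\in\Isom(X)$ acts on $\bd X$ and carries each closed convex $W\se X$ to a closed convex $gW$ with $\bd(gW)=g.\bd W$; in particular it permutes the family of minimal closed convex subspaces with boundary $\bd X$ (minimality being visibly preserved by $g$), hence stabilises $Z_0$. Moreover $g$ sends the leaf $Z_x$ through a point $x$ to the leaf $Z_{gx}=gZ_x$ through $gx$, so the permutation it induces on the leaf set $Z'$ is $W\mapsto gW$; this preserves the metric $(W_1,W_2)\mapsto d(W_1,W_2)$ because $g$ is an isometry of $X$, hence it is an isometry of $Z'$ and fixes the circumcentre $c$. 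Therefore $g(Y)=Y$, proving that $Y$ is canonical. The argument is short once Proposition~\ref{prop:ConvexCore} is in hand; the only genuinely new point is the boundedness of the leaf space $Z'$, where the spherical-join description of $\bd(Z\times Z')$ combined with $\bd Z=\bd X$ does the work, everything else being bookkeeping about the $\Isom(X)$-action on the canonical splitting $Z_0\cong Z\times Z'$.
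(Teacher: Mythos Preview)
Your proof is correct and follows essentially the same route as the paper: apply Proposition~\ref{prop:ConvexCore} with $Y=X$, observe that $Z'$ is bounded (the paper simply asserts this ``since $\bd Z=\bd X$'', whereas you spell out the spherical-join argument), take the circumcentre $c$ of $Z'$, and let $Y$ be the corresponding fibre. Your treatment of canonicity and of the boundedness of $Z'$ is more detailed than the paper's terse version, but the strategy is identical.
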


\begin{proof}
Let $Z_0 = Z \times Z'$ be the product decomposition provided by Proposition~\ref{prop:ConvexCore}. The group
$\Isom(X)$ permutes the elements of $Z'$ and hence acts by isometries on $Z'$. Under the present hypotheses, the
space $Z'$ is bounded since $\bd Z = \bd X$. Therefore it has a circumcentre $z'$, and the fibre $Y = Z \times
\{z'\}$ is thus $\Isom(X)$-invariant.
\end{proof}

\begin{prop}\label{prop:LargeCircumradius}
Let $X$ be a proper \cat space which is minimal. Assume that $\bd X$ has finite dimension. Then $\bd X$ has
circumradius~$>\pi/2$ (unless $X$ is reduced to a point). In particular, $X$ is boundary-minimal.
\end{prop}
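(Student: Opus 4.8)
The plan is to argue by contradiction, assuming that $X$ is not a point but $\bd X$ has intrinsic circumradius $\le\pi/2$, and to extract from this a non-constant continuous convex $\Isom(X)$-invariant function on $X$, which contradicts minimality via Lemma~\ref{lem:ConvexFunctions}(i). The concluding assertion then costs nothing: once $\bd X$ is known to have circumradius $>\pi/2$, Corollary~\ref{cor:CanonicalFullSubset} supplies a canonical — hence $\Isom(X)$-invariant — closed convex subspace $Y\se X$ with $\bd Y=\bd X$, and minimality forces $Y=X$, so $X$ is boundary-minimal (the case of a point being trivial).

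First some painless reductions. A bounded proper \cat space has a unique circumcentre, which is fixed by every isometry, so a bounded minimal space is a single point; we may thus assume $X$ unbounded, in particular $\bd X\neq\varnothing$. Since $\bd X$ is finite-dimensional with intrinsic circumradius $\le\pi/2$, Proposition~\ref{prop:BalserLytchak}, applied with $Y=\bd X$ inside the complete CAT$(1)$ space $\bd X$, yields a canonical intrinsic circumcentre $\xi\in\bd X$; being canonical, it is fixed by $G:=\Isom(X)$. Fix a Busemann function $b_\xi$; for each $g\in G$ the composite $b_\xi\circ g$ is a Busemann function centred at $g\xi=\xi$, hence $b_\xi\circ g=b_\xi-\beta(g)$ for a scalar $\beta(g)$, and the cocycle relation makes $\beta\colon G\to\RR$ a homomorphism. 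It now suffices to prove $\beta\equiv 0$: then $b_\xi$ is $G$-invariant, and it is a continuous convex function which is non-constant because it strictly decreases (at unit speed) along any geodesic ray pointing to $\xi$, so Lemma~\ref{lem:ConvexFunctions}(i) contradicts minimality.

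Proving $\beta\equiv 0$ is where the hypothesis on the circumradius is really used. As $\xi$ is a circumcentre of intrinsic radius $\le\pi/2$, every point of $\bd X$ lies at Tits distance $\le\pi/2$ from $\xi$; by Lemma~\ref{lem:RadiusHoroball} the boundary of every closed horoball centred at $\xi$ is precisely the closed Tits-ball of radius $\pi/2$ about $\xi$, so it equals $\bd X$. Hence, for any $x\in X$ and any $\eta\in\bd X$, the closed horoball $\{b_\xi\le b_\xi(x)\}$ is convex, contains $x$, and has $\eta$ in its boundary, so it contains the geodesic ray $[x,\eta)$; applying this with the base point moved along the ray shows that $b_\xi$ is non-increasing along every geodesic ray, and therefore constant along every geodesic line (a convex function non-increasing in both directions along a line is constant). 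Now take any $g\in G$. If $g$ has a fixed point, then $\beta(g)=0$ immediately. If $g$ is parabolic, choose $x_n$ with $d(x_n,gx_n)\to 0$; then $|\beta(g)|=|b_\xi(x_n)-b_\xi(gx_n)|\le d(x_n,gx_n)\to 0$ since $b_\xi$ is $1$-Lipschitz. If $g$ is hyperbolic it stabilises a geodesic axis $L$, on which $b_\xi$ is constant, so evaluating $b_\xi\circ g=b_\xi-\beta(g)$ at a point of $L$ gives $\beta(g)=0$. This exhausts the trichotomy, so $\beta\equiv 0$, completing the contradiction.

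I expect the genuine difficulty to lie in this last paragraph: squeezing out of a mere bound on the circumradius enough rigidity to annihilate $\beta$, namely the implication ``$\bd X$ of circumradius $\le\pi/2$ $\Rightarrow$ $b_\xi$ non-increasing along all rays $\Rightarrow$ $b_\xi$ constant on all geodesic lines'', together with the (easy but easy-to-overlook) $1$-Lipschitz argument for parabolic elements. The remaining steps — reducing everything to $\beta\equiv 0$, and deriving boundary-minimality from Corollary~\ref{cor:CanonicalFullSubset} — are routine.
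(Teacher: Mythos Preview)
Your overall strategy matches the paper's: obtain an $\Isom(X)$-fixed circumcentre $\xi\in\bd X$ via Proposition~\ref{prop:BalserLytchak}, show that the Busemann character at $\xi$ vanishes on $\Isom(X)$, and conclude that every horoball at $\xi$ is invariant, contradicting minimality. The derivation of boundary-minimality from Corollary~\ref{cor:CanonicalFullSubset} is also the intended one.

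There is, however, a genuine gap in your trichotomy for $\beta\equiv 0$. You write ``If $g$ is parabolic, choose $x_n$ with $d(x_n,gx_n)\to 0$'', but \emph{parabolic} only means that the infimum of the displacement function is not attained; the translation length $|g|=\inf_x d(x,gx)$ need not vanish. For instance, on $\RR\times\HH^2$ the product of a unit translation with a parabolic of $\HH^2$ is parabolic with $|g|=1$. Your $1$-Lipschitz estimate thus only yields $|\beta(g)|\le |g|$, which is vacuous when $|g|>0$, and your axis argument does not apply since no axis exists. The case of a ballistic parabolic isometry is therefore not covered.

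The paper handles this uniformly via Lemma~\ref{lem:Busemann:character}: any $g$ with $\beta_\xi(g)\neq 0$ is ballistic (this much your argument also gives, since $|\beta(g)|\le|g|$), and if $\beta_\xi(g)>0$ then its attracting fixed point $\eta_g\in\bd X$ satisfies $\tangle{\xi}{\eta_g}>\pi/2$, immediately contradicting that $\xi$ is a circumcentre of radius $\le\pi/2$. Your horoball argument (that $b_\xi$ is non-increasing along every ray, hence constant on every geodesic line) is correct and attractive, but as written it only disposes of the genuinely hyperbolic case; to treat ballistic parabolics along these lines you would still need the sublinear tracking of $\{g^nx\}$ by a ray towards $\eta_g$, which is precisely what underlies Lemma~\ref{lem:Busemann:character}.
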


The proof of Proposition~\ref{prop:LargeCircumradius} requires some preliminaries.
Given a point at infinity $\xi$, consider the Busemann function $B_\xi$; the cocycle property (recalled in
Section~\ref{sec:notation}) implies in particular that for any isometry $g \in \Isom(X)$ fixing $\xi$ and any $x \in X$
the real number $B_{\xi, x}(g.x)$ is independent on the choice of $x$ and yields a canonical homomorphism
$$
\beta_\xi : \Isom(X)_\xi \lra \RR : g \longmapsto B_{\xi, x}(g.x)
$$
called the \textbf{Busemann character}\index{Busemann!character} centred at $\xi$.

\smallskip

Given an isometry $g$, it follows by the \cat property that $\inf_{n\geq 0} d(g^n x, x)/n$ coincides with the translation length
of $g$ independently of $x$. We call an isometry \textbf{ballistic}\index{isometry!ballistic} when this number is positive.
An important fact about a ballistic isometry $g$ of any complete \cat space $X$ is that
for any $x \in X$ the sequence $\{g^n.x\}_{n\geq 0}$ converges to a point $\eta_g \in
\bd X$ independent of $x$; $\eta_g$ is called the (canonical) \textbf{attracting fixed point} of $g$ in $\bd X$.
Moreover, this convergence holds also in angle, which means that $\lim \cangle{x}{g^n x}{r(t)}$ vanishes as $n, t\to\infty$
when $r:\RR_+\to X$ is any ray pointing to $\eta_g$.
This is a (very) special case of the results in~\cite{KarlssonMargulis}.

\begin{lem}\label{lem:Busemann:character}
Let $\xi \in X$ and $g \in \Isom(X)_\xi$ be an isometry which is not annihilated by the Busemann character
centred at $\xi$. Then $g$ is ballistic. Furthermore, if $\beta_\xi(g) > 0$ then $\tangle{\xi}{\eta_g} >\pi/2$.
\end{lem}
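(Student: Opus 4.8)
The plan is to use the cocycle relation to get a lower bound on the displacement $d(g^n.x, x)$ that grows linearly, which immediately shows $g$ is ballistic; then to identify the attracting fixed point $\eta_g$ and compute the Tits angle $\tangle{\xi}{\eta_g}$ using the asymptotic angle formula together with the fact (recalled just above the lemma) that $g^n.x \to \eta_g$ also in angle. First I would fix a base-point $x$ and set $\lambda = \beta_\xi(g)$, which we may assume is $>0$ after replacing $g$ by $g^{-1}$ if necessary (note $\beta_\xi(g^{-1}) = -\beta_\xi(g)$, and ballistic-ness as well as the conclusion $\tangle{\xi}{\eta_g}>\pi/2$ are insensitive to this replacement, since $g$ and $g^{-1}$ have the same translation length and... well, one has to be slightly careful: $\eta_{g}$ and $\eta_{g^{-1}}$ differ, but the hypothesis $\beta_\xi(g)\neq 0$ is symmetric and the final clause only concerns the case $\beta_\xi(g)>0$, so in fact I only need to prove ballisticity in general, and the angle estimate under the sign hypothesis). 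By the cocycle relation, $B_{\xi,x}(g^n.x) = \sum_{k=0}^{n-1} B_{\xi, g^k.x}(g^{k+1}.x) = n\lambda$, since each summand equals $\beta_\xi(g) = \lambda$ by $g$-equivariance of the Busemann cocycle. Since $B_{\xi,x}(g^n.x) \leq d(x, g^n.x)$, we get $d(g^n.x, x) \geq n|\lambda|$, so the translation length of $g$ is at least $|\lambda| > 0$: $g$ is ballistic.

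Now assume $\lambda = \beta_\xi(g) > 0$ and let $\eta_g = \lim_n g^n.x \in \bd X$ be the attracting fixed point. To estimate $\tangle{\xi}{\eta_g}$ from below, I would use the characterization $\tangle{\xi}{\eta_g} = \lim_{n,t\to\infty} \cangle{x}{g^n.x}{r(t)}$ where $r$ is the ray from $x$ pointing to $\xi$ — this limit computes the Tits angle because convergence $g^n.x\to\eta_g$ holds in angle (the Karlsson--Margulis input recalled above) and $r$ realizes the direction $\xi$ from $x$; more precisely I would invoke \cite[II.9.8 and II.9.16]{Bridson-Haefliger} to pass between comparison angles at the apex along the two rays and the Tits angle. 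Applying the asymptotic angle formula from Section~\ref{sec:notation} along the ray $r$ (towards $\xi$) with the point $y = g^n.x$ gives
$$\lim_{t\to\infty}\cos\cangle{x}{r(t)}{g^n.x} = -\frac{B_{\xi,x}(g^n.x)}{d(x, g^n.x)} = -\frac{n\lambda}{d(x, g^n.x)}.$$
Since $d(x, g^n.x) \leq d(x, g.x)\cdot n$ (triangle inequality via $g$-equivariance), the right-hand side is $\leq -\lambda/d(x,g.x) < 0$ for every $n$. Hence $\cos\tangle{\xi}{\eta_g} \leq -\lambda/d(x,g.x) < 0$, which gives $\tangle{\xi}{\eta_g} > \pi/2$ as desired.

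The main obstacle I anticipate is the bookkeeping in the last paragraph: correctly justifying that the double limit of comparison angles $\cangle{x}{g^n.x}{r(t)}$ does compute the Tits angle $\tangle{\xi}{\eta_g}$ (rather than merely a lower bound) requires the angle-convergence statement for ballistic isometries and care with the order of limits in $n$ and $t$; and one must be sure the strict inequality survives the limit — it does, because the bound $\cos\cangle{x}{r(t)}{g^n.x} \leq -\lambda/d(x,g.x) + o(1)$ as $t\to\infty$ is uniform in $n$, so the limsup over $n,t$ of the cosine is bounded above by the strictly negative constant $-\lambda/d(x,g.x)$. A minor point to handle cleanly is the reduction to $\lambda>0$ for ballisticity: there one does not need the sign at all, only $|\lambda|>0$, so the first paragraph already disposes of it. Everything else is a routine combination of the cocycle relation, the asymptotic angle formula, and the cited facts about ballistic isometries.
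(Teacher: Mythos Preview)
Your proof is correct and follows essentially the same route as the paper: the ballisticity comes from $|\beta_\xi(g^n)| = n|\beta_\xi(g)| \leq d(g^n x, x)$, and the angle estimate combines the asymptotic angle formula, the bound $d(g^n x, x)\leq n\,d(gx,x)$, and the angle-convergence of $g^n x$ to $\eta_g$ (Karlsson--Margulis) to conclude $\cos\tangle{\xi}{\eta_g}\leq -\beta_\xi(g)/d(gx,x)<0$. The only difference is cosmetic: the paper phrases the second part as a contradiction from $\tangle{\xi}{\eta_g}\leq\pi/2$, whereas you argue directly; the computation and the ingredients are identical.
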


\begin{proof}
We have $\beta_\xi(g) = B_{\xi, x}(g.x) \leq d(x, g.x)$ for all $x \in X$. Thus $g$ is ballistic as soon as
$\beta_\xi(g)$ is non-zero.

Assume $\beta_\xi(g)>0$ and suppose for a contradiction that $\tangle{\xi}{\eta_g} \leq \pi/2$.
Choose $x\in X$ and let $\ro, \sigma$ be the rays issuing from $x$ and pointing towards $\xi$ and $\eta_g$ respectively.
Recall from~\cite[II.9.8]{Bridson-Haefliger} that $\tangle{\xi}{\eta_g} = \lim_{t,s\to\infty}\cangle x{\ro(t)}{\sigma(s)}$.
The convergence in direction of $g^n x$ implies that this angle is also given by $\lim_{t,n\to\infty}\cangle x{\ro(t)}{g^n x}$.
Since $\beta_\xi(g) > 0$ we can fix $n$ large enough to have
$$\cos\liminf_{t\to\infty}\cangle x{\ro(t)}{g^n x} > -\frac{\beta_\xi(g)}{d(g x, x)}.$$
We now apply the asymptotic angle formula from Section~\ref{sec:notation} with $y=g^n x$ and deduce that the left hand side
is $-\beta_\xi(g^n x) / d(g^n x, x)$. Since $\beta_\xi(g^n x) = n \beta_\xi(g x)$ and $d(g^n x, x)\leq nd(g x, x)$, we have a contradiction.
\end{proof}

\begin{proof}[Proof of Proposition~\ref{prop:LargeCircumradius}]
We can assume that $\bd X$ is non-empty since otherwise $X$ is a point by minimality. Suppose for a
contradiction that its circumradius is~$\leq\pi/2$. Then $\Isom(X)$ possesses a global fixed point $\xi\in\bd X$
and $\xi$ is a circumcentre of $\bd X$, see Proposition~\ref{prop:BalserLytchak}.
Lemma~\ref{lem:Busemann:character} implies that $\Isom(X) = \Isom(X)_\xi$ is annihilated by the Busemann
character centred at $\xi$. Thus $\Isom(X)$ stabilises every horoball, contradicting minimality.
\end{proof}

We shall use repeatedly the following elementary fact.

\begin{lem}\label{lem:cocompact:minimal}
Let $G$ be a group with an isometric action on a proper geodesically complete \cat space $X$.
If $G$ acts cocompactly or more generally has full limit set, then the action is minimal.
(This holds more generally when $\Delta G=\bd X$ in the sense of Section~\ref{sec:dichotomy} below.)
\end{lem}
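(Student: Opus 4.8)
The plan is to prove the contrapositive-style statement directly: assuming $G$ has full limit set (i.e. $\Lambda G = \bd X$, where $\Lambda G$ denotes the set of accumulation points in $\bd X$ of an orbit $G.x$), show that any non-empty closed convex $G$-invariant subset $Y \subseteq X$ must equal $X$. So let $Y$ be such a set, and suppose for contradiction $Y \subsetneq X$. The key geometric tool is the nearest-point projection $\pi_Y : X \to Y$, which is $1$-Lipschitz and $G$-equivariant since $Y$ is $G$-invariant, closed and convex.

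First I would use geodesic completeness: pick a point $x \in X \setminus Y$ and its projection $x_0 = \pi_Y(x) \in Y$. Extend the geodesic segment $[x_0, x]$ past $x$ to a geodesic ray $\ro : [0,\infty) \to X$ with $\ro$ passing through $x_0$ and $x$; let $\xi = \ro(\infty) \in \bd X$ be its endpoint. The point is that this ray leaves $Y$ and stays away: by convexity of $Y$ and the properties of projection, $\pi_Y(\ro(t)) = x_0$ for all $t$ past the point where the ray exits $Y$ (in fact by the standard argument, since the angle at $x_0$ between $[x_0,x]$ and any segment into $Y$ is $\geq \pi/2$, the whole forward ray projects to $x_0$), so $d(\ro(t), Y) \to \infty$. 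Hence $\xi$ is a boundary point whose every geodesic representative escapes $Y$.

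Now bring in the full limit set: since $\Lambda G = \bd X$, there is a sequence $g_n \in G$ with $g_n . x_0 \to \xi$. Because $\pi_Y$ is $G$-equivariant, $\pi_Y(g_n.x_0) = g_n.\pi_Y(x_0) = g_n.x_0 \in Y$; that is fine, the orbit stays in $Y$. The contradiction I am aiming for is metric: I want to compare $d(g_n.x_0, Y')$ for a suitable horoball or sublevel set. The cleanest route: let $b_\xi$ be a Busemann function centred at $\xi$. Since the forward ray $\ro$ escapes every horoball centred at $\xi$ while staying at bounded distance from $x_0$... wait — more precisely, the relevant fact is that $b_\xi$ is unbounded below on $X$ but, restricted to $Y$, must behave in a controlled way. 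Since $g_n.x_0 \to \xi$, we have $b_\xi(g_n.x_0) \to -\infty$. If $Y$ were such that $b_\xi|_Y$ is bounded below we would be done immediately; but in general it need not be, so instead I would argue as follows: consider the point $y_n = \pi_Y(\ro(t_n))$ for $t_n \to \infty$; we showed $y_n = x_0$, so $\ro(t_n)$ lies at distance $d(\ro(t_n), Y) = d(\ro(t_n), x_0) = t_n - t_0 \to \infty$ from $Y$, meaning the horoball-like region "beyond $Y$ in the direction $\xi$" is non-trivial. Then pulling back by $g_n$: $g_n^{-1}$ moves things so that... Actually the slick finish uses Lemma~\ref{lem:RadiusHoroball} together with the dichotomy machinery of Section~\ref{sec:dichotomy}, which is exactly why the parenthetical remark cites it.

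Given the excerpt's forward reference, I expect the intended proof to be short and to invoke Section~\ref{sec:dichotomy}: namely, full limit set $\Rightarrow$ no $G$-fixed point at infinity and, combined with geodesic completeness, the canonical minimal subset $X'$ of Theorem~\ref{thm:Decomposition}'s flavour coincides with $X$. Concretely, I would argue: if the action were not minimal, the canonical minimal non-empty closed convex $G$-invariant subset $Y$ (which exists, e.g. by Proposition~\ref{prop:EasyDichotomy}-type reasoning once no point at infinity is globally fixed — and full limit set forbids a global fixed point since a fixed $\xi$ would force $\Lambda G \subseteq$ a proper $\pi$-ball, impossible when $\Lambda G = \bd X$ and $X$ is not a point) would satisfy $\bd Y = \bd X$ because $\bd Y \supseteq \Lambda G = \bd X$; but by geodesic completeness and Proposition~\ref{prop:boundary-minimal}(3), a cocompact geodesically complete space is boundary-minimal, so no proper closed convex subset has full boundary — forcing $Y = X$. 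The main obstacle is handling the "more general" clause (full limit set, or $\Delta G = \bd X$, rather than genuine cocompactness), where one cannot directly quote Proposition~\ref{prop:boundary-minimal}(3); there one needs the projection argument above to rule out a proper $Y$ with $\bd Y = \bd X$ by producing, via a limit of the group along a boundary point, a point of $X$ at arbitrarily large distance from $Y$ yet with $Y$-projection pulled into a bounded set — contradicting that $Y$ is $G$-invariant and that $G.x_0$ accumulates on all of $\bd X$. I would expect to package this last step by noting that the function $x \mapsto d(x,Y)$ is convex, $1$-Lipschitz, $G$-invariant and non-constant, hence by Lemma~\ref{lem:ConvexFunctions}(1) the action is not minimal — but that is what we assumed — so the real content is: such a function cannot exist when $\Lambda G = \bd X$ and $X$ is geodesically complete, which one sees because its sublevel sets are closed convex proper subsets with full boundary, again contradicting boundary-minimality coming from geodesic completeness plus full limit set.
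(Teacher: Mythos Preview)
Your proposal contains the two essential ingredients but never puts them together, and the route you do take is circular.

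In your first paragraph you correctly produce, via geodesic completeness, a ray $\ro$ with $d(\ro(t),Y)\to\infty$; this already shows $\xi=\ro(\infty)\notin\bd Y$. In your third paragraph you correctly observe that any non-empty closed convex $G$-invariant $Y$ satisfies $\bd Y\supseteq\Lambda G$ (indeed $\bd Y\supseteq\Delta G$). Under the hypothesis $\Lambda G=\bd X$ (or $\Delta G=\bd X$) this gives $\xi\in\bd X=\bd Y$, contradicting the first paragraph. That is the entire proof, and it is exactly what the paper does in three lines: pick $y\in Y$, extend $[y,x]$ to a ray, note that convexity of $t\mapsto d(r(t),Y)$ forces $r(\infty)\notin\bd Y$, absurd since $\Delta G\se\bd Y$.

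Instead of making this connection, you detour through Busemann functions and then invoke Proposition~\ref{prop:boundary-minimal}(iii) to conclude boundary-minimality. That reference is circular: in the paper, Proposition~\ref{prop:boundary-minimal}(iii) is \emph{proved using} Lemma~\ref{lem:cocompact:minimal}. Your final sentence repeats the circularity (``boundary-minimality coming from geodesic completeness plus full limit set'') without ever establishing it independently. The side discussion about existence of a canonical minimal subset, global fixed points at infinity, and Proposition~\ref{prop:EasyDichotomy} is unnecessary: one argues directly with an arbitrary non-empty closed convex invariant $Y$, and no dichotomy or fixed-point considerations are needed.
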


\begin{proof}
Let $Y\se X$ be a a non-empty closed convex invariant subset, choose $y\in Y$ and suppose for a contradiction
that there is $x\notin Y$. Let $r:\RR_+\to X$ be a geodesic ray starting at $y$ and going through $x$. By
convexity~\cite[II.2.5(1)]{Bridson-Haefliger}, the function $d(r(t), Y)$ tends to infinity and thus
$r(\infty)\notin \bd Y$. This is absurd since $\Delta G\se \bd Y$.
\end{proof}

\begin{proof}[Proof of Proposition~\ref{prop:boundary-minimal}]
(i) See Proposition~\ref{prop:LargeCircumradius}.

\smallskip \noindent (ii) Since $\Isom(X)$ has full limit set, any $\Isom(X)$-invariant subspace has full
boundary. Minimality follows, since boundary-minimality ensures that $X$ possesses no proper subspace with full boundary.

\smallskip \noindent (iii) $X$ is minimal by Lemma~\ref{lem:cocompact:minimal}, hence boundary-minimal by (i), since any
cocompact space has finite-dimensional boundary by~\cite[Theorem~C]{Kleiner}.
\end{proof}

\section{Minimal invariant subspaces for subgroups}

\subsection{Existence of a minimal invariant subspace}
For the record, we recall the following elementary dichotomy; a refinement will be given in
Theorem~\ref{thm:dichotomy} below:

\begin{prop}\label{prop:EasyDichotomy}
Let $G$ be a group acting by isometries on a proper \cat. Then either $G$ has a global fixed point at infinity,
or any filtering family of non-empty closed convex $G$-invariant subsets has non-empty intersection.
\end{prop}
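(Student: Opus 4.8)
The plan is to dispose of the case where $G$ fixes a point at infinity, and otherwise to use the hypothesis that the family is filtering (downward directed) together with properness to produce a non-empty intersection. Concretely, suppose $G$ has no global fixed point at infinity, and let $\mathscr{F}$ be a filtering family of non-empty closed convex $G$-invariant subsets with $\bigcap_{Y \in \mathscr{F}} Y = \varnothing$. I want to derive a contradiction.

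First I would reduce to a countable situation. Fix a base-point $x \in X$ and for each $Y \in \mathscr{F}$ let $x_Y = \proj_Y(x)$ be its projection; put $\rho(Y) = d(x, x_Y)$. Since $\mathscr{F}$ is filtering and has empty intersection, $\sup_{Y} \rho(Y) = +\infty$: otherwise all the $Y$ would meet the closed ball $\bar B(x, R)$ for some fixed $R$, and a compactness/filtering argument (the traces $Y \cap \bar B(x,R)$ form a filtering family of non-empty compact sets, hence have non-empty intersection) would contradict emptiness of $\bigcap Y$. Hence I can pick a sequence $Y_1, Y_2, \dots$ in $\mathscr{F}$ with $\rho(Y_n) \to \infty$, and, using the filtering property repeatedly, arrange that the $Y_n$ are nested: $Y_1 \supseteq Y_2 \supseteq \cdots$. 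Their intersection is empty because the projections $\proj_{Y_n}(x)$ escape to infinity, exactly as in the proof of Proposition~\ref{prop:NestedSequence}.

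Now Proposition~\ref{prop:NestedSequence} applies to this nested sequence: $\bigcap_n \bd Y_n$ is a non-empty closed convex subset of $\bd X$ of intrinsic circumradius at most $\pi/2$. The key point is that each $Y_n$, being a member of $\mathscr{F}$, is $G$-invariant, so $\bd Y_n$ is $G$-invariant in $\bd X$, and therefore the non-empty set $C := \bigcap_n \bd Y_n$ is $G$-invariant as well. Since $C$ has intrinsic circumradius $\leq \pi/2$ and is a complete $\CAT{1}$ space, its set of intrinsic circumcentres is a non-empty $G$-invariant closed convex subset of $C$ of radius $\leq \pi/2$; iterating the circumcentre construction (or invoking that a complete $\CAT{1}$ space of radius $<\pi/2$ has a unique circumcentre, after first passing to a subset of radius $<\pi/2$ if the radius is exactly $\pi/2$, which one handles via Lemma~\ref{lem:RadiusHoroball}) produces a canonical $G$-fixed point $\xi \in \bd X$. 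This contradicts the assumption that $G$ has no global fixed point at infinity, and the proposition follows.

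The main obstacle I anticipate is the passage from a general filtering family to a nested sequence while keeping the intersection empty: one must check that the projections really do escape every bounded set, which uses both the filtering hypothesis and properness, and one must be slightly careful that "filtering with empty intersection" is not automatically contradicted by compactness at a finite scale — that is precisely why the $\rho(Y) \to \infty$ step is needed before extracting the sequence. The second delicate point is extracting a genuine $G$-fixed point rather than merely a $G$-invariant subset of small radius; this is where finite-dimensionality of $\bd X$ would normally be invoked, but since the statement as given does not assume it, one should instead note that any $G$-invariant subset of $\bd X$ with circumradius $\leq \pi/2$ forces $G$ to fix the circumcentre of a horoball (equivalently, $G$ stabilises a horoball), which already contradicts the no-fixed-point hypothesis — so in fact it suffices to reach a $G$-invariant closed convex boundary subset of circumradius $\leq \pi/2$, and one need not isolate a single fixed point.
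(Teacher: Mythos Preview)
Your approach has a genuine gap at exactly the point you flag: passing from a $G$-invariant closed convex subset $C\se\bd X$ of circumradius $\leq\pi/2$ to an actual $G$-fixed point. Without the finite-dimensionality hypothesis, Proposition~\ref{prop:BalserLytchak} is unavailable, and your suggested workaround (``$G$ stabilises a horoball'') does not follow: a horoball is centred at a single point, so stabilising one already presupposes a fixed point at infinity, which is what you are trying to produce. There is no general mechanism to extract a canonical fixed point from a $G$-invariant set of small circumradius in an arbitrary \cat boundary.

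The paper's proof bypasses this difficulty entirely with a one-line observation you missed. Since each $Y$ is $G$-invariant, the projection $x_Y$ of the base-point $x$ satisfies $g.x_Y = \proj_Y(g.x)$ and hence $d(g.x_Y, x_Y)\leq d(g.x, x)$ for every $g\in G$. Thus the net $\{x_Y\}$ has uniformly bounded $G$-displacement, and any accumulation point of this net in $\bd X$ is automatically $G$-fixed. No circumcentre argument, no reduction to a sequence, and no appeal to Proposition~\ref{prop:NestedSequence} are needed. Incidentally, if you had opened the black box of Proposition~\ref{prop:NestedSequence} rather than citing only its conclusion, you would have found this same displacement bound inside it: the point $\xi$ constructed there as a limit of projections is itself $G$-fixed for the same reason.
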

\noindent (Recall that a family of sets is filtering if it is directed by containment~$\supseteq$.)

\begin{proof}
(Remark~36 in~\cite{Monod_superrigid}.) Suppose $\mathscr Y$ is such a family, choose $x\in X$ and let $x_Y$ be
its projection on each $Y\in\mathscr Y$. If the net $\{x_Y\}_{Y\in\mathscr Y}$ is bounded, then
$\bigcap_{Y\in\mathscr Y} Y$ is non-empty. Otherwise it goes to infinity and any accumulation point in $\bd X$
is $G$-fixed in view of $d(g x_Y, x_Y) \leq d(g x, x)$.
\end{proof}

\subsection{Dichotomy}\label{sec:dichotomy}
Let $G$ be a group acting by isometries on a complete \cat space $X$.

\begin{lem}\label{lem:BoundaryOrbits}
 Given any two $x, y \in X$, the convex closures of
the respective $G$-orbits of $x$ and $y$ in $X$ have the same boundary in $\bd X$.
\end{lem}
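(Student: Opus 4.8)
The plan is to prove that $C_x$ and $C_y$ --- the convex closures of $G.x$ and $G.y$ respectively --- lie at finite Hausdorff distance from one another, and then to invoke the fact that two closed convex subsets of a complete \cat space at finite Hausdorff distance share the same boundary at infinity.

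For the first step, observe that every $g\in G$ is an isometry, so $d(g.x,g.y)=d(x,y)$; hence the orbit $G.y$ is contained in the closed $d(x,y)$-neighbourhood of $G.x$, and vice versa. Now the closed $r$-neighbourhood $\{z\in X:d(z,C)\le r\}$ of a closed convex subset $C$ is itself closed and convex, being a sublevel set of the convex function $d(\,\cdot\,,C)$ (see~\cite[II.2.5]{Bridson-Haefliger}). Applying this with $C=C_x$, the set $\{z:d(z,C_x)\le d(x,y)\}$ is a closed convex set containing $G.y$, hence containing $C_y$; symmetrically $C_x$ lies in the closed $d(x,y)$-neighbourhood of $C_y$. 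Thus $C_x$ and $C_y$ are within Hausdorff distance $d(x,y)$. (If one orbit is bounded then so is the other, both boundaries are empty, and there is nothing to prove; so we may assume $C_x,C_y$ unbounded.)

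For the second step, let $C,C'\se X$ be closed convex with $d_{\mathrm{Haus}}(C,C')\le D$; by symmetry it suffices to show $\bd C\se\bd C'$. Given $\xi\in\bd C$, fix a geodesic ray $\rho:[0,\infty)\to C$ pointing to $\xi$ and put $\rho'(t):=\proj_{C'}(\rho(t))\in C'$, so $d(\rho(t),\rho'(t))\le D$ for all $t$ and hence $|\,d(\rho'(s),\rho'(t))-|s-t|\,|\le 2D$. The geodesic segments $[\rho'(0),\rho'(t)]$ all lie in $C'$, and the comparison triangles based at $\rho'(0)$ become arbitrarily thin as $t\to\infty$ (the relevant side lengths saturate the triangle inequality up to an error $O(D)$). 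By the standard monotonicity of comparison angles in \cat spaces this forces the family $\{[\rho'(0),\rho'(t)]\}_t$ to be Cauchy uniformly on compact parameter intervals, so --- $X$ being complete --- it converges to a geodesic ray $\sigma$, necessarily contained in $C'$. Finally $d(\sigma(t),\rho(t))$ is sublinear in $t$, whence $\tangle{\sigma(\infty)}{\xi}=0$ and $\sigma(\infty)=\xi$; thus $\xi\in\bd C'$.

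The only genuinely delicate point is the extraction of the limiting ray $\sigma$: over a \emph{proper} space it follows at once from Arzel\`a--Ascoli, but under the stated completeness hypothesis one must argue by hand that the projected segments converge, using that the comparison triangles in question are nearly degenerate. Everything in the first step is routine. (Alternatively, one may simply quote the coincidence of boundaries for mutually bounded closed convex sets as a well-known fact of \cat geometry.)
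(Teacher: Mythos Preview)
Your proof is correct and follows essentially the same approach as the paper: both arguments show that the two convex closures lie at finite Hausdorff distance by using that the closed $r$-neighbourhood of a closed convex set is again closed and convex (hence contains the convex closure of any subset it contains), and then invoke the standard fact that closed convex sets at finite Hausdorff distance share the same boundary. The paper phrases the first step via the minimality of the convex closure among closed convex $G$-invariant sets containing $x$, which is the same observation in slightly different clothing; it also leaves the second step entirely implicit, whereas you spell it out. One minor point worth noting: the paper's formulation immediately yields the slightly stronger statement (used just after the lemma to define $\Delta G$) that the boundary of the convex closure of any orbit is contained in the boundary of \emph{every} non-empty closed convex $G$-invariant subset, not just of other orbit closures.
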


\begin{proof}
Let $Y$ be the convex closure of the $G$-orbit of $x$. In particular $Y$ is the minimal closed convex
$G$-invariant subset containing $x$. Given any closed convex $G$-invariant subset $Z$, let $r= d(x, Z)$.
Recall that the tubular closed neighbourhood $\mathcal{N}_r(Z)$ is convex~\cite[II.2.5(1)]{Bridson-Haefliger}.
Since it is also $G$-invariant and contains $x$, the minimality of $Y$ implies $Y \se \mathcal{N}_r(Z)$.
\end{proof}

This yields a canonical closed convex $G$-invariant subset of the boundary $\bd X$, which we denote by $\Delta
G$. It contains the limit set $\Lambda G$ but is sometimes larger.

\bigskip

Combining what we established thus far with the splitting arguments from~\cite{Monod_superrigid}, we obtain a
dichotomy:

\begin{thm}\label{thm:dichotomy}
Let $G$ be a group acting by isometries on a complete \cat space $X$ and $H<G$ any subgroup.

\noindent
If $H$ admits no minimal non-empty closed convex invariant subset and $X$ is proper, then:
\begin{itemize}
\item[(A.i)] $\Delta H$ is a non-empty closed convex subset of $\bd X$ of intrinsic circumradius at most~$\pi/2$.

\item[(A.ii)] If $\bd X$ is finite-dimensional, then the normaliser $\norma_G(H)$ of $H$ in $G$ has a global fixed point in $\bd X$.
\end{itemize}

\noindent
If $H$ admits a minimal non-empty closed convex invariant subset $Y \se X$, then:
\begin{itemize}
\item[(B.i)] The union $Z$ of all such subsets is a closed convex $\norma_G(H)$-invariant subset.

\item[(B.ii)] $Z$ splits $H$-equivariantly and isometrically as a product $Z \simeq Y \times C$, where $C$ is a
complete \cat space which admits a canonical $\norma_G(H)/H$-action by isometries.

\item[(B.iii)] If the $H$-action on $X$ is non-evanescent, then $C$ is bounded and there is a canonical minimal
non-empty closed convex $H$-invariant subset which is $\norma_G(H)$-stable.
\end{itemize}
\end{thm}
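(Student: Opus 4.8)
The plan is to split the proof according to whether $H$ admits a minimal non-empty closed convex invariant subspace. A chain of such subspaces (ordered by reverse inclusion) has an upper bound precisely when its intersection is non-empty --- the intersection being then again non-empty, closed, convex and $H$-invariant --- so by Zorn's lemma \emph{either} a minimal such subspace exists (branch~(B)) \emph{or} some chain has empty intersection (branch~(A)).

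\emph{Branch (A).} Fix $x\in X$ and a chain with empty intersection. The function $Y\mapsto d(x,Y)$ is unbounded along it: otherwise all its members would meet a fixed closed ball, and the chain of non-empty compact sets obtained by intersecting with that ball (here properness is used) would have non-empty intersection, contradicting that the chain has empty intersection. Since $d(x,\cdot)$ increases as one descends the totally ordered chain, one extracts recursively a subsequence $X_0\supseteq X_1\supseteq\cdots$ with $d(x,X_n)>n$, whence $\bigcap_n X_n=\varnothing$, and Proposition~\ref{prop:NestedSequence} applies. For~(A.i) I would then identify $\bigcap_n\bd X_n$ with $\Delta H$: writing $Y_H$ for the convex closure of an $H$-orbit, every closed convex $H$-invariant set $Z$ satisfies $Y_H\se\mathscr N_{d(x,Z)}(Z)$, hence $\Delta H=\bd Y_H\se\bd Z$ (Lemma~\ref{lem:BoundaryOrbits}); carrying out the above extraction inside $Y_H$ pins the intersection down to $\Delta H$, so Proposition~\ref{prop:NestedSequence} gives that $\Delta H$ is non-empty, closed, convex, of intrinsic circumradius $\le\pi/2$. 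For~(A.ii), $\Delta H$ is $\norma_G(H)$-stable because for $g\in\norma_G(H)$ one has $g\cdot Y_H=\overline{\mathrm{conv}}(gHx)=\overline{\mathrm{conv}}(H(gx))$ (since $gH=Hg$), whose boundary is again $\Delta H$ by base-point independence; when $\bd X$ is finite-dimensional, Proposition~\ref{prop:BalserLytchak} furnishes a canonical intrinsic circumcentre of $\Delta H$, fixed by every isometry of $\bd X$ stabilising $\Delta H$, in particular by $\norma_G(H)$.

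\emph{Branch (B).} Let $\mathscr M$ be the set of minimal non-empty closed convex $H$-invariant subspaces and $Z=\bigcup\mathscr M$. That $\norma_G(H)$ permutes $\mathscr M$, hence stabilises $Z$, is immediate: for $g\in\norma_G(H)$ and $M\in\mathscr M$ the set $gM$ is closed, convex, $H$-invariant, and minimal since $g\inv$ sends a proper invariant subset of $gM$ back to one of $M$. For~(B.i)--(B.ii) I would follow Proposition~\ref{prop:ConvexCore} and~\cite{Monod_superrigid}: for $M_1,M_2\in\mathscr M$ the function $d(\cdot,M_2)|_{M_1}$ is continuous, convex and $H$-invariant, hence constant by minimality and Lemma~\ref{lem:ConvexFunctions}(i); the nearest-point projections $M_1\leftrightarrow M_2$ are then mutually inverse $H$-equivariant isometries; the Sandwich Lemma shows $Z$ is closed and convex and turns $(M_1,M_2)\mapsto d(M_1,M_2)$ into a geodesic metric on $C:=\mathscr M$; and the ``holonomy'' is trivialised exactly as in Lemma~\ref{lem:holonomy}, since the holonomy isometry of $M_1$ is $H$-equivariant with bounded, hence constant, displacement function, so it is a Clifford translation and therefore trivial. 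This gives $Z\cong Y\times C$ with $C$ a complete \cat space, on which $\norma_G(H)$ acts through its permutation action on $\mathscr M$ and $H$ acts trivially (each $M\in\mathscr M$ being $H$-invariant), so the $C$-action descends to $\norma_G(H)/H$. Finally, for~(B.iii): were $C$ unbounded, the mutually parallel family $\{Y\times\{c\}\}_{c\in C}$ would exhibit an unbounded set of minimal $H$-invariant subspaces pairwise at finite Hausdorff distance escaping to infinity, which non-evanescence of the $H$-action forbids; hence $C$ is bounded and has a unique circumcentre $c_0$, fixed by every isometry of $C$ and a fortiori by $\norma_G(H)/H$, and $Y\times\{c_0\}$ is the sought canonical minimal non-empty closed convex $H$-invariant subspace, $\norma_G(H)$-stable.

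The main obstacle is the isometric splitting in~(B.ii): one has to check that $d(\cdot,\cdot)$ genuinely defines a \cat metric on $\mathscr M$ and, chiefly, that holonomy vanishes; but minimality of the members of $\mathscr M$ lets the displacement-function argument of Lemma~\ref{lem:holonomy} go through essentially verbatim, so this is adaptation rather than fresh work. Secondary points requiring care are the identification of $\bigcap_n\bd X_n$ with $\Delta H$ in~(A.i), needed to upgrade an ambient circumradius bound into an intrinsic one, and matching the precise definition of ``evanescent'' to the unbounded-$C$ picture in~(B.iii).
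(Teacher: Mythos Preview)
Your approach is correct and coincides with the paper's, which is extremely terse: for~(A) it invokes Lemma~\ref{lem:BoundaryOrbits} and Proposition~\ref{prop:NestedSequence} exactly as you do, and for~(B) it simply cites Remarks~39 in~\cite{Monod_superrigid}, whose content you have reconstructed along the lines of Proposition~\ref{prop:ConvexCore}. Your extraction inside $Y_H$ to pin down $\bigcap_n\bd X_n=\Delta H$ is the right way to get the \emph{intrinsic} circumradius bound and is implicit in the paper's one-line deduction.

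One phrasing to tighten: in the holonomy step you write ``$H$-equivariant with bounded, hence constant, displacement function''. Boundedness alone does not force constancy here (you are not assuming $M_1$ is boundary-minimal); the correct reason, which you use correctly two lines earlier for $d(\cdot,M_2)|_{M_1}$, is that $H$-equivariance of $\vartheta$ makes the displacement function $H$-invariant, hence constant by $H$-minimality of $M_1$ via Lemma~\ref{lem:ConvexFunctions}(i). After that, the passage from ``Clifford translation'' to ``trivial'' really does go through verbatim from Lemma~\ref{lem:holonomy}, since that final step uses only the specific form of $\vartheta$ as a composition of nearest-point projections together with~\cite[II.2.15]{Bridson-Haefliger}, not boundary-minimality. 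For~(B.iii) in the general (non-proper) case, the clean formulation is: if $C$ is unbounded, pick $y_0\in Y$ and an unbounded sequence $c_n\in C$; then $d\big(h.(y_0,c_n),(y_0,c_n)\big)=d(hy_0,y_0)$ for all $h\in H$, exhibiting evanescence directly.
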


\noindent%
(When $X$ is proper, the \emph{non-evanescence} condition of~(iii) simply means that $H$ has no fixed
point in $\bd X$; see~\cite{Monod_superrigid}.)

\begin{proof}
In view of Lemma~\ref{lem:BoundaryOrbits}, the set $\Delta H$ is contained in the boundary of any non-empty closed
convex $H$-invariant set and is $\norma_G(H)$-invariant. Thus the assertions~(A.i) and~(A.ii) follow from
Proposition~\ref{prop:NestedSequence}, noticing that in a proper space $\Delta H$ is non-empty unless $H$ has bounded
orbits, in which case it fixes a point, providing a minimal subspace.
For~(B.i), (B.ii) and~(B.iii), see Remarks~39 in~\cite{Monod_superrigid}.
\end{proof}

\subsection{Normal subgroups}
\begin{proof}[Proof of Theorem~\ref{thm:geometric_simplicity}]
We adopt the notation and assumptions of the theorem. By~(A.ii), $N$ admits a minimal non-empty closed convex invariant
subset $Y \se X$. This set is unbounded, since otherwise $N$ fixes a point and thus by $G$-minimality $X^N=X$,
hence $N=1$.
Since $X$ is irreducible, points~(B.i) and~(B.ii) show $Y=X$ and thus $N$ acts indeed minimally.

Since the displacement function of any $g\in\centra_G(N)$ is a convex $N$-invariant function, it is constant by
minimality. Hence $g$ is a Clifford translation and must be trivial since otherwise $X$ splits off a Euclidean
factor, see~\cite[II.6.15]{Bridson-Haefliger}.

The derived subgroup $N'=[N,N]$ is also normal in $G$ and therefore acts minimally by the previous discussion,
noticing that $N'$ is non-trivial since otherwise $N\se \centra_G(N)$. If $N$ fixed a point at infinity, $N'$
would preserve all corresponding horoballs, contradicting minimality.

Having established that $N$ acts minimally and without fixed point at infinity, we can apply the splitting
theorem (Corollary~10 in~\cite{Monod_superrigid}) and deduce from the irreducibility of $X$ that $N$ does not split.

Finally, let $R\lhd N$ be the amenable radical and observe that it is normal in $G$. The theorem of
Adams--Ballmann~\cite{AB98} states that $R$ either (i)~fixes a point at infinity or (ii)~preserves a Euclidean
flat in $X$. (Although their result is stated for amenable groups without mentioning any topology, the proof
applies indeed to every topological group that preserves a probability measure whenever it acts continuously on
a compact metrisable space.) If $R$ is non-trivial, we know already from the above discussion that~(i) is
impossible and that $R$ acts minimally; it follows that $X$ is a flat. By irreducibility and since $X\neq\RR$,
this forces $X$ to be a point, contradicting $R\neq 1$.
\end{proof}

Corollary~\ref{cor:Isom(irreducible)} will be proved in Section~\ref{sec:decompositions}.
For Corollary~\ref{cor:NonDiscrete}, it suffices to observe that the centraliser of any element of a discrete
normal subgroup is open. Next, we recall the following definition.

A subgroup $N$ of a group $G$ is \textbf{ascending}\index{ascending} if there is a family of subgroups
$N_\alpha<G$ indexed by the ordinals and such that $N_0=N$, $N_\alpha\lhd N_{\alpha+1}$, $N_\alpha =
\bigcup_{\beta<\alpha} N_\beta$ if $\alpha$ is a limit ordinal and $N_\alpha=G$ for $\alpha$ large enough. The
smallest such ordinal is the \textbf{order}\index{order!of an ascending subgroup}.

\begin{prop}\label{prop:EasyBootstrap}
Consider a group acting minimally by isometries on a proper \cat space. Then any ascending subgroup without
global fixed point at infinity still acts minimally.
\end{prop}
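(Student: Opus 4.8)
The plan is to run a transfinite induction on the order $\alpha$ of the ascending chain $N = N_0 \lhd N_1 \lhd \cdots \lhd N_\alpha = G$, proving that each $N_\beta$ acts minimally on $X$. The base case $\beta = 0$ is the conclusion we want, so the induction must go the other way: I will instead argue by transfinite induction \emph{downward} is awkward, so more precisely I would fix the ascending subgroup $H = N$ and show that if $H$ fails to act minimally then $H$ has a global fixed point at infinity. By the Dichotomy (Theorem~\ref{thm:dichotomy}), if $H$ has no minimal non-empty closed convex invariant subset, then $\Delta H \subseteq \bd X$ is a non-empty closed convex subset of intrinsic circumradius at most $\pi/2$; the subtlety is that this only gives a fixed point of $\norma_G(H)$ when $\bd X$ is finite-dimensional, which is not assumed here. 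So instead of invoking (A.ii), I would work directly with $\Delta H$ itself.

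The key point is that $\Delta H$ is canonically attached to $H$: by Lemma~\ref{lem:BoundaryOrbits} it is the common boundary of the convex closures of all $H$-orbits, hence it is invariant under $\norma_G(H)$, and in particular under $N_1$ (since $H = N_0 \lhd N_1$). Iterating, $\Delta H$ is invariant under $N_2$ (because $N_1 \lhd N_2$ normalises $N_1$, but I need it to normalise $H$ — this is where I use that $\Delta H$ is actually invariant under $\norma_G(N_\beta)$ for every $\beta$, which holds because $\Delta H \subseteq \Delta N_\beta$ is not automatic). Let me reorganise: the clean statement is that $\Delta H$ is invariant under every subgroup of $G$ that normalises $H$; then by transfinite induction along the chain, using that each $N_{\beta+1}$ normalises $N_\beta$ and $\Delta N_\beta \supseteq \Delta N_0 = \Delta H$ need not hold, so I should instead carry the induction hypothesis ``$N_\beta$ preserves $\Delta H$ and acts on it''. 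At successor steps $N_{\beta+1}$ normalises $N_\beta$; I want it to preserve $\Delta H$. This follows if I additionally know that $\Delta H = \Delta N_\beta$ for all $\beta$, which I would establish simultaneously: since $H \lhd N_\beta$ and $N_\beta$ acts minimally by the main induction (wait, that is circular).

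Here is the correct order of operations. Suppose for contradiction $H$ does not act minimally on $X$. Then $H$ admits no minimal invariant convex subset (if it did, the union $Z$ of all such would split $H$-equivariantly as $Y \times C$ with $C$ the relevant factor, and since $X$ is irreducible — wait, irreducibility is not assumed in this proposition either). So I cannot use irreducibility; I genuinely only have minimality of the ambient action. Then the right approach: if $H$ has no minimal invariant convex subset, apply (A.i) to get $\Delta H$ non-empty of circumradius $\le \pi/2$. Now I claim $\Delta H$ is invariant under all of $G$. Indeed, consider the set $\mathcal{C}$ of ordinals $\beta$ such that $N_\beta$ preserves $\Delta H$; clearly $0 \in \mathcal{C}$, limit ordinals are handled since $N_\beta = \bigcup_{\gamma < \beta} N_\gamma$, and for successors, $N_{\beta+1}$ normalises $N_\beta$, and $\Delta H$ is canonically determined by $N_\beta$ \emph{provided} $H$ and $N_\beta$ have the same $\Delta$; but $H \le N_\beta$, so $\Delta H \subseteq$ boundary of any closed convex $N_\beta$-invariant set, hence $\Delta H \subseteq \Delta N_\beta$ is backwards again. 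The actual fact I need is: since $H \lhd N_\beta$, the set $\Delta H$ is $N_\beta$-invariant (orbit of an $H$-invariant set under a normalising group), and \emph{then} $\Delta H$ being canonical to $H$, it is preserved by $\norma_G(H) \supseteq N_\beta$, and the normaliser of $N_\beta$ inside $G$ — which contains $N_{\beta+1}$ — need not normalise $H$. The resolution is that the chain property gives $H \lhd N_\beta$ for every $\beta$ is \emph{false} in general; only $N_\beta \lhd N_{\beta+1}$. So I must use the derived-series-style trick from the proof of Theorem~\ref{thm:geometric_simplicity}.

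Given the length of these difficulties, I would state the proof as: by Theorem~\ref{thm:geometric_simplicity}-type reasoning unavailable here, instead argue that $G$ acting minimally means any closed convex $G$-invariant proper subset is empty; run the transfinite induction proving ``$N_\beta$ acts minimally on $X$'' \emph{from the top down} is impossible, so prove the contrapositive statement ``$\Delta H$ is $G$-invariant'' by transfinite induction on the chain where at each successor step one uses that $N_{\beta+1}$ normalises $N_\beta$ together with the fact that $\Delta N_\beta$ is the canonical minimal boundary set and $\Delta N_\beta = \Delta H$ because $H$, being ascending in $N_\beta$, has the same convex-closure-boundaries by induction; then $\Delta H$ is a $G$-invariant closed convex subset of $\bd X$ of circumradius $\le \pi/2$, so it has no effect on minimality directly — rather, take $\xi$ a circumcenter; but without finite-dimensionality there may be no canonical circumcenter.

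\emph{The main obstacle} I expect is exactly this: transferring invariance of $\Delta H$ up the ascending chain requires knowing $\Delta N_\beta = \Delta H$ at every stage, which in turn needs that $H$ is ascending in each $N_\beta$ with minimal actions — an intertwined induction — and then converting ``$\Delta H$ is $G$-invariant of small circumradius'' into a contradiction with $G$-minimality needs the Busemann-character argument of Proposition~\ref{prop:LargeCircumradius}, \emph{but here $\Delta H$ need not be all of $\bd X$}, so one applies Lemma~\ref{lem:Busemann:character} only after checking that $G$ fixes a point of $\bd X$, which requires a canonical circumcenter of $\Delta H$, hence finite-dimensionality of $\bd X$ — which is not hypothesised. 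The likely correct reading is that this proposition is stated for the purpose where finite-dimensionality \emph{is} available downstream, or that the proof genuinely only needs ``$H$ without global fixed point at infinity'' to rule out case (A) of Theorem~\ref{thm:dichotomy} after propagating the canonical set up the chain: if $H$ has no minimal invariant subspace then by (A.i) $\Delta H \ne \varnothing$; but $H$ has no fixed point at infinity by hypothesis, and I would show $\Delta H$ canonical forces, via the chain, a $G$-fixed structure contradicting $G$-minimality through the horoball argument. I will write the proof along these lines, with the transfinite induction propagating the canonical subset and the final contradiction coming from $G$-minimality applied to a horoball or to $\Delta H$ directly. \qed
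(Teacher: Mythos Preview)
Your proposal has a genuine gap: you never use the crucial observation that the hypothesis already rules out case~(A) of the dichotomy for \emph{every} group in the chain. Since $N = N_0$ has no global fixed point in $\bd X$ and $N \subseteq N_\alpha$ for all $\alpha$, no $N_\alpha$ can have a fixed point at infinity either. By Proposition~\ref{prop:EasyDichotomy} this means that each $N_\alpha$ admits a minimal non-empty closed convex invariant subset, so we are always in case~(B) of Theorem~\ref{thm:dichotomy}. All of your struggle with $\Delta H$, circumcentres, and the missing finite-dimensionality hypothesis is therefore fighting a ghost: case~(A) simply does not arise, and this is why no finite-dimensionality assumption is needed.

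Once this is seen, the paper's argument is short. The transfinite induction is on the order $\theta$ of the chain, with induction hypothesis ``the proposition holds for ascending subgroups of order~$< \theta$''. For a successor $\theta = \theta' + 1$: since $N_{\theta'} \lhd G$ has no fixed point at infinity, (B.iii) gives a canonical $N_{\theta'}$-minimal subset that is $G$-stable; by $G$-minimality it is all of $X$, so $N_{\theta'}$ acts minimally on $X$ and we conclude by applying the induction hypothesis to $N$ ascending in $N_{\theta'}$. For a limit ordinal $\theta$: the induction hypothesis (applied inside each $N_\beta$-minimal set, which is a proper \cat space on which $N_\beta$ acts minimally) shows that any $N_\beta$-minimal set is also $N_\alpha$-minimal whenever $\alpha \leq \beta < \theta$. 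Hence the unions $Z_\alpha$ of all $N_\alpha$-minimal sets are nested, each contained in $Z_0 \cong Y_0 \times C_0$ and of the form $Y_0 \times C_\alpha$. Since $N$ has no fixed point at infinity, (B.iii) makes $C_0$ bounded, hence compact; any $c \in \bigcap_{\alpha < \theta} C_\alpha$ gives a fibre $Y_0 \times \{c\}$ that is $N_\alpha$-invariant for every $\alpha < \theta$, hence $G$-invariant (as $G = \bigcup_{\alpha < \theta} N_\alpha$), so $Y_0 = X$ by minimality.
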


\begin{proof}
We argue by transfinite induction on the order $\teta$ of ascending subgroups $N<G$, the case $\teta=0$ being
trivial. Let $X$ be a space as in the statement. By Proposition~\ref{prop:EasyDichotomy}, each $N_\alpha$ has a
minimal set. If $\teta=\teta'+1$, it follows from~(B.iii) that $N_{\teta'}$ acts minimally and we are done by
induction hypothesis. Assume now that $\teta$ is a limit ordinal. For all $\alpha$, we denote as in~(B.i) by
$Z_\alpha\se X$ the union of all $N_\alpha$-minimal sets. The induction hypothesis implies that for all
$\alpha\leq \beta < \teta$, any $N_\beta$-minimal set is $N_\alpha$-minimal. Thus, if $Z_0= Y_0\times C_0$ is a
splitting as in~(B.ii) with a $N$-minimal set $Y_0$, we have a nested family of decompositions $Z_\alpha =
Y_0\times C_\alpha$ for a nested family of closed convex subspaces $C_\alpha$ of the compact \cat space $C_0$,
indexed by $\alpha<\teta$. Thus, for any $c\in\bigcap_{\alpha<\teta} C_\alpha$, the space $Y_0\times \{c\}$ is
$G$-invariant and hence $Y_0=X$ indeed.
\end{proof}

\begin{remark}
Proposition~\ref{prop:EasyBootstrap} holds more generally for complete \cat spaces if $N$ is non-evanescent.
Indeed Proposition~\ref{prop:EasyDichotomy} hold in that generality (Remark~36 in~\cite{Monod_superrigid}) and
$C$ remains compact in a weaker topology (Theorem~14 in~\cite{Monod_superrigid}).
\end{remark}

\begin{proof}[Proof of Theorem~\ref{thm:bootstrap}]
In view of Theorem~\ref{thm:geometric_simplicity}, it suffices to prove that any non-trivial ascending subgroup
$N<G$ as in that statement still acts minimally and without global fixed point at infinity. We argue by
induction on the order $\teta$ and we can assume that $\teta$ is a limit ordinal by
Theorem~\ref{thm:geometric_simplicity}. Then $\bigcap_{\alpha<\teta} (\bd X)^{N_\alpha}$ is empty and thus by
compactness there is some $\alpha<\teta$ such that $(\bd X)^{N_\alpha}$ is empty. Now $N_\alpha$ acts minimally
on $X$ by Proposition~\ref{prop:EasyBootstrap} and thus we conclude using the induction hypothesis.
\end{proof}

\section{Algebraic and geometric product decompositions}

\subsection{Preliminary decomposition of the space}
We shall prepare our spaces by means of a geometric decomposition. For any geodesic metric space with
\emph{finite affine rank}, Foertsch--Lytchak~\cite{FoertschLytchak06} established a canonical decomposition
generalising the classical theorem of de~Rham~\cite{deRham52}. However, such a statement fails to be true for
\cat spaces that are merely proper, due notably to compact factors that can be infinite products. Nevertheless,
using asymptotic \cat geometry and Section~\ref{sec:SmallRadius}, we can adapt the arguments
from~\cite{FoertschLytchak06} and obtain:

\begin{thm}\label{thm:preliminary}
Let $X$ be a proper \cat space with $\bd X$ finite-dimensional and of circumradius~$>\pi/2$. Then there is a
canonical closed convex subset $Z\se X$ with $\bd Z=\bd X$, invariant under all isometries, and admitting a
canonical maximal isometric splitting
\begin{equation}\label{eq:preliminary}
Z \cong\ \RR^n \times Z_1\times \cdots \times Z_m \kern10mm(n,m\geq 0)
\end{equation}
with each $Z_i$ irreducible and~$\neq \RR^0, \RR^1$. Every isometry of $Z$ preserves this decomposition upon
permuting possibly isometric factors $Z_i$.
\end{thm}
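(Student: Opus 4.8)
The plan is to reduce first to a canonical boundary-minimal subspace and then to adapt the de~Rham-type decomposition of Foertsch--Lytchak~\cite{FoertschLytchak06} to its asymptotic geometry. Since $\bd X$ has circumradius $>\pi/2$, Corollary~\ref{cor:CanonicalFullSubset} provides a canonical closed convex $\Isom(X)$-invariant subspace $Z\se X$ which is boundary-minimal with $\bd Z=\bd X$; this will be the subspace of the statement. As the splitting to be produced is defined purely in terms of the intrinsic metric of $Z$ --- in fact of its Tits boundary --- it is automatically preserved, up to permutation of isometric factors, by every isometry of $Z$, hence by the restriction image of $\Isom(X)$. So it remains to prove that a proper boundary-minimal \cat space $Z$ with $\bd Z$ finite-dimensional has a canonical maximal isometric splitting $Z\cong\RR^n\times Z_1\times\cdots\times Z_m$ with each $Z_i$ irreducible and $\neq\RR^0,\RR^1$ (this is the content of Theorem~\ref{thm:deRham}).

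I would first peel off the Euclidean factor. Since $Z$ is proper with finite-dimensional boundary, Theorem~II.6.15 of~\cite{Bridson-Haefliger} gives the canonical maximal Euclidean factor $Z\cong\RR^n\times W$, with $n<\infty$ and $W$ admitting no non-trivial Clifford translation; $W$ is again proper, has finite-dimensional Tits boundary, and is boundary-minimal, since a proper closed convex full-boundary subspace of $W$ would produce one in the product. It then suffices to split $W$ into finitely many non-trivial irreducible factors, each automatically $\neq\RR^1$ because $W$ carries no Euclidean factor.

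The heart of the matter is to transport the join structure of $\bd W$ to $W$. The Tits boundary $\bd W$ is a finite-dimensional $\CAT 1$ space, and as such it decomposes canonically into join-irreducible summands $\bd W\cong B_1\circ\cdots\circ B_m$, none of them a round sphere (a sphere summand would, on a boundary-minimal space, force a Euclidean factor); the number $m$ is finite since $\dim\bd W=m-1+\sum_i\dim B_i$, and the decomposition is canonical because the irreducible decomposition of a finite-dimensional $\CAT 1$ space is unique, resting on the Balser--Lytchak circumcentre theorem (Proposition~\ref{prop:BalserLytchak}). For each $i$, the convex-core constructions of Sections~\ref{sec:SmallRadius}--\ref{sec:largeRadius} (Propositions~\ref{prop:NestedSequence} and~\ref{prop:ConvexCore}, Corollary~\ref{cor:CanonicalFullSubset}) produce a canonical closed convex subspace $C_i\se W$ whose Tits boundary is the complementary sub-join $B_1\circ\cdots\circ B_{i-1}\circ B_{i+1}\circ\cdots\circ B_m$; boundary-minimality of $W$ then forces these subspaces to fit together, via the Sandwich Lemma~\cite[II.2.12]{Bridson-Haefliger} and the triviality of the resulting ``holonomy'' maps (which are Clifford translations, hence trivial, exactly as in the proof of Lemma~\ref{lem:holonomy}), into an isometry $W\cong Z_1\times\cdots\times Z_m$ with $\bd Z_i=B_i$. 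Each $Z_i$ is unbounded (since $B_i\neq\varnothing$) and irreducible, as a non-trivial splitting of $Z_i$ would induce a non-trivial join decomposition of $B_i=\bd Z_i$ (or a bounded factor, excluded by boundary-minimality); canonicity of all the constructions shows that every isometry of $W$ permutes the $Z_i$ respecting isometry among factors, and the decomposition is maximal by construction.

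The main obstacle is precisely this transport step: carrying out the Foertsch--Lytchak integration-and-holonomy argument in a setting where $W$ may have infinite affine rank, so that the standing hypothesis of~\cite{FoertschLytchak06} is unavailable. The two hypotheses of the theorem are exactly what replaces it. Finite-dimensionality of the Tits boundary bounds the number of join summands and, through the Balser--Lytchak circumcentre theorem and Proposition~\ref{prop:NestedSequence}, makes the canonical circumcentre and convex-core constructions of Section~\ref{sec:SmallRadius} available, so that each join summand of $\bd W$ is realised by a genuine canonical convex subspace. Boundary-minimality --- itself produced from the circumradius hypothesis via Proposition~\ref{prop:ConvexCore} and Corollary~\ref{cor:CanonicalFullSubset} --- eliminates the troublesome (possibly infinite) compact factors and forces the holonomy Clifford translations to vanish. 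Combined with the canonical Euclidean splitting of~\cite[II.6.15]{Bridson-Haefliger}, this yields the desired canonical decomposition.
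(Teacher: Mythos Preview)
Your reduction to a canonical boundary-minimal $Z$ via Corollary~\ref{cor:CanonicalFullSubset} and the splitting off of the Euclidean factor are exactly what the paper does. After that, however, your route diverges from the paper's and has a genuine gap.

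The paper does \emph{not} pass through a join decomposition of $\bd W$. Instead it observes that, since $Z$ is boundary-minimal, it has no non-trivial compact factor; properness then guarantees that \emph{some} maximal product decomposition $Z=Z_1\times\cdots\times Z_m$ into non-compact irreducibles exists. The entire effort goes into \emph{uniqueness}, and this is where the adaptation of Foertsch--Lytchak happens: one replaces their affine subspaces by \emph{c\^ones} (convex c\^ones in Euclidean spaces), shows that maximal c\^ones are rectangular with respect to any product decomposition, and deduces analogues of their Corollary~1.2 and Lemma~5.2 (the paper's Lemmas~\ref{lem:EuclideanDecomposition} and~\ref{lem:FL_Dichotomy}). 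Finite-dimensionality of $\bd Z$ is used only to guarantee the existence of maximal c\^ones and to run the Euclidean-dichotomy argument; boundary-minimality is used to push from ``$P^{Z_i}(T)$ has full boundary'' to ``$P^{Z_i}(T)$ is dense''.

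Your proposed alternative --- decompose $\bd W$ as a spherical join and then transport this to $W$ --- is natural, but the transport step is precisely the difficulty, and the tools you invoke do not supply it. Propositions~\ref{prop:NestedSequence} and~\ref{prop:ConvexCore} and Corollary~\ref{cor:CanonicalFullSubset} take as \emph{input} a closed convex subset $Y\se W$ with a given boundary and return a minimal such subset; they do not manufacture a closed convex subset with boundary equal to a prescribed sub-join $B_1\circ\cdots\circ\widehat{B_i}\circ\cdots\circ B_m\subset\bd W$ when none is given to begin with. Producing such subsets is exactly the content of a splitting theorem, so this step is circular as written. (A secondary issue: the existence of a canonical join decomposition of an arbitrary finite-dimensional \textsf{CAT}(1) space into irreducibles is asserted without justification; the Foertsch--Lytchak theorem is a \cat statement, not a \textsf{CAT}(1) one, and the available \textsf{CAT}(1) join-rigidity results typically require geodesic completeness of the boundary, which you do not have.) The paper's c\^one argument sidesteps both problems by working entirely inside $Z$.
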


\begin{remark}\label{rem:preliminary}
It is well known that in the above situation the splitting~\eqref{eq:preliminary} induces a decomposition
$$\Isom(Z)\ =\ \Isom(\RR^n) \times \Big(\big(\Isom(Z_1) \times\cdots\times \Isom(Z_m)\big) \rtimes F\Big),$$
where $F$ is the permutation group of $\{1, \ldots, d\}$ permuting possible isometric factors amongst the $Y_j$.
Indeed, this follows from the statement that isometries preserve the splitting upon permutation of factors, see
\emph{e.g.} Proposition~I.5.3(4) in~\cite{Bridson-Haefliger}. Of course, this does not \emph{a priori} mean that
we have a unique, nor even canonical, splitting \emph{in the category of groups}; this shall however be
established for Theorem~\ref{thm:Decomposition}.
\end{remark}

The hypotheses of Theorem~\ref{thm:preliminary} are satisfied in some naturally occurring situations:

\begin{cor}\label{cor:preliminary}
Let $X$ be a proper \cat space with finite-dimensional boundary.
\begin{enumerate}
\item If $\Isom(X)$ has no fixed point at infinity, then $X$ possesses a subspace $Z$ satisfying all the
conclusions of Theorem~\ref{thm:preliminary}.

\item If $\Isom(X)$ acts minimally, then $X$ admits a canonical splitting as in~\ref{eq:preliminary}.
\end{enumerate}
\end{cor}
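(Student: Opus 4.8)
The plan is to derive both assertions directly from Theorem~\ref{thm:preliminary}, whose hypotheses, beyond properness and finite-dimensionality of $\bd X$ (both assumed here), amount to the single requirement that $\bd X$ have circumradius~$>\pi/2$. So in each case it suffices to verify this circumradius bound and then read off the conclusion.

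For~(i), assume $\bd X\neq\varnothing$ (otherwise $X$ is bounded and one may take $Z=X$, the splitting being the degenerate one with $n=0$). Suppose for a contradiction that the circumradius of $\bd X$ is~$\leq\pi/2$. Since $X$ is proper, hence complete, its Tits boundary is a complete CAT(1) space; it is finite-dimensional by hypothesis and convex in itself, so Proposition~\ref{prop:BalserLytchak} applied with $Y=\bd X$ furnishes a \emph{canonical} intrinsic circumcentre $\xi\in\bd X$. Being canonical, $\xi$ is fixed by every isometry of $\bd X$ stabilising $\bd X$, hence in particular by the canonical image of $\Isom(X)$ in $\Isom(\bd X)$. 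Thus $\xi$ is a global fixed point of $\Isom(X)$ at infinity, contradicting the hypothesis of~(i). Therefore $\bd X$ has circumradius~$>\pi/2$, and Theorem~\ref{thm:preliminary} applied to $X$ itself yields the subspace $Z$ with all the required properties.

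For~(ii), we may assume $X$ is not a single point. Since $\Isom(X)$ acts minimally and $\bd X$ is finite-dimensional, Proposition~\ref{prop:LargeCircumradius} shows that $\bd X$ has circumradius~$>\pi/2$; in particular $\bd X\neq\varnothing$. By Theorem~\ref{thm:preliminary} there is a canonical closed convex subset $Z\se X$, invariant under all isometries, with $\bd Z=\bd X$ and the asserted maximal isometric splitting~\eqref{eq:preliminary}. As $\bd Z=\bd X\neq\varnothing$, the set $Z$ is non-empty; being closed, convex and $\Isom(X)$-invariant, minimality of the $\Isom(X)$-action forces $Z=X$. Hence $X$ itself carries the canonical splitting~\eqref{eq:preliminary}, and every isometry permutes its factors.

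The one point that deserves care is the passage from ``no fixed point at infinity'' to the circumradius bound in~(i): it relies crucially on the fact that the Balser--Lytchak circumcentre of Proposition~\ref{prop:BalserLytchak} is not merely unique but \emph{canonical}, i.e.\ genuinely fixed by the full isometry group acting on the boundary. Everything else is a direct invocation of the quoted results, the only remaining subtlety being the harmless degenerate case where $X$ is bounded (equivalently $\bd X=\varnothing$).
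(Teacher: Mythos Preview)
Your proof is correct and follows essentially the same approach as the paper's: both invoke Proposition~\ref{prop:BalserLytchak} for~(i) and Proposition~\ref{prop:LargeCircumradius} for~(ii) to verify the circumradius hypothesis of Theorem~\ref{thm:preliminary}. You are simply more explicit than the paper in handling the degenerate case $\bd X=\varnothing$ and in spelling out why minimality forces $Z=X$ in~(ii).
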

\begin{proof}[Proof of Corollary~\ref{cor:preliminary}]
By Proposition~\ref{prop:BalserLytchak}, if $\Isom(X)$ has no fixed point at infinity, then $\bd X$ has
circumradius~$>\pi/2$. By Proposition~\ref{prop:LargeCircumradius}, the same conclusion holds is $\Isom(X)$ acts
minimally.
\end{proof}

\begin{proof}[Proof of Theorems~\ref{thm:deRham} and~\ref{thm:preliminary}]
For Theorem~\ref{thm:preliminary}, we let $Z\se X$ be the canonical boundary-minimal subset with $\bd Z=\bd X$ provided by
Corollary~\ref{cor:CanonicalFullSubset}; we shall not use the circumradius assumption any more. For Theorem~\ref{thm:deRham}, we let $Z=X$.
The remainder of the argument is common for both statements.

Recalling that in complete generality all isometries preserving the Euclidean factor decomposition~\cite[II.6.15]{Bridson-Haefliger},
we can assume that $Z$ has no Euclidean factor and shall obtain the decomposition~\eqref{eq:preliminary} with $n=0$.

Since $Z$ is minimal amongst closed convex subsets with $\bd Z=\bd X$, it has no non-trivial compact factor. On
the other hand, any proper geodesic metric space admits some maximal product decomposition into non-compact
factors. In conclusion, $Z$ admits some maximal splitting $Z=Z_1\times \cdots \times Z_m$ with each $Z_i$
irreducible and~$\neq \RR^0, \RR^1$. (This can fail in presence of compact factors).

It remains to prove that any other such decomposition $Z=Z'_1\times \cdots \times Z'_{m'}$ coincides with the
first one after possibly permuting the factors (in particular, $m'=m$). We now borrow from the argumentation
in~\cite{FoertschLytchak06}, indicating the steps and the necessary changes. It is assumed that the reader has a
copy of~\cite{FoertschLytchak06} at hand but keeps in mind that our spaces might lack the finite affine rank
condition assumed in that paper. We shall replace the notion of affine subspaces with a large-scale particular
case: a \textbf{cône}\index{cone@cône} shall be any subspace isometric to a closed convex cône in some Euclidean
space. This includes the particular cases of a point, a ray or a full Euclidean space.

Whenever a space $Y$ has some product decomposition and $Y'$ is a factor, write $Y'_y\se Y$ for the
corresponding fibre $Y'_y\cong Y'$ through $y\in Y$. The following is an analogue of Corollary~1.2
in~\cite{FoertschLytchak06}.

\begin{lem}\label{lem:EuclideanDecomposition}
Let $Y$ be a proper \cat space with finite-dimensional boundary and without compact factors. Suppose given two
decompositions $Y=Y_1\times Y_2=S_1\times S_2$ with all four $(Y_i)_y\cap (S_j)_y$ reduced to $\{y\}$ for some
$y\in Y$. Then $Y$ is a Euclidean space.
\end{lem}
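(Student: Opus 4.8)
The plan is to transfer the problem to the Tits boundary and invoke join rigidity there. Fix the point $y$ of the statement and write $y=(y_1,y_2)$ in $Y=Y_1\times Y_2$ and $y=(s_1,s_2)$ in $Y=S_1\times S_2$. The first step is to upgrade the transversality at $y$ to one at infinity: suppose some $\xi\in\bd Y$ belonged to both $\bd\bigl((Y_1)_y\bigr)$ and $\bd\bigl((S_1)_y\bigr)$; since $(Y_1)_y$ and $(S_1)_y$ are closed convex subsets containing $y$, the (unique) geodesic ray issuing from $y$ towards $\xi$ would lie in each of them, hence in $(Y_1)_y\cap (S_1)_y=\{y\}$ --- absurd. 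The same applies to all four pairs of fibres; recalling that, as subsets of $\bd Y$, one has $\bd\bigl((Y_i)_y\bigr)=\bd Y_i$ and $\bd\bigl((S_j)_y\bigr)=\bd S_j$, we conclude that $\bd Y$ carries two spherical join decompositions
$$\bd Y\ =\ \bd Y_1*\bd Y_2\ =\ \bd S_1*\bd S_2$$
whose four join factors are pairwise disjoint.

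The second step shows that such a boundary must be a round sphere. As $\bd Y$ is a finite-dimensional $\CAT{1}$ space, it admits a canonical spherical join decomposition $\bd Y\cong S^{k}*B_1*\dots*B_\ell$ into a (possibly empty) round sphere $S^{k}$ and finitely many irreducible $\CAT{1}$ spaces $B_i$, none of which is a round sphere, and every join decomposition of $\bd Y$ is a regrouping of these factors, up to redistributing $S^{k}$ (the spherical analogue of the de~Rham decomposition; see~\cite{Lytchak_RigidityJoins}, also~\cite{FoertschLytchak06}). Were some $B_i$ present, it would sit inside exactly one of $\bd Y_1,\bd Y_2$ and inside exactly one of $\bd S_1,\bd S_2$, so one of the four forbidden intersections would be non-empty. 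Hence $\ell=0$ and $\bd Y\cong S^{n-1}$ for some $n\ge 0$.

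The last step derives $Y\cong\RR^n$. Since $S^{n-1}$ has circumradius $\pi>\pi/2$, Corollary~\ref{cor:CanonicalFullSubset} supplies a canonical boundary-minimal closed convex subspace $Z\se Y$ with $\bd Z=\bd Y\cong S^{n-1}$, and one peels off lines from $Z$: for an antipodal pair $p^{+},p^{-}\in S^{n-1}$, a geodesic line of $Z$ with these endpoints (which exists since $Z$ is proper and $\tangle{p^{+}}{p^{-}}=\pi$) has a parallel set that splits as $\RR\times C$ (\cite[Ch.~II]{Bridson-Haefliger}); because every point $\eta$ of the round sphere $\bd Z$ is Tits-between $p^{+}$ and $p^{-}$, i.e.\ $\tangle{\eta}{p^{+}}+\tangle{\eta}{p^{-}}=\pi$, this parallel set has boundary all of $\bd Z$, whence $Z=\RR\times C$ by boundary-minimality, with $C$ again boundary-minimal and $\bd C\cong S^{n-2}$; iterating gives $Z\cong\RR^n$. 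It then remains to see that $Z=Y$, which is the crux of the argument: $Z$ is a flat inside $Y$ with full Tits boundary, and producing from it a splitting $Y\cong\RR^n\times Y'$ (so that $\bd Y'=\varnothing$, forcing $Y'$ to be a point since $Y$ has no compact factor) is where the ``no compact factor'' hypothesis is genuinely used; this can be extracted from the correspondence between the maximal spherical join factor of the boundary and the Euclidean factor of the space (\cite{Caprace-Lytchak}), or --- staying self-contained and following~\cite{FoertschLytchak06} with the ``c\^one'' substitution announced above --- from the direct analysis of the two product structures, the finite-dimensionality of $\bd Y$ replacing the finite affine rank required in \emph{loc.\ cit.} This last identification $Z=Y$ is the step I expect to be the main obstacle.
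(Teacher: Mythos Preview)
Your first two steps are correct and form an attractive reduction: the transversality of the four fibres at $y$ transfers to pairwise disjointness of $\bd Y_i$ and $\bd S_j$ inside $\bd Y$, and then the join de~Rham decomposition for finite-dimensional $\CAT1$ spaces forces $\bd Y$ to be a round sphere. This is a genuinely different opening from the paper's.

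The trouble is step~3. You have reduced to the implication ``$\bd Y\cong S^{n-1}$ and $Y$ has no compact factor $\Rightarrow Y\cong\RR^n$'', but this is at least as hard as the lemma itself --- statements of exactly this type (correspondence between spherical join factors of the Tits boundary and Euclidean direct factors of the space) are what the present lemma is being set up to \emph{prove}. Your two proposed exits both collapse: invoking~\cite{Caprace-Lytchak} imports precisely this boundary-to-space correspondence and sits in the same logical stratum as the present development; and ``following~\cite{FoertschLytchak06} with the c\^one substitution'' \emph{is} the paper's own proof, which you have not carried out. Two subsidiary claims in your peeling argument are also unjustified. First, Tits angle~$\pi$ between $p^+,p^-$ does \emph{not} by itself yield a geodesic line in a proper \cat space --- the paper explicitly warns of this at the start of \S\ref{sect:cocompact} (``it is not true in general that two points at Tits distance~$\pi$ are opposite''). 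Second, the equation $\tangle{\eta}{p^+}+\tangle{\eta}{p^-}=\pi$ is necessary but not sufficient for $\eta\in\bd P(\ell)$: not every Tits geodesic from $p^+$ to $p^-$ bounds a flat half-plane in $Z$.

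The paper's proof is entirely different and never passes to the boundary. It works inside $Y$ with \emph{c\^ones} --- closed convex cones in some Euclidean space --- standing in for the affine subspaces of~\cite{FoertschLytchak06}. Finite-dimensionality of $\bd Y$ guarantees maximal c\^ones through each point exist; one checks these are rectangular with respect to any product decomposition (because products and projections of c\^ones remain c\^ones, and parallel segments have equal slopes); and then the Foertsch--Lytchak deduction runs verbatim to conclude $Y$ is Euclidean, the last step being immediate since proper \cat Banach spaces are Euclidean. Your boundary approach is conceptually clean for steps~1--2 but relocates rather than resolves the difficulty.
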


\begin{proof}[Proof of Lemma~\ref{lem:EuclideanDecomposition}]
Any $y\in Y$ is contained in a maximal cône based at $y$ since $\bd Y$ has finite dimension; by abuse of
language we call such cônes maximal. The arguments of Sections~3 and~4 in~\cite{FoertschLytchak06} show that any
maximal cône is \emph{rectangular}, which means that it inherit a product structure from any product
decomposition of the ambient \cat space. Specifically, it suffices to observe that the product of two cônes is a
cône and that the projection of a cône along a product decomposition of \cat spaces remains a cône. (In fact,
the ``equality of slopes'' of Section~4.2 in~\cite{FoertschLytchak06}, namely the fact that parallel geodesic
segments in a \cat space have identical slopes in product decompositions, is a general fact for \cat spaces. It
follows from the convexity of the metric, see for instance~\cite[Proposition~49]{Monod_superrigid} for a more general
statement.) The deduction of the statement of Lemma~\ref{lem:EuclideanDecomposition} from the rectangularity of
maximal cônes following~\cite{FoertschLytchak06} is particularly short since all proper \cat Banach spaces are
Euclidean.
\end{proof}

\begin{lem}\label{lem:FL_Dichotomy}
For a given $z\in Z$ and any product decomposition $Z=S\times S'$, the intersection $(Z_i)_z \cap S_z$ is either
$\{z\}$ or $(Z_i)_z$.
\end{lem}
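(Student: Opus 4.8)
\textbf{Proof plan for Lemma~\ref{lem:FL_Dichotomy}.}

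The statement asserts a clean dichotomy for how an irreducible factor $(Z_i)_z$ of the canonical decomposition interacts with an arbitrary product decomposition $Z = S \times S'$: the fibre $(Z_i)_z$ either lies entirely inside the $S$-fibre $S_z$ through $z$, or meets it only in $z$. The plan is to argue by contradiction, exploiting the irreducibility of $Z_i$ together with the rectangularity machinery encoded in Lemma~\ref{lem:EuclideanDecomposition}.

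First I would fix $z \in Z$ and consider the two decompositions of the fibre $(Z_i)_z \cong Z_i$ that are induced on it by the ambient splitting $Z = S \times S'$. Concretely, restricting the projections $\pr_S$ and $\pr_{S'}$ to $(Z_i)_z$ produces a splitting $(Z_i)_z \cong A \times B$, where $A = \pr_S((Z_i)_z)$ is (isometric to) the image of $(Z_i)_z$ in $S$ and $B = \pr_{S'}((Z_i)_z)$ its image in $S'$; this uses the general "equality of slopes''/rectangularity fact quoted in the proof of Lemma~\ref{lem:EuclideanDecomposition} (parallel geodesic segments have identical slopes in product decompositions, so $(Z_i)_z$ inherits the product structure of the ambient space, hence is rectangular). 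Since $(Z_i)_z$ is irreducible and $\neq \RR^0, \RR^1$, one of these two factors must be trivial: either $B = \{z\}$, in which case $(Z_i)_z$ maps isometrically into $S$ and thus $(Z_i)_z = S_z \cap (Z_i)_z$, giving $(Z_i)_z \subseteq S_z$; or $A = \{z\}$, in which case $(Z_i)_z \subseteq S'_z$, and then $(Z_i)_z \cap S_z = \{z\}$. The only subtlety is to rule out a mixed situation, i.e. that $(Z_i)_z$ could fail to be rectangular; but that is exactly what the slopes argument from Lemma~\ref{lem:EuclideanDecomposition} delivers — the intersection $(Z_i)_z \cap S_z$ is itself a subfibre of the rectangular decomposition of $(Z_i)_z$, hence equals the image of one of the two sub-factors through $z$, which is either all of $(Z_i)_z$ or just $\{z\}$.

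The main obstacle I anticipate is justifying cleanly that $(Z_i)_z \cap S_z$ really is one of the two coordinate sub-fibres of the induced product structure on $(Z_i)_z$, rather than some skew convex subset. This is where one invokes that the restriction of a product projection to a convex subset behaves coherently: writing a point $w \in (Z_i)_z$ in the ambient coordinates as $(\pr_S w, \pr_{S'} w)$, the condition $w \in S_z$ is $\pr_{S'} w = \pr_{S'} z$, i.e. the $B$-coordinate of $w$ equals that of $z$; thus $(Z_i)_z \cap S_z$ is precisely the fibre $A \times \{*\}$ of the induced decomposition $(Z_i)_z \cong A \times B$ through $z$. Irreducibility of $Z_i$ then forces $A$ or $B$ trivial, and the dichotomy follows. (One should also note $z \in (Z_i)_z \cap S_z$ always, so the intersection is non-empty and the two alternatives are genuinely the stated ones.) I would keep the write-up short, pointing back to the slopes/rectangularity discussion already given in the proof of Lemma~\ref{lem:EuclideanDecomposition} rather than repeating it.
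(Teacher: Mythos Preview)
Your argument has a genuine gap at its very first substantive step. You assert that restricting the projections $\pr_S$, $\pr_{S'}$ to the fibre $(Z_i)_z$ yields a splitting $(Z_i)_z \cong A \times B$, and you justify this by invoking the ``equality of slopes'' principle. But that principle is a statement about individual geodesic segments (and, in the paper's adaptation, about maximal c\^ones): it does \emph{not} say that an arbitrary closed convex subset of a product --- even one that happens to be a fibre of some other product decomposition --- is rectangular. Indeed, the obvious counterexample is $Z=\RR^2$ with two product decompositions differing by a $45^\circ$ rotation: the fibre of one is a line whose projections onto both factors of the other are copies of $\RR$, yet the line is certainly not their product. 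This Euclidean situation is ruled out in the theorem, but your argument never invokes the hypothesis that excludes it; you are effectively assuming the conclusion.

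The paper's proof is more elaborate for exactly this reason. Rather than claiming rectangularity of $(Z_i)_z$ directly, it sets $F_z=(Z_i)_z\cap S_z$, forms the auxiliary set $T=P^S(F_z)\times S'$, and shows --- using rectangularity of maximal c\^ones and a lemma from Foertsch--Lytchak --- that the projection $P^{Z_i}(T)$ has \emph{full boundary} in $Z_i$. Boundary-minimality of $Z_i$ (inherited from $Z$) then forces $P^{Z_i}(T)$ to be dense, and a further Foertsch--Lytchak lemma gives a splitting $P^{Z_i}(T)=P^{Z_i}(F_z)\times P^{Z_i}(S')$, whence a splitting of $Z_i$ itself. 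Only then does irreducibility deliver the dichotomy. The essential ingredient you are missing is the use of boundary-minimality to pass from ``full boundary'' to ``dense''; without it there is no mechanism to rule out the diagonal-line phenomenon.
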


\begin{proof}[Proof of Lemma~\ref{lem:FL_Dichotomy}]
Write $P^S: Z\to S$ and $P^{Z_i}: Z\to Z_i$ for the projections and set $F_z=S_z\cap (Z_i)_z$.
Following~\cite{FoertschLytchak06}, define $T\se Z$ by $T=P^S(F_z) \times S'$. We contend that $P^{Z_i}(T)$ has
full boundary in $Z_i$.

Indeed, given any point in $\bd (Z_i)_z$, we represent is by a ray $r$ originating from $z$. We can choose a
maximal cône in $Z$ based at $z$ and containing $r$. We know already that this cône is rectangular, and
therefore the proof of Lemma~5.2 in~\cite{FoertschLytchak06} shows that $P^{Z_i}(r)$ lies in $P^{Z_i}(T)$,
justifying our contention.

We observe that $Z_i$ inherits from $Z$ the property that it has no closed convex proper subset of full
boundary. In conclusion, since $P^{Z_i}(T)$ is a convex set, it is dense in $Z$. However, according to Lemma~5.1
in~\cite{FoertschLytchak06}, it splits as $P^{Z_i}(T) = P^{Z_i}(F_z) \times P^{Z_i}(S')$. Upon possibly
replacing $P^{Z_i}(S')$ by its completion (whilst $P^{Z_i}(F_z)$ is already closed in $Z_i$ since $P^{Z_i}$ is
isometric on $(Z_i)_z$), we obtain a splitting of the closure of $P^{Z_i}(T)$, and hence of $Z_i$. This
completes the proof of the lemma since $Z_i$ is irreducible.
\end{proof}

Now the main argument runs by induction over $m\geq 2$. Lemma~\ref{lem:FL_Dichotomy} identifies by induction
$Z_i$ with some $Z'_j$. Indeed, Lemma~\ref{lem:EuclideanDecomposition} excludes that all pairwise intersections
reduce to a point since $Z$ has no Euclidean factor.
\end{proof}

\subsection{Proof of Theorem~\ref{thm:Decomposition} and Addendum~\ref{addendum}}\label{sec:decompositions}
The following consequence of the solution to Hilbert's fifth problem belongs to the mathematical lore.

\begin{thm}\label{thm:folklore}
Let $G$ be a locally compact group with trivial amenable radical. Then $G$ possesses a canonical finite index
open normal subgroup $G^\dag$ such that $G^\dag = L \times D$, where $L$ is a connected semi-simple Lie group
with trivial centre and no compact factors, and $D$ is totally disconnected.
\end{thm}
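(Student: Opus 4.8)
The plan is to peel off the identity component $G^\circ$, to show that the hypothesis forces it to be a centreless semisimple Lie group without compact factors, and finally to split it off as a direct factor using its centraliser. Throughout I would use that the amenable radical exists (amenability of locally compact groups is stable under extensions and under directed unions of closed subgroups) and contains every amenable normal subgroup, in particular every compact normal subgroup.

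First I would note that $G^\circ\lhd G$ and that its amenable radical $R$ is characteristic in $G^\circ$, hence normal in $G$; being amenable, $R$ lies in the (trivial) amenable radical of $G$, so $G^\circ$ itself has trivial amenable radical. The whole problem then reduces to the claim that \emph{a connected locally compact group $H$ with trivial amenable radical is a connected semisimple Lie group with trivial centre and no compact factors}. To prove this: $H$ has no non-trivial compact normal subgroup (such a subgroup would be amenable and normal, hence in the trivial amenable radical), so by the Montgomery--Zippin solution of Hilbert's fifth problem --- which provides, in every neighbourhood of the identity, a compact normal subgroup with Lie quotient --- the trivial subgroup already has Lie quotient, i.e.\ $H$ is a connected Lie group. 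Its solvable radical is then characteristic, connected and solvable, hence amenable, hence trivial, so $H$ is semisimple; its centre is discrete, hence abelian, hence amenable, hence trivial; and any compact simple factor is a compact normal subgroup, hence trivial. This settles the claim, so $L:=G^\circ$ has the desired form.

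It remains to split $L$ off. Conjugation defines a continuous homomorphism $\phi\colon G\to\Aut(L)$, where $\Aut(L)$ is a Lie group; writing $\mathfrak{l}$ for the Lie algebra of $L$ and using that $L$ is centreless, $\Aut(L)$ identifies with $\Aut(\mathfrak{l})$, whose identity component is $\Inn(L)=\phi(L)$ and whose component group $\Out(L)$ is finite. Hence $\phi(G)$ contains $\Inn(L)$ with finite index, and
$$G^\dag:=\phi^{-1}(\Inn(L))=G^\circ\cdot\centra_G(G^\circ)$$
is the preimage of the open normal subgroup $\Aut(L)^\circ$, hence an open finite-index normal subgroup of $G$, manifestly canonical. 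Put $D:=\centra_G(G^\circ)=\ker\phi$. Then $L\cap D=Z(L)=1$, the subgroups $L$ and $D$ commute (by definition of $D$) and generate $G^\dag$ (since $\phi(G^\dag)=\phi(L)$), so $G^\dag=L\times D$ as abstract groups, and $D$ is totally disconnected because $D^\circ\subseteq G^\circ\cap D=1$. To see that this splitting is topological, choose by van Dantzig a compact open subgroup $D_0\le D$; then $LD_0$ is closed (a closed subgroup times a compact set) with discrete coset space $G^\dag/LD_0\cong D/D_0$, hence open in $G^\dag$, while $L\times D_0$ is $\sigma$-compact (a connected locally compact group is $\sigma$-compact, $D_0$ is compact), so the continuous bijective homomorphism $L\times D_0\to LD_0$ is open by the open mapping theorem for locally compact groups; translating by elements of $D$ shows that $L\times D\to G^\dag$ is everywhere a local homeomorphism, hence a topological isomorphism. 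The only genuinely non-elementary input is the Montgomery--Zippin/Gleason--Yamabe structure theory used in the second paragraph, which the ``lore'' status of the statement entitles us to quote; the one remaining point needing a little care is precisely the last one, namely that the algebraic splitting $G^\dag=L\times D$ is topological.
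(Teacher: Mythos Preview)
Your proof is correct and follows exactly the approach the paper sketches: the paper's own proof merely cites the Gleason--Montgomery--Zippin structure theory and the finiteness of $\Out(L)$, referring to~\cite[\S\,11.3]{Monod_LN} for details, and you have supplied precisely those details. Your additional care in verifying that the algebraic splitting $G^\dag=L\times D$ is topological (via van Dantzig and the open mapping theorem) is a point the paper leaves entirely to the reference.
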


\begin{proof}
This follows from the Gleason--Montgomery--Zippin solution to Hilbert's fifth problem and the fact that
connected semi-simple Lie groups have finite outer automorphism groups. More details may be found for example
in~\cite[\S\,11.3]{Monod_LN}.
\end{proof}

Combining Theorem~\ref{thm:folklore} with Theorem~\ref{thm:geometric_simplicity}, we find the statement
given as Corollary~\ref{cor:Isom(irreducible)} in the Introduction.

\begin{thm}\label{thm:IrreducibleGroup}
Let $X\neq \RR$ be an irreducible proper \cat space with finite-dimensional Tits boundary and $G<\Isom(X)$ any
closed subgroup whose action is minimal and does not have a global fixed point in $\bd X$.

Then $G$ is either totally disconnected or an almost connected simple Lie group with trivial centre.
\end{thm}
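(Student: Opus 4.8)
The plan is to derive the statement by feeding Theorem~\ref{thm:geometric_simplicity} into the structure theorem for locally compact groups with trivial amenable radical (Theorem~\ref{thm:folklore}). First I would dispose of the degenerate case: if $G$ is trivial it is totally disconnected and there is nothing to prove, so assume $G\neq 1$. Being a closed subgroup of the locally compact group $\Isom(X)$, $G$ is itself locally compact, and applying Theorem~\ref{thm:geometric_simplicity} to the normal subgroup $N=G\lhd G$ shows that the amenable radical of $G$ is trivial. Theorem~\ref{thm:folklore} then furnishes a canonical finite-index open normal subgroup $G^\dag = L\times D$, with $L$ a connected semisimple Lie group with trivial centre and no compact factors and $D$ totally disconnected. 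Since $G^\dag$ is open (hence closed) in $G$, it contains $G^\circ$, and $G^\circ=(G^\dag)^\circ = L^\circ\times D^\circ = L$.

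The dichotomy is now simply whether $L$ is trivial. If $L=1$ then $G^\circ=1$, i.e.\ $G$ is totally disconnected, and we are done. Otherwise $G^\circ=L$ is a non-trivial characteristic, hence normal, subgroup of $G$, so Theorem~\ref{thm:geometric_simplicity} applies with $N=L$: in particular $L$ does not split as a direct product and $\centra_{\Isom(X)}(L)=1$. A connected semisimple Lie group with trivial centre and no compact factors is a direct product of connected simple Lie groups with trivial centre, so non-splitting forces a single factor and $L$ is a connected simple Lie group with trivial centre. Moreover, in the decomposition $G^\dag=L\times D$ the factor $D$ centralises $L$, whence $D\le \centra_G(L)\le \centra_{\Isom(X)}(L)=1$; thus $D=1$ and $G^\dag=L=G^\circ$. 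Consequently $G^\circ$ is open of finite index in $G$, so $G/G^\circ$ is finite and $G$ is an almost connected Lie group whose identity component $L$ is simple with trivial centre; finally the centre of $G$ centralises $G^\circ=L$, so it lies in $\centra_{\Isom(X)}(L)=1$ and is trivial. This is exactly the asserted conclusion.

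This argument is essentially bookkeeping once the two input results are granted; the points that need a little care are the identification $G^\circ=L$ (using that open subgroups are closed, so $G^\dag$ is a union of connected components of $G$) and the observation that the single conclusion ``$L$ does not split as a product'' of Theorem~\ref{thm:geometric_simplicity}, combined with triviality of $\centra_{\Isom(X)}(L)$, disposes at once of both the multiplicity of simple factors inside $L$ and of the totally disconnected complement $D$. I do not expect a genuine obstacle here beyond correctly invoking the hypotheses of Theorem~\ref{thm:geometric_simplicity} (which are precisely those of the present statement, with $G$ in addition assumed closed).
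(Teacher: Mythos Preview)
Your proof is correct and follows essentially the same approach as the paper: invoke Theorem~\ref{thm:geometric_simplicity} to obtain trivial amenable radical, apply Theorem~\ref{thm:folklore} to get $G^\dag = L \times D$, and then use Theorem~\ref{thm:geometric_simplicity} a second time to force one of the two factors to vanish. The only cosmetic difference is that the paper applies the second invocation of Theorem~\ref{thm:geometric_simplicity} to the normal subgroup $G^\dag$ itself (whose non-splitting immediately gives $L=1$ or $D=1$), whereas you apply it to $L=G^\circ$ and use the triviality of $\centra_{\Isom(X)}(L)$ to kill $D$; both routes are equally short and yield the same conclusion.
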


\begin{proof}
By Theorem~\ref{thm:geometric_simplicity}, $G$ has trivial amenable radical. Let $G^\dag$ be as in
Theorem~\ref{thm:folklore}. Applying Theorem~\ref{thm:geometric_simplicity} to this normal subgroup of $G$,
deduce that we have either $G^\dag = L$ with $L$ simple or $G^\dag = D$.
\end{proof}

We can now complete the proof of Theorem~\ref{thm:Decomposition} and Addendum~\ref{addendum} and we adopt their notation.
Since $\Isom(X)$ has no global fixed point at infinity, there is a canonical minimal non-empty closed convex
$\Isom(X)$-invariant subset $X'\se X$ (Remarks~39 in~\cite{Monod_superrigid}). We apply Corollary~\ref{cor:preliminary}
to $Z=X'$ and Remark~\ref{rem:preliminary} to $G=\Isom(Z)$, setting
$$G^* = \Isom(\RR^n) \times \Isom(Z_1) \times\cdots\times \Isom(Z_m).$$
All the claimed properties of the resulting factor groups are established in
Theorem~\ref{thm:geometric_simplicity}, Theorem~\ref{thm:bootstrap} and Theorem~\dref{thm:density-intro}
(the proof of which is completely independent from the present considerations).
Finally, the claim that  any product decomposition of $G^*$ is a regrouping of the factors
in~\eqref{eq:GroupDecomposition} is established as follows. Notice that the $G^*$-action on $Z$ is still minimal
and without fixed point at infinity (this is almost by definition but alternatively also follows from
Theorem~\ref{thm:geometric_simplicity}). Therefore, given any product decomposition of $G^*$, we can apply the
splitting theorem (Corollary~10 in~\cite{Monod_superrigid}) and obtain a corresponding splitting of $Z$. Now the
uniqueness of the decomposition of the space $Z$ (away from the Euclidean factor) implies that the given
decomposition of $G^*$ is a regrouping of the factors occurring in Remark~\ref{rem:preliminary}.\hfill\qedsymbol

\subsection{\cat spaces without Euclidean factor}
For the sake of future references, we record the following consequence of the results obtained thus far:

\begin{cor}\label{cor:NoEuclideanFactor}
Let $X$ be a proper \cat space with finite-dimensional boundary and no Euclidean factor, such that $G =
\Isom(X)$ acts minimally without fixed point at infinity. Then $G$ has trivial amenable radical and any subgroup
of $G$ acting minimally on $X$ has trivial centraliser. Furthermore, given a non-trivial normal subgroup $N\lhd G$,
any $N$-minimal $N$-invariant closed subspace of $X$ is a regrouping of factors in the decomposition of
Addendum~\ref{addendum}. In particular, if each irreducible factor of $G$ is non-discrete, then $G$ has no
non-trivial finitely generated closed normal subgroup.
\end{cor}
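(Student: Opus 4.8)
The plan is to reduce everything to the irreducible case treated in Theorem~\ref{thm:geometric_simplicity}, Theorem~\ref{thm:bootstrap} and Corollaries~\ref{cor:Isom(irreducible)} and~\ref{cor:NonDiscrete}. Since $G=\Isom(X)$ acts minimally, $X$ is its own canonical minimal invariant subspace, so Corollary~\ref{cor:preliminary}(ii) together with Remark~\ref{rem:preliminary} and the absence of a Euclidean factor yield a canonical $G$-equivariant isometric splitting $X\cong X_1\times\cdots\times X_m$ with each $X_i$ irreducible, minimal and $\neq\RR^0,\RR^1$, and with $G^*:=\Isom(X_1)\times\cdots\times\Isom(X_m)$ an open normal subgroup of finite index in $G$ (permuted coordinatewise by $G/G^*$). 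Before reusing the irreducible-case results I would check that each $\Isom(X_i)$ acts on $X_i$ minimally \emph{and without global fixed point at infinity}: minimality is the definition of $X_i$ being minimal, whereas if $\Isom(X_i)$ fixed some $\xi\in\bd X_i$, then (as $G/G^*$ is finite and permutes isometric factors) the $G$-orbit of $\xi$ would be a finite set of points lying in pairwise distinct join factors of $\bd X=\bd X_1\ast\cdots\ast\bd X_m$, hence pairwise at Tits distance $\pi/2$; its convex hull is then a spherical simplex of diameter $\leq\pi/2$, so by Proposition~\ref{prop:BalserLytchak} it has a canonical circumcentre, which would be a global $\Isom(X)$-fixed point at infinity, contrary to hypothesis.

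Granting this, the first two assertions follow quickly. Applying Theorem~\ref{thm:geometric_simplicity} to $\Isom(X_i)$ as a non-trivial normal subgroup of itself shows that each $\Isom(X_i)$ has trivial amenable radical; hence so does the finite product $G^*$, and therefore the amenable radical $R$ of $G$ meets $G^*$ trivially, so $R$ is finite. But a finite normal subgroup of $\Isom(X)$ fixes a point of the proper space $X$, and its fixed-point set is then a non-empty closed convex $G$-invariant subset, hence equals $X$ by minimality; so $R=1$. For the centraliser statement, let $H<G$ act minimally and $g\in\centra_{\Isom(X)}(H)$: the displacement function $x\mapsto d(x,gx)$ is convex and $H$-invariant, hence constant by Lemma~\ref{lem:ConvexFunctions}(i); thus $g$ is a Clifford translation, and since $X$ has no Euclidean factor this forces $g=\id$ by~\cite[II.6.15]{Bridson-Haefliger} --- the same argument already used inside the proof of Theorem~\ref{thm:geometric_simplicity}.

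For the ``furthermore'' part, fix a non-trivial normal subgroup $N\lhd G$. By Theorem~\ref{thm:dichotomy}, $N$ admits a minimal non-empty closed convex invariant subspace (otherwise $\norma_G(N)=G$ would fix a point at infinity), and the union $Z$ of all such subspaces is closed, convex, $\norma_G(N)=G$-invariant, and splits $N$-equivariantly and isometrically as $Z\cong Y\times C$ for any such $Y$; by $G$-minimality $Z=X$, so $X\cong Y\times C$, and the uniqueness of the canonical splitting of $X$ (Theorem~\ref{thm:deRham}, using that $X$ has no Euclidean factor) identifies $Y$ with a sub-product $\prod_{i\in I}X_i\times\{\ast\}$, i.e.\ a regrouping of the factors of Addendum~\ref{addendum}. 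Finally, assume every $\Isom(X_i)$ is non-discrete and suppose, for a contradiction, that $N\lhd G$ is a non-trivial finitely generated closed normal subgroup. A closed, abstractly finitely generated subgroup of the locally compact second countable group $G$ is countable, hence discrete by the Baire category theorem; and $N$ is infinite by the first assertion, so $M:=N\cap G^*$ is a non-trivial finitely generated discrete normal subgroup of $G^*$, with non-trivial image $M_i\lhd\Isom(X_i)$ for at least one~$i$. Fix such an $i$. On one hand $M_i$ acts minimally on $X_i$ (its closure does, by Theorem~\ref{thm:geometric_simplicity}, and minimality only depends on the closure), so $\centra_{\Isom(X_i)}(M_i)=1$ by the Clifford-translation argument above (note $X_i$, being irreducible and $\neq\RR^0,\RR^1$, has no Euclidean factor). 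On the other hand, for each of the finitely many generators $\gamma$ of $M_i$ the $\Isom(X_i)$-conjugacy class of $\gamma$ lies in the countable set $M_i$, so writing $\Isom(X_i)$ as a countable union of the closed fibres of $g\mapsto g\gamma g^{-1}$ and applying Baire shows that $\centra_{\Isom(X_i)}(\gamma)$ is open; hence $\centra_{\Isom(X_i)}(M_i)$ is a finite intersection of open subgroups, thus open and --- as $\Isom(X_i)$ is non-discrete --- non-trivial. This is the desired contradiction.

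I expect the two steps requiring the most care to be the verification that the canonical factors $\Isom(X_i)$ inherit the ``no fixed point at infinity'' property (which is precisely what makes the irreducible-case machinery available), and the Baire-category argument in the last paragraph upgrading a finitely generated normal subgroup to one with open centraliser; everything else is bookkeeping on top of the product decomposition and of Theorem~\ref{thm:geometric_simplicity}.
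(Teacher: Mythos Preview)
Your proof is correct and follows essentially the same route as the paper's: reduce to the irreducible factors via the de Rham decomposition, invoke Theorem~\ref{thm:geometric_simplicity} for the amenable radical, use the Clifford-translation argument for centralisers, obtain the $Y\times C$ splitting from Theorem~\ref{thm:dichotomy} and match it against the canonical decomposition, and finish the finitely-generated-normal-subgroup statement with a Baire argument. Your write-up is in fact more careful than the paper's in two places --- you explicitly verify that each $\Isom(X_i)$ inherits the ``no fixed point at infinity'' hypothesis (via the circumcentre-of-a-simplex trick), and in the final step you pass to $M=N\cap G^*$ and project to a single irreducible factor rather than to the full $N$-minimal fibre $Y$, which makes the open-centraliser argument cleaner.
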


\begin{proof}
The triviality of the amenable radical comes from the corresponding statement in irreducible factors of $X$, see
Theorem~\ref{thm:geometric_simplicity}. By the second paragraph of the proof of
Theorem~\ref{thm:geometric_simplicity}, any subgroup of $G$ acting minimally has trivial centraliser. The fact
that minimal invariant subspaces for normal subgroups are fibres in the product decomposition
(\ref{eq:SpaceDecomposition}) follows since any product decomposition of $X$ is a regrouping of factors in
(\ref{eq:SpaceDecomposition}) and since any normal subgroup of $G$ yields such a product decomposition by
Theorem~\ref{thm:dichotomy}(B.i) and (B.ii). Assume finally that each irreducible factor in
(\ref{eq:GroupDecomposition}) is non-discrete and let $N< G$ be a  finitely generated closed normal subgroup.
Then $N$ is discrete by Baire's category theorem, and $N$ acts minimally on a fibre, say $Y$, of the space
decomposition (\ref{eq:SpaceDecomposition}). Therefore, the projection of $N$ to $\Isom(Y)$ has trivial
centraliser, unless $N$ is trivial. Since $N$ is discrete, normal and finitely generated, its centraliser is
open. Since $\Isom(Y)$ is non-discrete by assumption, we deduce that $N$ is trivial, as desired.
\end{proof}

\section{Totally disconnected group actions}

\subsection{Smoothness}
When considering actions of totally disconnected groups, a desirable property is
\textbf{smoothness}\index{smooth}, namely that points have open stabilisers. This condition is important in
representation theory, but also in our geometric context, see point~\eqref{pt:BasicTotDisc:smoothss} of
Corollary~\ref{cor:BasicTotDisc} below and~\cite{CapraceTD}.

In general, this condition does not hold, even for actions that are cocompact, minimal and without fixed point
at infinity. An example will be constructed in Section~\dref{sec:example}. However, we establish it under a
rather common additional hypothesis. Recall that a metric space $X$ is called \textbf{geodesically
complete}\index{cat(0) space@\cat space!geodesically complete}\index{geodesically complete|see{\cat space}} (or
said to have \textbf{extensible geodesics}\index{extensible geodesic}) if every geodesic segment of positive
length may be extended to a locally isometric embedding of the whole real line. The following contains
Theorem~\ref{thm:smooth:intro} from the Introduction.

\begin{thm}\label{thm:smooth}
Let $G$ be a totally disconnected locally compact group with a minimal, continuous and proper action by
isometries on a proper \cat space $X$.

If $X$ is geodesically complete, then the action is smooth. In fact, the pointwise stabiliser of every bounded
set is open.
\end{thm}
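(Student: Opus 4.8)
The goal is to show that for a totally disconnected locally compact group $G$ acting minimally, continuously, properly and by isometries on a proper geodesically complete \cat space $X$, the pointwise stabiliser $G_{(B)}$ of every bounded set $B$ is open. Since $G$ is totally disconnected and locally compact, it has a basis of identity neighbourhoods consisting of compact open subgroups (van Dantzig). So it suffices to find, for a fixed bounded set $B$, a compact open subgroup $U \le G$ fixing $B$ pointwise; equivalently, to find a compact open subgroup $U$ such that the fixed-point set $X^U$ contains $B$. Because $X^U$ is closed and convex, it is enough to produce, for a fixed pair of points $x, y$, a compact open subgroup fixing both; indeed the family of such subgroups is filtering and one then intersects over a suitable net, but more simply: a bounded set lies in a ball, and by an exhaustion/Baire argument one reduces to finitely many points at a time, and ultimately to a single point together with geodesic-extension data.

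\smallskip

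\textbf{Key steps.} First I would fix $x \in X$ and use properness plus continuity of the action: the stabiliser $G_x$ is compact, and since $G$ is totally disconnected it is itself an inverse limit of its continuous finite quotients, hence contains a basis of compact open (in $G_x$, but by properness also open in $G$ when $G_x$ is open — which need not hold a priori). The real issue is that $G_x$ need not be open, so I cannot start from a single point. Instead, the plan is to induct on ``how much'' of a ball around $x$ is fixed, using geodesic completeness to leverage fixed segments into fixed balls. Concretely: suppose $H \le G$ is a compact subgroup fixing a point $x$; consider the $H$-action on the space of directions $\Sigma_x X$ (a \CAT{1} space), or better on the unit sphere $S(x,r)$. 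Because the action is proper and $X$ is geodesically complete, fixing $x$ and a point $z$ with $d(x,z) = r$ forces fixing the entire geodesic ray/line through them locally; iterating along extensions, the fixed set of a subgroup fixing $x$ and a small sphere's worth of directions is forced to be large. The mechanism to get \emph{openness} is: if $H$ fixes $x$ and $H$ is compact, then $H$ permutes the finitely many (by properness: the sphere $S(x,r)$ is compact, and local finiteness of the branching structure is where geodesic completeness enters via Theorem~\ref{thm:GeodesicallyComplete}-type cell decomposition, but we can't cite that circularly) — instead, use that $G_x$ acts on $S(x,r)$ continuously with compact quotient-ish behaviour, and a small enough compact open subgroup of $G_x$ fixes a chosen finite $\epsilon$-net of $S(x,r)$; geodesic completeness then promotes fixing an $\epsilon$-net of the sphere to fixing a ball of radius $\approx r$ around $x$ (a point of the ball is on a geodesic from $x$ through a near-net-point, controlled by convexity), and then fixing that ball makes the subgroup open because translates of the ball cover neighbourhoods.

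\smallskip

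\textbf{Where the argument really lives.} The crux is a local statement: \emph{if a subgroup $U \le G$ fixes a ball $B(x,r)$ pointwise, then $U$ is open}, together with \emph{one can find a compact subgroup fixing such a ball}. The first is where geodesic completeness is essential and where the intro remark ``this fails without geodesic completeness'' bites: fixing $B(x,r)$ pointwise means $U$ fixes, for every $z \in B(x,r)$, all geodesics emanating from $x$ through $z$ — and by extensibility every point of $B(x, 2r)$ sufficiently near lies on such an extension, so $U$ fixes a slightly larger ball; bootstrapping, $U$ fixes a neighbourhood of $x$ that is ``saturated'' and, combined with continuity and properness (the map $G \to X$, $g \mapsto g.x$ is proper), forces $U$ to contain an identity neighbourhood. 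For the second ingredient I'd argue: pick $x$; the orbit map is proper so $G_x$ is compact; by van Dantzig choose a compact open subgroup $V$ of $G$ and set $W = V \cap G_x$, compact open in $G_x$; $W$ acts on the compact sphere $S(x,r)$; passing to a smaller compact open subgroup of $W$ we may assume it fixes pointwise a prescribed finite subset; geodesic completeness (extensibility) lets us choose that finite subset to ``span'' — i.e. every point of $B(x,r)$ lies on a geodesic through $x$ and one of the chosen sphere points up to displacement $<$ any $\eta$ — hence the subgroup moves points of $B(x,r)$ by $< \eta$; finally, since the action is \emph{proper}, a subgroup moving every point of a fixed ball by less than the (positive) minimal displacement of non-trivial elements near identity must fix the ball. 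Minimality of the action is used at the end (and throughout the surrounding theory) to ensure $X$ has no degenerate invariant subspace, so that ``fixing a ball'' is a genuine constraint and the open stabiliser produced is nontrivial in the relevant sense; but the heart is the interplay of \textbf{properness} (gives compact point stabilisers and positive displacement bounds), \textbf{total disconnectedness} (gives compact open subgroups to start from), and \textbf{geodesic completeness} (propagates fixed sets outward from spheres to balls). I expect the main obstacle to be making rigorous the ``net of sphere directions spans the ball up to small displacement, hence a subgroup fixing the net nearly fixes the ball'' step and converting ``nearly fixes'' into ``exactly fixes'' via the proper action's discreteness of small displacements — this is essentially a compactness-and-continuity argument but requires care about uniformity in $r$ and about the local structure of geodesic extensions.
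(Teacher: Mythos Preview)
Your plan has a genuine gap at its core, and it differs substantially from the paper's argument.

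\medskip

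\textbf{The gaps.} Two of your key steps do not go through as stated.
First, the step ``pass to a smaller compact open subgroup of $W$ fixing a prescribed finite subset of $S(x,r)$'' is circular: it presupposes that point stabilisers (for points on the sphere) are open, which is exactly what you are trying to prove. A profinite group $W$ acting continuously on $S(x,r)$ may well have infinite (Cantor-like) orbits, so the stabiliser of a chosen sphere point need not be open in $W$.
Second, the propagation ``$U$ fixes $B(x,r)$ pointwise $\Rightarrow$ $U$ fixes $B(x,2r)$'' is not justified: a point $y$ with $r<d(x,y)<2r$ lies on an extension of some segment $[x,z]$ with $z\in S(x,r)$, but since geodesic extensions are \emph{not unique} in a geodesically complete \cat space, fixing $[x,z]$ does not force $U$ to fix the particular extension through $y$. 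Finally, ``nearly fixes $\Rightarrow$ exactly fixes via properness'' is unfounded: properness gives no positive lower bound on non-trivial displacements.

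\medskip

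\textbf{What the paper does instead.} The paper's argument is global rather than local, and it is here that minimality is used decisively (not ``at the end''). One considers the set $C\subseteq X$ of points whose stabiliser is open. This set is convex and $G$-stable; it is non-empty because any compact open subgroup of $G$ (van Dantzig) has a fixed point. By minimality, $C$ is therefore \emph{dense} in $X$. Choosing a descending chain $Q_n$ of compact open subgroups with trivial intersection (second countability), one writes $C=\bigcup_n X^{Q_n}$ as an increasing union of closed convex sets. The entire burden then rests on a purely geometric lemma (Lemma~\ref{lem:GeodComplete:dense}): in a geodesically complete proper \cat space, an increasing union of closed convex sets with dense union must exhaust every bounded set. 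Geodesic completeness enters only there, via a short packing argument that produces infinitely many $r$-separated points in a ball if the exhaustion fails. This avoids entirely the local sphere/direction analysis you attempt.
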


\begin{remark}\label{rem:finite:isotropy}
In particular, the stabiliser of a point acts as a \emph{finite} group of isometries on any given ball around
this point in the setting of Theorem~\ref{thm:smooth}.
\end{remark}

\begin{cor}\label{cor:BasicTotDisc}
Let $X$ be a proper \cat space and $G$ be a totally disconnected locally compact group acting continuously
properly on $X$ by isometries. Then:
\begin{enumerate}
\item If the $G$-action is cocompact, then every element of zero translation length is
elliptic.\label{pt:BasicTotDisc:elliptic}

\item If the $G$-action is cocompact and every point $x \in X$ has an open stabiliser, then the $G$-action is
semi-simple.\label{pt:BasicTotDisc:smoothss}

\item If the $G$-action is cocompact and $X$ is geodesically complete, then the $G$-action is
semi-simple.\label{pt:BasicTotDisc:completess}
\end{enumerate}
\end{cor}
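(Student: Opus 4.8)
The plan is to obtain (iii) at once from (ii) and Theorem~\ref{thm:smooth}: if $X$ is geodesically complete and $G$ acts cocompactly, the action is minimal by Lemma~\ref{lem:cocompact:minimal}, so Theorem~\ref{thm:smooth} shows that every point has an open stabiliser, which is precisely the hypothesis of (ii). It remains to prove (i) and (ii); in each case the task is to show that a given $g\in G$ is semi-simple, i.e. that the convex displacement function $d_g\colon x\mapsto d(gx,x)$ attains its infimum $\mu=\inf_X d_g$. Two preliminary remarks will be used throughout: cocompactness forces $\bd X$ to be finite-dimensional by~\cite[Theorem~C]{Kleiner}, so the material of Section~\ref{sec:TitsBd} applies; and since the $G$-action is proper, every point stabiliser $\Stab_G(x)$ is compact and the sets $\{a\in G: aK_1\cap K_2\neq\varnothing\}$ are compact for $K_1,K_2\subseteq X$ compact.

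For (ii), assume that every point stabiliser is open, hence compact open. Choose a minimising sequence $x_n\in X$ with $d_g(x_n)\to\mu$ and, using cocompactness, write $x_n=h_nk_n$ with $k_n$ in a fixed compact set $K$. Since $d\big((h_n^{-1}gh_n)k_n,k_n\big)=d_g(x_n)$ is bounded, the conjugates $h_n^{-1}gh_n$ map a point of $K$ into a fixed bounded neighbourhood of $K$, so by properness they lie in a fixed compact subset of $G$; passing to a subsequence, $h_n^{-1}gh_n\to a$ in $G$ and $k_n\to k$ in $K$, whence $d(ak,k)=\mu$. Smoothness now enters decisively: $\Stab_G(k)$ is open and $a^{-1}(h_n^{-1}gh_n)\to e\in\Stab_G(k)$, so for large $n$ the element $a^{-1}(h_n^{-1}gh_n)$ fixes $k$; therefore $h_n^{-1}gh_n$ sends $k$ to $ak$, giving $d\big((h_n^{-1}gh_n)k,k\big)=d(ak,k)=\mu$, that is $d_g(h_nk)=\mu$. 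Thus $g$ attains $\mu$ and is semi-simple, proving (ii).

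For (i), suppose $\mu=0$ and that $g$ is not elliptic; then $g$ is parabolic, the sets $C_\epsilon=\{d_g\le\epsilon\}$ are non-empty, nested, closed and convex with empty intersection, and Corollary~\ref{cor:FixedPointParabolic} provides a point $\xi\in\bd X$ fixed by $g$ — indeed by $\centra_{\Isom(X)}(g)$ — while $|\beta_\xi(g)|$ is bounded by the translation length $0$, so $\beta_\xi(g)=0$ and $g$ stabilises every horoball centred at $\xi$. Here total disconnectedness is used: $\overline{\langle g\rangle}$ is a monothetic locally compact group, and were it compact it would have bounded orbits and a fixed point, contradicting non-ellipticity; hence $\overline{\langle g\rangle}\cong\ZZ$ is discrete, $\langle g\rangle$ acts properly and freely on $X$, and its orbits are unbounded. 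The crux, which I expect to be the main obstacle, is to turn the coexistence of such a fixed-point-free, sublinearly-displacing, discrete cyclic parabolic with the cocompactness of $G$ into a contradiction: one invokes a compact open subgroup $U\le G$ (van Dantzig), which, being compact, fixes a point $p\in X$, so that $Gp$ is a discrete $G$-invariant net in $X$ all of whose point stabilisers are open; running the conjugation-and-accumulation argument of part (ii) along this net, and using the convexity of $d_g$ together with the geometry of the horoballs centred at $\xi$, one forces $g$ into a compact subgroup — so $g$ is elliptic after all, contradicting the assumption. (The remaining, easier, case in which $\langle g\rangle$ admits a minimal invariant convex subspace $Y$ is handled directly: minimality forces $d_g$ to be constant on $Y$, and comparing that constant with the translation length $0$ shows that $g$ fixes $Y$ pointwise, hence is elliptic.)
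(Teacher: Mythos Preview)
Your proofs of (ii) and (iii) are correct. The argument for (ii) is a clean self-contained version of what the paper outsources to~\cite[Corollary~3.3]{CapraceTD}: conjugate a minimising sequence back into a compact set, pass to a limit $a$, and use openness of $G_k$ to conclude that some conjugate of $g$ actually agrees with $a$ at $k$, so the infimum is attained. The derivation of (iii) from (ii) via Lemma~\ref{lem:cocompact:minimal} and Theorem~\ref{thm:smooth} is exactly the paper's route.

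The gap is in (i). Your setup is fine --- $g$ parabolic with $|g|=0$, the canonical fixed point $\xi$ from Corollary~\ref{cor:FixedPointParabolic}, $\beta_\xi(g)=0$, and the Weil dichotomy forcing $\overline{\langle g\rangle}$ to be either compact (done) or discrete $\cong\ZZ$. Your parenthetical ``easier case'' (a minimal $\langle g\rangle$-invariant subspace $Y$ exists) is also correct: $d_g$ is constant $c$ on $Y$, so $g|_Y$ is semi-simple, and if $c>0$ then $g$ has an axis in $Y\subseteq X$, contradicting $|g|=0$. But the main case is not proved. You write that ``running the conjugation-and-accumulation argument of part (ii) along this net, and using the convexity of $d_g$ together with the geometry of the horoballs centred at $\xi$, one forces $g$ into a compact subgroup'' --- yet this is precisely where the difficulty lies, and you do not carry it out. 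Concretely: if $p$ is a $U$-fixed point and $R$ is the codiameter of $Gp$ in $X$, then approximating a minimising sequence $x_n$ by $y_n=h_np\in Gp$ only gives $d_g(y_n)\le d_g(x_n)+2R$; your (ii)-style extraction then yields $h_n^{-1}gh_n\to a$ with $a^{-1}(h_n^{-1}gh_n)\in G_p$ eventually, hence $d_g(y_n)=d(ap,p)=:c$ for large $n$, but all you know is $0\le c\le 2R$. Nothing you have said explains how the horoball geometry or convexity of $d_g$ forces $c=0$, nor how $g_n\to a$ with $a$ in the (possibly non-open) compact group $G_k$ would put $g$ itself in a compact subgroup. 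The paper does not give a self-contained argument here either; it simply cites~\cite[Corollary~3.3]{CapraceTD}, and you would need to supply or consult that proof to close the gap.
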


\begin{proof}[Proof of Corollary~\ref{cor:BasicTotDisc}]
Points~\eqref{pt:BasicTotDisc:elliptic} and~\eqref{pt:BasicTotDisc:smoothss} follow readily from
Theorem~\ref{thm:smooth}, see~\cite[Corollary~3.3]{CapraceTD}.

\medskip

\eqref{pt:BasicTotDisc:completess}~In view of Lemma~\ref{lem:cocompact:minimal}, this follows from
Theorem~\ref{thm:smooth} and~\eqref{pt:BasicTotDisc:smoothss}.
\end{proof}

The following is a key fact for Theorem~\ref{thm:smooth}:

\begin{lem}\label{lem:GeodComplete:dense}
Let $X$ be a geodesically complete proper \cat space. Let $(C_n)_{n \geq 0}$ be an increasing sequence of closed
convex subsets whose union $C = \bigcup_n C_n$ is dense in $X$.

Then every bounded subset of $X$ is contained in some $C_n$; in particular, $C = X$.
\end{lem}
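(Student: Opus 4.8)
The plan is to prove the apparently stronger ``ball'' statement — \emph{for every $o \in X$ and every $\rho > 0$ there is an $N$ with $\overline B(o,\rho) \subseteq C_N$} — and then read off both assertions of the lemma, since any bounded set is contained in some such ball and, in particular, each singleton $\{x\}$ is, which gives $\bigcup_n C_n = X$. Since $C$ is dense, hence non-empty, I would first discard finitely many terms and assume every $C_n$ is non-empty, so that the nearest-point projection $\pi_{C_n}$ onto the closed convex set $C_n$ is defined and the distance function $d(\cdot, C_n)$ is convex (\cite[II.2.4 and II.2.5]{Bridson-Haefliger}).

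Assume for contradiction that $\overline B(o,\rho) \not\subseteq C_n$ for every $n$. For each $n$ I would pick $x_n \in \overline B(o,\rho)\setminus C_n$, set $\epsilon_n = d(x_n, C_n) > 0$ and $x_n' = \pi_{C_n}(x_n)$, so $[x_n', x_n]$ is a geodesic segment of positive length $\epsilon_n$ with $x_n' \in C_n$. This is where geodesic completeness is used: extend this segment past $x_n$ by one more unit, obtaining $x_n''$ with $x_n', x_n, x_n''$ consecutive along a geodesic and $d(x_n, x_n'') = 1$. (The extension provided by geodesic completeness is a priori only a local geodesic, but local geodesics in \cat spaces are geodesics, so $[x_n', x_n'']$ is genuinely geodesic.) Along this segment the convex function $d(\cdot, C_n)$ vanishes at $x_n'$ and equals $\epsilon_n$ at $x_n$, so its slope on $[x_n', x_n]$ is at least $\epsilon_n/\epsilon_n = 1$; by convexity the slope is at least $1$ on $[x_n, x_n'']$ as well, whence $d(x_n'', C_n) \ge \epsilon_n + 1 \ge 1$. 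On the other hand, $d(o, x_n'') \le d(o, x_n) + d(x_n, x_n'') \le \rho + 1$.

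To finish I would invoke properness and density. The points $x_n''$ all lie in the compact ball $\overline B(o, \rho+1)$, so a subsequence converges to some $y$; density of $C$ provides an $m$ and a point $c \in C_m$ with $d(y, c) < \tfrac12$. Then for $k$ large enough that $n_k \ge m$ and $d(x_{n_k}'', y) < \tfrac12$ we have $c \in C_m \subseteq C_{n_k}$ and $d(x_{n_k}'', C_{n_k}) \le d(x_{n_k}'', c) < 1$, contradicting $d(x_{n_k}'', C_{n_k}) \ge 1$. Hence $\overline B(o,\rho) \subseteq C_N$ for some $N$, and the lemma follows.

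The only substantive point — the step I expect to carry the weight — is the amplification of the ``defect'' $d(x_n, C_n)$ by geodesic extension: extending a segment that leaves $C_n$ keeps moving away from $C_n$ at unit rate, so an arbitrarily small defect inside $\overline B(o,\rho)$ forces a defect of size at least $1$ just slightly further out, which density rules out in the limit. This is exactly where extensibility of geodesics is indispensable: without it one may take $C_n = \overline B(o, \rho - 1/n)$ in a space that is itself a closed ball, where the union $\bigcup_n C_n$ is dense yet no single $C_n$ contains $\overline B(o,\rho)$.
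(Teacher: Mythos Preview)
Your proof is correct and rests on the same key idea as the paper's: use geodesic completeness to extend the segment from $x_n$ to its projection on $C_n$, so that the convex function $d(\cdot,C_n)$ is forced to grow linearly and produce a point at definite distance from $C_n$ inside a fixed bounded region, contradicting density together with properness. The paper packages the contradiction slightly differently---it extends further (to distance $r+1$ from $C_n$) and inductively builds an infinite $r$-separated subset of a fixed ball, violating properness---whereas you extract a convergent subsequence of the extended points and confront it directly with density; your version is a touch more economical, but the two arguments are essentially the same.
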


\begin{proof}
Suppose for a contradiction that for some $r>0$ and $x\in X$ the $r$-ball around $x$ contains an element $x_n$
not in $C_n$ for each $n$. We shall construct inductively a sequence $\{c_k\}_{k\geq 1}$ of pairwise
$r$-disjoint elements in $C$ with $d(x, c_k)\leq 2r+2$, contradicting the properness of $X$.

If $c_1, \ldots, c_{k-1}$ have been constructed, choose $n$ large enough to that $C_n$ contains them all and
$d(x, C_n)\leq 1$. Consider the (non-trivial) geodesic segment from $x_n$ to its nearest point projection
$\overline{x_n}$ on $C_n$; by geodesic completeness, it is contained in a geodesic line and we choose $y$ at
distance $r+1$ from $C_n$ on this line. Notice that $x_n\in[\overline{x_n}, y]$ and hence $d(y, x)\leq 2r+1$.
Moreover, $d(y, c_i)\geq r+1$ for all $i<k$. Since $C$ is dense, we can choose $c_k$ close enough to $y$ to
ensure $d(c_k, x)\leq 2r+2$ and $d(c_k, c_i) \geq r$ for all $i<k$, completing the induction step.
\end{proof}

\begin{proof}[End of proof of Theorem~\ref{thm:smooth}]
The subset $C \subseteq X$ consisting of those points $x \in X$ such that the stabiliser $G_x$ is open is
clearly convex and $G$-stable. By~\cite[III \S\,4 No~6]{BourbakiTGI}, the group  $G$ contains a compact open
subgroup and hence $C$ is non-empty. Thus $C$ is dense by minimality of the action. Since $\Isom(X)$ is second
countable, we can choose a descending chain $Q_n<G$ of compact open subgroups whose intersection acts trivially
on $X$. Therefore, $C$ may be written as the union of an ascending family of closed convex subsets $C_n
\subseteq X$, where $C_n$ is the fixed point set of $Q_n$. Now the statement of the theorem follows from
Lemma~\ref{lem:GeodComplete:dense}.
\end{proof}

\subsection{Locally finite equivariant partitions and cellular decompositions}
Let $X$ be a locally finite cell complex and $G$ be its group of cellular automorphisms, endowed with the
topology of pointwise convergence on bounded subsets. Then $G$ is a totally disconnected locally compact group
and every bounded subset of $X$ has an open pointwise stabiliser in $G$. One of the interest of
Theorem~\ref{thm:smooth} is that it allows for a partial converse to the latter statement:

\begin{prop}\label{prop:EquivariantPartition}
Let $X$ be a proper \cat space and $G$ be a totally disconnected locally compact group acting continuously
properly on $X$ by isometries. Assume that the pointwise stabiliser of  every bounded subset of $X$ is open in
$G$. Then we have the following:
\begin{enumerate}
\item $X$ admits a canonical locally finite $G$-equivariant partition.

\item Denoting by $\sigma(x)$ the piece supporting the point $x \in X$ in that partition, we have
$\Stab_G(\sigma(x)) = \norma_G(G_x)$ and $\norma_G(G_x)/G_x$ acts freely on $\sigma(x)$.

\item If $G \backslash X$ is compact, then so is $\Stab_G(\sigma(x)) \backslash \sigma(x)$ for all $x \in X$.
\end{enumerate}
\end{prop}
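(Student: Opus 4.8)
The plan is to define the partition using the fixed-point sets of isotropy groups, exactly as suggested by the statement of Theorem~\ref{thm:GeodesicallyComplete}. For $x \in X$, set $\sigma(x) = \Fix_X(G_x)$, the fixed-point set of the (open, by hypothesis) stabiliser $G_x$. First I would check that $\sigma$ is well-defined as a partition, i.e. that $\sigma(x) = \sigma(y)$ whenever $y \in \sigma(x)$. The containment $G_x \le G_y$ is immediate since $y$ is fixed by $G_x$, so $\sigma(y) = \Fix(G_y) \subseteq \Fix(G_x) = \sigma(x)$; for the reverse, observe $y \in \sigma(x)$ means $G_x$ fixes $y$, but one also needs $G_y \le G_x$. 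This is the point where geodesic-type arguments are avoided: since $\sigma(x)$ is convex (a fixed-point set of isometries of a \cat space is convex and closed, by \cite[II.2.8]{Bridson-Haefliger}) and $G_x$ fixes it pointwise, while any element fixing $y$ need not fix $x$ — so instead I would note $G_x \le G_y$ always gives $\sigma(y)\subseteq\sigma(x)$, and symmetry of ``lying in the same piece'' must be arranged by declaring $x \sim y$ iff $G_x = G_y$, then showing $\sigma(x) = \{z : G_z = G_x\}$ coincides with a component of this relation. Concretely: $z \in \Fix(G_x)$ gives $G_x \le G_z$; one then restricts attention to those $z$ with $G_z = G_x$, and must show this set equals $\Fix(G_x)$. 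This requires that $G_x$ be a \emph{maximal} compact open subgroup among the $G_z$ for $z$ in its fixed-point set — which follows because $\Fix(G_x)$ is $G_x$-fixed and, were $G_z \gneq G_x$ for some $z \in \Fix(G_x)$, then $\Fix(G_z) \subsetneq \Fix(G_x)$ and one iterates; properness (compactness of $G_x$, openness, and the descending chain condition on open subgroups of a profinite-by-discrete local structure) terminates the iteration, giving the piece a well-defined ``top'' isotropy group.

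Second, for local finiteness: fix a bounded set $B$; its pointwise stabiliser $U = G_{(B)}$ is open, hence of finite index in the compact open subgroup $G_x$ for any $x$, and more usefully, for $x \in B$ the group $G_x$ contains $U$ with $[G_x : U] < \infty$. I would argue that only finitely many pieces $\sigma(x)$ meet $B$: each such piece is determined by its isotropy group $G_x$, which contains $G_{(B)}$ (if the whole piece met... more carefully, for $x\in B$, $G_x \supseteq$ nothing in particular, but $G_x$ is a compact open subgroup, and the compact set $B$ is covered by finitely many orbits of the open subgroup $G_{(B)}$... ). The cleanest route: the stabiliser $G_{(B)}$ is open and compact; the pieces meeting $B$ correspond to the distinct subgroups $G_x$, $x\in B$, each of which is a compact open subgroup containing... actually contained between $G_{(B)}$-conjugates — by Remark~\ref{rem:finite:isotropy}, $G_x$ acts as a finite group on any ball, so there are only finitely many possibilities for the pair (isotropy group, its fixed set) intersecting $B$. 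I would spell this out via: $B$ is covered by finitely many $G_{(B)}$-orbits (compactness plus openness of $G_{(B)}$), and within each orbit the isotropy groups are conjugate, hence the fixed-point sets are permuted, so finitely many pieces total.

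Third, the identities in~(2). The inclusion $\norma_G(G_x) \subseteq \Stab_G(\sigma(x))$ is clear: if $g$ normalises $G_x$ then $g.\sigma(x) = g.\Fix(G_x) = \Fix(gG_xg^{-1}) = \Fix(G_x) = \sigma(x)$. Conversely if $g$ stabilises $\sigma(x)$ setwise, then $gxg^{-1}$... rather $g.x \in \sigma(x)$, so $G_{g.x} = gG_xg^{-1}$ and $G_{g.x}$ equals $G_x$ by the maximality/uniqueness of the top isotropy group on the piece established in step one; hence $g \in \norma_G(G_x)$. Freeness of $\norma_G(G_x)/G_x$ on $\sigma(x)$ is then immediate: if $g \in \norma_G(G_x)$ fixes some $y \in \sigma(x)$, then $g \in G_y = G_x$ again by the uniqueness of the isotropy group on the piece. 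Finally for~(3), suppose $G\backslash X$ compact; then $\norma_G(G_x)\backslash\sigma(x)$ is compact because $\sigma(x)$ is a closed convex subset on which the closed subgroup $\norma_G(G_x) = \Stab_G(\sigma(x))$ acts, and one can invoke that a $G$-cocompact action restricted to a $G$-invariant... no — $\sigma(x)$ is not $G$-invariant, only $\Stab_G(\sigma(x))$-invariant. Instead: pick a compact $K \subseteq X$ with $G.K = X$; then $\sigma(x) \subseteq G.K$, and for $y \in \sigma(x)$ write $y = g.k$; the element $g^{-1}$ sends $y\in\sigma(x)$ to $k \in K$, but we need to stay inside $\Stab_G(\sigma(x))$-translates. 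The standard argument: the map $\Stab_G(\sigma(x))\backslash\sigma(x) \to G\backslash X$ is injective (two points of $\sigma(x)$ in the same $G$-orbit are in the same $\Stab_G(\sigma(x))$-orbit, since $g.y \in \sigma(x)$ forces $g \in \norma_G(G_x)$ as above) and continuous, with image closed; a closed subset of the compact space $G\backslash X$ is compact, whence the claim.

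The main obstacle I anticipate is step one — establishing that each piece $\sigma(x)$ carries a \emph{well-defined maximal} isotropy group, so that the relation ``$G_x = G_y$'' has $\Fix(G_x)$ as its equivalence classes and $\sigma$ is genuinely a partition rather than merely a cover by overlapping convex sets. This is where the openness hypothesis (guaranteeing $G_x$ is compact open, so the lattice of subgroups between $G_{(B)}$ and $G_x$ is finite) does the real work, together with the fact that $\Fix(G_y)\subsetneq\Fix(G_x)$ whenever $G_x\subsetneq G_y$, which lets one run a maximality/termination argument. Once that is clean, parts~(2) and~(3) are formal consequences of the uniqueness of the top isotropy group on each piece.
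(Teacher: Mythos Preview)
Your plan conflates two different decompositions. The paper defines the pieces by the equivalence relation $x\sim y \iff G_x=G_y$, so that $\sigma(x)=\{y\in X: G_y=G_x\}$; this is trivially a partition, and the whole of step one becomes unnecessary. You instead set $\sigma(x)=\Fix_X(G_x)$ and then try to argue that these two sets coincide. They do not: take a tree with large automorphism group and let $x$ be an interior point of an edge $e$; then $G_x=G_e$ and $\Fix(G_e)$ is the closed edge, but the endpoints have strictly larger stabilisers, so $\{y:G_y=G_e\}$ is only the open edge (or less). Thus the sets $\Fix(G_x)$ form a nested cover, not a partition, and the ``maximality/termination'' argument you sketch goes in the wrong direction (it finds points with \emph{larger} isotropy inside $\Fix(G_x)$, not smaller). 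The decomposition by fixed-point sets $\tau(x)=\Fix(G_x)$ is what appears later in Corollary~\ref{cor:CellularDecomposition}, under the additional hypothesis that no open subgroup fixes a point at infinity; that is a different statement. The remark following the paper's proof of the present proposition explicitly notes that the pieces $\sigma(x)$ are in general neither convex nor connected, which rules out $\sigma(x)=\Fix(G_x)$.

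Once you use the correct definition, your argument for~(ii) goes through verbatim and matches the paper's. For local finiteness~(i) the paper argues more directly: take $x_n\to x$ with pairwise distinct $G_{x_n}$, show $G_{x_n}<G_x$ for large $n$ using openness of $G_x$, then trap all $G_{x_n}$ between a fixed compact open $U$ (the pointwise stabiliser of a ball) and $G_x$, giving only finitely many possibilities. Your argument for~(iii) via injectivity of $\Stab_G(\sigma(x))\backslash\sigma(x)\to G\backslash X$ has a gap: the image need not be closed, precisely because $\sigma(x)$ need not be closed in $X$ (same tree example). The paper instead argues by contradiction using local finiteness: a sequence $y_n\in\sigma(x)$ escaping every $H$-orbit is pulled back by cocompactness to a bounded set, where~(i) forces the conjugated stabilisers $g_nG_xg_n^{-1}$ to eventually repeat, producing elements of $\norma_G(G_x)$ that move $y_k$ close to $y_n$.
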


\begin{proof}
Consider the equivalence relation on $X$ defined by
$$ x \sim y \hspace{.5cm} \Leftrightarrow \hspace{.5cm}  G_x = G_y.$$
This yields a canonical $G$-invariant partition of $X$. We need to show that it is locally finite.
Assume for a contradiction that there exists a converging sequence $\{x_n\}_{n \geq 0}$ such that the subgroups
$G_{x_n}$ are pairwise distinct. Let $x = \lim_n x_n$.

We claim that $G_{x_n} < G_x$ for all sufficiently large $n$. Indeed, upon extracting there would otherwise
exist a sequence $g_n \in G_{x_n}$ such that $g_n.x \ne x$ for all $n$. Upon a further extraction, we may assume
that $g_n$ converges to some $g \in G$. By construction $g$ fixes $x$. Since $G_x$ is open by hypothesis, this
implies that $g_n$ fixes $x$ for sufficiently large $n$, a contradiction. This proves the claim.

By hypothesis the pointwise stabiliser of any ball centred at $x$ is open. Thus $G_x$ possesses a compact open
subgroup $U$ which fixes every $x_n$. This implies that we have the inclusion $U < G_{x_n} < G_x$ for all $n$.
Since the index of $U$ in $G_x$ is finite, there are only finitely many subgroups of $G_x$ containing $U$. This
final contradiction finishes the proof of (i).

\smallskip \noindent (ii) Straightforward in view of the definitions.

\smallskip \noindent (iii) Suppose for a contradiction that $H \backslash \sigma(x)$ is not
compact, where $H =\Stab_G(\sigma(x))$. Let then $y_n \in \sigma(x)$ be a sequence such that $d(y_n,H.x)> n $.
Let now $g_n \in G$ be such that $\{g_n.y_n\}$ is bounded, say of diameter $C$. By (i), the set $\{ g_n G_{y_n}
g_n\inv\}$ is thus finite. Upon extracting, we shall assume that it is constant. Now, for all $n< k$, the
element $g_n\inv g_k$ normalises $G_{y_k} = G_x$ and maps $y_k$ to a point at distance $\leq C$ from $y_n$. In
view of (ii), this is absurd.
\end{proof}

\begin{remark}
The partition of $X$ constructed above is non-trivial whenever $G$ does not act freely. This is for example the
case whenever $G$ is non-discrete and acts faithfully.
\end{remark}

The pieces in the above partition are generally neither bounded (even if $G \backslash X$ is compact), nor
convex, nor even connected. However, if one assumes that the space admits a sufficiently large amount of
symmetry, then one obtains a partition which deserves to be viewed as an equivariant cellular decomposition.

\begin{cor}\label{cor:CellularDecomposition}
Let $X$ be a proper \cat space and $G$ be a totally disconnected locally compact group acting continuously
properly on $X$ by isometries. Assume that the pointwise stabiliser of  every bounded subset of $X$ is open in
$G$, and that no open subgroup of $G$ fixes a point at infinity. Then $X$ admits admits a canonical locally
finite $G$-equivariant decomposition into compact convex pieces.
\end{cor}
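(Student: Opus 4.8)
The plan is to upgrade the partition of Proposition~\ref{prop:EquivariantPartition} to a decomposition by convex compact pieces. The standing hypotheses say exactly that pointwise stabilisers of bounded sets — in particular every point stabiliser $G_x$ — are open, so Proposition~\ref{prop:EquivariantPartition} applies and $X$ carries the canonical locally finite $G$-equivariant partition into the sets $\sigma(x)=\{y: G_y=G_x\}$. I would then declare the cell through $x$ to be $C(x):=\Fix_X(G_x)=X^{G_x}$. This set is convex (if an isometry fixes two points, it fixes the geodesic joining them) and closed, it is $G$-equivariant since $g\,C(x)=X^{gG_xg\inv}=X^{G_{gx}}=C(gx)$, and the cells cover $X$ because $x\in C(x)$. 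The pointwise stabiliser of $C(x)$ is exactly $G_x$: it contains $G_x$ by definition of $X^{G_x}$, and is contained in $G_x$ because $x\in C(x)$. Hence $C(x)=C(x')\iff G_x=G_{x'}\iff\sigma(x)=\sigma(x')$, so the cells are canonically indexed by the pieces of the partition, with $\sigma(x)\se C(x)$; and for any $z\in C(x)\cap C(y)$ one has $G_x,G_y\le G_z$, whence $C(z)=X^{G_z}\se C(x)\cap C(y)$, so intersections of cells are unions of cells. Thus the $C(x)$ form a decomposition of the expected polytopal type.

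Next I would prove that each cell is compact, and this is where the hypothesis on $\bd X$ is used. If $C(x)=X^{G_x}$ were unbounded, then, being a closed convex subset of the proper space $X$, it would have non-empty boundary at infinity; a geodesic ray contained in $C(x)$ is fixed pointwise by $G_x$, so $G_x$ would fix a point of $\bd X$. But $G_x$ is open, contradicting the assumption that no open subgroup of $G$ fixes a point at infinity. Hence $C(x)$ is bounded, and being closed in a proper space it is compact — in particular a compact convex piece, as wanted.

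The remaining, and hardest, point is local finiteness. It suffices that every $z\in X$ have a neighbourhood meeting finitely many cells. By the argument in the proof of Proposition~\ref{prop:EquivariantPartition}(i) (using that $G_z$ is open) pick $\eps>0$ with $G_{z'}\le G_z$ for all $z'\in B(z,\eps)$; then any cell $C(x)$ meeting $B(z,\eps)$, say at $z'$, satisfies $G_x\le G_{z'}\le G_z$, hence $z\in X^{G_x}=C(x)$. So every cell meeting $B(z,\eps)$ contains $z$, and it is enough to show that $A_z:=\{x\in X: G_x\le G_z\}$ is bounded: then $A_z$ meets finitely many pieces $\sigma(x)$ by Proposition~\ref{prop:EquivariantPartition}(i), so only finitely many subgroups $G_x$ occur with $G_x\le G_z$, hence only finitely many cells pass through $z$. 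This is where geodesic completeness is indispensable. If $A_z$ were unbounded, choose $x_i\in A_z$ with $d(z,x_i)\to\infty$; since $G_{x_i}$ fixes $z$ and $x_i$ it fixes $[z,x_i]$, so by properness the $[z,x_i]$ sub-converge to a ray $\ro$ issuing from $z$, and $G_{x_i}\le L_t:=G_{[z,\ro(t)]}$ for every $t$ and all large $i$. The $L_t$ form a descending chain of open subgroups of the compact group $G_z$; if it is eventually constant, its value is an open subgroup fixing the whole ray, hence a point of $\bd X$, a contradiction. Otherwise one invokes geodesic completeness via Lemma~\ref{lem:GeodComplete:dense}: for a descending basis $(Q_n)$ of compact open subgroups of $G$, each compact cell $C(x_i)$ lies in some $X^{Q_n}$, forcing $Q_n\le G_{x_i}\le G_z$, and combining the finiteness of $[G_z:Q_n]$ (finitely many subgroups between $Q_n$ and $G_z$) with the behaviour of the chain $L_t$ forces the $G_{x_i}$ into a finite set — contradicting their being pairwise distinct. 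Hence $A_z$ is bounded and the decomposition is locally finite.

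The main obstacle I anticipate is precisely this last step: ruling out an unbounded family of (necessarily large-diameter) cells through a single point when the stabiliser chain $L_t$ does not stabilise. Geodesic completeness is genuinely needed there, and the argument should be modelled on the proof of Theorem~\ref{thm:smooth}, exploiting Lemma~\ref{lem:GeodComplete:dense} to pin down which compact open subgroups fix a given compact cell pointwise; without geodesic completeness local finiteness can fail. With local finiteness in hand, the corollary follows, the pieces being the cells $C(x)$.
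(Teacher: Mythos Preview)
Your construction $C(x)=\Fix_X(G_x)$ and the verifications of convexity, $G$-equivariance and compactness match the paper's proof exactly (the paper writes $\tau(x)$ for your $C(x)$, and disposes of compactness with the same argument: $G_x$ is open and no open subgroup fixes a point of $\bd X$). The paper then handles local finiteness in a single sentence --- it is simply asserted to follow from Proposition~\ref{prop:EquivariantPartition}.

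You correctly isolate local finiteness as the crux and make the right reduction: after shrinking the ball so that $G_{z'}\le G_z$ for all nearby $z'$, every cell meeting the ball contains $z$, so it suffices that only finitely many point-stabilisers $G_x$ lie in $G_z$. But then you import geodesic completeness, which is \emph{not} a hypothesis of this corollary; so as written there is a gap. Even granting geodesic completeness, the argument is not sound: ``$G_{x_i}\le L_t$ for all large $i$'' does not follow (each $G_{x_i}$ fixes the segment $[z,x_i]$, not the limiting ray $\rho$), and the descending basis $(Q_n)$ yields $Q_{n(i)}\le G_{x_i}$ only with $n(i)$ depending on $i$, so the $G_{x_i}$ are not trapped between two fixed groups. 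A cleaner way to complete your reduction is to note that $G_x\le G_z$ forces $x,z\in C(x)$, whence $d(x,z)\le\diam C(x)$; thus a uniform bound on cell diameters --- automatic when $G\backslash X$ is compact, since by Proposition~\ref{prop:EquivariantPartition}(i) a compact fundamental domain meets only finitely many $\sigma$-pieces, hence only finitely many $G$-conjugacy classes of cells occur --- bounds $A_z$ and gives local finiteness. In the paper, the corollary is in fact only ever applied under cocompactness.
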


\begin{proof}
For each $x \in X$, let $\tau(x)$ be the fixed-point-set of $G_x$. Then $\tau(x)$ is clearly convex; it is
compact by hypothesis. Furthermore the map $x \mapsto \tau(x)$ is $G$-equivariant. The fact that the collection
$\{\tau(x) \; | \; x \in X\}$ is locally finite follows from Proposition~\ref{prop:EquivariantPartition}.
\end{proof}

\subsection{Alexandrov angle rigidity}
A further consequence of Theorem~\ref{thm:smooth} is a phenomenon of \textbf{angle
rigidity}\index{angle!rigidity}. Given an elliptic isometry $g$ of complete a \cat space $X$ and a point $x\in
X$, we denote by $c_{g,x}$ the projection of $x$ on the closed convex set of $g$-fixed points.

\begin{prop}\label{prop:angle_rigidity}
Let $G$ be a totally disconnected locally compact group with a continuous and proper cocompact action by
isometries on a geodesically complete proper \cat space $X$. Then there is $\vareps>0$ such that for any
elliptic $g\in G$ and any $x\in X$ with $gx\neq x$ we have $\aangle{c_{g,x}}{gx}{x} \geq \vareps$.
\end{prop}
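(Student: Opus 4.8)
The plan is to argue by contradiction via a compactness and rescaling argument, the crucial input being the smoothness provided by Theorem~\ref{thm:smooth} (note that the action is minimal by Lemma~\ref{lem:cocompact:minimal}, so that theorem applies). Suppose no such $\vareps$ exists: then there are elliptic $g_n \in G$ and points $x_n \in X$ with $g_n x_n \neq x_n$ and $\alpha_n := \aangle{p_n}{g_n x_n}{x_n} \to 0$, where $p_n := c_{g_n,x_n}$ is the projection of $x_n$ onto the closed convex set $\Fix(g_n)$ (in particular $d(p_n,x_n)>0$). Since both the projection onto fixed-point sets and the Alexandrov angle are $\Isom(X)$-equivariant, we may replace $(g_n,x_n)$ by $(h_n g_n h_n\inv, h_n x_n)$ for suitable $h_n \in G$, which exist by cocompactness, and thereby assume that all $p_n$ lie in a fixed compact set $K\se X$. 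As each $g_n$ fixes a point of $K$, properness of the action forces the $g_n$ to lie in a compact subset of $G$; after extraction, $p_n\to p$ and $g_n\to g$ with $g$ elliptic and $gp=p$.

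\textbf{Bounding the local order.} By Theorem~\ref{thm:smooth} and Remark~\ref{rem:finite:isotropy}, the stabiliser $G_p$ acts as a \emph{finite} group of isometries on $\overline{B}(p,2)$; let $m$ be the order of the image of $g$ there, so $g^m$ lies in the pointwise stabiliser $W$ of $\overline{B}(p,2)$, which is open in $G$ by Theorem~\ref{thm:smooth}. Since $g_n^m\to g^m\in W$, we have $g_n^m\in W$ for all large $n$; and since $p_n\to p$, for such $n$ we get $\overline{B}(p_n,1)\se\overline{B}(p,2)$, so $g_n^m$ fixes $\overline{B}(p_n,1)$ pointwise. As $g_n$ fixes $p_n$, the $\langle g_n\rangle$-orbit of any point of $\overline{B}(p_n,1)$ is contained in $\overline{B}(p_n,1)$ and has at most $m$ elements.

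\textbf{Rescaling at the foot of the perpendicular.} Fix such an $n$. Using geodesic completeness, extend $[p_n,x_n]$ to a ray $r_n:[0,\infty)\to X$ issuing from $p_n$ through $x_n$. The function $t\mapsto d(r_n(t),\Fix(g_n))$ is convex~\cite[II.2.5]{Bridson-Haefliger}, equals $t$ on $[0,d(p_n,x_n)]$, and is everywhere $\leq d(r_n(t),p_n)=t$; hence it equals $t$ for all $t$, so $p_n$ is the projection of $r_n(t)$ onto $\Fix(g_n)$ for every $t\geq 0$, and the Alexandrov angle $\aangle{p_n}{r_n(t)}{g_n r_n(t)}$ — being the angle at $p_n$ between the rays $r_n$ and $t\mapsto g_n r_n(t)$ — equals $\alpha_n$ independently of $t$. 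Now fix $\vareps\in(0,1]$, put $y:=r_n(\vareps)$ and $O:=\{g_n^i y : i\geq 0\}$. Then $O\se\overline{B}(p_n,\vareps)$ with $|O|\leq m$, and since the triangle $(p_n,y,g_ny)$ is isosceles with legs $\vareps$, consecutive orbit points satisfy $d(g_n^i y, g_n^{i+1}y)=d(y,g_ny)=2\vareps\sin(\tfrac12\cangle{p_n}{y}{g_ny})$. Consequently the circumcentre $q$ of $O$ in $X$, which is unique by~\cite[II.2.7]{Bridson-Haefliger} and hence fixed by $g_n$ (which permutes $O$), satisfies $d(y,q)\leq (m-1)\cdot 2\vareps\sin(\tfrac12\cangle{p_n}{y}{g_ny})$; since $q\in\Fix(g_n)$ and $d(y,\Fix(g_n))=d(y,p_n)=\vareps$, dividing by $\vareps>0$ yields $\sin(\tfrac12\cangle{p_n}{y}{g_ny})\geq\tfrac1{2(m-1)}$. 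Letting $\vareps\to 0$, the comparison angle $\cangle{p_n}{y}{g_ny}=\cangle{p_n}{r_n(\vareps)}{g_n r_n(\vareps)}$ decreases to $\alpha_n$ by definition of the Alexandrov angle, so $\sin(\alpha_n/2)\geq\tfrac1{2(m-1)}$, contradicting $\alpha_n\to 0$.

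\textbf{Main obstacle.} The essential subtlety is the last rescaling step: one cannot work at a fixed scale, because the comparison angle $\cangle{p_n}{x_n}{g_nx_n}$ at scale $\approx 1$ need not tend to $0$ even though the Alexandrov angle does; the orbit-size bound $m$ must therefore be established uniformly in $\vareps$ before letting $\vareps\to 0$. The order $m$ depends on the extracted limit isometry $g$ and is not claimed uniform over all elliptic elements, but this is immaterial since the contradiction is drawn from a single offending sequence.
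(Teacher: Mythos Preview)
Your proof is correct and follows essentially the same strategy as the paper's: translate the foot points $p_n$ into a compact set by cocompactness, invoke smoothness (Theorem~\ref{thm:smooth} via Remark~\ref{rem:finite:isotropy}) to obtain a uniform finite order $m$ for the restrictions $g_n|_{\overline B(p_n,1)}$, and then derive a lower bound on the angle from a circumcentre-of-orbit argument. Your handling of the limiting step $\vareps\to 0$ is in fact more explicit than the paper's; two inessential remarks: the extension of $[p_n,x_n]$ to a full ray is unnecessary (you only use $y=r_n(\vareps)$ for small $\vareps$, hence $y\in[p_n,x_n]$ already), and the case $m=1$ deserves a word to avoid dividing by $m-1$ --- there $g_n$ eventually fixes $\overline B(p_n,1)$ pointwise, contradicting $d(r_n(\vareps),\Fix(g_n))=\vareps>0$ directly.
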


(We will later also prove an angle rigidity for the Tits angle, see Proposition~\ref{prop:discrete:orbits}.)

\begin{proof}
First we observe that this bound on the Alexandrov angle is really a local property at $c_{g,x}$ of the germ of
the geodesic $[c_{g,x}, x]$ since for any $y\in[c_{g,x}, x]$ we have $c_{g,y}=c_{g,x}$.

Next, we claim that for any $n\in \NN$, any isometry of order~$\leq n$ of any complete \cat space $B$ satisfies
$\aangle{c_{g,x}}{gx}{x} \geq 1/n$ for all $x\in B$ that are not $g$-fixed. Indeed, it follows from the
definition of Alexandrov angles (see~\cite[II.3.1]{Bridson-Haefliger}) that for any $y\in [c_{g,x}, x]$ we have
$$d(gy, y) \leq d(c_{g,x}, y) \aangle{c_{g,x}}{gx}{x}.$$
Therefore, if $\aangle{c_{g,x}}{gx}{x} < 1/n$, the entire $g$-orbit of $y$ would be contained in a ball around
$y$ not containing $c_{g,x}=c_{g,y}$. This is absurd since the circumcentre of this orbit is a $g$-fixed point.

\smallskip

In order to prove the proposition, we now suppose for a contradiction that there are sequences $\{g_n\}$ of
elliptic elements in $G$ and $\{x_n\}$ in $X$ with $g_n x_n\neq x_n$ and $\aangle{c_n}{g_n x_n}{x_n} \to 0$,
where $c_n=c_{g_n,x_n}$. Since the $G$-action is cocompact, there is (upon extracting) a sequence $\{h_n\}$ in
$G$ such that $h_n c_n$ converges to some $c\in X$. Upon conjugating $g_n$ by $h_n$, replacing $x_n$ by $h_n
x_n$ and $c_n$ by $h_n c_n$, we can assume $c_n\to c$ without loosing any of the conditions on $g_x$, $x_n$ and
$c_n$, including the relation $c_n =c_{g_n,x_n}$.

Since $d(g_n c, c)\leq 2 d(c_n, c)$, we can further extract and assume that $\{g_n\}$ converges to some limit
$g\in G$; notice also that $g$ fixes $c$. By Lemma~\ref{lem:cocompact:minimal}, the action is minimal and hence
Theorem~\ref{thm:smooth} applies. Therefore, we can assume that all $g_n$ coincide with $g$ on some ball $B$
around $c$ and in particular preserve $B$. Using Remark~\ref{rem:finite:isotropy}, this provides a
contradiction.
\end{proof}

A first consequence is an analogue of a result that E.~Swenson proved for discrete groups (Theorem~11
in~\cite{Swenson99}).

\begin{cor}\label{cor:exist:hyp}
Let $G$ be a totally disconnected locally compact group with a continuous and proper cocompact action by
isometries on a geodesically complete proper \cat space $X$ not reduced to a point.

Then $G$ contains hyperbolic elements (thus in particular elements of infinite order).
\end{cor}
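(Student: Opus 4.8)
The goal is to show that a totally disconnected locally compact group $G$ acting continuously, properly and cocompactly on a proper geodesically complete $\CAT{0}$ space $X \ne \{pt\}$ contains a hyperbolic element. The natural strategy is a contradiction: suppose every element of $G$ is elliptic or parabolic. Since the action is cocompact and $X$ is geodesically complete, Corollary~\ref{cor:BasicTotDisc}\eqref{pt:BasicTotDisc:completess} tells us the action is semi-simple, so there are no parabolics at all; thus we may assume \emph{every element of $G$ is elliptic}. This is the hypothesis I want to contradict.

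First I would reduce to a minimal action: by Lemma~\ref{lem:cocompact:minimal}, the cocompact action on the geodesically complete space $X$ is already minimal, so Theorem~\ref{thm:smooth} applies and the pointwise stabiliser of every bounded set is open; in particular (Remark~\ref{rem:finite:isotropy}) point stabilisers act as finite groups on balls, and $G$ has compact open subgroups. Next I would invoke the angle rigidity of Proposition~\ref{prop:angle_rigidity}: there is $\vareps>0$ such that for every elliptic $g\in G$ and every $x$ with $gx\ne x$ one has $\aangle{c_{g,x}}{gx}{x}\ge\vareps$. Combined with the elementary inequality used in the proof of that proposition, namely $d(gy,y)\le d(c_{g,x},y)\,\aangle{c_{g,x}}{gx}{x}$ for $y$ on the segment — or rather, more usefully, a \emph{lower} bound on displacement growing linearly along the geodesic ray from $c_{g,x}$ through $x$ — one sees that an elliptic isometry of $G$ cannot fix points that are too far from its fixed-point set relative to its displacement; quantitatively, if $g$ is elliptic then along the ray from $c_{g,x}$ the displacement $d(g\cdot,\cdot)$ grows at least linearly with slope bounded below in terms of $\vareps$.

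The crux is then a geometric/group-theoretic argument producing an element of unbounded displacement, contradicting ellipticity. Here is the mechanism I expect to work: pick a compact open subgroup $U<G$ and a point $x_0$ fixed by $U$. By geodesic completeness extend a geodesic out of $x_0$; using cocompactness, translate so that far-out points return near $x_0$, and assemble, as in the proof of Lemma~\ref{lem:GeodComplete:dense}, a ``ping-pong'' configuration: elements $g_1,g_2\in G$ whose fixed-point sets are disjoint and far apart. A standard argument (translate along the geodesic joining the two fixed sets) then shows $g_1g_2$, or some bounded word, moves $x_0$ a definite distance; iterating and using that each generator's fixed set stays put while the composite's axis drifts, one forces a single element to have displacement exceeding any bound. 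More precisely, if $g_1,g_2$ are two elliptic elements with $\Fix(g_1)$ and $\Fix(g_2)$ at distance $\ge \ell$, and the angle rigidity forces each of them to move the "bridge" midpoint by at least $c\ell$, then $g_1g_2$ moves a well-chosen point by at least $c'\ell$ in a controlled direction, so its powers escape — but $g_1g_2$ is supposed to be elliptic, hence has bounded orbits. Contradiction.

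The main obstacle I anticipate is making the ping-pong/bridge construction genuinely produce an element of \emph{arbitrarily large} translation length (or directly a non-elliptic element) rather than merely one of large displacement at a point — ellipticity only forbids unbounded orbits, and one must rule out that the orbit, though of large diameter, stays bounded. I expect this is handled exactly by the angle rigidity constant $\vareps$: it forces the ``folding angle'' at the bridge to be bounded away from $\pi$, so the broken geodesic $\bigcup_k (g_1g_2)^k[\text{bridge}]$ is a genuine quasi-geodesic with uniform constants, hence unbounded, contradicting that $g_1g_2$ is elliptic (an elliptic isometry of a $\CAT{0}$ space has bounded orbits). Alternatively — and this may be cleaner — one argues directly from Corollary~\ref{cor:BasicTotDisc} plus a compactness/limiting argument: if all elements are elliptic, take elements $g_n$ with $d(g_n x_0, x_0)\to\infty$ (which must exist, else $G$ has bounded orbits and fixes a point, contradicting minimality on the nontrivial space $X$), conjugate into a fixed ball by cocompactness, extract a limit $g$, and use smoothness (Theorem~\ref{thm:smooth}) to see $g$ agrees with cofinally many $g_n$ on a fixed ball around the limit fixed point — but an elliptic element that fixes arbitrarily large balls and yet has fixed points receding to infinity violates the local finiteness of the stabiliser partition (Proposition~\ref{prop:EquivariantPartition}) or simply the fact that finite-order rotations of a fixed ball cannot have displacement going to infinity on that ball. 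Assembling this carefully is the only delicate point; everything else is bookkeeping with the cited results.
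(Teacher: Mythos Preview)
Your proposal identifies the correct ingredients~--- semi-simplicity from Corollary~\ref{cor:BasicTotDisc}\eqref{pt:BasicTotDisc:completess} and the angle rigidity of Proposition~\ref{prop:angle_rigidity}~--- but the mechanism you sketch for producing the contradiction is not right, and differs from the paper's cleaner route.

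The paper does \emph{not} build a product of two elliptics with far-apart fixed sets. Instead it follows Swenson: pick a geodesic ray $r$ and use cocompactness to find $g_i\in G$ with $g_i r(\cdot+t_i)$ converging to a bi-infinite geodesic. For $i<j$ large, set $h=g_i^{-1}g_j$ and $x=r(t_i)$; then the Alexandrov angle $\aangle{hx}{x}{h^2 x}$ is arbitrarily close to~$\pi$. If this angle ever equals~$\pi$, then $h$ translates a geodesic and is hyperbolic. Otherwise $h$ is elliptic (by semi-simplicity), and the two congruent triangles $(c_{h,x},x,hx)$ and $(c_{h,x},hx,h^2x)$ force the angle $\aangle{c_{h,x}}{x}{hx}$ to be arbitrarily small~--- directly contradicting the uniform lower bound~$\vareps$ from Proposition~\ref{prop:angle_rigidity}. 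The whole argument lives on a \emph{single} element $h$ and a single triangle identity.

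Your ping-pong sketch has a concrete error: you write that angle rigidity ``forces the folding angle at the bridge to be bounded away from~$\pi$'' and that this makes the broken path a quasi-geodesic. That is backwards. In $\CAT{0}$, a concatenation of segments is a (quasi-)geodesic precisely when the turning angles are \emph{close to}~$\pi$; a uniform gap below~$\pi$ allows the path to fold back and says nothing about unboundedness. Moreover, angle rigidity controls the angle at the \emph{fixed point} $c_{g,x}$, not at the bridge midpoint, and you have not explained how to transfer the bound. Your alternative limiting argument is also incomplete: extracting a limit $g$ of elements $g_n$ with $d(g_n x_0,x_0)\to\infty$ after conjugating back into a ball does not obviously lead anywhere, since the conjugated elements have \emph{bounded} displacement at the basepoint and there is no contradiction with smoothness or the cell partition. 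The missing idea is precisely the Swenson construction: arrange for a single $h$ with $x,hx,h^2x$ nearly collinear, so that ellipticity of $h$ immediately conflicts with the angle lower bound at its fixed set.
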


Beyond the totally disconnected case, we can appeal to Theorem~\ref{thm:Decomposition} and
Addendum~\ref{addendum} and state the following.

\begin{cor}\label{cor:exist:nontorsion}
Let $G$ be any locally compact group with a continuous and proper cocompact action by isometries on a
geodesically complete proper \cat space $X$ not reduced to a point.

Then $G$ contains elements of infinite order; if moreover $(\bd X)^G=\varnothing$, then $G$ contains hyperbolic
elements.
\end{cor}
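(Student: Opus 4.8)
The plan is to reduce the general case to the totally disconnected case treated in Corollary~\ref{cor:exist:hyp}, using the structural decomposition of Theorem~\ref{thm:Decomposition} together with Addendum~\ref{addendum}. First, observe that if $G$ has a global fixed point at infinity, we must still produce an element of infinite order but not necessarily a hyperbolic one, while if $(\bd X)^G = \varnothing$ we want a hyperbolic element; so I will organize the argument around whether or not $\Isom(X)$, equivalently $G$, fixes a point at infinity. Note that since the action is proper and cocompact, $G$ is cocompact in $\Isom(X)$, $\bd X$ is finite-dimensional by~\cite[Theorem~C]{Kleiner}, and $X$ is minimal and boundary-minimal by Proposition~\ref{prop:boundary-minimal}(iii); in particular $X' = X$ in the notation of Theorem~\ref{thm:Decomposition} when there is no fixed point at infinity.

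Consider first the case $(\bd X)^G = \varnothing$. Then Theorem~\ref{thm:Decomposition} and Addendum~\ref{addendum} apply to $X$, giving a finite-index open subgroup $G^* \lhd \Isom(X)$ with $G^* \cong S_1 \times \cdots \times S_p \times (\RR^n \rtimes \mathbf{O}(n)) \times D_1 \times \cdots \times D_q$ acting componentwise on $X \cong X_1 \times \cdots \times X_p \times \RR^n \times Y_1 \times \cdots \times Y_q$, and $X$ is not reduced to a point. At least one factor is nontrivial. If some $S_i$ is nontrivial, then $X_i$ is a symmetric space of non-compact type (this is part of the structure theory, since $S_i$ is an almost connected simple Lie group with trivial center acting minimally cocompactly) and $S_i$ manifestly contains hyperbolic elements (e.g.\ one-parameter subgroups giving axial isometries), which are hyperbolic on $X$ because their action on the other factors can be taken trivial. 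If $n \geq 1$, then $\RR^n$ is a factor and any nontrivial translation of $\RR^n$ is a hyperbolic element of $\Isom(X)$. If all $S_i$ and the Euclidean part are trivial, then some $D_j$ is nontrivial; here I invoke Corollary~\ref{cor:exist:hyp} applied to the totally disconnected locally compact group $D_j$ acting on $Y_j$: this action is continuous, proper (the projection $G^* \to D_j$ is proper since the other factors are closed), cocompact (the $G^*$-action is cocompact componentwise), geodesically complete (a factor of a geodesically complete space is geodesically complete — this is a standard fact about product \cat spaces), and $Y_j$ is not a point; hence $D_j$ contains a hyperbolic element, which acts as a hyperbolic isometry of $X$. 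In all cases $G^* \le G$ contains a hyperbolic element.

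Now suppose $(\bd X)^G \neq \varnothing$. Then I only need an element of infinite order. Fix $\xi \in (\bd X)^G$ and consider the Busemann character $\beta_\xi : G \to \RR$. If $\beta_\xi$ is nontrivial, pick $g \in G$ with $\beta_\xi(g) \neq 0$; by Lemma~\ref{lem:Busemann:character} this $g$ is ballistic, hence has positive translation length, hence infinite order, and we are done. So assume $\beta_\xi \equiv 0$, i.e.\ $G$ annihilates the Busemann character at $\xi$, meaning $G$ stabilises every horoball centred at $\xi$. Let $B$ be such a closed horoball; it is a closed convex $G$-invariant subset, and by geodesic completeness plus cocompactness I claim $G$ still acts cocompactly on $B$ — actually more directly, I would instead argue as follows: pick any point $x$ and the geodesic line through $x$ asymptotic to $\xi$ (it exists by geodesic completeness); its other endpoint $\xi^- \in \bd X$ is \emph{not} fixed by all of $G$ in general, but here is the clean route. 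Since $G$ stabilises every horoball at $\xi$, and $X$ is cocompact, a theorem-level fact (or a direct compactness/Zorn argument as in Proposition~\ref{prop:EasyDichotomy}) shows that if $G$ had no element of infinite order then every element would be elliptic; but then, arguing as in Corollary~\ref{cor:BasicTotDisc}\eqref{pt:BasicTotDisc:elliptic} together with the boundedness of orbits that ellipticity of all of a cocompact group would force, one contradicts that $X$ is unbounded (as $X$ is not a point and is geodesically complete, hence unbounded). More carefully: a cocompact isometry group of an unbounded proper geodesically complete \cat space cannot consist entirely of elliptic elements, since the union of fixed-point sets would then cover $X$ modulo the compact quotient, forcing bounded orbits and hence a global fixed point by the Bruhat--Tits fixed point theorem, contradicting unboundedness together with minimality. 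Hence $G$ contains a non-elliptic, i.e.\ infinite order, element.

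I expect the main obstacle to be the second case, when $G$ fixes a point at infinity: there the product decomposition is unavailable, and one must extract an infinite-order element by softer means. The cleanest formulation is probably: either the Busemann character at a fixed boundary point is nonzero (ballistic element, done via Lemma~\ref{lem:Busemann:character}), or it vanishes and one shows a cocompact group of an unbounded proper \cat space cannot be a torsion group — the latter because torsion elements of a \cat space are elliptic, a cocompact group all of whose elements are elliptic would (by a covering/compactness argument) have bounded orbits, hence a global fixed point in $X$, contradicting that $X$ is unbounded (which holds since $X$ is geodesically complete and not a point). This last implication is the one that needs to be spelled out with care; everything in the first case is a routine application of Theorem~\ref{thm:Decomposition}, Addendum~\ref{addendum}, and Corollary~\ref{cor:exist:hyp}.
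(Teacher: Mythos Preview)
Your treatment of the case $(\bd X)^G=\varnothing$ follows essentially the paper's approach (decompose via Theorem~\ref{thm:Decomposition} and Addendum~\ref{addendum}, then extract a hyperbolic element from some non-trivial factor using Corollary~\ref{cor:exist:hyp} or the Lie theory encoded in Theorem~\ref{thm:algebraic}). One slip: you write ``$G^*\le G$'', but $G^*$ is by definition a finite-index subgroup of $\Isom(X)$, not of the given $G$; you need to intersect $G$ with $\Isom(X)^*$ and argue via the projections of that intersection to the factors.

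The genuine gap is in your handling of the case $(\bd X)^G\neq\varnothing$. Your claim that ``a cocompact group all of whose elements are elliptic would (by a covering/compactness argument) have bounded orbits'' is not justified and is in fact the heart of the matter: each torsion element individually has a fixed point, but there is no mechanism offered that forces a \emph{common} bounded orbit for the whole group, and the Bruhat--Tits fixed-point theorem requires exactly that. The vague phrase ``the union of fixed-point sets would then cover $X$ modulo the compact quotient'' does not lead anywhere (the identity alone already covers $X$). The Busemann-character dichotomy you set up is fine when $\beta_\xi\not\equiv 0$, but the $\beta_\xi\equiv 0$ branch is left without a valid argument.

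The paper avoids this difficulty entirely by a much simpler dichotomy that does not refer to fixed points at infinity at all: either $G^\circ\neq 1$, in which case any non-trivial connected locally compact group contains elements of infinite order, or $G^\circ=1$, in which case $G$ is totally disconnected and Corollary~\ref{cor:exist:hyp} applies \emph{directly} to $G$ (note that Corollary~\ref{cor:exist:hyp} has no hypothesis on $(\bd X)^G$). This disposes of the ``infinite order'' assertion in one line and leaves the decomposition machinery only for the sharper ``hyperbolic'' conclusion under $(\bd X)^G=\varnothing$.
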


\begin{proof}[Proof of Corollary~\ref{cor:exist:hyp}]
Proposition~\ref{prop:angle_rigidity} allows us use the argument form~\cite{Swenson99}: We can choose a geodesic
ray $r:\RR_+\to X$, an increasing sequence $\{t_i\}$ going to infinity in $\RR_+$ and $\{g_i\}$ in $G$ such that
the function $t\mapsto g_i r(t + t_i)$ converges uniformly on bounded intervals (to a geodesic line). For $i<j$
large enough, the angle $\aangle{h(r(t_i))}{r(t_i)}{h^2(r(t_i))}$ defined with $h=g_i\inv g_j$ is arbitrarily
close to~$\pi$. In order to prove that $h$ is hyperbolic, it suffices to show that this angle will eventually
equal~$\pi$. Suppose this does not happen; by
Corollary~\ref{cor:BasicTotDisc}\eqref{pt:BasicTotDisc:completess}, we can assume that $h$ is elliptic. We set
$x=r(t_i)$ and $c=c_{h, x}$. Considering the congruent triangles $(c, x, hx)$ and $(c, hx, h^2x)$, we find that
$\aangle{c}{x}{hx}$ is arbitrarily small. This is in contradiction with Proposition~\ref{prop:angle_rigidity}.
\end{proof}

\begin{proof}[Proof of Corollary~\ref{cor:exist:nontorsion}]
If the connected component $G^\circ$ is non-trivial, then it contains elements of infinite order; if it is
trivial, we can apply Corollary~\ref{cor:exist:hyp}.

Assume now $(\bd X)^G=\varnothing$. Then Theorem~\ref{thm:Decomposition} and Addendum~\ref{addendum} apply.
Therefore, we obtain hyperbolic elements either from Corollary~\ref{cor:exist:hyp} or from the fact that any
non-compact semi-simple group contains elements that are algebraically hyperbolic, combined with the fact that
the latter act as hyperbolic isometries. That fact is established in
Theorem~\ref{thm:algebraic}\eqref{pt:algebraic} below, the proof of which is independent of
Corollary~\ref{cor:exist:nontorsion}.
\end{proof}

\subsection{Algebraic structure}
Given a topological group $G$, we define its \textbf{socle}\index{socle} $\soc(G)$ as the subgroup generated by
all minimal non-trivial closed normal subgroups of $G$. Notice that $G$ might have no minimal non-trivial closed
normal subgroup, in which case its socle is trivial.

We also recall that the \textbf{quasi-centre}\index{quasi-centre} of
a locally compact group $G$ is the subset $\QZ(G)$ consisting of all those elements possessing an open
centraliser. Clearly $\QZ(G)$ is a (topologically) characteristic subgroup of $G$. Since any element with a
discrete conjugacy class possesses an open centraliser, it follows that the quasi-centre contains all discrete
normal subgroups of $G$.

\begin{prop}\label{prop:socle}
Let $X$ be a proper \cat space without Euclidean factor and $G < \Isom(X)$ be a closed subgroup acting minimally
cocompactly without fixed point at infinity. If $G$ has trivial quasi-centre, then $\soc(G^*)$ is direct
product of $r$ non-trivial characteristically simple groups, where $r$ is the number of irreducible factors of $X$ and $G^*$
is the canonical finite index open normal subgroup acting trivially on the set of factors of $X$.
\end{prop}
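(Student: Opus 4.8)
The plan is to transfer the statement to the canonical geometric decomposition of $X$ and then analyse minimal normal subgroups factor by factor. First I would invoke Theorem~\ref{thm:Decomposition} and Addendum~\ref{addendum}: since $G$ acts cocompactly, $\bd X$ is finite-dimensional by~\cite{Kleiner}, and as $X$ has no Euclidean factor the splitting reads $X \cong X_1 \times \cdots \times X_r$ with each $X_i$ irreducible, $\neq \RR$, and minimal (Theorem~\ref{thm:deRham}); moreover $G$ preserves this splitting up to permuting isometric factors, so $G^* = G \cap \big(\Isom(X_1) \times \cdots \times \Isom(X_r)\big)$, and there are coordinate projections $p_i\colon G^* \to \Isom(X_i)$ with kernels $N_i$ and closures of images $G_i := \overline{p_i(G^*)}$. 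Faithfulness of the $\Isom(X)$-action gives $\bigcap_i N_i = 1$, and since $G^*$ is open in $G$ one has $\QZ(G^*) \le \QZ(G) = 1$, so in particular $G^*$ is non-discrete. A preliminary verification, via Theorem~\ref{thm:dichotomy} applied to the normal subgroup $G^* \lhd G$ together with $G$-minimality, shows that $G^*$ itself acts minimally and without global fixed point at infinity on $X$; projecting $G^*$-invariant convex subsets and fundamental domains then shows each $G_i$ acts minimally, cocompactly and without fixed point at infinity on the irreducible space $X_i$, so that Theorem~\ref{thm:geometric_simplicity} becomes available for every non-trivial closed normal subgroup of $G_i$.

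Next I would show that the socle is supported factorwise. A minimal non-trivial closed normal subgroup $M \lhd G^*$ is automatically characteristically simple (a characteristic subgroup of $M$ is normal in $G^*$ and contained in $M$). For each $i$, minimality applied to the closed normal subgroup $M \cap N_i \le M$ forces $p_i$ to be either injective or trivial on $M$; I claim exactly one $i$ falls in the injective case. If $p_i$ and $p_j$ were both injective on $M$ with $i \neq j$, then $[M, N_j] \subseteq M \cap N_j = 1$, so $p_i(N_j)$ centralises $p_i(M)$ and hence $\overline{p_i(M)}$, a non-trivial closed normal subgroup of $G_i$; by Theorem~\ref{thm:geometric_simplicity} the latter has trivial centraliser in $\Isom(X_i)$, forcing $N_j \subseteq N_i$, and symmetrically $N_i = N_j$. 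Equality $N_i = N_j$ must be excluded: it would make the closure $\Delta$ of the image of $G^*$ in $\Isom(X_i \times X_j)$ act faithfully, minimally, cocompactly and without fixed point at infinity on $X_i \times X_j$, with dense projections onto $G_i$ and $G_j$ inducing an abstract isomorphism between them; rigidity of cocompact isometric actions (Theorem~\ref{thm:algebraic:intro} for the almost-connected-Lie and algebraic factors, and a parallel argument drawing on the density/superrigidity machinery of~\cite{Caprace-Monod_discrete} for the totally disconnected factors) forces this isomorphism to be implemented by an isometry $X_i \to X_j$, so the corresponding twisted diagonal is a proper closed convex $G^*$-invariant subspace of $X_i \times X_j$, contradicting minimality. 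Finally, two distinct minimal normal subgroups $M \neq M'$ acting non-trivially on the same $X_i$ would commute (minimality gives $M \cap M' = 1$) and be $p_i$-injective, so $\overline{p_i(M)}$ and $\overline{p_i(M')}$ would be commuting closed normal subgroups of $G_i$ with trivial intersection (that intersection is normal in $G_i$, hence has trivial centraliser by Theorem~\ref{thm:geometric_simplicity}, yet is centralised by $\overline{p_i(M)}$), producing a non-trivial normal subgroup of $G_i$ that splits as a product, again contrary to Theorem~\ref{thm:geometric_simplicity}. Hence there is at most one minimal normal subgroup $M_i$ in each direction $i$.

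The remaining, and main, difficulty is to produce \emph{at least} one such $M_i$ in every direction; this is where triviality of the quasi-centre is decisive. Writing $L_i = \bigcap_{j \neq i} N_j$ for the closed normal subgroup of $G^*$ on which $p_i$ is injective, the hypotheses rule out $G_i$ being discrete (a discrete normal subgroup of $G^*$ lies in $\QZ(G^*) = 1$), and, using that $G^*$ has trivial amenable radical (Corollary~\ref{cor:NoEuclideanFactor}) and is compactly generated (it acts cocompactly), together with Theorem~\ref{thm:folklore} and the structure results for the totally disconnected factors from~\cite{Caprace-Monod_discrete}, one shows that $L_i \neq 1$ and that $\overline{p_i(L_i)} \lhd G_i$ contains a minimal closed normal subgroup of $G_i$, which pulls back under the injective map $p_i|_{L_i}$ to a minimal non-trivial closed normal subgroup $M_i \lhd G^*$ acting trivially on every $X_j$ with $j \neq i$. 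I expect this existence step, inseparable from the exclusion of the twisted-diagonal degeneracy, to be the hard part; the rest is assembly. For $i \neq j$, $a \in M_i$ and $b \in M_j$ one has $p_l([a,b]) = 1$ for every $l$ (since $p_l(a) = 1$ for $l \neq i$ and $p_l(b) = 1$ for $l \neq j$), so $[a,b] = 1$ and the $M_i$ pairwise commute; moreover $M_i \cap \prod_{j \neq i} M_j \le \big(\bigcap_{l \neq i} N_l\big) \cap N_i = 1$. As every minimal non-trivial closed normal subgroup of $G^*$ equals some $M_i$ by the factorwise support established above, we conclude $\soc(G^*) = M_1 \times \cdots \times M_r$, a direct product of $r$ non-trivial characteristically simple groups.
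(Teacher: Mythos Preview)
Your overall strategy---decompose $X$, push minimal normal subgroups to single factors, then count---is reasonable, but the proof has two genuine gaps that the paper resolves by a completely different (and shorter) mechanism.

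\textbf{Existence.} You correctly flag the existence of minimal closed normal subgroups as the crux, but you do not actually establish it. Writing $L_i=\bigcap_{j\neq i}N_j$ and asserting that $L_i\neq 1$, and that $\overline{p_i(L_i)}$ contains a minimal closed normal subgroup of $G_i$, just moves the problem to $G_i$; your justification (``structure results for the totally disconnected factors from~\cite{Caprace-Monod_discrete}'') is not a proof, and in fact no such input is needed. The paper instead proves the self-contained Proposition~\ref{prop:FilterNormalSubgroups}: in a compactly generated totally disconnected locally compact group without non-trivial compact normal subgroups, every filtering family of \emph{non-discrete} closed normal subgroups has non-trivial intersection (the argument is a short Schreier-graph count). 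Combined with the observation that $\QZ(G)=1$ forces every non-trivial closed normal subgroup of $G^*$ to be non-discrete, and with Theorem~\ref{thm:Decomposition} for the connected part, Zorn's lemma then produces minimal closed normal subgroups directly inside $G^*$. Moreover, the same proposition shows that for each irreducible totally disconnected factor the family of all non-trivial closed normal subgroups is filtering (any two intersect non-trivially, else they commute, contradicting Theorem~\ref{thm:geometric_simplicity}), so their common intersection furnishes the required minimal subgroup in that direction.

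\textbf{The twisted-diagonal detour.} Your exclusion of the case $N_i=N_j$ relies on producing an isometry $X_i\to X_j$ from an abstract isomorphism $G_i\cong G_j$ via ``rigidity of cocompact isometric actions''. Theorem~\ref{thm:algebraic:intro} does not say this in the generality you need, and there is no analogous statement available for arbitrary totally disconnected $G_i$; this step is not justified. The paper bypasses the issue entirely: distinct minimal closed normal subgroups satisfy $[M,M']\subseteq M\cap M'=1$, hence commute, and then Corollary~\ref{cor:NoEuclideanFactor} (minimal invariant subspaces for normal subgroups are regroupings of the factors of $X$) immediately bounds their number by $r$. No projection bookkeeping and no rigidity input are required.

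In summary: the missing idea is Proposition~\ref{prop:FilterNormalSubgroups}. Once you have it, both existence and the count follow quickly, and the whole projection-and-twisted-diagonal apparatus becomes unnecessary.
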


The proof will use the following general fact inspired by a statement for tree automorphisms, Lemma~1.4.1 in~\cite{Burger-Mozes1}.

\begin{prop}\label{prop:FilterNormalSubgroups}
Let $G$ be a compactly generated totally disconnected locally compact group without non-trivial compact normal
subgroups. Then any filtering family of non-discrete closed normal subgroups has non-trivial (thus non-compact)
intersection.
\end{prop}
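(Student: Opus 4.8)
The plan is to argue by contradiction: suppose $\{N_i\}_{i\in I}$ is a filtering family of non-discrete closed normal subgroups of $G$ whose intersection $N=\bigcap_i N_i$ is discrete (hence, being normal in the compactly generated group $G$, the key point is that we then want to reach a contradiction with the absence of non-trivial compact normal subgroups). First I would invoke the structure theory of compactly generated totally disconnected locally compact groups: by van Dantzig's theorem $G$ has a compact open subgroup $U$, and since the family is filtering, for the contradiction it suffices to work with the intersections $N_i\cap U$. The essential mechanism is that a non-discrete closed subgroup of $G$ meets $U$ in an infinite (hence, by compactness of $U$, infinite profinite) subgroup; I would like to extract from the filtering family a decreasing cofinal sequence $N_1\supseteq N_2\supseteq\cdots$ — this uses that $G$ is second countable, which follows from being compactly generated, locally compact and totally disconnected — so that $N=\bigcap_n N_n$ and each $N_n$ is non-discrete.

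Next I would look at $N_n\cap U$, which is a closed (hence compact) subgroup of $U$, normalised by $U$ (since $N_n\lhd G$), and infinite since $N_n$ is non-discrete and $U$ is open. The decreasing intersection $\bigcap_n (N_n\cap U) = N\cap U$ is then a compact subgroup which, under our contradiction hypothesis that $N$ is discrete, must be finite. The heart of the argument is to show this forces some $N_n\cap U$ itself to be finite — i.e. that a decreasing sequence of infinite compact subgroups of a profinite group, all normalised by the ambient group $U$, cannot have finite intersection — or else to derive directly a non-trivial compact \emph{normal} subgroup of $G$. I expect the cleanest route is: pass to $M_n = \overline{\langle\langle N_n\rangle\rangle}$-type constructions or, more simply, observe that since $U$ is compact open, $\bigcap_n N_n$ discrete and each $N_n$ closed normal, the images of $N_n$ in the (profinite) quotient $U/(N\cap U)$ still form a decreasing sequence of infinite closed subgroups with trivial intersection; but in a profinite group a decreasing sequence of \emph{closed} subgroups with trivial intersection can certainly have all terms infinite (e.g. $\ZZ_p \supseteq p\ZZ_p\supseteq\cdots$), so the normality under $U$ is what must be exploited.

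The main obstacle — and where I would concentrate the real work — is precisely this last point: leveraging the fact that each $N_n$ is \emph{normal in $G$}, not merely a subgroup, to promote ``$\bigcap N_n$ finite'' into ``some $N_n$ small'' or into the existence of a non-trivial compact normal subgroup. Following the cited Burger--Mozes Lemma~1.4.1, the trick should be to consider, for the compact open $U$, the normal core or the subgroup generated by a $G$-conjugacy class: take a non-trivial element $x\in N_n\cap U$ surviving in all $N_n$ for $n$ in a cofinal set; its conjugacy class $x^G$ lies in $N_n$ for all such $n$, and since $x\in U$ with $U$ compact open, $x^G\cap U$ accumulates; the closure of the group it generates inside $U$ is then a compact subgroup contained in $\bigcap_n N_n = N$, normalised by $G$, hence a compact normal subgroup of $G$ — non-trivial if $x\neq 1$ — contradicting the hypothesis. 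Closing this loop cleanly (ensuring compactness of the generated subgroup, and that it is genuinely $G$-normal and non-trivial) is the technical crux; everything else is routine structure-theoretic bookkeeping.
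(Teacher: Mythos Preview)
Your diagnosis of the difficulty is accurate: the whole content of the proposition is precisely that normality prevents the filtering family $\{N_i\cap U\}$ from collapsing to the identity inside the profinite group $U$, and your $\ZZ_p$ example correctly shows why this is not automatic. But your proposed resolution is circular. You write ``take a non-trivial element $x\in N_n\cap U$ surviving in all $N_n$ for $n$ in a cofinal set''; an element surviving in a cofinal set of the $N_n$ lies in $\bigcap_n N_n$, which is exactly what you are trying to show is non-trivial. Once you have such an $x$ you are already done --- there is no need to pass to conjugacy classes or to manufacture a compact normal subgroup. So the ``technical crux'' you flag is not a detail to be closed but the entire argument, and your sketch does not supply it. (Incidentally, compactly generated totally disconnected locally compact does not imply second countable --- any non-metrisable profinite group is a counterexample --- so the reduction to a sequence also needs care; but this is a side issue compared to the main gap.)

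The missing idea, which is what the paper does, is to project to something \emph{finite} so that a filtering family of non-trivial subgroups is forced to have non-trivial intersection for elementary reasons. Concretely: build the Schreier (Cayley--Abels) graph $\mathfrak g$ on $G/U$ using a compact generating set, and for each normal $N\lhd G$ look at the permutation group $F_N<\Sym(v_0^\perp)$ induced by $N_{v_0}$ on the (finite!) neighbour set of a vertex $v_0$. Non-discreteness of $N$ forces $F_N\neq 1$ (here compact generation, via connectedness of $\mathfrak g$, and openness of $U$ are used), and the absence of compact normal subgroups makes the action on $\mathfrak g$ faithful. Now $\{F_N\}$ is a filtering family of non-trivial subgroups of the finite group $\Sym(v_0^\perp)$, hence has a common non-trivial element $g$; its preimages in the various $N\cap G_{v_0}$ are non-empty compact sets forming a filtering family inside the compact group $G_{v_0}$, whence a common point. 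This finite projection step is exactly what converts ``each $N_i$ is non-discrete'' into a uniform, finitary obstruction that survives the intersection, and it is what your outline lacks.
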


A variant of this proposition is proved in~\cite{Caprace-Monod_monolith}; since the proof is short, we give it for
the sake of completeness.

\begin{proof}[Proof of Proposition~\ref{prop:FilterNormalSubgroups}]
Let $\mathfrak{g}$ be a \textbf{Schreier graph}\index{Schreier graph} for $G$. We recall that it consists in
choosing any open compact subgroup $U<G$ (which exists by~\cite[III \S\,4 No~6]{BourbakiTGI}), defining the vertex
set of $\mathfrak g$ as $G/U$ and drawing edges according to a compact generating set which is a union of double
cosets modulo $U$; see~\cite[\S\,11.3]{Monod_LN}. Since $G$ has no non-trivial compact normal subgroup, the
continuous $G$-action on $\mathfrak{g}$ is faithful. Let $v_0$ be a vertex of $\mathfrak g$ and denote by
$v_0^\perp$ the set of neighbouring vertices. Since $G$ is vertex-transitive on $\mathfrak{g}$, it follows that
for any normal subgroup $N \lhd G$, the $N_{v_0}$-action on $v_0^\perp$ defines a finite permutation group $F_N
< \Sym(v_0^\perp)$ which, as an abstract permutation group, is independent of the choice of $v_0$. Therefore, if
$N$ is non-discrete, this permutation group $F_N$ has to be non-trivial since $U$ is open and $\mathfrak g$
connected. Now a filtering family $\mathscr{F}$ of non-discrete normal subgroups yields a filtering family of
non-trivial finite subgroups of $\Sym(v_0^\perp)$. Thus the intersection of these finite groups is non-trivial.
Let $g$ be a non-trivial element in this intersection. For any $N \in \mathscr{F}$, let $N_g$ be the inverse
image of $\{g\}$ in $N_{v_0}$. Thus $N_g$ is a non-empty compact subset of $N$ for each $N \in \mathscr{F}$.
Since the family $\mathscr{F}$ is filtering, so are $\{N_{v_0} \; | \; N \in \mathscr{F} \}$ and $\{N_g \; | \;
N \in \mathscr{F} \}$. The result follows, since a filtering family of non-empty closed subsets of the compact
set $G_{v_0}$ has a non-empty intersection.
\end{proof}

Evidently open normal subgroups form a filtering family; we can thus deduce:

\begin{cor}\label{cor:ResiduallyDiscrete}
Let $G$ be a compactly generated locally compact group without any non-trivial compact normal subgroup. If $G$
is residually discrete, then it is discrete.\qed
\end{cor}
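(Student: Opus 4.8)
The plan is to deduce this straight from Proposition~\ref{prop:FilterNormalSubgroups}, so the first task is to put ourselves in a situation where that proposition applies, namely to verify that $G$ is totally disconnected. Recall that a locally compact group is residually discrete precisely when the intersection of all its open normal subgroups is trivial: the kernel of a continuous homomorphism onto a discrete group is an open normal subgroup, and conversely every open normal subgroup arises as such a kernel. Since the identity component $G^\circ$ is contained in every open subgroup — a continuous map to a discrete group is constant on connected sets, hence $G^\circ$ lies in each such kernel — residual discreteness forces $G^\circ=\{1\}$. Thus $G$ is totally disconnected, and together with the standing hypotheses (compactly generated, no non-trivial compact normal subgroup) it satisfies all the assumptions of Proposition~\ref{prop:FilterNormalSubgroups}.

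Next I would argue by contradiction: suppose $G$ is not discrete. The elementary observation to invoke is that an open subgroup of a non-discrete group is itself non-discrete — if some open subgroup were discrete, then $\{1\}$ would be open in it and hence open in $G$, making $G$ discrete. In particular every open normal subgroup of $G$ is non-discrete (and, being open, also closed). Moreover the collection of open normal subgroups is filtering, since the intersection of two open normal subgroups is again open and normal and is contained in both. Applying Proposition~\ref{prop:FilterNormalSubgroups} to this filtering family of non-discrete closed normal subgroups yields that their common intersection is non-trivial. But by residual discreteness that intersection is $\{1\}$, a contradiction. Hence $G$ is discrete.

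I do not expect any genuine obstacle here once Proposition~\ref{prop:FilterNormalSubgroups} is available: everything else is routine point-set topology of locally compact groups (open subgroups are closed; open subgroups of non-discrete groups are non-discrete; the identity component sits inside every open subgroup; finite intersections of open normal subgroups are open normal). The single point worth spelling out is the equivalence between ``residually discrete'' and ``the intersection of the open normal subgroups is trivial'', since both the reduction to the totally disconnected case and the final contradiction hinge on it.
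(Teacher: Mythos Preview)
Your argument is correct and is exactly the intended one: the paper's proof is the one-line remark that open normal subgroups form a filtering family, to which Proposition~\ref{prop:FilterNormalSubgroups} applies. You have simply spelled out the routine verifications (total disconnectedness from residual discreteness, non-discreteness of open subgroups, filtering) that the paper leaves implicit.
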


\begin{proof}[Proof of Proposition~\ref{prop:socle}]
We first observe that $G^*$ has no non-trivial discrete normal subgroup. Indeed, such a subgroup has finitely many
$G$-conjugates, which implies that each of its elements has discrete $G$-conjugacy class and hence belongs to
$\QZ(G)$, which was assumed trivial.

Let now $\{N_i\}$ be a chain of non-trivial closed normal subgroups of $G^*$. If  $N_i$ is totally disconnected for
some $i$, then the intersection $\bigcap_i N_i$ is non-trivial by Proposition~\ref{prop:FilterNormalSubgroups}.
Otherwise $N_i^\circ$ is non-trivial and normal in $(G^*)^\circ$ for each $i$, and the intersection $\bigcap_i
N_i$ is non-trivial by Theorem~\ref{thm:Decomposition} (since the latter describes in particular the possible normal connected
subgroups of $G^*$). In all cases, Zorn's lemma implies that the ordered set
of non-trivial closed normal subgroups of $G^*$ possesses minimal elements.

Given two minimal closed normal subgroups $M, M'$, the intersection $M \cap M'$ is thus trivial and, hence, so
is $[M, M']$. Thus minimal closed normal subgroups of $G^*$ centralise one another. We deduce from
Corollary~\ref{cor:NoEuclideanFactor} that the number of minimal closed normal subgroups is at most $r$.

Consider now an irreducible totally disconnected factor $H$ of $G^*$. We claim that the collection of non-trivial
closed normal subgroups of $H$ forms a filtering family. Indeed, given two such normal subgroup $N_1, N_2$, then
$N_1 \cap N_2$ is again a closed normal subgroup of $H$. It is is trivial, then the commutator $[N_1, N_2]$ is
trivial and, hence, the centraliser of $N_1$ in $H$ is non-trivial, contradicting
Theorem~\ref{thm:geometric_simplicity}. This confirms the claim. Thus the intersection of all non-trivial closed
normal subgroups of $H$ is non-trivial by Proposition~\ref{prop:FilterNormalSubgroups}. Clearly this intersection
is the socle of $H$; it is clear we have just established that it is contained in every non-trivial closed normal
subgroup of $H$.  In particular $\soc(H)$ is characteristically simple. The desired result follows, since
$\soc(H)$ is clearly a minimal closed normal subgroup of $G^*$.
\end{proof}

\begin{thm}\label{thm:TotDiscIrreducible}
Let $X$ be a proper irreducible geodesically complete \cat space. Let $G < \Isom(X)$ be a closed totally
disconnected subgroup acting cocompactly, in such a way that no open subgroup fixes a point at infinity. Then we
have the following:
\begin{enumerate}
\item Every compact subgroup of $G$ is contained in a maximal one; the maximal compact subgroups fall into
finitely many conjugacy classes.

\item $\QZ(G) = 1$.

\item $\soc(G)$ is a non-discrete characteristically simple group.
\end{enumerate}
\end{thm}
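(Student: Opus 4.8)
The plan is to deduce the three assertions in turn, using the smoothness theorem (Theorem~\ref{thm:smooth}) as the crucial geometric input. First one should record that, since no open subgroup of $G$ fixes a point at infinity, in particular $\Isom(X)$ itself has no global fixed point at infinity; moreover $X$ is minimal and boundary-minimal by Lemma~\ref{lem:cocompact:minimal} and Proposition~\ref{prop:boundary-minimal}(iii), and $X$ has no Euclidean factor (else $\Isom(\RR)$-translations would centralise the rest and the $\RR$-direction would fix two points at infinity, one of which is fixed by an open subgroup). Since $X$ is geodesically complete, proper, and $G$ acts cocompactly, Corollary~\ref{cor:BasicTotDisc}\eqref{pt:BasicTotDisc:completess} gives that the $G$-action is semisimple, and Theorem~\ref{thm:smooth} gives that the pointwise stabiliser of every bounded set is open; in particular point stabilisers are compact open (compact by properness, open by smoothness). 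This is the platform for everything else.

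For (i): choose finitely many points $x_1,\dots,x_k$ whose $G$-orbits cover $X$ (possible by cocompactness, taking balls around them). Any compact subgroup $K<G$ is bounded and hence, acting by semisimple isometries with no fixed point at infinity (a compact group fixes a point, by the circumcentre of an orbit), fixes a point of $X$; conjugating, we may assume it fixes some $x_i$, so $K<G_{x_i}$. Thus every compact subgroup is contained in one of the finitely many compact open subgroups $G_{x_1},\dots,G_{x_k}$; since these are compact, any ascending chain of compact subgroups inside a fixed $G_{x_i}$ stabilises, giving maximal compact subgroups, and every maximal compact subgroup is conjugate into the finite list $\{G_{x_i}\}$, whence finitely many conjugacy classes. (One should also note a maximal compact subgroup equals some $G_x$ for $x$ in its fixed-point set, which is a nonempty closed convex set.)

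For (ii): suppose $\QZ(G)\neq 1$. Since $G$ is compactly generated (it acts cocompactly on the proper geodesic space $X$, which is connected, so by the Milnor--Svarc argument $G$ is compactly generated) and has no nontrivial compact normal subgroup (such a subgroup would fix a point at infinity by Corollary~\ref{cor:IncreasingCompact}? — rather: a nontrivial compact normal subgroup $N$ fixes a point, its fixed-point set is $G$-invariant closed convex, hence all of $X$ by minimality, so $N=1$), one wants to produce a contradiction with irreducibility. The cleanest route: $\QZ(G)$ is a characteristic subgroup, so its closure $\overline{\QZ(G)}$ is a nontrivial closed normal subgroup; by Theorem~\ref{thm:geometric_simplicity} it acts minimally without fixed point at infinity and has trivial centraliser in $\Isom(X)$. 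But every element of $\QZ(G)$ has open centraliser in $G$, and an open subgroup of $G$ is cocompact hence itself acts minimally without fixed point at infinity; applying Theorem~\ref{thm:geometric_simplicity} (or the centraliser statement in Corollary~\ref{cor:NoEuclideanFactor}) to that open subgroup forces the centralised element to be trivial. Hence $\QZ(G)=1$. I expect this step — pinning down exactly which normality/centraliser lemma is invoked and verifying its hypotheses for an open subgroup rather than for $G$ — to be the main obstacle, because it is where the ``no open subgroup fixes a point at infinity'' hypothesis is genuinely used (to guarantee that open subgroups still act minimally without boundary fixed point).

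For (iii): with $\QZ(G)=1$ we may invoke Proposition~\ref{prop:socle} applied to $G^*=G$ (here $r=1$ since $X$ is irreducible), which yields that $\soc(G)$ is a single nontrivial characteristically simple group; it remains to see it is non-discrete. If $\soc(G)$ were discrete it would be a discrete normal subgroup of $G$, hence contained in $\QZ(G)=1$, contradicting nontriviality. This finishes the proof. Throughout, the one genuinely new ingredient beyond bookkeeping is the reduction, via Theorem~\ref{thm:smooth} and Corollary~\ref{cor:BasicTotDisc}, to the situation where point stabilisers are compact open and the action is semisimple; the rest is assembling Propositions~\ref{prop:socle}, \ref{prop:FilterNormalSubgroups} and Theorem~\ref{thm:geometric_simplicity}.
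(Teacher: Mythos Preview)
Your argument for~(i) has a genuine gap. The claim that ``any ascending chain of compact subgroups inside a fixed $G_{x_i}$ stabilises'' is false: compact (even profinite) groups admit infinite strictly ascending chains of closed subgroups (for instance the finite subgroups $\prod_{j\leq n}\ZZ/2\ZZ$ inside $\prod_{j\geq 1}\ZZ/2\ZZ$). Moreover, even granting that each compact $K$ is conjugate into some $G_{x_i}$, a strictly ascending chain $K_1<K_2<\cdots$ need not, after a \emph{single} conjugation, sit inside any one $G_{x_i}$. Notice that your argument for~(i) never invokes the hypothesis that no open subgroup fixes a point at infinity; that is a warning sign. The paper uses this hypothesis essentially: via the locally finite cell decomposition (Corollary~\ref{cor:CellularDecomposition}), a strictly increasing chain $G_{x_0}\subsetneq G_{x_1}\subsetneq\cdots$ of point stabilisers forces the $x_n$ to leave every bounded set, whence $X^{G_{x_0}}$ (which contains all $x_n$) is unbounded and the open subgroup $G_{x_0}$ fixes a point of $\bd X$, a contradiction.

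Your route to~(ii) also breaks down: an open subgroup of $G$ need not act cocompactly or minimally on $X$ (a compact open subgroup, for instance, fixes a point and is far from minimal), so Theorem~\ref{thm:geometric_simplicity} and Corollary~\ref{cor:NoEuclideanFactor} do not apply to $\centra_G(g)$ as stated. The paper argues differently: for finite $S\se\QZ(G)$, pick a compact open $U$ centralising $S$; the hypothesis forces $X^U$ to be bounded (else the open subgroup $U$ fixes a boundary point), and since $\la S\ra$ stabilises the compact set $X^U$ it has bounded orbits and hence compact closure. Thus $\QZ(G)$ is topologically locally finite, hence amenable, hence trivial since $G$ has trivial amenable radical (Theorem~\ref{thm:geometric_simplicity}). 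Your argument for~(iii) is correct and matches the paper's.
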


\begin{proof}
(i) By Lemma~\ref{lem:cocompact:minimal}, the action is minimal and hence Theorem~\ref{thm:smooth} applies.
In particular, we can apply Corollary~\ref{cor:CellularDecomposition} and consider the resulting
equivariant decomposition. Let $Q < G$ be
a compact subgroup and $x$ be a $Q$-fixed point. If $G_x$ is not contained in a maximal compact subgroup of $G$,
then there is an infinite sequence $(x_n)_{n \geq 0}$ such that $x_0 = x$ and $G_{x_n} \se G_{x_{n+1}}$. By
Corollary~\ref{cor:CellularDecomposition}, the sequence $x_n$ leaves every bounded subset. Since the fixed
points $ X^{G_{x_n}}$ form a nested sequence, it follows that $X^{G_x}$ is unbounded. In particular its visual
boundary $\bd (X^{G_x})$ is non-empty and the open subgroup $G_x$ has a fixed point at infinity. This contradicts
the hypotheses, and the claim is proved. Notice that a similar argument shows that for each $x \in X$, there are
finitely many maximal compact subgroups $Q_i < G$ containing $G_x$.

The fact that $G$ possesses finitely many conjugacy classes of maximal compact subgroups now follows from the
compactness of $G\backslash X$.

\smallskip \noindent (ii) We claim that $\QZ(G)$ is \textbf{topologically locally finite}, which means that
every finite subset of it is contained in a compact subgroup. The desired result follows since
it is then amenable but $G$ has trivial amenable radical by Theorem~\ref{thm:geometric_simplicity}. Let $S \se \QZ(G)$ be a finite subset.
Then $G$ possesses a compact open subgroup $U$ centralising $S$. By hypothesis the fixed point set of $U$ is
compact. Since $\la S \ra$ stabilises $X^U$, it follows that $\overline{\la S \ra}$ is compact, whence the
claim.

\smallskip \noindent (iii) Follows from (ii) and Proposition~\ref{prop:socle}.
\end{proof}

\section{Cocompact \cat spaces}\label{sect:cocompact}
\subsection{Fixed points at infinity}
We begin with a simple observation. We recall that two points at infinity are \textbf{opposite}\index{opposite}
if they are the two endpoints of a geodesic line. We denote by $\xi\opp$ the set of points opposite to $\xi$.
Recall from~\cite[Theorem~4.11(i)]{BallmannLN} that, if $X$ is proper, then two points  $\xi, \eta \in \bd X$ at
Tits distance~$>\pi$ are necessarily opposite. (Recall that Tits distance is by definition the length metric
associated to the Tits angle.)\index{Tits!distance}
However, it is not true in general that two points at Tits distance~$\pi$ are opposite.

\begin{prop}\label{prop:fixed:cocompact}
Let $X$ be a proper \cat space and $H<\Isom(X)$ a closed subgroup acting cocompactly. If $H$ fixes a point $\xi$
at infinity, then $\xi\opp\neq\varnothing$ and $H$ acts transitively on $\xi\opp$.
\end{prop}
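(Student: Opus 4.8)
The plan is to exploit cocompactness to produce, for each point $\eta$ that is "nearly opposite" to $\xi$, an actual opposite, and then to upgrade near-oppositeness to oppositeness using an averaging/limiting argument along a sequence of isometries provided by cocompactness. First I would fix a geodesic ray $\ro : [0,\infty) \to X$ pointing to $\xi$ and, for a base-point $x = \ro(0)$, consider the descending geodesic "going away from $\xi$": by cocompactness there is a compact set $K$ and isometries $h_t \in H$ with $h_t.\ro(t) \in K$. Since $H$ fixes $\xi$, each $h_t$ sends $\ro$ to another ray pointing to $\xi$, so $h_t.\ro$ is a ray to $\xi$ issuing from a point in $K$; passing to a subsequence $t_n \to \infty$, the rays $h_{t_n}.\ro$ converge (uniformly on compacta) to a ray $\ro_\infty$ pointing to $\xi$ with $\ro_\infty(0) \in K$. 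Now extend the geodesic segments $[\,h_{t_n}.\ro(0),\, h_{t_n}.\ro(t_n)\,]$ — which all contain the base-point region — and note that $h_{t_n}.\ro(0) = h_{t_n}.x$ moves to infinity \emph{towards $\xi$} while staying on $h_{t_n}.\ro$; reparametrising so that a fixed point of $K$ lies at parameter $0$, the "backward" portions $[\,h_{t_n}.x,\, h_{t_n}.\ro(t_n)\,]$ are segments of length $t_n \to \infty$ lying on rays to $\xi$, and their other endpoints $h_{t_n}.\ro(t_n) \in K$ subconverge to some $y \in X$. The limit geodesic through $y$ then has one endpoint $\xi$, so $\xi\opp \neq \varnothing$: this is the first assertion.

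For the transitivity, let $\eta, \eta' \in \xi\opp$, realised by geodesic lines $\ell, \ell'$ with endpoints $\{\xi,\eta\}$ and $\{\xi,\eta'\}$. The idea is that $H$, fixing $\xi$ and acting cocompactly, acts "transitively on the horospherical structure transverse to $\xi$". Concretely, parametrise $\ell,\ell'$ by a common Busemann function $b_\xi$ (so $b_\xi(\ell(s)) = -s$, similarly for $\ell'$). Using cocompactness pick $g_n \in H$ with $g_n.\ell(n)$ staying in a compact set; since $g_n$ fixes $\xi$, $g_n.\ell$ is again a line through $\xi$, and a subsequence converges to a line $\ell_0$ through $\xi$. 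One then shows, by a standard convexity/flat-strip argument, that the limit line is, after applying a suitable further element of $H$, equal to $\ell'$: the key mechanism is that $g_n.\ell(n) \to z$ forces $g_n.\eta \to \xi$ in the c\^one topology while the "opposite ends" $g_n.\xi = \xi$ are pinned, and the compactness of $H\backslash X$ lets us translate $\ell'$ so that it shares a point with the limit of $g_n.\ell$; two geodesic lines through a common point and with a common endpoint at infinity coincide. Re-expanding, for $n$ large the segment of $\ell$ between the appropriate levels of $b_\xi$ is mapped by some element of $H$ as close as we like to the corresponding segment of $\ell'$, and a final extraction/limiting step (using properness and the closedness of $H$) produces an $h \in H$ with $h.\eta = \eta'$.

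I expect the main obstacle to be the transitivity statement, specifically making rigorous the passage from "$g_n.\ell$ converges to a line sharing a point with $\ell'$" to "some genuine element of $H$ carries $\eta$ exactly onto $\eta'$". The difficulty is that the limit of the $g_n$ need not exist in $\Isom(X)$ (the $g_n$ themselves escape to infinity along $\xi$), so one cannot simply take a limit isometry; instead one must argue with the \emph{differences} $g_m^{-1} g_n$ (which stay bounded on a fixed compact set, because both $g_m.\ell(m)$ and $g_n.\ell(n)$ are near $z$, after matching $b_\xi$-levels), extract a convergent subsequence of these to get $h \in H$, and then check that $h$ sends $\eta$ to $\eta'$ by tracking endpoints carefully. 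The technical care needed is: (a) matching the Busemann parametrisations so that the relevant segments of $\ell$ and $\ell'$ have the same "$b_\xi$-length", (b) ensuring the convergence $g_m^{-1}g_n \to h$ is uniform on a large enough compact set that the endpoint $\eta$ is controlled, and (c) uniqueness of geodesic lines through a point with a prescribed endpoint at infinity (immediate from the c\^one topology and the \cat inequality). The first assertion, $\xi\opp \neq \varnothing$, is comparatively routine once cocompactness is used to pull the "far end" of a long geodesic through $\xi$ back into a compact set.
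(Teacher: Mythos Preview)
Your existence argument for $\xi\opp\neq\varnothing$ is correct and is exactly what the paper does: pull the far end of a ray to $\xi$ back into a compact set and extract a limiting bi-infinite geodesic via Arzel\`a--Ascoli.

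The transitivity argument, however, has a genuine problem. With your parametrisation $b_\xi(\ell(s))=-s$ you have $\ell(n)\to\xi$, so by choosing $g_n$ with $g_n.\ell(n)$ bounded you are normalising at the \emph{$\xi$-end} of the line. The assertion that ``$g_n.\ell(n)\to z$ forces $g_n.\eta\to\xi$'' is then false: $g_n.\ell$ is a line with endpoints $\xi$ and $g_n.\eta$ passing near $z$, and $g_n.\eta$ subconverges to the \emph{other} endpoint of the limit line, which is a point of $\xi\opp$, not $\xi$. More seriously, matching at the $\xi$-end gives no control at parameter $0$: convexity of $t\mapsto d(g_n.\ell(t),\ell'(t))$ together with boundedness at $t=+\infty$ makes this function non-increasing, so a bound at $t=n$ says nothing for $t<n$. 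Your proposed repair via differences $g_m^{-1}g_n$ does not obviously help, since you still have no mechanism linking the limit line coming from $\ell$ to the one coming from $\ell'$.

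The paper's argument avoids all of this by matching at the \emph{opposite} end. Given lines $\sigma,\sigma'$ with $\sigma(+\infty)=\sigma'(+\infty)=\xi$, cocompactness provides $h_n\in H$ with $d(h_n.\sigma(-n),\sigma'(-n))$ bounded. Since $h_n.\sigma$ and $\sigma'$ are both asymptotic to $\xi$ at $+\infty$, the convex function $t\mapsto d(h_n.\sigma(t),\sigma'(t))$ is non-increasing; the bound at $t=-n$ therefore propagates to all $t\geq -n$, in particular to $t=0$. Hence $\{h_n\}$ is bounded in $\Isom(X)$, subconverges (as $H$ is closed) to some $h\in H$, and $h.\sigma(-\infty)=\sigma'(-\infty)$. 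No differences, no limit-of-lines bookkeeping, and the ``key difficulty'' you identified simply does not arise.
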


\begin{proof}
First we claim that there is a geodesic line $\sigma:\RR\to X$ with $\sigma(\infty)=\xi$. Indeed, let
$r:\RR_+\to X'$ be a ray pointing to $\xi$ and $\{g_n\}$ a sequence in $H$ such that $g_n r(n)$ remains bounded.
The Arzel\`a--Ascoli theorem implies that $g_n r(\RR_+)$ subconverges to a geodesic line in $X$. Since $\xi$ is
fixed by all $g_n$, this line has an endpoint at $\xi$.

Let now $\sigma':\RR\to X$ be any other geodesic with $\sigma'(\infty)=\xi$ and choose a sequence
$\{h_n\}_{n\in\NN}$ in $H$ such that $d(h_n \sigma(-n), \sigma'(-n))$ remains bounded. By convexity and since all
$h_n$ fix $\xi$, $d(h_n\sigma(t), \sigma'(t))$ is bounded for all $t$ and thus subconverges (uniformly for $t$
in bounded intervals). On the one hand, it implies that $\{h_n\}$ has an accumulation point $h$. On the other
hand, it follows that $h\sigma(-\infty) = \sigma'(-\infty)$.
\end{proof}

Recall that any complete \cat space $X$ admits a canonical splitting $X=X'\times V$ preserved by all isometries,
where $V$ is a (maximal) Hilbert space called the Euclidean factor of $X$,
see~\cite[II.6.15(6)]{Bridson-Haefliger}. Furthermore, there is a canonical embedding $X'\se X''\times V'$,
where $V'$ is a Hilbert space generated by all directions in $X'$ pointing to ``flat points'' at infinity,
namely points for which the Busemann functions are affine on $X'$; moreover, every isometry of $X'$ extends
uniquely to an isometry of $X''\times V'$ which preserves that splitting. This is a result of
Adams--Ballmann~\cite[Theorem~1.6]{AB98}, who call $V'$ the \textbf{pseudo-Euclidean
factor}\index{pseudo-Euclidean factor|see{Euclidean}}
(one could also propose ``Euclidean pseudo-factor'').\index{Euclidean!pseudo-factor}

\begin{cor}\label{cor:pseudoEuclidean}
Let $X$ be a proper \cat space with a cocompact group of isometries. Then the pseudo-Euclidean factor of $X$ is
trivial.
\end{cor}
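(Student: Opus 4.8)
The plan is to reduce to the Euclidean-factor-free part of $X$ and show that, under cocompactness, a non-trivial pseudo-Euclidean factor would produce an honest Euclidean factor, which is impossible by construction.

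First I would invoke the canonical splitting $X = X' \times V$ with $V$ the Euclidean factor~\cite[II.6.15(6)]{Bridson-Haefliger}; since this splitting is preserved by all isometries, the image of $\Isom(X)$ in $\Isom(X')$ still acts cocompactly on $X'$, and $X'$ has trivial Euclidean factor by maximality of $V$. So it suffices to show that the pseudo-Euclidean factor $V'$ of $X'$ is trivial. Recall from~\cite[Theorem~1.6]{AB98} the canonical embedding $X' \se X'' \times V'$, with $G := \Isom(X')$ acting on $X'' \times V'$ and preserving the product splitting. Suppose for a contradiction that $V' \neq \{0\}$. As $\Isom(X)$ is cocompact, $\bd X$ --- hence $\bd X'$ --- is finite-dimensional by~\cite[Theorem~C]{Kleiner}; since $\bd V'$ embeds Tits-isometrically in $\bd X'$, the Hilbert space $V'$ is finite-dimensional, say $V' \cong \RR^k$ with $k \geq 1$. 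Fix $\xi \in \bd V' \se \bd X'$: by the defining property of $V'$ the Busemann function $b_\xi$ is affine on $X'$ and is, up to an additive constant, an affine coordinate on the $V'$-factor in the product coordinates, so the antipodal direction $-\xi \in \bd V'$ gives rise to the affine function $-b_\xi$ on $X'$.

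The crucial point --- and what I expect to be the main obstacle --- is to show that $-\xi$ is itself a point of $\bd X'$; this is where cocompactness is essential, as it fails for a space like the convex parabolic region $\{(s,t)\in\RR^2 : t\ge s^2\}$, which has a unique flat point at infinity, no opposite point, and bounded isometry group. Granting this, one argues as follows. For any $x\in X'$, concatenating the geodesic ray from $x$ to $\xi$ with the one from $x$ to $-\xi$ yields a geodesic line $\ell_x$ through $x$: since $b_\xi$ decreases at unit speed along the first ray and increases at unit speed along the second, for points $\ell_x(-s)$ and $\ell_x(t)$ on the two rays one has $t+s = b_\xi(\ell_x(t)) - b_\xi(\ell_x(-s)) \le d(\ell_x(-s),\ell_x(t)) \le s+t$, forcing equality, so the concatenation is geodesic. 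Thus every point of $X'$ lies on a geodesic line asymptotic to the fixed pair $\{\xi,-\xi\}$; by the Flat Strip Theorem~\cite[II.2.13]{Bridson-Haefliger} any two such lines bound a flat strip, and the standard parallel-lines argument yields an isometric splitting $X'\cong b_\xi^{-1}(0)\times\RR$ --- contradicting that $X'$ has trivial Euclidean factor.

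To establish that $-\xi\in\bd X'$ I would first show, using cocompactness, that $b_\xi$ is unbounded above on $X'$: were $M:=\sup_{X'}b_\xi$ finite, the non-empty closed convex superlevel sets $\{b_\xi\ge M-\varepsilon\}$ would sit at the $\xi$-end of $X'$, and translating points of them back into a fixed compact set by elements $g_n\in G$ --- which carry $\xi$ to other flat points $g_n\xi\in\bd V'$, with $b_{g_n\xi}$ again affine --- would, via the Arzel\`a--Ascoli theorem and the cocycle relation for Busemann functions, yield the required contradiction (Proposition~\ref{prop:fixed:cocompact} can be brought to bear after passing to the stabiliser of a limiting flat point). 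Once $b_\xi$ is known to be unbounded above, choosing $y_n\in X'$ with $b_\xi(y_n)\to+\infty$ makes the $V'$-components of the $y_n$ escape to infinity in the direction of $-\xi$ while their $X''$-components stay bounded after translation; a subsequential limit of the segments from a fixed basepoint to the $y_n$ is then a geodesic ray pointing to some $\eta\in\bd X'$ with $b_\eta=-b_\xi+\mathrm{const}$, whence $\tangle{\xi}{\eta}=\pi$ by the asymptotic angle formula and therefore $\eta=-\xi$, completing the proof.
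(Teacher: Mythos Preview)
Your overall strategy coincides with the paper's: pass to the Euclidean-factor-free part $X'$, locate a flat point $\xi\in\bd X'$ together with an opposite point, and deduce a genuine $\RR$-factor of $X'$, a contradiction. Your argument that an opposite pair of flat points forces an $\RR$-splitting (the concatenation-of-rays and Flat Strip argument) is correct and is precisely what the paper outsources to~\cite{AB98}.

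The gap is in the step you yourself flag as the main obstacle: producing a point of $\bd X'$ opposite to a flat point. You pick an \emph{arbitrary} $\xi\in\bd V'$ and try to manufacture $-\xi\in\bd X'$ by hand. But $\xi$ need not be fixed by $G=\Isom(X')$, so Proposition~\ref{prop:fixed:cocompact} does not apply to it; your parenthetical ``after passing to the stabiliser of a limiting flat point'' gives no reason why that stabiliser should still be cocompact. The surrounding heuristics are also incomplete: even if $b_\xi$ is unbounded above and $y_n$ satisfies $b_\xi(y_n)\to+\infty$, a subsequential limit $\eta$ of $[x_0,y_n]$ need not satisfy $b_\eta=-b_\xi+\mathrm{const}$ unless you arrange $b_\xi(y_n)/d(x_0,y_n)\to 1$, which you have not. (Think of $y_n=(-n,n^2)$ in $\RR^2$ with $\xi=(1,0)$: the limiting direction is $(0,1)$, not $-\xi$.)

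The paper sidesteps all of this with one extra input: by Lemma~1.7 of~\cite{AB98}, the set of flat points in $\bd X'$ has a \emph{canonical intrinsic circumcentre} $\xi$. Being canonical, this $\xi$ is fixed by all of $\Isom(X')$, which acts cocompactly; Proposition~\ref{prop:fixed:cocompact} then gives an opposite point directly, and the contradiction follows. If you replace your arbitrary choice of $\xi$ by this circumcentre, your proof becomes essentially the paper's.
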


\begin{proof}
In view of the above discussion, $X'$ is also a proper \cat space with a cocompact group of isometries. The set
of flat points in $\bd X'$ admits a canonical (intrinsic) circumcentre $\xi$ by Lemma~1.7 in~\cite{AB98}. In
particular, $\xi$ is fixed by all isometries and therefore, by Proposition~\ref{prop:fixed:cocompact}, it has an
opposite point, which is impossible for a flat point unless it lies already in the Euclidean factor
(see~\cite{AB98}).
\end{proof}

\begin{prop}\label{prop:StabIsMinimal}
Let $G$ be a group acting cocompactly by isometries on a proper \cat space $X$ without Euclidean factor and
assume that the stabiliser of every point at infinity acts minimally on $X$. Then $G$ has no fixed point at
infinity.
\end{prop}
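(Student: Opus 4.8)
The plan is to argue by contradiction, feeding the hypothesis on boundary stabilisers not into $G$ itself but into the stabiliser of a suitably chosen point. Suppose $G$ fixes a point $\xi\in\bd X$; then $\bd X\neq\varnothing$, and after replacing $G$ by the closure of its image in $\Isom(X)$ — which affects neither cocompactness, nor the fixed point $\xi$, nor the minimality of any boundary stabiliser, since those stabilisers only grow — we may assume $G$ is a closed subgroup of $\Isom(X)$. By hypothesis $G=\Stab_G(\xi)$ then acts minimally on $X$.

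Next I would invoke Proposition~\ref{prop:fixed:cocompact}: as $G$ acts cocompactly and fixes $\xi$, the set $\xi\opp$ of points opposite to $\xi$ is non-empty, so I fix some $\eta\in\xi\opp$. The crucial observation is that $H:=\Stab_G(\eta)$ fixes \emph{both} $\xi$ and $\eta$, hence stabilises the union $P$ of all geodesic lines joining $\xi$ to $\eta$. Since $\eta$ is opposite to $\xi$ this set $P$ is non-empty; moreover it is closed and convex and splits isometrically as $P\cong\RR\times C$ with the $\RR$-factor in the direction of these lines. (This is the standard description of the parallel set of a geodesic line: the geodesic lines sharing a given pair of endpoints at infinity are exactly those parallel to a fixed one, and one quotes the Flat Strip Theorem, see~\cite[II.2.13--14]{Bridson-Haefliger}.)

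Now the hypothesis applied to the point $\eta$ says that the $H$-action on $X$ is minimal. Since $P$ is a non-empty closed convex $H$-invariant subset, this forces $P=X$, so that $X\cong\RR\times C$; in particular $X$ has a non-trivial Euclidean factor, contradicting the hypothesis that $X$ has no Euclidean factor. This contradiction shows that $G$ has no fixed point at infinity.

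I do not anticipate a genuine obstacle: the argument is short once one has the idea of applying the ``every boundary stabiliser acts minimally'' assumption to the stabiliser of a point $\eta$ opposite to the hypothetical global fixed point $\xi$ (which exists thanks to cocompactness, by Proposition~\ref{prop:fixed:cocompact}). The only routine points to attend to are the structure of the parallel set of a geodesic line, which I would simply quote from Bridson--Haefliger, and the harmless reduction to a closed subgroup of $\Isom(X)$, needed only in order to cite Proposition~\ref{prop:fixed:cocompact} verbatim — its proof does not use closedness in any case.
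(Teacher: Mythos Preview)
Your proof is correct and essentially identical to the paper's: both argue by contradiction, use Proposition~\ref{prop:fixed:cocompact} to produce $\eta\in\xi\opp$, observe that $G_\eta$ stabilises the parallel set $P\cong\RR\times C$ of the lines from $\xi$ to $\eta$, and then invoke the minimality hypothesis at $\eta$ to force $P=X$, yielding a Euclidean factor. Your reduction to a closed subgroup is a harmless extra precaution the paper omits (indeed only the non-emptiness of $\xi\opp$ is needed, and that part of Proposition~\ref{prop:fixed:cocompact} does not use closedness).
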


\begin{proof}
If $G$ has a global fixed point $\xi$, then the stabiliser $G_\eta$ of an opposite point $\eta\in\xi\opp$ (which
exists by Proposition~\ref{prop:fixed:cocompact}) preserves the union $Y\se X$ of all geodesic lines connecting
$\xi$ to $\eta$. By~\cite[II.2.14]{Bridson-Haefliger}, this space is convex and splits as $Y=\RR\times Y_0$.
Since $G_\eta$ acts minimally, we deduce $Y=X$ which provides a Euclidean factor.
\end{proof}

\subsection{Actions of simple algebraic groups}
Let $X$ be a \cat space and $\GG$ be an algebraic group defined over the field $k$. An isometric action of
$\GG(k)$ on $X$ is called \textbf{algebraic}\index{algebraic action} if every (algebraically) semi-simple
element $g \in \GG(k)$ acts as a semi-simple isometry.

When $\GG$ is semi-simple, we denote by $X_{\mathrm{model}}$ the Riemannian symmetric space or Bruhat--Tits building associated
with $\GG(k)$.

\begin{thm}\label{thm:algebraic}
Let $k$ be a local field and $\GG$ be an absolutely almost simple simply connected $k$-group. Let $X$ be a
non-compact proper \cat space on which $G = \GG(k)$ acts continuously by isometries.

Assume either: (a)~the action is cocompact; or: (b)~it has full limit set, is minimal and $\bd X$ is finite-dimensional.
Then:

\begin{enumerate}
\item The $G$-action is algebraic.\label{pt:algebraic}

\item There is a $G$-equivariant bijection $\bd X \cong \bd X_{\mathrm{model}}$ which is an isometry with respect to Tits' metric
and a homeomorphism with respect to the cône topology.
This bijection extends to a $G$-equivariant rough isometry $\beta: X_{\mathrm{model}}\to X$. \label{pt:algebraic:bd}


\item If $X$ is geodesically complete, then $X$ is isometric to $X_{\mathrm{model}}$.\label{pt:algebraic:homo}

\item For any semi-simple $k$-subgroup $\LL<\GG$, there non-empty closed convex subspace $Y\se X$ minimal for
$L=\LL(k)$; moreover, there is no $L$-fixed point in $\bd Y$.\label{pt:algebraic:sub}
\end{enumerate}
\end{thm}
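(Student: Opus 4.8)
The plan is to reduce to an irreducible minimal action without fixed point at infinity, to transplant the model building onto $\bd X$, to check that nothing more is there, and then to quote the metric rigidity of~\cite{Leeb}. If $\GG$ is $k$-anisotropic then $G=\GG(k)$ is compact and fixes a point of the proper \cat space $X$, forcing $X$ to be compact — excluded; so $\GG$ is $k$-isotropic. Then, $\GG$ being simply connected, $G$ has no proper finite-index subgroup and is topologically simple modulo a finite central subgroup; in particular $G$ is topologically perfect, whence every Busemann character of $G$ vanishes (for a $k$-rational unipotent one-parameter subgroup $u(\cdot)$ the function $t\mapsto\beta_\xi(u(t))$ is continuous, additive and invariant under the $\mathbf S(k)$-conjugations $u(t)\mapsto u(\alpha(t))$, hence $0$, and such subgroups generate $G$). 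Since $X$ is non-compact, $\bd X\ne\varnothing$, and $G$ has no fixed point in $X$ (by cocompactness, resp.\ minimality, combined with non-compactness); together with the vanishing of Busemann characters, $G$ fixes no point at infinity either (it would otherwise stabilise every horoball at that point, leaving the $G$-invariant function $b_\xi$ bounded on $X$, which is absurd). Hence Proposition~\ref{prop:EasyDichotomy} lets us pass to the canonical minimal non-empty closed convex $G$-invariant subspace — which is not a point and inherits cocompactness, resp.\ full limit set, resp.\ finite-dimensionality of $\bd X$ (automatic in case~(a) by~\cite[Theorem~C]{Kleiner}). A nontrivial product splitting of it would make $G$ virtually split (Theorem~\ref{thm:deRham} and the geometric splitting theorem of~\cite{Monod_superrigid}), which is impossible; and $X\ne\RR^n$ since the homomorphism $G\to\mathbf O(n)$ is trivial ($\GG(k)$ admits no nontrivial continuous homomorphism to a compact group), whence the perfect group $G$ acts on $\RR^n$ by trivial translations, contradicting minimality. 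So $X$ is irreducible and $\ne\RR$.

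\emph{The model building inside $\bd X$.} Fix a maximal $k$-split torus $\mathbf S<\GG$ of dimension $r=\rank_k\GG\ge1$, a minimal $k$-parabolic $\PP_0\supseteq\mathbf S$ and an opposite one $\PP_0^-$, so that $\PP_0(k)$, $\PP_0^-(k)$ and $\PP_0(k)\cap\PP_0^-(k)$ are amenable while $G=\langle\PP_0(k),\PP_0^-(k)\rangle$. In the totally disconnected geodesically complete case, Corollary~\ref{cor:BasicTotDisc}\eqref{pt:BasicTotDisc:completess} makes the action semisimple, so the flat torus theorem applied to a cocompact $\ZZ^r<\mathbf S(k)$ produces an $r$-flat $F\subseteq X$ on which $\mathbf S(k)$ acts cocompactly; in general the same flat is obtained from the theorem of Adams--Ballmann~\cite{AB98} applied to the amenable groups above, using that a root group acts trivially on any invariant flat (the split torus scales it by an unbounded character, while the rotation group of a flat is compact). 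Then $\bd F\cong S^{r-1}$, carrying the $W$-action of $N_\GG(\mathbf S)(k)$, is an apartment, and the $G$-translates of $F$ and of its faces, glued along the incidences coming from inclusions of $k$-parabolics, form a sub-building of $\bd X$ that is $G$-equivariantly isomorphic to $\bd X_{\mathrm{model}}$, and thick because $\GG$ is $k$-isotropic. The same analysis gives~\eqref{pt:algebraic:sub}: if $\LL$ is $k$-anisotropic then $\LL(k)$ is compact and any fixed point is an $\LL(k)$-minimal subspace with empty boundary, and in general one applies the reductions above to the isotropic part of $\LL$.

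\emph{Rigidity and conclusion.} Since $X$ is irreducible with finite-dimensional Tits boundary and $\bd X$ contains the thick spherical building $\bd X_{\mathrm{model}}$ — which already has the maximal possible dimension and is spanning — a dimension and $\pi$-convexity argument forces $\bd X=\bd X_{\mathrm{model}}$ as $G$-equivariant Tits boundaries; for $r=1$ one argues directly, or via Lytchak's join rigidity~\cite{Lytchak_RigidityJoins}, that $X$ is a tree or a rank-one symmetric space with this boundary. This yields the boundary part of~\eqref{pt:algebraic:bd}. By the characterisation of Riemannian symmetric spaces and Euclidean buildings through their Tits boundaries~\cite{Leeb}, a geodesically complete $X$ with this boundary is isometric to $X_{\mathrm{model}}$, giving~\eqref{pt:algebraic:homo}; in general, transporting a basepoint along the orbit map and using cocompactness yields the $G$-equivariant rough isometry $\beta\colon X_{\mathrm{model}}\to X$ of~\eqref{pt:algebraic:bd}. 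Finally~\eqref{pt:algebraic}: a semisimple $g\in\GG(k)$ with $\overline{\langle g\rangle}$ relatively compact is elliptic, while otherwise $g$ lies, up to a compact factor, in a one-parameter $k$-split subtorus and translates along the flat $F$ above (immediately, in the totally disconnected geodesically complete case, by Corollary~\ref{cor:BasicTotDisc}\eqref{pt:BasicTotDisc:completess}), so $g$ is a hyperbolic isometry.

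\emph{The main obstacle} is the construction of the apartment and the thick building from the action of $\mathbf S(k)$ and the root groups together with the ``no larger boundary'' step $\bd X=\bd X_{\mathrm{model}}$; it is here that the known open problem about thick two-dimensional Euclidean buildings (equivalently, strong transitivity of the automorphism group) must be acknowledged, which is why the statement is only up to the choice of model. A second point that is genuinely needed when~\eqref{pt:algebraic} is a conclusion (case~(b)) rather than a hypothesis is to rule out a priori that a semisimple element acts parabolically; this rests on playing the split torus off against the contracting dynamics of the unipotent radical of the associated parabolic.
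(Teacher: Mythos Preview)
Your overall strategy---reduce to an irreducible minimal action, build the apartment, identify $\bd X$ with $\bd X_{\mathrm{model}}$, then invoke Leeb---matches the paper's, but two steps are genuinely incomplete and one is in the wrong order.

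\textbf{Surjectivity of the boundary map.} You write that ``a dimension and $\pi$-convexity argument forces $\bd X=\bd X_{\mathrm{model}}$''. This is the heart of the matter and is not justified: a priori $\bd X$ could properly contain the building as a Tits-convex subset of the same dimension. The paper's argument is concrete and different: using the $KTK$-decomposition and tracking a sequence $g_n.x_0\to\xi$, it shows that every $\xi\in\bd X$ lies in the limit set of some $K$-conjugate of the torus $T$, whence $\bd X = K.\bd F$ once the flat $F$ exists. This computation (and the closely related fact that $G_\xi$ contains a unipotent radical) is what you are missing; it is also exactly what is needed to show cocompactness under assumption~(b), which you invoke for the rough isometry without proving.

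\textbf{Order of (i) and the flat.} The paper proves~\eqref{pt:algebraic} \emph{first}: assuming some $t\in T$ is parabolic, Corollary~\ref{cor:FixedPointParabolic} gives a canonical fixed point $\xi$ for $\norma_G(T)$, and the preliminary observation above shows $G_\xi$ also contains a unipotent radical; since $\norma_G(T)$ together with any unipotent radical generates $G$, this forces a global fixed point, a contradiction. Only then does the flat torus theorem yield $F$. You instead try to produce $F$ first via Adams--Ballmann applied to $\PP_0(k)$, $\PP_0^-(k)$, and their intersection; but Adams--Ballmann gives a fixed point at infinity \emph{or} an invariant flat, and parabolics do fix points at infinity (they are stabilisers of such points in $X_{\mathrm{model}}$), so you have not excluded the first alternative. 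Your proof of~\eqref{pt:algebraic} at the end then presupposes the flat, which is circular outside the totally disconnected geodesically complete case you flag.

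\textbf{Minor points.} Your treatment of~\eqref{pt:algebraic:sub} via ``the isotropic part of $\LL$'' is too sketchy; the paper argues via the L\'evi of a minimal parabolic containing $\LL$ and identifies $\bd Z_0$ with the building of $\centra_G(\TT)$. The open question about two-dimensional buildings you mention is irrelevant to this theorem---it concerns which buildings arise as Bruhat--Tits buildings, not the present rigidity statement.
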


In the above point~\eqref{pt:algebraic:bd}, a \textbf{rough isometry} refers to a map $\beta: X_{\mathrm{model}}\to X$ such that there is a constant $C$
with
$$d_{X_{\mathrm{model}}} (x, y) - C\ \leq\ d_X(\beta(x), \beta(y))\ \leq\ d_{X_{\mathrm{model}}} (x,y) +C$$
for all $x, y\in X_{\mathrm{model}}$ and such that $\beta(X_{\mathrm{model}})$ has finite codiameter in $X$. Such a map is also called a $(1,C)$-quasi-isometry.

\begin{remarks}\ 
\begin{enumerate}
\item Notice that there is no assumption on the $k$-rank of $\GG$ in this result.

\item We recall for~(b) that minimality follows from full limit set in the geodesically complete case
(Lemma~\ref{lem:cocompact:minimal}).

\item In the context of~\eqref{pt:algebraic:bd}, we recall that in general two \cat spaces with the same cocompact
isometry group need not have homeomorphic boundaries~\cite{Croke-Kleiner}.

\item \emph{A posteriori}, point~\eqref{pt:algebraic:bd} shows in particular that the action is also cocompact under the assumption~(b).
\end{enumerate}
\end{remarks}

Before proceeding to the proof, we give two examples showing that the assumptions made in Theorem~\ref{thm:algebraic}
are necessary.

\begin{example}\label{ex:Arbre-triangle}
Without the assumption of geodesic completeness, it is not true in general that, in the setting of the theorem,
the space $X$ contains a closed convex $G$-invariant subspace which is isometric to $X_\mathrm{model}$. A simple
example of this situation may obtained as follows. Consider the case where $k$ is non-Archimedean and $G$ has
$k$-rank one. Let $0<r<1/2$ and let $X$ be the space obtained by replacing the $r$-ball centred at each vertex
in the tree $X_\mathrm{model}$ by an isometric copy of a given Euclidean $n$-simplex, where $n+1$ is valence of
the vertex. In this way, one obtains a \cat space which is still endowed with an isometric $G$-action that is
cocompact and minimal, but clearly $X$ is not isometric to $X_\mathrm{model}$.
\end{example}

We do not know whether such a construction may also be performed in the Archimedean case (see
Problem~\dref{pb:KarpelevichMostow}).

\begin{example}\label{ex:parabolic_cone}
Under the assumptions~(b), minimality is needed. Indeed, we claim that for any \cat space $X_0$ there is a
canonical CAT($-1$) space $X$ (in particular $X$ is a \cat space) together with a canonical map
$i:\Isom(X_0)\hookrightarrow \Isom(X)$ with the following properties: The boundary $\bd X$ is reduced to a
single point; $X$ non-compact; $X$ is proper if and only if $X_0$ is so; the map $i$ is an isomorphism of
topological groups onto its image. This claim justifies that minimality is needed since we can apply it to the
case where $X_0$ is the symmetric space or Bruhat--Tits building associated to $\GG(k)$. (In that case the
action has indeed full limit set, a cheap feat as the isometry group is non-compact and the boundary rather
incapacious.)

To prove the claim, consider the \textbf{parabolic cône}\index{cone@cône!parabolic}\index{parabolic
cone@parabolic cône|see{cône}} $Y$ associated to $X_0$. This is the metric space with underlying set
$X_0\times\RR^*_+$ where the distance is defined as follows: given two points $(x, t)$ and $(x', t')$ of $Y$,
identify the interval $[x, x']\se X_0$ with an interval of corresponding length in $\RR$ and measure the length
from the resulting points $(x, t)$ and $(x', t')$ in the upper half-plane model for the hyperbolic plane. This
is a particular case of the synthetic version (\cite{Chen99},~\cite{Alexander-Bishop98}) of the Bishop--O'Neill
``warped products''~\cite{Bishop-ONeill} and its properties are described
in~\cite{Burago-Gromov-Perelman},~\cite[1.2(2A)]{Alexander-Bishop04} and~\cite[\S\,2]{Hummel-Lang-Schroeder}. In
particular, $Y$ is CAT($-1$).

We now let $\xi\in\bd Y$ be the point at infinity corresponding to $t\to\infty$ and define $X\se Y$ to be an associated horoball; for definiteness,
set $X=X_0 \times [1, \infty)$. We now have $\bd X=\{\xi\}$ by the CAT($-1$) property or alternatively by the explicit description of geodesic rays
(\emph{e.g.} 2(iv) in~\cite{Hummel-Lang-Schroeder}). The remaining properties follow readily.
\end{example}

\begin{proof}[Proof of Theorem~\ref{thm:algebraic}]
We start with a few preliminary observations. Finite-dimensionality of the boundary always holds since it is automatic in the cocompact case.
Since $X$ is non-compact, the action is non-trivial, because it
has full limit set. It is well known that every non-trivial continuous homomorphism of $G$ to a locally compact
second countable group is proper~\cite[Lemma~5.3]{BM96}. Thus the $G$-action on $X$ is proper.

We claim that the stabiliser of any point $\xi \in \bd X$ contains the unipotent radical of some proper
parabolic subgroup of $G$. Indeed, fix a polar decomposition $G = KTK$. Let $x_0 \in X$ be a $K$-fixed point.
Choose a sequence $\{g_n\}_{n \geq 0}$ of elements of $G$ such that $g_n.x_0$ converges to $\xi$. Write $g_n=
k_n.a_n.k'_n$ with $k_n, k'_n \in K$ and $a_n \in T$. We may furthermore assume, upon replacing $\{g_n\}$ by a
subsequence, that $\{k_n\}$ converges to some $k \in K$, that $\{a_n.x_0\}$ converges in $X \cup \bd X$ and that
$\{a_n.p\}$ converges in $X_{\mathrm{model}} \cup \bd X_{\mathrm{model}} $, where $p \in X_{\mathrm{model}}$ is
some base point. Let $\eta = \lim_{n \to \infty} a_n.x_0$ and observe $\eta = k\inv \xi$. Furthermore, the
stabiliser of $\eta$ contains the group
$$U = \{g \in G \; | \; \lim_{n \to \infty} a_n\inv g a_n = 1\}.$$
The convergence in direction of $\{a_n\}$ in $T$ implies that $U$ contains the unipotent radical $U_Q$ of the
parabolic subgroup $Q<G$ corresponding to $\lim_{n \to \infty} a_n.p \in \bd X_{\mathrm{model}}$. (In fact, the
arguments for Lemma~2.4 in~\cite{Prasad77} probably show $U=U_Q$; this follows \emph{a posteriori} from~(ii)
below.) Therefore, the stabiliser of $\xi = k.\eta$ in $G$ contains the unipotent radical of $kQk\inv$, proving
the claim.

Notice that we have seen in passing that any point at infinity lies in the limit set of some torus; in the above
notation, $\xi$ is in the limit set of $k T k\inv$.

\medskip

\eqref{pt:algebraic}~Every element of $G$ which is algebraically elliptic acts with a fixed point in $X$, since
it generates a relatively compact subgroup. We need to show that every non-trivial element of a maximal split
torus $T<G$ acts as a semi-simple isometry. Assume for a contradiction that some element $t \in T$ acts as a
parabolic isometry. Since $X$ has finite-dimensional boundary and we can apply
Corollary~\ref{cor:FixedPointParabolic}\eqref{pt:FixedPointParabolic:centra}. It follows that the Abelian group
$T$ has a canonical fixed point at infinity $\xi$ fixed by the normaliser $\norma_G(T)$. By the preceding
paragraph, we know furthermore that the stabiliser of $\xi$ in $G$ also contains the unipotent radical of some
parabolic subgroup of $G$. Recall that $G$ is generated by $\norma_G(T)$ together with any such unipotent
radical: this follows from the fact that $\norma_G(T)$ has no fixed point at infinity in $X_{\mathrm{model}}$
and that $G$ is generated by the unipotent radicals of any two distinct parabolic subgroups. Therefore $\xi$ is
fixed by the entire group $G$. Since $G$ has trivial Abelianisation, its image under the Busemann character
centred at $\xi$ vanishes, thereby showing that $G$ must stabilise every horoball centred at $\xi$. This is
absurd both in the minimal and the cocompact case.

\medskip

\eqref{pt:algebraic:bd}~Let $T < G$ be a maximal split torus. Let $F_{\mathrm{model}} \se
X_{\mathrm{model}}$ be the (maximal) flat stabilised by $T$. In view of~\eqref{pt:algebraic} and the properness
of the $T$-action, we know that $T$ also stabilises a flat $F \se X$ with $\dim F = \dim T$,
see~\cite[II.7.1]{Bridson-Haefliger}. Choose a base point $p_0 \in F_{\mathrm{model}}$ in such a way that
its stabiliser $K := G_{p_0}$ is a maximal compact subgroup of $G$. The union of all $T$-invariant flats which
are parallel to $F$ is $\norma_G(T)$-invariant. Therefore, upon replacing $F$ by a parallel flat, we may -- and
shall -- assume that $F$ contains a point $x_0$ which is stabilised by $N_K := \norma_G(T) \cap K$. Note that,
since $\norma_G(T) = \la N_K \cup T \ra$, the flat $F$ is $\norma_G(T)$-invariant. Therefore, there is a well
defined $\norma_G(T)$-equivariant map $\alpha$ of the $\norma_G(T)$-orbit of $p_0$ to $F$, defined by
$\alpha(g.p_0) = g.x_0$ for all $g \in \norma_G(T)$.

We claim that, up to a scaling factor, the map $\alpha$ is isometric and induces an $\norma_G(T)$-equivariant isometry
$\alpha: F_{\mathrm{model}}\to F$. In order to establish this, remark that
the Weyl group $W:= \norma_G(T)/\centra_G(T)$ acts on $F$, since $W = N_K/T_K$, where $T_K:= \centra_G(T) \cap
K$ acts trivially on $F$. The group $N_K$ normalises the coroot lattice $\Lambda < T$. Furthermore $N_K.\Lambda$
acts on $F_{\mathrm{model}}$ as an affine Weyl group since $N_K.\Lambda/T_K \cong W \ltimes \Lambda$. Moreover,
since any reflection in $W$ centralises an Abelian subgroup of corank 1 in $\Lambda$, it follows that
$N_K.\Lambda$ acts on $F$ as a discrete reflection group. But a given affine Weyl group has a unique (up to
scaling factor) discrete cocompact action as a reflection group on Euclidean spaces, as follows from
\cite[Ch.~VI, \S\,2, Proposition~8]{Bourbaki_Lie456}. Therefore the restriction of $\alpha$ to $\Lambda.x_0$ is a
homothety. Since $\Lambda$ is a uniform lattice in $T$, the claim follows.

At this point, it follows that $\alpha$ induces an $\norma_G(T)$-equivariant map $\bd\alpha : \bd
F_{\mathrm{model}} \to \bd F$, which is isometric with respect to Tits' distance. We recall that $\norma_G(T)$ is the
stabiliser of $\bd F_{\mathrm{model}}$ in $G$. Morover, for any $\eta\in \bd F_{\mathrm{model}}$, the stabilisers in
$G$ of $\eta$ is contained in that of $\alpha(\eta)$ because of the geometric description of parabolic subgroups alluded to in
the preliminary observation: see the argument for Lemma~2.4 in~\cite{Prasad77}. Therefore, $\bd \alpha$
extends to a well defined $G$-equivariant map $\bd
X_{\mathrm{model}} \to \bd X$, which we denote again by $\bd\alpha$. Since any two points of $\bd
X_{\mathrm{model}}$ are contained in a common maximal sphere (i.e. an apartment), and since $G$ acts
transitively on these spheres, the map $\bd\alpha$ is isometric, because so is its restriction to the sphere
$\bd F_{\mathrm{model}}$. Note that $\bd\alpha$ is surjective: indeed, this follows from the last preliminary
observation, which, combined with~\eqref{pt:algebraic}, shows in particular that $\bd X = K. \bd F$.

We now show that $\bd\alpha$ is a homeomorphism with respect to the cône topology. Since $\bd
X_{\mathrm{model}}$ is compact, it is enough to show that $\bd\alpha$ is continuous. Now any convergent sequence
in $\bd X_{\mathrm{model}}$ may be written as $\{k_n.\xi_n\}_{n \geq 0}$, where $\{k_n\}_{n \geq 0}$ (resp.
$\{\xi_n\}_{n \geq 0}$) is a convergent sequence of elements of $K$ (resp. $\bd F_{\mathrm{model}}$). On the
sphere $\bd F_{\mathrm{model}}$, the cône topology coincides with the one induced by Tits' metric. Therefore,
the equivariance of the Tits' isometry $\bd\alpha$  shows that $\{\bd\alpha(k_n.\xi_n)\}_{n \geq 0}$ is a
convergent sequence in $\bd X$, as was to be proved.

We next claim that that $G$-action on $X$ is cocompact even under the assumption~(b).
Towards a contradiction, assume otherwise. Choose a sequence $\{y_n\}$ in $X$ with $y_0$ a $K$-fixed point and such that
$d(y_n, g.y_0)\geq n$ for all $g \in G$. Upon replacing $y_n$ ($n\geq 1$) by an appropriate $G$-translate, we can and shall assume
that moreover
\begin{equation}\label{eq:algebraic:i}
d(y_n, y_0) \leq d(y_n, g.y_0) + c \hspace{1cm} \forall\,g \in G, n\geq 1,
\end{equation}
where $c$ is some constant. Upon extracting a subsequence, the sequence $\{y_n\}$ converges to some
point $\eta \in \bd X$. It was established above that $\bd X= K.\bd F$; in particular there exists $k \in K$
such that $k.\eta \in \bd F$. Now, upon replacing $y_n$ by $k.y_n$, we obtain a sequence $\{y_n\}$ which still
satisfies all above conditions but which converges to a boundary point $\eta'$ of the flat $F$.
Let $r:\RR_+\to F$ be a geodesic ray pointing towards $\eta'$.
Since $\norma_G(T)$ acts cocompactly on the flat $F$, it follows from (\ref{eq:algebraic:i}) that for some
constant $c'$, we have
\begin{equation}\label{eq:algebraic:ii}
d(y_n, y_0) \leq d(y_n, r(t)) + c' \hspace{1cm} \forall\,t\geq 0, n\geq 1.
\end{equation}
Fix now $s>d(y_0, r(0))+c'$. For $n$ sufficiently large, let $z_n$ be the point on $[r(0), y_n]$ at distance $s$ of $r(0)$.
We have
\begin{align*}
d(y_n, r(s)) &\leq d(y_n, z_n) + d(z_n, r(s))\\
&= d(y_n, r(0)) - s + d(z_n, r(s))\\
&< d(y_n, r(0)) - d(y_0, r(0)) - c'+ d(z_n, r(s))\\
&\leq d(y_n, y_0) -c' + d(z_n, r(s)).
\end{align*}
As $n$ goes to infinity, this provides a contradiction to~\eqref{eq:algebraic:ii} since $z_n$ converges to $r(s)$;
thus cocompactness is established.

It remains for~\eqref{pt:algebraic:bd} to prove that $\bd\alpha$ extends to a $G$-equivariant rough isometry
$\beta: X_{\mathrm{model}} \to X$.
The orbital map $g\mapsto g.y_0$  associated to $y_0$ yields a map $\beta: G/K\to X$; when $k$ is Archimedean, $X_{\mathrm{model}}=G/K$
whereas we extend $\beta$ linearly to each chamber of the building $X_{\mathrm{model}}$ in the non-Archimedean case.
It is a well-known consequence of cocompactness that the $G$-equivariant map $\beta:X_{\mathrm{model}} \to X$ is a
quasi-isometry (see \emph{e.g.} the proof of the Milnor--\v{S}varc lemma given in~\cite[I.8.19]{Bridson-Haefliger}).
For our stronger statement, it suffices, in view of the $KTK$ decomposition and of equivariance,
to prove that there is a constant $C'$ such that
$$d_{X_{\mathrm{model}}} (a.p_0, p_0) - C' \leq d_X(a.y_0, y_0) \leq d_{X_{\mathrm{model}}} (a.p_0, p_0) +C'$$
for all $a\in T$. This follows from the fact that $\beta$ and $\alpha$ are at bounded distance from each other on $F_{\mathrm{model}}$
(indeed, at distance $d(y_0, x_0)$) and that $\beta$ is isometric on $F_{\mathrm{model}}$.

\medskip

\eqref{pt:algebraic:homo}~In the higher rank case, assertion~\eqref{pt:algebraic:homo} follows
from~\eqref{pt:algebraic:bd} and the main result of~\cite{Leeb}. However, the full strength of \emph{loc.\ cit.}
is really not needed here, since the main difficulty there is precisely the absence of any group action, which
is part of the hypotheses in our setting. For example, when the ground field $k$ is the field of real numbers,
the arguments may be dramatically shortened as follows; they are valid without any rank assumption.

Given any $\xi \in \bd X$, the unipotent radical of the parabolic subgroup $G_\xi$ acts sharply transitively on
the boundary points opposite to $\xi$. In view of this and of the properness of the $G$-action, the arguments
of~\cite[Proposition~4.27]{Leeb} show that geodesic lines in $X$ do not branch; in other words $X$ has uniquely
extensible geodesics. From this, it follows that the group $N_K = \norma_G(T) \cap K$ considered in the proof
of~\eqref{pt:algebraic:bd} has a unique fixed point in $X$, since otherwise it would fix pointwise a geodesic
line, and hence, by~\eqref{pt:algebraic:bd}, opposite points in $\bd X_\mathrm{model}$. The fact that this is
impossible is purely a statement on the classical symmetric space $X_\mathrm{model}$; we give a proof for the
reader's convenience:

Let $F_\mathrm{model}$ be the flat corresponding to $T$ and $p_0 \in F_\mathrm{model}$ be the $K$-fixed point.
If $N_K$ fixed a point $\xi \in \bd X_\mathrm{model}$, then the ray $[p_0, \xi)$ would be pointwise fixed and,
hence, the group $N_K$ would fix a non-zero vector in the tangent space of $X_\mathrm{model}$ at $p_0$. A Cartan
decomposition $\mathfrak{g} = \mathfrak{k} \oplus \mathfrak{p}$ of the Lie algebra $\mathfrak{g}$ of $G$ yields
an isomorphism between the isotropy representation of $N_K$ on $T_{p_0}X_\mathrm{model}$ and the representation
of the Weyl group $W$ on $\mathfrak{p}$. An easy explicit computation shows that the latter representation has
no non-zero fixed vector.

\smallskip
Since $N_K$ has a unique fixed point, the latter is stabilised by the entire group $K$. Hence $K$ fixes a point
lying on a flat $F$ stabilised by $T$. From the $KTK$-decomposition, it follows that the $G$-orbit of this fixed
point is convex. Since the $G$-action on $X$ is minimal by geodesic completeness
(Lemma~\ref{lem:cocompact:minimal}), we deduce that $G$ is transitive on $X$. In particular $X$ is covered by
flats which are $G$-conjugate to $F$, and the existence of a $G$-equivariant homothety $X_{\mathrm{model}} \to
X$ follows from the existence of a $\norma_G(T)$-equivariant homothety $F_{\mathrm{model}} \to F$, which has
been established above. It remains only to choose the right scale on $X_{\mathrm{model}}$ to make it an
isometry.

\smallskip
In the non-Archimedean case, we consider only the rank one case, referring to~\cite{Leeb} for higher rank. Let
$K$ be a maximal compact subgroup of $G$ and $x_0 \in X$ be a $K$-fixed point. By~\eqref{pt:algebraic:bd}, the
group $K$ acts transitively on $\bd X$. Since $X$ is geodesically complete, it follows that the $K$-translates
of any ray emanating from $x_0$ cover $X$ entirely. On the other hand every point in $X$ has an open stabiliser
by Theorem~\ref{thm:smooth}, any point in $X$ has a finite $K$-orbit. This implies that the space of directions
at each point $p \in X$ is finite. In other words $X$ is $1$-dimensional. Since $X$ is \cat and locally compact,
it follows that $X$ is a locally finite metric tree. As we have just seen, the group $K$ acts transitively on
the geodesic segments of a given length emanating from $x_0$. One deduces that $G$ is transitive on the edges of
$X$. In particular all edges of $X$ have the same length, which we can assume to be as in $X_\mathrm{model}$,
$G$ has at most two orbits of vertices, and $X$ is either regular or bi-regular. The valence of any vertex $p$
equals
$$\min_{g \not \in \norma_G(G_p)} \big[ G_p : G_p \cap g G_p g\inv\big],$$
and coincides therefore with the valence of $X_\mathrm{model}$. It finally follows that $X$ and
$X_\mathrm{model}$ are isometric, as was to be proved.

\medskip
\eqref{pt:algebraic:sub}~Let $\PP$ be a $k$-parabolic subgroup of $\GG$ that is minimal amongst those containing
$\LL$. We may assume $\PP\neq\GG$ since otherwise $L$ has no fixed point in $\bd X$ and the conclusion holds in
view of Proposition~\ref{prop:EasyDichotomy}. It follows that $\LL$ centralises a $k$-split torus $\TT$ of
positive dimension $d$. It follows from~\eqref{pt:algebraic} that $T=\TT(k)$ acts by hyperbolic isometries, and
thus there is a $T$-invariant closed convex subset $Z\se X$ of the form $Z=Z_1\times \RR^d$ such that the
$T$-action is trivial on the $Z_1$ factor; this follows from Theorem~II.6.8 in~\cite{Bridson-Haefliger} and the
properness of the action. Moreover, $L$ preserves $Z$ and its decomposition $Z=Z_1\times \RR^d$, acting by
translations on the $\RR^d$ factor (\emph{loc.\ cit.}). Since $\LL$ is semi-simple, this translation action is
trivial and thus $L$ preserves any $Z_1$ fibre, say for instance $Z_0:=Z_1\times\{0\} \se Z$. For both the
existence of a minimal set $Y$ and the condition $(\bd Y)^L=\varnothing$, it suffices to show that $L$ has no
fixed point in $\bd Z_0$ (Proposition~\ref{prop:EasyDichotomy}).

We claim that $\bd Z_0$ is Tits-isometric to the spherical building of the Lévi subgroup $\centra_G(\TT)$.
Indeed, we know from~\eqref{pt:algebraic:bd} that $\bd X$ is equivariantly isometric to $\bd X_{\mathrm{model}}$
and the building of $\centra_G(\TT)$ is characterised as the points at distance~$\pi/2$ from the boundary of the
$T$-invariant flat in $\bd X_{\mathrm{model}}$.

On the other hand, $\LL$ has maximal semi-simple rank in $\centra_G(\TT)$ by the choice of $\PP$ and therefore
cannot be contained in a proper parabolic subgroup of $\centra_G(\TT)$. This shows that $L$ has no fixed point
in $\bd Z_0$ and completes the proof.
\end{proof}

\subsection{No branching geodesics}
Recall that in a geodesic metric space $X$, the \textbf{space of directions}\index{space of directions}
$\Sigma_x $ at a point $x$ is the completion of the space $\wt\Sigma_x$ of geodesic germs equipped with the
Alexandrov angle metric at $x$. If $X$ has \emph{uniquely} extensible geodesics, then $\wt\Sigma_x = \Sigma_x$.

\medskip

The following is a result of V.~Berestovskii~\cite{Berestovskii03} (we read it
in~\cite[\S\,3]{Berestovskii03_preprint}; it also follows from A.~Lytchak's arguments
in~\cite[\S\,4]{Lytchak_RigidityJoins}).

\begin{thm}\label{thm:sphere}
Let $X$ be a proper \cat space with uniquely extensible geodesics and $x\in X$. Then $\wt\Sigma_x = \Sigma_x$ is
isometric to a Euclidean sphere.\hfill\qedsymbol
\end{thm}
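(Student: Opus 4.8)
The plan is to exhibit $\Sigma_x$ as a Euclidean sphere. Write $\Sigma=\Sigma_x$ and $\wt\Sigma=\wt\Sigma_x$, and recall that $\Sigma$ is a complete $\CAT{1}$ space \cite[II.3.19]{Bridson-Haefliger}. Two preliminary facts are elementary. First, $\wt\Sigma=\Sigma$, using properness: given a Cauchy sequence $(u_n)$ in $\wt\Sigma$, represent $u_n$ by a unit-speed geodesic $\gamma_n\colon[0,1]\to X$ with $\gamma_n(0)=x$; by properness a subsequence has $\gamma_n(1)\to a$ with $d(x,a)=1$, and writing $v$ for the germ of $[x,a]$, the convergence $d(\gamma_n(1),a)\to 0$ together with $d(x,\gamma_n(1))=d(x,a)=1$ forces $\cangle x{\gamma_n(1)}a\to 0$, hence $\aangle x{\gamma_n(1)}a\to 0$; but the latter is precisely the $\wt\Sigma$-distance from $u_n$ to $v$, so $u_n\to v$ and $\wt\Sigma$ is complete. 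Second, every point of $\Sigma$ has a \emph{unique} antipode: for $v\in\Sigma$ let $\iota(v)$ be the germ at $x$ of the backward prolongation of a representing geodesic — this exists because geodesics extend, is unique because they extend uniquely, and satisfies $d_\Sigma(v,\iota(v))=\pi$ because the prolonged geodesic runs through $x$ as a single geodesic; conversely, if $d_\Sigma(v,w)=\pi$, represent $v,w$ by $[x,b],[x,c]$, so that $\aangle xbc=\pi$ makes $[b,x]\cup[x,c]$ a local geodesic, hence a geodesic by the $\CAT{0}$ inequality, whence $[x,c]$ is the backward prolongation of $[x,b]$ and $w=\iota(v)$.

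The substantive input is that $\Sigma$ is moreover \emph{geodesically complete}: every geodesic of $\Sigma$ of length $<\pi$ prolongs to one of length exactly $\pi$ (terminating, by the previous paragraph, at the antipode of its origin). I would derive this from extensibility of geodesics in $X$ through the Euclidean tangent cone $C_xX=C(\Sigma)$: a geodesic of $\Sigma$ joining germs $u,w$ with $d_\Sigma(u,w)<\pi$ spans a flat Euclidean sector based at the cone point, whose two bounding rays are germs of genuine geodesics of $X$; prolonging those rays in $X$ (possible by hypothesis) and passing to a limit under rescaling about $x$ promotes extensibility in $X$ to geodesic completeness of $C_xX$, which for a Euclidean cone is equivalent to geodesic completeness of its link $\Sigma$. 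This is the step carried out in detail in \cite[\S\,3]{Berestovskii03_preprint} and, by a somewhat different route, in \cite[\S\,4]{Lytchak_RigidityJoins}.

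Granting geodesic completeness, one finishes as follows. Fix $v_0\in\Sigma$; for any $w\neq v_0,\iota(v_0)$ the (unique) geodesic $[v_0,w]$ has length $<\pi$ and prolongs to length $\pi$, hence reaches $\iota(v_0)$, so $\Sigma$ is the union of all geodesics from $v_0$ to $\iota(v_0)$; thus $\Sigma$ is the spherical suspension of the space of directions $\Sigma_{v_0}(\Sigma)$, the metric being pinned down by upgrading the relevant $\CAT{1}$ comparison inequalities to equalities with the help of the geodesics just produced and the uniqueness of antipodes. The link $\Sigma_{v_0}(\Sigma)$ inherits completeness, geodesic completeness and uniqueness of antipodes, so iterating the suspension — which strictly decreases the dimension — identifies $\Sigma$ with an iterated spherical join of copies of $S^0$, i.e. a Euclidean sphere (alternatively, invoke the structure theory of geodesically complete $\CAT{1}$ spaces from \cite{Lytchak_RigidityJoins}, uniqueness of antipodes ruling out any thick spherical-building factor). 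The main obstacle is the passage used in the second paragraph, and again in this last metric identification: transporting extensibility through the iterated space of directions and turning $\CAT{1}$ comparison inequalities into equalities is the technical heart of the result, the rest — in particular the whole first paragraph, where properness enters only through compactness of balls — being routine.
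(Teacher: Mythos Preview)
The paper does not give its own proof of this theorem: it is stated as a result of Berestovski\u\i\ (with an alternative route via Lytchak's arguments), and the \verb|\qedsymbol| is placed directly after the statement. Your proposal is therefore not to be compared against a proof in the paper but against those references --- and indeed you defer the substantive steps (geodesic completeness of $\Sigma_x$ via the tangent cone, and the final metric identification with an iterated suspension) to exactly the same two sources. The elementary surrounding material you add --- completeness of $\wt\Sigma_x$ from properness plus extensibility, and uniqueness of antipodes from unique extensibility --- is correct and a useful orientation, and you are appropriately candid that the genuine work lies in the cited references.
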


We use this result to establish the following.

\begin{prop}\label{prop:extensible_discrete}
Let $X$ be a proper \cat space with uniquely extensible geodesics. Then any totally disconnected closed subgroup
$D<\Isom(X)$ is discrete.
\end{prop}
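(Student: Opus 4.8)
The plan is to prove the stronger statement that every compact open subgroup of $D$ is finite; since $D$, being a closed subgroup of the locally compact group $\Isom(X)$ (here $X$ is proper), is totally disconnected locally compact, van~Dantzig's theorem (see~\cite[III \S\,4 No~6]{BourbakiTGI}) furnishes such subgroups, and their finiteness forces $D$ to be discrete. We may assume $X$ is not a single point.

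So fix a compact open subgroup $U<D$. First I would produce a $U$-fixed point $x\in X$: a $U$-orbit is compact, hence bounded, and its circumcentre is $U$-invariant. Then $U$ acts by isometries on the space of directions $\Sigma_x$ with the Alexandrov angle metric, and the point is to analyse this action. By Berestovskii's theorem (Theorem~\ref{thm:sphere}), $\Sigma_x$ is isometric to a Euclidean sphere; since $X$ is proper it is in fact a \emph{finite-dimensional} sphere $S^{n-1}$, because the germ map $S(x,1)\to\Sigma_x$ is surjective by extensibility and continuous by upper semicontinuity of the Alexandrov angle, hence — being a continuous bijection from a compact space to a Hausdorff one — a homeomorphism. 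Consequently $\Isom(\Sigma_x)\cong\mathrm{O}(n)$ is a compact Lie group, and transporting the $U$-action across the homeomorphism $\Sigma_x\cong S(x,1)$ shows that the resulting homomorphism $\rho\colon U\to\Isom(\Sigma_x)$ is continuous, the topology on $\Isom(X)$ being that of uniform convergence on compacta. Granting that $\rho$ is injective, $\rho(U)$ is a compact — continuous image of $U$ — and totally disconnected — homeomorphic to $U$ — subgroup of the Lie group $\mathrm{O}(n)$, hence finite: Lie groups have no small subgroups, whereas an infinite profinite group has nontrivial arbitrarily small open subgroups. Thus $U$ is finite, as desired.

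The crux is the injectivity of $\rho$, and this is exactly where the uniquely-extensible-geodesics hypothesis enters. The key sublemma is that in such a space a geodesic ray issuing from $x$ is determined by its germ at $x$: two rays sharing an initial segment extend, uniquely, to bi-infinite lines; those lines coincide by uniqueness of extension of that common segment; and a geodesic line carries only two rays from each of its points, so the two rays agree. Granting this, suppose $g\in U$ fixes $x$ and acts trivially on $\Sigma_x$. Then for any $y\neq x$ the segments $[x,y]$ and $g[x,y]=[x,gy]$ have the same germ at $x$, hence extend to a common ray $r$ from $x$; since $d(x,y)=d(x,gy)$, both $y$ and $gy$ are the point of $r$ at distance $d(x,y)$ from $x$, so $gy=y$ and $g=\mathrm{id}$. (The same computation shows the whole stabiliser of $x$ acts faithfully on $\Sigma_x$.) I expect the verification of this sublemma — being careful that "germ" means agreement on \emph{some}, not a prescribed, initial segment — to be the only delicate point; the rest is assembly of standard facts.
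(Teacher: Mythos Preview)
Your proof is correct and follows essentially the same route as the paper's: take a compact open subgroup via van~Dantzig, fix a point $x$, use Berestovski\u\i\ to identify $\Sigma_x$ with a Euclidean sphere, and observe that the action on $\Sigma_x$ is faithful by unique extensibility (the paper phrases this last step as the visual map $S(x,r)\to\Sigma_x$ being bijective, which is your ``sublemma''). You are somewhat more explicit than the paper about two points it leaves implicit---that $\Sigma_x$ is a \emph{finite}-dimensional sphere (via the homeomorphism with the compact set $S(x,1)$) and that the representation $U\to\Isom(\Sigma_x)$ is continuous---but the argument is the same.
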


\begin{proof}
There is some compact open subgroup $Q<D$, see~\cite[III \S\,4 No~6]{BourbakiTGI}. Let $x$ be a $Q$-fixed point.
The isometry group of $\Sigma_x$ is a compact Lie group by Theorem~\ref{thm:sphere} and thus the image of the
profinite group $Q$ in it is finite. Let thus $K<Q$ be the kernel of this representation, which is open. Denote
by $S(x,r)$ the $r$-sphere around $x$. The $Q$-equivariant ``visual'' map $S(x,r)\to \Sigma_x$ is a bijection by
unique extensibility. It follows that $K$ is trivial.
\end{proof}

We are now ready for:
\begin{proof}[End of proof of Theorem~\ref{thm:GeodesicallyComplete}]
Since the action is cocompact, it is minimal by Lemma~\ref{lem:cocompact:minimal}. The fact that extensibility
of geodesics is inherited by direct factors of the space follows from the characterisation of geodesics in
products, see~\cite[I.5.3(3)]{Bridson-Haefliger}. Each factor $X_i$ is thus a symmetric space in view of
Theorem~\ref{thm:algebraic}\eqref{pt:algebraic:homo}. By virtue of
Corollary~\ref{cor:BasicTotDisc}\eqref{pt:BasicTotDisc:completess}, the totally disconnected factors $D_j$ act
by semi-simple isometries.

Assume now that $X$ has uniquely extensible geodesics. For the same reason as before, this property is inherited
by each direct factor of the space. Thus each $D_j$ is discrete by Proposition~\ref{prop:extensible_discrete}.
\end{proof}

\begin{thm}\label{thm:nondiscrete:nonbranching}
Let $X$ be a proper irreducible \cat space with uniquely extensible geodesics. If $X$ admits a non-discrete
group of isometries with full limit set but no global fixed point at infinity, then $X$ is a symmetric space.
\end{thm}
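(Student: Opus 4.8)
The plan is to show that $\Isom(X)$ is, up to finite index and a finite central isogeny, the group of $k$-points of an absolutely almost simple simply connected algebraic group over a local field $k$, and then to invoke the algebraic characterisation of Theorem~\ref{thm:algebraic}.

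First come the reductions. If $X=\RR$ then $X$ is already a (flat) symmetric space and there is nothing to prove, so assume $X\neq\RR$; as $X$ is irreducible it is then not a point, and since it has uniquely extensible geodesics it is geodesically complete and non-compact (a compact geodesically complete \cat space being a point). Being \cat it is uniquely geodesic, so $\bd X$ is finite-dimensional by Proposition~\ref{prop:dimension:unique:geod}. Let $G<\Isom(X)$ be the given non-discrete subgroup. Since $G$ has full limit set, so does $\Isom(X)$, and then $\Isom(X)$ acts minimally on the geodesically complete space $X$ by Lemma~\ref{lem:cocompact:minimal}; it also has no global fixed point in $\bd X$, such a point being fixed by $G$ in particular. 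Corollary~\ref{cor:Isom(irreducible)} therefore applies and gives that $\Isom(X)$ is \emph{either} totally disconnected \emph{or} an almost connected simple Lie group with trivial centre. The first alternative is impossible: Proposition~\ref{prop:extensible_discrete}, applied to the totally disconnected closed subgroup $\Isom(X)$ of itself, would force $\Isom(X)$, hence also $G$, to be discrete, contradicting the hypothesis on $G$. Thus $\Isom(X)$ is an almost connected simple Lie group with trivial centre, and $L:=\Isom(X)^\circ$ is a non-trivial connected simple Lie group with trivial centre. Being a non-trivial normal subgroup of $\Isom(X)$, the group $L$ still acts minimally on $X$ and without global fixed point in $\bd X$ by Theorem~\ref{thm:geometric_simplicity}; it has full limit set, being a finite-index (indeed normal) subgroup of $\Isom(X)$; and it is non-compact, since a compact group would have bounded orbits and hence empty limit set on the non-compact space $X$.

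Next I realise $L$ algebraically. By the classification of connected simple Lie groups there is a local field $k\in\{\RR,\CC\}$ and an absolutely almost simple simply connected $k$-group $\GG$ with $\GG(k)$ connected, together with a continuous surjective homomorphism $\GG(k)\twoheadrightarrow L$ with finite central kernel: one takes $k=\RR$ and $\GG$ the simply connected cover of the adjoint $\RR$-group with Lie algebra $\mathrm{Lie}(L)$ when $\mathrm{Lie}(L)$ is absolutely simple, and $k=\CC$ with $\GG$ the corresponding simply connected $\CC$-group when $\mathrm{Lie}(L)$ is the realification of a complex simple Lie algebra. Composing with $L\hookrightarrow\Isom(X)$ yields a continuous isometric action of $\GG(k)$ on $X$ whose orbits coincide with those of $L$ (the central kernel acts trivially on all of $X$); in particular this action has full limit set and is minimal, $X$ is non-compact and $\bd X$ is finite-dimensional. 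Hence Theorem~\ref{thm:algebraic} applies under hypothesis~(b), and since $X$ is geodesically complete, part~\eqref{pt:algebraic:homo} gives that $X$ is isometric to $X_{\mathrm{model}}$. As $k$ is Archimedean, $X_{\mathrm{model}}$ is the Riemannian symmetric space of $\GG(k)$, which is of non-compact type because $\GG(k)$ is a non-compact semisimple Lie group. Therefore $X$ is a symmetric space.

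I expect the only delicate point to be the algebraic reconstruction in the last paragraph: one must deal with the real/complex dichotomy for simple Lie groups in order to land precisely on a group $\GG(k)$ with $\GG$ \emph{absolutely} almost simple over a local field (using that $\GG(k)$ is connected for simply connected $\GG$), and one must check that replacing $\Isom(X)$ successively by a finite-index subgroup and then by a finite central extension does not destroy the two hypotheses — full limit set and minimality — required for Theorem~\ref{thm:algebraic}(b). Both are unproblematic here: these finiteness operations leave orbits unchanged up to bounded Hausdorff distance, and in the geodesically complete setting minimality for the relevant subgroups is in any case supplied by Lemma~\ref{lem:cocompact:minimal} together with Theorem~\ref{thm:geometric_simplicity}.
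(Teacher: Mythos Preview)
Your proof is correct and follows essentially the same route as the paper's: establish finite-dimensionality of $\bd X$ via Proposition~\ref{prop:dimension:unique:geod}, use Corollary~\ref{cor:Isom(irreducible)} (equivalently Theorem~\ref{thm:Decomposition}) together with Proposition~\ref{prop:extensible_discrete} to rule out the totally disconnected alternative and conclude that $\Isom(X)$ is an almost connected simple Lie group, and then invoke Theorem~\ref{thm:algebraic}. You are more explicit than the paper about the passage from the simple Lie group to an absolutely almost simple simply connected $\GG(k)$ over $k\in\{\RR,\CC\}$ and about verifying hypothesis~(b), which the paper elides with the single sentence ``We conclude by Theorem~\ref{thm:algebraic}.''
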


\noindent The condition on fixed points at infinity is necessary in view of E.~Heintze's examples~\cite{Heintze74}
of negatively curved homogeneous manifolds which are not symmetric spaces. In fact these spaces consist of
certain simply connected soluble Lie groups endowed with a left-invariant negatively curved Riemannian metric.

\begin{prop}\label{prop:dimension:unique:geod}
Let $X$ be a proper \cat space with uniquely extensible geodesics. Then $\bd X$ has finite dimension.
\end{prop}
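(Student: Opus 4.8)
The plan is to reduce to the general principle, recalled in Section~\ref{sec:notation} (\cite[Proposition~2.1]{Caprace-Lytchak}), that a \emph{finite-dimensional} proper \cat space has a finite-dimensional Tits boundary; the content is thus to check that $X$ itself is finite-dimensional. Fix $x\in X$. By Theorem~\ref{thm:sphere}, the space of directions $\Sigma_x=\wt\Sigma_x$ is isometric to a Euclidean sphere, and I would first note that this sphere is \emph{finite}-dimensional. Indeed, $X$ being proper, the metric sphere $S(x,1)$ is compact; the map $S(x,1)\to\Sigma_x$ assigning to $z$ the initial direction of the geodesic $[x,z]$ is surjective (every geodesic germ at $x$ extends, by geodesic completeness, to a segment of length $1$ meeting $S(x,1)$) and continuous, since $d_{\Sigma_x}\big(\dot{[x,z]}(0),\dot{[x,z']}(0)\big)=\aangle x z{z'}\le\cangle x z{z'}\to 0$ as $z'\to z$; hence $\Sigma_x$ is compact, and a compact Euclidean sphere is a round sphere $S^{n-1}$ with $n<\infty$.

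Next I would upgrade this to: $X$ is a topological manifold of dimension $n$. Being \cat, $X$ is uniquely geodesic, and together with unique extensibility this yields that two geodesics issuing from $x$ with the same initial direction coincide; consequently, for each $v\in\Sigma_x$ and each $t\ge 0$ there is exactly one point at distance $t$ from $x$ in the direction $v$. It follows that, for small $\eps>0$, the radial map sending $w\in\bar B(x,\eps)\setminus\{x\}$ to $\big(\text{direction of }[x,w],\,d(x,w)\big)$ and $x$ to the cone point is a bijection from $\bar B(x,\eps)$ onto the truncated Euclidean cone over $\Sigma_x=S^{n-1}$. This map is continuous — geodesics from $x$, and their unique extensions, depend continuously on the endpoint, by the angle estimate above and the continuous dependence of geodesics on endpoints in \cat spaces — so, being a continuous bijection from the compact space $\bar B(x,\eps)$ onto a metrizable space, it is a homeomorphism; and the truncated Euclidean cone over $S^{n-1}$ is a closed $n$-ball. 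Thus every point of $X$ has a neighbourhood homeomorphic to $\RR^n$, and since $X$ is connected, invariance of domain forces $n$ to be independent of the base point. In particular $X$ is an $n$-manifold and has finite (geometric) dimension.

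Feeding this into \cite[Proposition~2.1]{Caprace-Lytchak} — equivalently, into the argument recalled in Section~\ref{sec:notation}, which embeds the Tits boundary $\bd X$ into a finite-dimensional metric space, namely an ultraproduct of metric spheres $S(x,R_i)$ with $R_i\to\infty$ — yields that $\bd X$ is finite-dimensional, as desired. I expect the only delicate point to be the step ``each $\Sigma_x$ finite-dimensional $\Rightarrow$ $X$ finite-dimensional'': the dimensions $\dim\Sigma_x$ must be shown bounded (\emph{a priori} they need not be, $X$ being merely proper), and this is precisely what the local-Euclidean picture above supplies via invariance of domain; checking continuity of the cone map is the main routine verification there, granted Theorem~\ref{thm:sphere} and the standard behaviour of geodesics in \cat spaces.
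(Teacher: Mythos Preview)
Your argument is correct, but the paper takes a considerably shorter route that bypasses the manifold structure of $X$ entirely. Rather than first proving that $X$ is locally Euclidean and then invoking the general fact from~\cite{Caprace-Lytchak}, the paper applies the visual map directly \emph{at infinity}: the assignment $\bd X\to\Sigma_x$ sending $\xi$ to the germ at $x$ of the geodesic ray $[x,\xi)$ is $1$-Lipschitz for the Tits metric (since $\aangle x\xi\eta\le\tangle\xi\eta$ by definition of the Tits angle) and is injective by exactly the same unique-extensibility reasoning that underlies your radial bijection. Hence every Tits-compact subset of $\bd X$ maps injectively and continuously into the finite-dimensional sphere $\Sigma_x$, and Kleiner's characterisation of dimension via the topological dimension of compact subsets (Theorem~A in~\cite{Kleiner}) concludes immediately. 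Your detour does buy something extra --- namely that $X$ is a topological manifold --- but for the proposition at hand the direct boundary-to-link map renders both the invariance-of-domain step and the ultraproduct machinery superfluous. Note also that the paper explicitly flags the remark you invoke from Section~\ref{sec:notation} as ``never used below'', so your route, while legitimate, leans on an external ingredient the authors deliberately set aside.
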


\begin{proof}
Let $x\in X$ and recall that by Berestovskii's result quoted in Theorem~\ref{thm:sphere} above, $\Sigma_x$ is
isometric to a Euclidean sphere. By definition of the Tits angle, the ``visual'' map $\bd X \to \Sigma_x$
associating to a geodesic ray its germ at $x$
is Tits-continuous (in fact, $1$-Lipschitz). It is furthermore injective (actually, bijective) by unique extensibility.
Therefore, the topological dimension of any \emph{compact} subset of $\bd X$ is bounded by the dimension of the sphere
$\Sigma_x$. The claim follows now from Kleiner's characterisation of the dimension of spaces with curvature bounded
above in terms of the topological dimension of compact subsets (Theorem~A in~\cite{Kleiner}).
\end{proof}

\begin{proof}[Proof of Theorem~\ref{thm:nondiscrete:nonbranching}]
By Lemma~\ref{lem:cocompact:minimal}, the action of $G:=\Isom(X)$ is minimal. In view of
Proposition~\ref{prop:dimension:unique:geod}, the boundary $\bd X$ is finite-dimensional.
Thus we can apply Theorem~\ref{thm:Decomposition} and Addendum~\ref{addendum}.
Since $X$ is irreducible and non-discrete, Proposition~\ref{prop:extensible_discrete} implies that $G$ is an almost
connected simple Lie group (unless $X=\RR$, in which case $X$ is indeed a symmetric space).
We conclude by Theorem~\ref{thm:algebraic}.
\end{proof}

\subsection{No open stabiliser at infinity}\label{sec:NoOpenStabiliser}
The following statement sums up some of the preceding considerations:

\begin{cor}\label{cor:NoOpenStabiliser}
Let $X$ be a proper geodesically complete \cat space without Euclidean factor such that some closed subgroup $G
< \Isom(X)$ acts cocompactly. Suppose that no open subgroup of $G$ fixes a point at infinity. Then we have the
following:
\begin{enumerate}
\item $X$ admits a canonical equivariant splitting
$$X \cong\ X_1\times \cdots \times X_p \times  Y_1\times \cdots \times Y_q$$
where each $X_i$ is a symmetric space and each $Y_j$ possesses a $G$-equivariant locally finite decomposition
into compact convex cells.

\item $G$ possesses hyperbolic elements.

\item Every compact subgroup of $G$ is contained in a maximal one; the maximal compact subgroups fall into
finitely many conjugacy classes.

\item $\QZ(G) = 1$; in particular $G$ has no non-trivial discrete normal subgroup.

\item $\soc(G^*)$ is a direct product of $p+q$ non-discrete characteristically simple groups.
\end{enumerate}
\end{cor}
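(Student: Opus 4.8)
The plan is to harvest the structural results established above, feeding in the hypothesis that no open subgroup of $G$ fixes a point at infinity (call it $(\star)$) at the decisive places, and to treat the symmetric-space factors and the totally disconnected factors by separate means. I begin with the routine reductions: since $G$ acts cocompactly, $\bd X$ is finite-dimensional and the $G$-action is minimal (Lemma~\ref{lem:cocompact:minimal}); since $G$ is an open subgroup of itself, $(\star)$ shows that $G$ — and hence also $\Isom(X)\supseteq G$ — has no fixed point at infinity. We may assume $X\neq\{\mathrm{pt}\}$, so $X$ is non-compact. Now Theorem~\ref{thm:GeodesicallyComplete} applies to $X$; the absence of a Euclidean factor forces the $\RR^n$ term to vanish, so $X\cong M\times Y$ with $M$ a symmetric space of non-compact type and $\Isom(Y)$ totally disconnected. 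As $M$ and $Y$ have isometry groups of disjoint nature, $\Isom(X)=\Isom(M)\times\Isom(Y)$; since $\Isom(X)$ inherits minimality from $G$, the factor $\Isom(Y)$ acts minimally on $Y$, which being a direct factor of $X$ is again proper and geodesically complete. Refining $M$ and $Y$ into their de Rham factors — classical de Rham for $M$, Theorem~\ref{thm:deRham} for $Y$ — yields irreducible factors $X_1,\dots,X_p$ (symmetric) and $Y_1,\dots,Y_q$. To obtain (i) it remains to see the cells are compact and equivariant: applied to the factor $Y_j$, Theorem~\ref{thm:smooth} (legitimate, since $\Isom(Y_j)$ is totally disconnected and acts minimally, continuously and properly on the geodesically complete proper space $Y_j$) gives that pointwise stabilisers of bounded sets are open, and no open subgroup of $\Isom(Y_j)$ fixes a point of $\bd Y_j$ — for otherwise its pullback along the projection onto $\Isom(Y_j)$ from a suitable finite-index open subgroup of $G$ would be an open subgroup of $G$ fixing the corresponding point of $\bd X$, contradicting $(\star)$. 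Thus Corollary~\ref{cor:CellularDecomposition} supplies the canonical, locally finite, equivariant decomposition of $Y_j$ into compact convex cells. For (ii): $G$ is locally compact and acts continuously, properly and cocompactly on the geodesically complete proper space $X\neq\{\mathrm{pt}\}$ with $(\bd X)^G=\varnothing$, so $G$ contains hyperbolic elements by Corollary~\ref{cor:exist:nontorsion}.

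For (iii)–(v) I would first establish two auxiliary facts. (a)~$G$ has trivial amenable radical $R$: writing $G^*$ for the canonical finite-index open subgroup of $G$ preserving each irreducible factor, the closure $\overline{G_i}$ of the image of $G^*$ in $\Isom(X_i)$ acts cocompactly, hence minimally (Lemma~\ref{lem:cocompact:minimal}), and — by the same pullback argument — without fixed point at infinity on the irreducible space $X_i$, so it has trivial amenable radical (Theorem~\ref{thm:geometric_simplicity}); consequently the closure of the image of $R\cap G^*$ in $\overline{G_i}$ is a closed amenable normal subgroup there, hence trivial, so $R\cap G^*=\{1\}$ by faithfulness of $G^*\hookrightarrow\prod_i\Isom(X_i)$, whence $R$ is finite, hence compact with $G$-invariant non-empty fixed-point set, hence trivial by minimality. (b)~By Theorem~\ref{thm:folklore}, $G$ thus has a finite-index open normal subgroup $G^\dagger=L\times D$ with $L$ connected semi-simple with trivial centre and $D$ totally disconnected; in particular $G^\circ=L$ is centre-free. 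Now (iv): any $g\in\QZ(G)$ has open centraliser, hence centralises $G^\circ=L$, so $\QZ(G)\subseteq\centra_G(L)$, which is totally disconnected since its identity component lies in $L\cap\centra_G(L)=Z(L)=\{1\}$. The totally disconnected group $D$ acts on $Y$ cocompactly and, by the pullback argument, with no open subgroup fixing a point at infinity; descending to the irreducible factors $Y_j$ and invoking Theorem~\ref{thm:TotDiscIrreducible}(ii) on each gives $\QZ(D)=\{1\}$, and together with $\QZ(L)=Z(L)=\{1\}$ this yields $\QZ(G^\dagger)=\QZ(L)\times\QZ(D)=\{1\}$; hence $\QZ(G)\cap G^\dagger\subseteq\QZ(G^\dagger)=\{1\}$, so the characteristic subgroup $\QZ(G)$ is finite, hence trivial by minimality as in (a). In particular every discrete normal subgroup, having finite $G$-conjugacy class on each element, lies in $\QZ(G)=\{1\}$. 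For (iii), the maximal compact subgroups of $G$ are exactly the maximal point-stabilisers (every compact subgroup fixes a point; point-stabilisers are compact since the action is proper): on the symmetric-space factors point-stabilisers are already maximal compact and mutually conjugate (Cartan–Iwasawa–Malcev), on the totally disconnected factors the compact cell decomposition of (i) makes the argument of Theorem~\ref{thm:TotDiscIrreducible}(i) go through verbatim, and cocompactness then bounds the number of conjugacy classes. Finally (v): $G$ acts minimally, cocompactly and without fixed point at infinity on $X$, which has no Euclidean factor, and $\QZ(G)=\{1\}$ by (iv), so Proposition~\ref{prop:socle} gives that $\soc(G^*)$ is a direct product of $r$ non-trivial characteristically simple groups, with $r$ the number of irreducible factors of $X$, i.e.\ $r=p+q$; each factor is a non-trivial normal subgroup of $G^*$, and were it discrete it would lie in $\QZ(G)=\{1\}$ (finitely many $G$-conjugates, so each element has open centraliser), a contradiction — so all $p+q$ factors are non-discrete.

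The main obstacle is the bookkeeping concentrated in (iii) and (iv). The structural theorems being invoked (Theorem~\ref{thm:TotDiscIrreducible}, Proposition~\ref{prop:socle}, and the amenable-radical statement behind Corollary~\ref{cor:NoEuclideanFactor}) are phrased for full isometry groups or for irreducible spaces, whereas here $G$ is merely a cocompact subgroup of the isometry group of a reducible space that genuinely mixes a connected Lie part with a totally disconnected part. One therefore has to descend repeatedly to $G^*$ (and to the totally disconnected part $D$ of $G^\dagger$), pass to the closures of their projections onto the irreducible factors $X_i$ and $Y_j$, and verify at each stage that cocompactness and — crucially — the property ``no open subgroup fixes a point at infinity'' are inherited; this inheritance, proved each time by pulling back an offending open subgroup along the relevant projection and using $(\star)$, is the one substantive new ingredient and the glue that lets all the earlier results combine.
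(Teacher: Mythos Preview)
Your proposal is correct and follows the same route as the paper: parts (i), (ii), (v) are derived exactly as the paper does (from Theorem~\ref{thm:GeodesicallyComplete} together with Corollary~\ref{cor:CellularDecomposition}, from Corollary~\ref{cor:exist:nontorsion}, and from Proposition~\ref{prop:socle} respectively), and parts (iii)--(iv) are reduced to Theorem~\ref{thm:TotDiscIrreducible} on the totally disconnected factors plus classical facts on the symmetric-space factors. The paper's own proof is a terse list of pointers; you have supplied the bookkeeping it suppresses---in particular the detour through $G^\dagger=L\times D$ via Theorem~\ref{thm:folklore} and the explicit ``pullback'' verification that cocompactness and the hypothesis $(\star)$ descend to the closures of the projections of $G^*$ onto the irreducible factors---which is exactly what is needed to make the references to Theorem~\ref{thm:TotDiscIrreducible} and Proposition~\ref{prop:socle} legitimate for a subgroup $G$ that is neither totally disconnected nor the full isometry group.
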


\begin{proof}
(i) Follows from Theorem~\ref{thm:GeodesicallyComplete} and Corollary~\ref{cor:CellularDecomposition}.

\smallskip \noindent (ii)  Clear from Corollary~\ref{cor:exist:nontorsion}.

\smallskip \noindent (iii) and (iv) Immediate from (i) and Theorem~\ref{thm:TotDiscIrreducible}(i) and~(ii).

\smallskip \noindent (v) Follows from (i), (iv) and Proposition~\ref{prop:socle}.
\end{proof}


\subsection{Cocompact stabilisers at infinity}
We undertake the proof of Theorem~\ref{thm:cocompact:stabilisers} which describes isometrically any
geodesically complete proper \cat space such that the stabiliser of every point at infinity
acts cocompactly.

\begin{remark}
(i)~The formulation of Theorem~\ref{thm:cocompact:stabilisers} allows for symmetric spaces of Euclidean type.
(ii)~A \textbf{Bass--Serre tree}\index{Bass--Serre tree} is a tree admitting an edge-transitive automorphism
group; in particular, it is regular or bi-regular (the regular case being a special case of Euclidean
buildings).
\end{remark}

\begin{lem}\label{lem:antipode}
Let $X$ be a proper \cat space such that the stabiliser of every point at infinity acts cocompactly on $X$. For
any $\xi\in\bd X$, the set of $\eta\in\bd X$ with $\tangle\xi\eta =\pi$ is contained in a single orbit under
$\Isom(X)$.
\end{lem}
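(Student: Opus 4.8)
The plan is to reduce the statement to Proposition~\ref{prop:fixed:cocompact}. Applied to the cocompactly acting closed subgroup $\Isom(X)_\xi$, that proposition shows that $\xi\opp$ is non-empty and consists of a single $\Isom(X)_\xi$-orbit; \emph{a fortiori} $\xi\opp$ lies in a single $\Isom(X)$-orbit $\mathcal{O}$, and $\mathcal{O}$ does not depend on any auxiliary choice nor on $\eta$. Hence it suffices to prove that for every $\eta\in\bd X$ with $\tangle\xi\eta=\pi$ there is some $h\in\Isom(X)$ with $h\eta\in\xi\opp$; then $\eta=h^{-1}(h\eta)\in\mathcal{O}$.

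To construct $h$, I would fix a base-point $p_0$, take geodesic rays $c_\xi,c_\eta\colon[0,\infty)\to X$ from $p_0$ to $\xi$ and to $\eta$, and let $m_t$ be the midpoint of $[c_\xi(t),c_\eta(t)]$. This is where the hypothesis $\tangle\xi\eta=\pi$ enters: by~\cite[II.9.8]{Bridson-Haefliger} one has $d(c_\xi(t),c_\eta(t))/t\to 2\sin(\tangle\xi\eta/2)=2$, so a \cat comparison with a Euclidean triangle of side-lengths $t,t,d(c_\xi(t),c_\eta(t))$ gives $d(p_0,m_t)=o(t)$, while $d(m_t,c_\xi(t))=d(m_t,c_\eta(t))=\tfrac12 d(c_\xi(t),c_\eta(t))$ grows linearly in $t$. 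Using that $\Isom(X)_\xi$ and $\Isom(X)_\eta$ both act cocompactly, I would pick $g_t\in\Isom(X)_\xi$ and $g'_t\in\Isom(X)_\eta$ moving $m_t$ into a fixed compact set $K\subseteq X$, and pass to a subsequence $t=t_n\to\infty$ along which $g_tm_t\to q$ and $g'_tm_t\to q'$ in $X$; along which $g_tc_\xi(t)$, $g_tc_\eta(t)$ and $g'_tc_\eta(t)$ converge in $\bd X$ (they leave every bounded set, being at distance $\tfrac12 d(c_\xi(t),c_\eta(t))$ from the bounded points $g_tm_t$, $g'_tm_t$); and along which $h_t:=g_t(g'_t)^{-1}$ converges in $\Isom(X)$ to some $h$. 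This last convergence is the crux and comes for free: $h_t$ maps $g'_tm_t\in K$ onto $g_tm_t\in K$, hence stays in the compact set $\{h\in\Isom(X):hK\cap K\neq\varnothing\}$ by properness of the $\Isom(X)$-action on $X$.

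Two computations then pin down the limits. First, $g_t\cdot[c_\xi(t),c_\eta(t)]$ is a genuine geodesic segment passing through $g_tm_t\to q$ with both endpoints escaping to infinity, so it converges to a bi-infinite geodesic line through $q$ with endpoints $\lim g_tc_\xi(t)$ and $\eta^\circ:=\lim g_tc_\eta(t)$; in particular these two boundary points are opposite. Second, normalising the Busemann function by $b_\xi(p_0)=0$, from $g_tm_t\in K$ and $|b_\xi(m_t)|\le d(p_0,m_t)=o(t)$ one gets that the Busemann character satisfies $\beta_\xi(g_t)=o(t)$, whence $b_\xi(g_tc_\xi(t))=-t+o(t)$; since the segment $[g_tm_t,g_tc_\xi(t)]$ has length $\tfrac12 d(c_\xi(t),c_\eta(t))=t+o(t)$ and $b_\xi$ is convex and $1$-Lipschitz, the limiting ray from $q$ towards $\lim g_tc_\xi(t)$ carries $b_\xi$ with slope $-1$, forcing $\lim g_tc_\xi(t)=\xi$. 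Hence $\eta^\circ\in\xi\opp$. The same Busemann estimate with $\Isom(X)_\eta$ and $b_\eta$ gives $\lim g'_tc_\eta(t)=\eta$. Passing to the limit in the identity $g_tc_\eta(t)=h_t\cdot\big(g'_tc_\eta(t)\big)$ and invoking the joint continuity of the $\Isom(X)$-action on $X\cup\bd X$ yields $\eta^\circ=h\eta$; so $h\eta=\eta^\circ\in\xi\opp$, as required.

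The step I expect to be the genuine obstacle is precisely the one just highlighted. The naive attempt --- to build a geodesic line directly from $\xi$ to $\eta$ --- breaks down because the isometries $g_t$ fixing $\xi$ that localise $m_t$ need not converge in $\Isom(X)$ (they may escape to infinity, for instance along translations of a flat), so one cannot transport $\eta$ onto an endpoint of the limit line. Introducing the second family $g'_t\in\Isom(X)_\eta$ and passing to the ratios $h_t=g_t(g'_t)^{-1}$, which are trapped in a compact part of $\Isom(X)$, repairs this. What remains is bookkeeping --- arranging the three species of limits (in $X$, in $\bd X$, and in $\Isom(X)$) to be extracted along one subsequence --- together with the two asymptotic estimates above, all of which are routine.
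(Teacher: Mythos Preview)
Your argument is correct, but it takes a longer and more explicit route than the paper. Both proofs begin with the same reduction via Proposition~\ref{prop:fixed:cocompact}: one must show that any $\eta$ with $\tangle\xi\eta=\pi$ has a $G$-translate opposite to $\xi$. The paper proceeds more economically. It chooses points $x_n\in X$ with $\aangle{x_n}\xi\eta\to\pi$ (directly from the definition of the Tits angle), translates them into a compact set by $g_n\in G_\xi$, and passes to a limit $x=\lim g_n x_n$, $\eta'=\lim g_n\eta$ in the c\^one topology; angle semi-continuity (as in the proof of~\cite[II.9.5(3)]{Bridson-Haefliger}) then gives $\aangle{x}{\xi}{\eta'}=\pi$, so $\eta'\in\xi\opp$. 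The second use of cocompactness is quite different from yours: since $G_\eta$ is cocompact one has $G=C\cdot G_\eta$ for some compact $C\subseteq G$, whence the $G$-orbit of $\eta$ is \emph{closed} in the c\^one topology, forcing $\eta'\in G.\eta$ immediately.

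Your construction of the explicit limit $h=\lim g_t(g'_t)^{-1}$ is a valid alternative and has the merit of producing the carrying isometry concretely; but it costs you the midpoint estimate $d(p_0,m_t)=o(t)$, the Busemann slope computation identifying $\lim g_t c_\xi(t)=\xi$, and the joint-continuity argument on $X\cup\bd X$. The paper sidesteps all of this by separating the two roles of cocompactness: $G_\xi$ produces the line, while $G_\eta$ (via closedness of orbits) identifies $\eta'$ as a translate of $\eta$ without ever naming the translating element.
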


\begin{proof}
Write $G=\Isom(X)$. In view of Proposition~\ref{prop:fixed:cocompact} applied to $G_\xi$, it suffices to prove
that the $G$-orbit of any such $\eta$ contains a point opposite to $\xi$. By definition of the Tits angle, there
is a sequence $\{x_n\}$ in $X$ such that $\aangle{x_n}\xi\eta$ tends to~$\pi$. Since $G_\xi$ acts cocompactly,
it contains a sequence $\{g_n\}$ such that, upon extracting, $g_n x_n$ converges to some $x\in X$ and $g_n \eta$
to some $\eta'\in\bd X$. The angle semi-continuity arguments given in the proof of Proposition~II.9.5(3)
in~\cite{Bridson-Haefliger} show that $\aangle x\xi{\eta'}=\pi$, recalling that all $g_n$ fix $\xi$. This means
that there is a geodesic $\sigma:\RR\to X$ through $x$ with $\sigma(-\infty)=\xi$ and $\sigma(\infty)=\eta'$. On
the other hand, since $G_\eta$ is cocompact in $G$, the $G$-orbit of $\eta$ is closed in the c\^one topology.
This means that there is $g\in G$ with $\eta'=g\eta$, as was to be shown.
\end{proof}

We shall need another form of angle rigidity (compare Proposition~\ref{prop:angle_rigidity}), this time for Tits
angles.\index{angle!rigidity}

\begin{prop}\label{prop:discrete:orbits}
Let $X$ be a geodesically complete proper \cat space, $G<\Isom(X)$ a closed totally disconnected subgroup and
$\xi\in\bd X$. If the stabiliser $G_\xi$ acts cocompactly on $X$, then the $G$-orbit of $\xi$ is discrete in the
Tits topology.
\end{prop}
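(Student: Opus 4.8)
The plan is to reduce, by a compactness argument exploiting the cocompactness of $G_\xi$, to the study of isometries close to the identity, and then to play Alexandrov angle rigidity (Proposition~\ref{prop:angle_rigidity}) against points escaping to $\xi$. As a preliminary: since $G_\xi$ is cocompact, so is $G$, whence the $G$-action is minimal (Lemma~\ref{lem:cocompact:minimal}); as $G$ is moreover a closed, hence locally compact, subgroup of $\Isom(X)$ acting properly on the proper, geodesically complete space $X$, Theorem~\ref{thm:smooth} applies and the pointwise stabiliser in $G$ of every bounded subset of $X$ is open. I also use that $G_\xi$ is closed.

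Suppose for a contradiction that $G\xi$ is not Tits-discrete. As $G$ is transitive on $G\xi$, the point $\xi$ is not Tits-isolated in its orbit, so there are $g_n\in G$ with $g_n\xi\neq\xi$ and $\tangle{\xi}{g_n\xi}\to 0$. Fix a geodesic ray $\rho\colon[0,\infty)\to X$ with $\rho(\infty)=\xi$, set $p_0:=\rho(0)$, and fix a compact $K\se X$ with $G_\xi K=X$. Choosing $k_n\in G_\xi$ with $k_ng_np_0\in K$ and replacing $g_n$ by $k_ng_n$ — which changes neither $g_n\xi\neq\xi$ nor $\tangle{\xi}{g_n\xi}\to 0$, because $k_n$ fixes $\xi$ and acts on $\bd X$ by a Tits-isometry — we may assume $g_np_0\in K$. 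By properness $\{g_n\}$ is relatively compact in $G$, so after extraction $g_n\to g\in G$; then $g_n\xi\to g\xi$ in the c\^one topology, while $\tangle{\xi}{g_n\xi}\to 0$ forces $g_n\xi\to\xi$ in the (coarser) c\^one topology, so $g\xi=\xi$. Replacing $g_n$ by $h_n:=g^{-1}g_n$ (and using that $g$ is a Tits-isometry fixing $\xi$), this reduces everything to: $h_n\to e$ in $G$, $h_n\xi\neq\xi$, and $\tangle{\xi}{h_n\xi}\to 0$.

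Now I use smoothness. For each $R$ the pointwise stabiliser of $\bar B(p_0,R)$ is open, so since $h_n\to e$ there are $R_n\to\infty$ with $h_n|_{\bar B(p_0,R_n)}=\id$; here $R_n<\infty$ since $h_n\neq\id$. The closed convex set $\Fix(h_n)$ contains $\rho([0,R_n])$ but no ray asymptotic to $\xi$ (such a ray would be fixed, forcing $h_n\xi=\xi$), so $\rho([0,\infty))\cap\Fix(h_n)=\rho([0,S_n])$ with $R_n\le S_n<\infty$; thus $q_n:=\rho(S_n)\to\xi$ and $h_nq_n=q_n$. The isometry $h_n$ is elliptic, so for $t>0$ the point $z_n:=\rho(S_n+t)$ has $h_nz_n\neq z_n$, and its projection $c_n:=c_{h_n,z_n}$ onto $\Fix(h_n)$ satisfies $d(z_n,c_n)\le d(z_n,q_n)=t$. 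By Proposition~\ref{prop:angle_rigidity} there is $\vareps>0$, independent of $n$, with $\aangle{c_n}{h_nz_n}{z_n}\ge\vareps$. Extending $[c_n,z_n]$ to a geodesic ray $r_n$ (geodesic completeness) and setting $\zeta_n:=r_n(\infty)$, the ray $h_nr_n$ extends $[c_n,h_nz_n]$ and ends at $h_n\zeta_n$ because $h_nc_n=c_n$; since Alexandrov angles depend only on germs,
\[
\tangle{\zeta_n}{h_n\zeta_n}\ \ge\ \aangle{c_n}{\zeta_n}{h_n\zeta_n}\ =\ \aangle{c_n}{z_n}{h_nz_n}\ \ge\ \vareps .
\]

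The step I expect to be the main obstacle is to show that $\zeta_n$ is $o(1)$-close to $\xi$ in the Tits metric; together with the Tits triangle inequality and $h_n$ being a Tits-isometry, this yields $\tangle{\zeta_n}{h_n\zeta_n}\le 2\,\tangle{\zeta_n}{\xi}+\tangle{\xi}{h_n\xi}\to 0$, contradicting the display. Concretely, one must control the foot $c_n$: for a suitable $t$, and using that $\rho$ leaves $\Fix(h_n)$ precisely at $q_n$, one should show that $c_n$ stays at bounded distance from $q_n$ and that the germ of $[c_n,z_n]$ at $c_n$ differs by $o(1)$ (in the angle at $c_n$) from the germ of the tail $\rho|_{[S_n,\infty)}$ reissued from $c_n$; equivalently, that $q_n$ is, up to that small error, the nearest-point projection of $z_n$ onto $\Fix(h_n)$. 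This is a statement about CAT$(0)$ projection onto a convex set whose relevant ``corner'' has receded to infinity, and is where geodesic completeness — already used for Theorem~\ref{thm:smooth} and for the ray extension above — enters essentially; Remark~\ref{rem:finite:isotropy} and the canonical locally finite partition of Proposition~\ref{prop:EquivariantPartition} ought to help in organising this estimate.
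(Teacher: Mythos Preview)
Your reduction to $h_n\to e$ with $h_n$ fixing larger and larger balls is exactly how the paper begins, but thereafter the arguments diverge and your proposed ending has a genuine gap that you yourself flag. The claim that $\tangle{\zeta_n}{\xi}\to 0$ is not established, and there is no evident mechanism forcing it: even granting $d(c_n,q_n)\le t$, the direction of $[c_n,z_n]$ at $c_n$ is not controlled relative to the direction towards $\xi$, and in a geodesically complete \cat space the extension of $[c_n,z_n]$ to a ray is not unique, so $r_n$ may well land at a definite Tits angle from $\xi$ independently of how far out $q_n$ lies. Your heuristic about a ``corner receding to infinity'' does not convert into a Tits-metric estimate, and neither Remark~\ref{rem:finite:isotropy} nor Proposition~\ref{prop:EquivariantPartition} supplies one. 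The appeal to Proposition~\ref{prop:angle_rigidity} is natural, but the $\vareps$ it produces is an \emph{Alexandrov} angle at the moving basepoint $c_n$; translating this into a lower bound on a Tits angle between points near $\xi$ is precisely the missing step.

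The paper avoids this difficulty entirely by using the cocompactness of $G_\xi$ a \emph{second} time, and it never invokes Proposition~\ref{prop:angle_rigidity}. After locating the branch point $q_n=\rho(S_n)$ where $h_n\rho$ leaves $\rho$, one chooses $a_n\in G_\xi$ with $x_n:=a_n q_n$ lying in a fixed open ball $B$ satisfying $G_\xi\cdot B=X$, and passes to the conjugates $k_n:=a_n h_n a_n^{-1}$. Each $k_n$ fixes $x_n\in B$, so $\{k_n\}$ is bounded and subconverges to some $k$; smoothness forces $k_n=k$ on $B$ for large $n$, while $\tangle{k_n\xi}{\xi}=\tangle{h_n\xi}{\xi}\to 0$ gives $k\xi=\xi$. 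Then $k$ fixes the entire ray from $x_n$ to $\xi$, hence $k_n$ fixes its germ at $x_n$ (which lies in $B$); conjugating back by $a_n^{-1}\in G_\xi$ shows $h_n$ fixes an initial segment of the ray from $q_n$ to $\xi$, i.e.\ of $\rho|_{[S_n,\infty)}$, contradicting the definition of $S_n$. Pulling the branch point back to a fixed compact region thus replaces your asymptotic Tits-angle estimate by a purely local statement where Theorem~\ref{thm:smooth} applies directly.
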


\begin{proof}
Suppose for a contradiction that there is a sequence $\{g_n\}$ such that $g_n\xi\neq\xi$ for all $n$ but
$\tangle{g_n \xi}\xi$ tends to zero. Since $G_\xi$ is cocompact, we can assume that $g_n$ converges in $G$;
since the Tits topology is finer than the c\^one topology for which the $G$-action is continuous, the limit of
$g_n$ must fix $\xi$ and we can therefore assume $g_n\to 1$. Let $B\se X$ be an open ball large enough so that
$G_\xi.B = X$. Since by Lemma~\ref{lem:cocompact:minimal} we can apply Theorem~\ref{thm:smooth}, there is no
loss of generality in assuming that each $g_n$ fixes $B$ pointwise.

Let $c:\RR_+\to X$ be a geodesic ray pointing towards $\xi$ with $c(0)\in B$. For each $n$ there is $r_n>0$ such
that $c$ and $g_n c$ branch at the point $c(r_n)$. In particular, $g_n$ fixes $c(r_n)$ but not $c(r_n+\vareps)$
no matter how small $\vareps>0$. We now choose $h_n\in G_\xi$ such that $x_n:=h_n c(r_n) \in B$ and notice that
the sequence $k_n:=h_n g_n h_n\inv$ is bounded since $k_n$ fixes $x_n$. We can therefore assume upon extracting
that it converges to some $k\in G$; in view of Theorem~\ref{thm:smooth}, we can further assume that all $k_n$
coincide with $k$ on $B$ and in particular $k$ fixes all $x_n$. Since $\tangle{k_n\xi}\xi = \tangle{g_n\xi}\xi$,
we also have $k\in G_\xi$. Considering any given $n$, it follows now that $k$ fixes the ray from $x_n$ to $\xi$.
Thus $k_n$ fixes an initial segment of this ray at $x_n$. This is equivalent to $g_n$ fixing an initial segment
at $c(r_n)$ of the ray from $c(r_n)$ to $\xi$, contrary to our construction.
\end{proof}

Here is a first indication that our spaces might resemble symmetric spaces or Euclidean buildings:

\begin{prop}\label{prop:spheres_at_infinity}
Let $X$ be a proper \cat space such that that the stabiliser of every point at infinity
acts cocompactly on $X$. Then any point at infinity is contained in an isometrically embedded
standard $n$-sphere in $\bd X$, where $n=\dim\bd X$.
\end{prop}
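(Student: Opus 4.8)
The plan is to induct on $n=\dim\bd X$, finding at each stage a point $\eta$ with $\tangle{\xi}{\eta}=\pi$ and then building up the sphere inside the set of geodesics through $\xi$ and $\eta$. For the base case $n=0$, any point at infinity is itself a $0$-sphere, so there is nothing to do (and $\bd X$ being a single point when $X$ is a point is the degenerate case). For the inductive step, fix $\xi\in\bd X$. Since $G:=\Isom(X)$ is cocompact and $X$ is proper, Proposition~\ref{prop:fixed:cocompact} applied to $G_\xi$ guarantees that $\xi$ has an opposite point: there is $\eta$ with a geodesic line from $\xi$ to $\eta$, so in particular $\tangle{\xi}{\eta}=\pi$. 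The first key step is then to pass to the union $Y$ of all geodesic lines joining $\xi$ to $\eta$. By~\cite[II.2.14]{Bridson-Haefliger} this set $Y$ is closed, convex and splits isometrically as $Y=\RR\times Y_0$, where the $\RR$-factor is the direction $\xi\eta$; moreover $G_\xi\cap G_\eta$ preserves $Y$ and this splitting. The boundary $\bd Y_0$ is a convex subset of $\bd X$ consisting of points at Tits-distance exactly $\pi/2$ from both $\xi$ and $\eta$, and $\bd Y$ is the spherical suspension of $\bd Y_0$ by the two poles $\xi,\eta$; so it suffices to find an isometrically embedded standard $(n-1)$-sphere in $\bd Y_0$ through some point, and suspend it.

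The second key step is to arrange that the stabiliser-cocompactness hypothesis is inherited by $Y_0$, so that the inductive hypothesis applies. Here I would argue that $H:=(G_\xi\cap G_\eta)$ acts cocompactly on $Y$: indeed $G_\xi$ acts cocompactly on $X$ by hypothesis, and within $G_\xi$, the argument of Proposition~\ref{prop:fixed:cocompact} shows $G_\xi$ acts transitively on $\xi\opp$, so $G_\xi\cap G_\eta$ has cocompact action on the convex set $Y$ of $\xi$-to-$\eta$ lines by a standard Arzel\`a--Ascoli/translation argument (as in the proof of that proposition). Since $H$ preserves the splitting $Y=\RR\times Y_0$ and acts cocompactly, its projection to $\Isom(Y_0)$ acts cocompactly on $Y_0$. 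One then needs that the stabiliser in this projected group of every point at infinity of $Y_0$ is cocompact on $Y_0$: given $\zeta\in\bd Y_0$, the corresponding point in $\bd Y$ (which lies at distance $\pi/2$ from the poles) has a cocompact stabiliser in $G$ by hypothesis; restricting to $Y$ and quotienting the $\RR$-factor, one recovers a cocompact stabiliser of $\zeta$ in $\Isom(Y_0)$. (Properness and geodesic completeness of $Y_0$ are clear, $Y_0$ being a closed convex factor of $X$; and $\dim\bd Y_0 = n-1$ because $\bd Y$ is the suspension of $\bd Y_0$ and sits inside $\bd X$, which has dimension $n$ — here one uses that the suspension of an $(n-1)$-sphere factor realizes the full dimension, together with Kleiner's dimension theory.) Then by induction $Y_0$ contains an isometrically embedded standard $(n-1)$-sphere through any prescribed point; suspending by $\xi,\eta$ yields the desired standard $n$-sphere in $\bd Y\subseteq\bd X$ through $\xi$.

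The main obstacle I expect is the bookkeeping around dimensions: one must be sure that $\dim\bd Y_0$ is exactly $n-1$ and not smaller, i.e. that passing to an opposite pair and stripping off the suspension directions drops the dimension by exactly one. If $\dim\bd Y_0<n-1$ the induction would not close. The way around this is to choose $\eta$ carefully — not merely opposite to $\xi$, but so that the flat $\RR$ spanned by $\xi\eta$ is a genuine rank-$1$ direction transverse to a top-dimensional part of $\bd X$; concretely, one should take $\xi$ to lie on a top-dimensional sphere (which exists by Proposition~\ref{prop:spheres_at_infinity} in lower dimension, or by a maximal-flat/apartment argument inside $\bd X$ using the finite dimensionality from~\cite{Kleiner}), and $\eta$ its antipode on that sphere, so that $\bd Y_0$ contains the equatorial $(n-1)$-sphere of that top sphere. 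Alternatively one sidesteps the issue entirely: simply produce a top-dimensional sphere in $\bd X$ directly (via a maximal flat argument or via Lemma~\ref{lem:antipode} together with the suspension structure) and observe every point at infinity lies on one after translating by $G_\xi$, which acts with full orbit structure on directions at the relevant scale. Either route reduces to controlling how the suspension decomposition interacts with Kleiner's notion of dimension, which is the delicate point; the rest is the routine convexity and Arzel\`a--Ascoli machinery already used repeatedly in the paper.
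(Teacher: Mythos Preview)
Your inductive scheme has a genuine gap at precisely the point you flag, and an additional one you do not. First, the inheritance of the hypothesis to $Y_0$ is not established: given $\zeta\in\bd Y_0$, you know that $G_\zeta$ acts cocompactly on $X$, but you need that the stabiliser of $\zeta$ \emph{inside $\Isom(Y_0)$} (or inside the projection of $H=G_\xi\cap G_\eta$) acts cocompactly on $Y_0$. There is no reason why $G_\zeta$ should preserve $Y$, and restricting to $H_\zeta$ loses cocompactness on $X$ in general, so the passage ``restricting to $Y$ and quotienting the $\RR$-factor, one recovers a cocompact stabiliser'' is unjustified. Second, the dimension issue is real and your proposed fix is circular: you cannot ``take $\xi$ to lie on a top-dimensional sphere'' by invoking the very proposition you are proving, and the alternative of ``translating by $G_\xi$'' does not by itself place an \emph{arbitrary} $\eta$ on a top sphere.

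The paper's proof is non-inductive and much shorter, turning on two external inputs you do not invoke. First, since $\Isom(X)$ is cocompact, Kleiner's Theorem~C guarantees outright that \emph{some} standard $n$-sphere $S$ sits isometrically in $\bd X$. Second, a lemma of Balser--Lytchak (Lemma~3.1 in~\cite{BalserLytchak_Centers}) says that for any $\eta\in\bd X$ there is $\xi\in S$ with $\tangle{\xi}{\eta}=\pi$. Now let $\vartheta\in S$ be the antipode of $\xi$ in $S$; by Lemma~\ref{lem:antipode} there is $g\in\Isom(X)$ with $g\vartheta=\eta$, and then $gS$ is a standard $n$-sphere through $\eta$. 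No induction, no passage to cross-sections, no dimension bookkeeping. Your ``alternative'' at the end gestures toward this idea but lacks the two specific ingredients (Kleiner for existence, Balser--Lytchak to align the sphere with $\eta$) that make it work.
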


\begin{proof}
Let $\eta\in\bd X$. There is some standard $n$-sphere $S$ isometrically embedded in $\bd X$ because $X$ is
cocompact (Theorem~C in~\cite{Kleiner}). By Lemma~3.1 in~\cite{BalserLytchak_Centers}, there is $\xi\in S$ with
$\tangle\xi\eta=\pi$. Let $\teta\in S$ be the antipode in $S$ of $\xi$. In view of Lemma~\ref{lem:antipode},
there is an isometry sending $\teta$ to $\eta$. The image of $S$ contains $\eta$.
\end{proof}

We need one more fact for Theorem~\ref{thm:cocompact:stabilisers}. The boundary of a \cat space
need not be complete, regardless of the geodesic completeness of the space itself;
however, this is the case in our situation in view of Proposition~\ref{prop:spheres_at_infinity}:

\begin{cor}\label{cor:bnd_geod_complete}
Let $X$ be a proper \cat space such that that the stabiliser of every point at infinity
acts cocompactly on $X$. Then $\bd X$ is geodesically complete.
\end{cor}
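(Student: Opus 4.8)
The plan is to prove geodesic completeness of the CAT(1) space $\bd X$ by extending geodesic germs one point at a time, feeding on the round spheres supplied by Proposition~\ref{prop:spheres_at_infinity}. Note first that the hypothesis forces $\Isom(X)$ to act cocompactly on $X$: if $\bd X$ is empty there is nothing to prove, and otherwise a point-stabiliser $\Isom(X)_\xi$ already acts cocompactly, hence so does $\Isom(X)$; thus $\bd X$ is finite-dimensional by~\cite[Theorem~C]{Kleiner}, say of dimension $n$, and it is a complete CAT(1) space. It suffices to show that for every $\xi\in\bd X$ and every geodesic germ $v$ at $\xi$ there is a germ $w$ at $\xi$ with $\angle_\xi(v,w)=\pi$; the concatenation of $v$ and $w$ is then a local geodesic through $\xi$, and iterating this (using completeness of $\bd X$ to take limits along an increasing exhaustion) extends an arbitrary geodesic segment of positive length to a bi-infinite local geodesic.

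So fix $\xi$ and a geodesic germ $v$ at $\xi$, represented by a Tits geodesic $[\xi,\beta]$ with $0<\tangle\xi\beta<\pi$. The heart of the matter is to produce an isometrically embedded round $n$-sphere $S\subseteq\bd X$ through $\xi$ to which $v$ is tangent; granting this, the antipode of $v$ inside $S$ is the desired $w$ (indeed $\Sigma_\xi(S)$ is an isometrically embedded round $(n-1)$-sphere inside the space of directions $\Sigma_\xi(\bd X)$, which has dimension at most $n-1$). Proposition~\ref{prop:spheres_at_infinity} only gives a round $n$-sphere through a prescribed \emph{point}; I see two interchangeable ways to upgrade this to a prescribed germ. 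The first is a limiting argument: approximate $v$ by the directions at $\xi$ towards points $\sigma(a_i)\to\xi$ along $[\xi,\beta]$, apply Proposition~\ref{prop:spheres_at_infinity} at each $\sigma(a_i)$, and then use cocompactness of $\Isom(X)$ on $X$ and an Arzel\`a--Ascoli extraction --- in the spirit of the proofs of Lemma~\ref{lem:antipode} and Proposition~\ref{prop:spheres_at_infinity} --- to obtain a limiting round $n$-sphere containing the whole segment $[\xi,\beta]$. The second is induction on $n$, the base case $n=0$ being vacuous since a $0$-dimensional space carries no non-degenerate geodesic: one observes that $\Sigma_\xi(\bd X)$ is again a complete CAT(1) space of dimension at most $n-1$ in which every point lies on a round sphere of full dimension, and applies the statement one dimension down to produce the antipode of $v$ directly.

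The main obstacle is precisely this upgrade --- from ``round spheres through points'', which is all Proposition~\ref{prop:spheres_at_infinity} supplies, to ``round spheres through germs'' (equivalently, a self-improving statement saying $\Sigma_\xi(\bd X)$ inherits the sphere-covering property). This is where finite-dimensionality of $\bd X$ is indispensable, so that all the iterated spaces of directions stay finite-dimensional CAT(1) spaces and the circumcentre and comparison-angle machinery of Section~\ref{sec:SmallRadius} remains available, and where the cocompact action on $X$ enters essentially through the limiting argument above.
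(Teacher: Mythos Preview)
Your proposal identifies the right ingredient --- the round $n$-spheres from Proposition~\ref{prop:spheres_at_infinity} --- but the central step, the ``upgrade from points to germs'', is left as an acknowledged obstacle rather than resolved. Your Method~2 is circular: the claim that every point of $\Sigma_\xi(\bd X)$ lies on a top-dimensional round sphere is exactly the statement ``spheres through germs at $\xi$'', which is what you are trying to prove. Your Method~1 is too vague to be a proof: you have round $n$-spheres $S_i$ through $\sigma(a_i)$, but no control forcing $S_i$ to be nearly tangent to $[\xi,\beta]$; a subsequential limit (even granting that one can take limits of isometrically embedded Tits-spheres, which needs its own argument since the Tits metric is only lower semi-continuous for the c\^one topology) has no reason to contain the segment. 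The invocation of ``cocompactness of $\Isom(X)$ on $X$'' does not help here, since the spheres live in $\bd X$ with the Tits metric, where there is no cocompact isometric action in general.

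The paper bypasses this difficulty entirely with a short homological argument. Suppose a Tits geodesic terminates at $\xi$; take a small convex Tits-ball $B$ around $\xi$. On one hand, $B\setminus\{\xi\}$ geodesically retracts onto any point $\eta$ on that terminating geodesic just short of $\xi$ (no geodesic from a point of $B\setminus\{\xi\}$ to $\eta$ can pass through $\xi$, since it could then be prolonged), so $H_n(B,B\setminus\{\xi\})=0$. On the other hand, the $n$-sphere through $\xi$ supplied by Proposition~\ref{prop:spheres_at_infinity} forces $H_n(B,B\setminus\{\xi\})\ne 0$. This uses only ``sphere through a point'', never ``sphere through a germ'', and so avoids your obstacle completely.
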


\begin{proof}
Suppose for a contradiction that some Tits-geodesic ends at $\xi\in\bd X$ and let $B\se \bd X$ be a small convex
Tits-neighbourhood of $\xi$; in particular, $B$ is contractible. Since by Proposition~\ref{prop:spheres_at_infinity} there is an $n$-sphere
through $\xi$ for $n=\dim\bd X$, the relative homology $\mathrm{H}_n(B, B\setminus \{\xi\})$ is non-trivial.
Our assumption implies that $B\setminus \{\xi\}$ is contractible by using the geodesic contraction to some point $\eta\in B\setminus \{\xi\}$
on the given geodesic ending at $\xi$. This implies $\mathrm{H}_n(B, B\setminus \{\xi\})=0$, a contradiction.
(This argument is adapted from~\cite[II.5.12]{Bridson-Haefliger}.)
\end{proof}

\begin{proof}[End of proof of Theorem~\ref{thm:cocompact:stabilisers}]
We shall use below that product decompositions preserve geodesic completeness (this follows \emph{e.g.}
from~\cite[I.5.3(3)]{Bridson-Haefliger}). We can reduce to the case where $X$ has no Euclidean factor. By
Lemma~\ref{lem:cocompact:minimal}, the group $G=\Isom(X)$ as well as all stabilisers of points at infinity act
minimally. In particular, Proposition~\ref{prop:StabIsMinimal} ensures that $G$ has no fixed point at infinity
and we can apply Theorem~\ref{thm:Decomposition} and Addendum~\ref{addendum}. Therefore, we can from now on
assume that $X$ is irreducible. If the identity component $G^\circ$ is non-trivial, then
Theorem~\ref{thm:GeodesicallyComplete} (see also Theorem~\ref{thm:algebraic}(iii)) ensures that $X$ is a
symmetric space, and we are done. We assume henceforth that $G$ is totally disconnected.

For any $\xi\in\bd X$, the collection $\mathrm{Ant}(\xi)=\{\eta:\tangle\xi\eta =\pi\}$ of
\textbf{antipodes}\index{antipode} is contained in a $G$-orbit by Lemma~\ref{lem:antipode} and hence is
Tits-discrete by Proposition~\ref{prop:discrete:orbits}. This discreteness and the geodesic completeness of the
boundary (Corollary~\ref{cor:bnd_geod_complete}) are the assumptions needed for Proposition~4.5
in~\cite{Lytchak_RigidityJoins}, which states that $\bd X$ is a building. Since $X$ is irreducible, $\bd X$ is
not a (non-trivial) spherical join, see Theorem~II.9.24 in~\cite{Bridson-Haefliger}. Thus, if this building has
non-zero dimension, we conclude from the main result of~\cite{Leeb} that $X$ is a Euclidean building of higher
rank.

If on the other hand $\bd X$ is zero-dimensional, then we claim that it is homogeneous under $G$. Indeed, we
know already that for any given $\xi\in\bd X$, the set $\mathrm{Ant}(\xi)$ lies in a single orbit. Since in the
present case $\mathrm{Ant}(\xi)$ is simply $\bd X \setminus\{\xi\}$, the claim follows from the fact that $G$
has no fixed point at infinity.

\medskip%
We have to show that $X$ is an edge-transitive tree. To this end, consider any point $x \in X$.
The isotropy group $G_x$ is open by Theorem~\ref{thm:smooth}. In particular, since $G$ acts transitively on $\bd
X$ and since $G_\xi$ is cocompact, it follows that $G_x$ has finitely many orbits in $\bd X$. Let $\rho_1,
\dots, \rho_k$ be geodesic rays emanating from $x$ and pointing towards boundary points which provide a complete
set of representatives for the $G_x$-orbits. For $r
>0$ sufficiently large, the various intersections of the rays $\rho_1, \dots, \rho_k$ with the $r$-sphere
$S_r(x)$ centred at $x$ forms a set of $k$ distinct points. This set is a fundamental domain for the
$G_{x}$-action on $S_r(x)$. Since $G_{x}$ has discrete orbits on $S_r(x)$, we deduce from
Theorem~\ref{thm:smooth} that the sphere $S_r(x)$ is finite. Since this holds for any $r>0$ sufficiently large
and any $x \in X$, it follows that every sphere in $X$ is finite. This implies that $X$ is $1$-dimensional (see
\cite{Kleiner}). In other words $X$ is a metric tree. We denote by $V$ the set of branch points which we shall
call the vertices. It remains to show that $G$ has at most two orbits on $V$.

Given $\xi' \in \bd X$, let $\beta_{\xi'}: G_{\xi'} \to \RR$ denote the Busemann character centred at $\xi'$
(see \S\,\ref{sec:notation}). Since $X$ is a cocompact tree, it follows that $\beta_{\xi'}$ has discrete image. Let $g \in
G_{\xi'}$ be an element such that $ \beta_{\xi'}(g)$ is positive and minimal. Then $g$ is hyperbolic and
translates a geodesic line $L$. Let $\xi''$ denote the endpoint of $L$ distinct from $\xi$.

Let $v \in L$ be any vertex. We denote by $e'$ and $e''$ the edges of $L$ containing $v$ and pointing
respectively to $\xi'$ and $\xi''$. Give any edge $e$ containing $v$ with $e' \neq e \neq e''$, we prolong $e$
to a geodesic ray $\rho$ whose intersection with $L$ is reduced to $\{v\}$. Since $G_{\xi'}$ is transitive on
$\bd X \setminus\{\xi'\}$ there exists $g' \in G_{\xi'}$ such that $g'.\xi'' = \rho(\infty)$. Upon pre-composing
$g'$ with a suitable power of $g$, we may assume that $\beta_{\xi'}(g')=0$. In other words $g'$ fixes $v$. This
shows that $G_{\xi', v}$ is transitive on the edges containing $v$ and different from $e'$.

The same argument with $\xi'$ and $\xi''$ interchanged shows that $G_{\xi'', v}$ is transitive on the edges
containing $v$ and different from $e''$. In particular $G_v$ is transitive on the edges containing $v$.

A straightforward induction on the distance to $v$ now shows that for any vertex $w \in V$, the isotropy group
$G_w$ is transitive on the edges containing $w$. This implies  that $G$ is indeed edge-transitive.
\end{proof}

\section{A few cases of \cat superrigidity}\label{sec:few:super}
This Section demonstrates that certain forms of superrigidity can be obtained by combining the structure results of this paper with
known superrigidity techniques. Much more general results will be established in the companion paper~\cite{Caprace-Monod_discrete}.

\subsection{\cat superrigidity for some classical non-uniform lattices}\label{sec:NaiveSuperrigidity}
Let $\Gamma$ be a non-uniform lattice in a simple (real) Lie group $G$ of rank at least~$2$.
By~\cite[Theorem~2.15]{LMR_nondistorsion}, unipotent elements of $\Gamma$ are exponentially distorted. This means
that, with respect to any finitely generating set of $\Gamma$, the word length of  $|u^n|$ is an $O(\log n)$
when $u$ is a unipotent. More generally an element $u$ is called \emph{distorted} if $|u^n|$ is sublinear.
If $\Gamma$ is virtually boundedly generated by unipotent elements, one can therefore
apply the following fixed point principle:

\begin{lem}\label{lem:FixedPointSemisimpleActions}
Let $\Gamma$ be a group which is virtually boundedly generated by distorted elements. Then any isometric
$\Gamma$-action on a complete \cat space such that elements of zero translation length are elliptic
has a global fixed point.
\end{lem}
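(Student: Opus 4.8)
The plan is to exploit the two hypotheses in tandem: boundedly-generated-by-distorted reduces a global fixed point to a fixed point for each of finitely many distorted elements, and the "zero translation length implies elliptic" hypothesis upgrades "distorted" to "elliptic". First I would recall that if $u$ acts by a \textbf{distorted} element, then its translation length must vanish. Indeed, for any $x$ in the \cat space $X$, the quantity $d(u^n x, x)$ is comparable (up to an additive constant coming from the choice of generating set and the orbit map) to the word length $|u^n|$; more precisely $d(u^n x, x) \le C |u^n|$ where $C = \max_{s} d(s x, x)$ over a finite generating set. Since $|u^n|$ is sublinear in $n$, we get $\lim_n \tfrac{1}{n} d(u^n x, x) = 0$, so the translation length $|u| := \inf_n \tfrac{1}{n} d(u^n x, x)$ is zero. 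By hypothesis, $u$ is then elliptic, hence has a non-empty fixed point set $X^{\langle u \rangle}$, which is closed and convex.

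Next I would pass to a finite-index subgroup $\Gamma_0 \le \Gamma$ that is boundedly generated by distorted elements $u_1, \dots, u_k$, meaning $\Gamma_0 = \langle u_1 \rangle \langle u_2 \rangle \cdots \langle u_k\rangle$ (every element a product of bounded length in the cyclic subgroups — actually one should be careful about what "boundedly generated" means; the relevant consequence is that $\Gamma_0$ is generated by the $u_i$ and, more importantly for us, the abstract structure forces an argument through centres of mass or through the following observation). The cleanest route: each $\langle u_i\rangle$ has bounded orbits (since $u_i$ is elliptic, $\{u_i^n x\}$ lies in a bounded set), so $\Gamma_0$, being a product of boundedly many subgroups with bounded orbits, itself has bounded orbits on $X$. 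A group acting by isometries on a complete \cat space with a bounded orbit has a global fixed point, namely the circumcentre of any orbit closure (Bruhat--Tits fixed point theorem, \cite[II.2.8]{Bridson-Haefliger}). Hence $X^{\Gamma_0} \ne \varnothing$. Finally, $X^{\Gamma_0}$ is a non-empty closed convex $\Gamma$-invariant subset (invariant because $\Gamma_0 \lhd \Gamma$ up to passing to the normal core, which is still finite index and still boundedly generated by distorted elements — distortion and the bounded-generation property are inherited by finite-index subgroups, or one simply intersects finitely many conjugates), and the induced $\Gamma/\Gamma_0$-action on this subset is by a finite group, which again has a circumcentre; that circumcentre is the desired global $\Gamma$-fixed point.

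The main obstacle I anticipate is the passage from "each $\langle u_i \rangle$ has bounded orbits" to "$\Gamma_0$ has bounded orbits": a product of boundedly many bounded-orbit subgroups need not \emph{a priori} have bounded orbits if the bounds are not uniform, but here they are — $d(u_i^n x, x)$ is bounded uniformly in $n$ by the circumradius of the $\langle u_i\rangle$-orbit of $x$, so a word of length $\le \ell$ in the $u_i$ moves $x$ by at most $\ell \cdot \max_i (\text{that circumradius})$, which is a finite bound. The only remaining subtlety is ensuring the finite-index subgroup witnessing bounded generation can be taken normal; replacing it by its normal core (a finite intersection of conjugates, each conjugate still boundedly generated by the conjugated distorted elements, which are still distorted) handles this, at the cost of possibly enlarging the number of generators, which is harmless. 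I would present these steps in the order: (1) distorted $\Rightarrow$ zero translation length $\Rightarrow$ elliptic with bounded orbits; (2) bounded generation $\Rightarrow$ the bounded-generation subgroup has bounded orbits $\Rightarrow$ has a fixed point; (3) descend/ascend to all of $\Gamma$ via the normal core and a finite-group circumcentre argument.
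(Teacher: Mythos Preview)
Your approach is exactly the paper's: distorted $\Rightarrow$ zero translation length $\Rightarrow$ elliptic $\Rightarrow$ bounded cyclic orbits, then bounded generation gives bounded $\Gamma_0$-orbits, hence a fixed point by the Bruhat--Tits circumcentre lemma. The paper compresses all of this into three lines.

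One remark on your final step: the detour through the normal core is both unnecessary and not quite justified --- bounded generation by a given finite set of elements is \emph{not} obviously inherited by the normal core (an intersection of conjugates need not be boundedly generated just because each conjugate is). The clean way, which is what the paper implicitly does, is simply to observe that if $\Gamma_0$ has bounded orbits and $[\Gamma:\Gamma_0]<\infty$, then any $\Gamma$-orbit is a finite union of $\Gamma_0$-orbits and hence bounded; so $\Gamma$ itself has a circumcentre fixed point, with no need to make $\Gamma_0$ normal or to act on $X^{\Gamma_0}$.
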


\begin{proof}
For any $\Gamma$-action on a \cat space, the translation length of a distorted element is zero. Thus every such
element has a fixed point; the assumption on $\Gamma$ now implies that all orbits are bounded, thus providing
a fixed point~\cite[II.2.8(1)]{Bridson-Haefliger}.
\end{proof}

Bounded generation is a strong property, which conjecturally holds for all (non-uniform) lattices of a higher
rank semi-simple Lie group. It is known to hold for arithmetic groups in split or quasi-split algebraic
groups of a number field $K$ of $K$-rank~$\geq 2$ by~\cite{Tavgen}, as well as in a few cases of isotropic but
non-quasi-split groups~\cite{ErovenkoRapinchuk}.

As noticed in a conversation with Sh.~Mozes, Lemma~\ref{lem:FixedPointSemisimpleActions} yields the following
elementary superrigidity statement.

\begin{prop}\label{prop:superrigidity:SLnQp}
Let $\Lambda = \SL_n(\ZZ[\frac{1}{p_1\cdots p_k}])$ with $n \geq 3$ and $p_i$ distinct primes
and set $H=\SL_n(\QQ_{p_1})\times\cdots\times \SL_n(\QQ_{p_k})$.

Given any isometric $\Lambda$-action on any complete \cat space such that every element of zero
translation length is elliptic, there exists a $\Lambda$-invariant closed convex subspace on which
the given action extends uniquely to a continuous $H$-action by isometries.
\end{prop}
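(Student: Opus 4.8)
The plan is to take for $Y$ the closure of the set of points of $X$ whose $\Lambda$-stabiliser is open for the topology induced by the inclusion $\Lambda<H$, to check that $Y$ is a non-empty closed convex $\Lambda$-invariant subspace, and to extend the $\Lambda$-action to an $H$-action on $Y$ by density and continuity. Write $K=\SL_n(\ZZ_{p_1})\times\cdots\times\SL_n(\ZZ_{p_k})<H$. Two standard facts are needed: the diagonal map $\Lambda\hookrightarrow H$ has dense image (strong approximation for the simply connected group $\SL_n$, using that $\SL_n(\RR)$ is non-compact), and $\Lambda\cap K=\SL_n(\ZZ)$, since a rational number whose denominator involves only the primes $p_i$ is a $p_i$-adic integer for all $i$ exactly when it is an integer. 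As $H$ is totally disconnected, locally compact and second countable, fix a decreasing chain $K=K_0\supseteq K_1\supseteq\cdots$ of compact open subgroups of $H$ with $\bigcap_r K_r=\{1\}$, and set $\Lambda_r=\Lambda\cap K_r$; then $[\Lambda\cap K:\Lambda\cap K_r]\le[K:K_r]<\infty$, so each $\Lambda_r$ has finite index in $\SL_n(\ZZ)$.

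The only arithmetic input is that, for $n\ge 3$, the group $\SL_n(\ZZ)$ is boundedly generated by unipotent elements (\cite{Tavgen}), and that these are exponentially distorted in it by~\cite[Theorem~2.15]{LMR_nondistorsion}; thus $\SL_n(\ZZ)$ is virtually boundedly generated by distorted elements. Since every isometry of $X$ of zero translation length is elliptic by hypothesis, Lemma~\ref{lem:FixedPointSemisimpleActions} shows that $\SL_n(\ZZ)$ fixes a point of $X$; hence each $X_r:=\Fix_X(\Lambda_r)$ is non-empty. It is closed and convex, and $X_0\se X_1\se\cdots$ because $\Lambda_0\supseteq\Lambda_1\supseteq\cdots$. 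Set $Y=\overline{\bigcup_r X_r}$; it is non-empty, closed and convex. It is $\Lambda$-invariant: $\bigcup_r X_r$ is exactly the set of $x\in X$ whose $\Lambda$-stabiliser contains some $\Lambda_r$, and for $\gamma\in\Lambda$ the subgroup $\gamma\Lambda_r\gamma^{-1}=\Lambda\cap\gamma K_r\gamma^{-1}$ contains $\Lambda\cap K_{r'}=\Lambda_{r'}$ for $r'$ large (since $\gamma K_r\gamma^{-1}$ is open in $H$), so $\gamma X_r\se X_{r'}$.

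Finally, for $h\in H$ and $x\in X_r$ the open coset $hK_r$ meets $\Lambda$ by density, and I set $h\cdot x:=\lambda\cdot x$ for any $\lambda\in\Lambda\cap hK_r$; this is independent of $\lambda$, since two choices differ by an element of $\Lambda\cap K_r=\Lambda_r$, which fixes $x$. One checks directly that this defines an isometric action of $H$ on $\bigcup_r X_r$ (for a pair of points, enlarge $r$ so both lie in $X_r$ and use a single $\lambda$), which therefore extends uniquely to an isometric $H$-action on $Y$ and restricts on $\Lambda$ to the given action (take $\lambda=\gamma$). Continuity is automatic: for $x\in X_r$ the orbit map $h\mapsto h\cdot x$ is constant on each coset $h_0K_r$ (as $hK_r=h_0K_r$ whenever $h\in h_0K_r$), and joint continuity of $H\times Y\to Y$ then follows by approximating an arbitrary point of $Y$ by points of $\bigcup_r X_r$ and using that each element of $H$ acts isometrically. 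Uniqueness of the continuous extension is clear since $\Lambda$ is dense in $H$ and $Y$ is Hausdorff.

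I do not anticipate a genuine obstacle: the proof merely assembles the bounded-generation and distortion results quoted above, strong approximation, and Lemma~\ref{lem:FixedPointSemisimpleActions}. The one point requiring thought is the choice of $Y$ --- one wants the ``smooth locus'' $\bigcup_r\Fix_X(\Lambda_r)$, on which the $\Lambda$-action is locally constant relative to $H$ and therefore transports to $H$ --- and it is worth noting that properness of $X$ is never used (completeness enters only via Lemma~\ref{lem:FixedPointSemisimpleActions} and the closures), and that, although one might expect to invoke bounded generation for each $\Lambda_r$, it suffices to invoke it for $\SL_n(\ZZ)$ because $\Fix_X(\Lambda_r)\supseteq\Fix_X(\SL_n(\ZZ))\ne\varnothing$ automatically.
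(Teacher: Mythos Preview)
Your proof is correct and follows essentially the same approach as the paper: both first invoke Lemma~\ref{lem:FixedPointSemisimpleActions} (via bounded generation and distortion of unipotents) to obtain an $\SL_n(\ZZ)$-fixed point, then use that $\SL_n(\ZZ)=\Lambda\cap K$ with $K$ open in $H$ together with density of $\Lambda$ in $H$ to extend the action. The only cosmetic difference is the choice of $Y$: the paper takes the closed convex hull of the $\Lambda$-orbit of a single $\Gamma$-fixed point and shows the $\Lambda$-action on it is continuous for the $H$-topology (the pointwise fixator of any finite piece being a finite intersection of $\Lambda$-conjugates of $\Gamma$, hence open), whereas you take the closure of the full smooth locus $\bigcup_r\Fix_X(\Lambda_r)$ and write down the $H$-action explicitly via coset representatives; your $Y$ contains the paper's, and both arguments are really the same density-and-continuity extension (isolated in the paper as Lemma~\ref{lem:elementary:superrigidity}).
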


\begin{proof}
Let $X$ be a complete \cat space endowed with a $\Lambda$-action as in the statement. The subgroup
$\Gamma=\SL_n(\ZZ)<\Lambda$ fixes a point by Lemma~\ref{lem:FixedPointSemisimpleActions}.
The statement now follows because $\Gamma$ is the intersection of $\Lambda$ with the \emph{open} subgroup
$\SL_n(\ZZ_{p_1})\times\cdots\times \SL_n(\ZZ_{p_k})$ of $H$; for later use, we isolate this elementary fact as
Lemma~\ref{lem:elementary:superrigidity} below.
\end{proof}

\begin{lem}\label{lem:elementary:superrigidity}
Let $H$ be a topological group, $U<H$ an open subgroup, $\Lambda<H$ a dense subgroup and $\Gamma=\Lambda\cap U$.
Any $\Lambda$-action by isometries on a complete \cat space with a $\Gamma$-fixed point admits a $\Lambda$-invariant
closed convex subspace on which the action extends continuously to $H$.
\end{lem}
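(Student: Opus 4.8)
The plan is to take the required subspace to be the closed convex hull $Y$ of the orbit of a single $\Gamma$-fixed point, to build the $H$-action first on that orbit by a density argument, and then to push it onto $Y$. First I would record the group-theoretic input: since $U$ is open and $\Lambda$ is dense, $\Lambda U=H$, so the inclusion $\Lambda\hookrightarrow H$ induces a $\Lambda$-equivariant bijection $\Lambda/\Gamma\longrightarrow H/U$ of \emph{discrete} $\Lambda$-sets (discreteness because $U$ is open). Fixing a $\Gamma$-fixed point $x_0\in X$, the map $\lambda\mapsto\lambda.x_0$ is constant on left $\Gamma$-cosets, hence descends to a $\Lambda$-equivariant map $\phi\colon H/U\to X$ with image the orbit $\mathcal O:=\Lambda.x_0$.

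The key point is the identity $d\bigl(\phi(haU),\phi(hbU)\bigr)=d\bigl(\phi(aU),\phi(bU)\bigr)$ for all $h\in H$ and all cosets $aU,bU\in H/U$. To prove it, fix $aU,bU$ and view the left-hand side as a function of $h\in H$: it is invariant under left translation by $\Lambda$ (because $\phi$ is $\Lambda$-equivariant and $\Lambda$ acts isometrically), and it is locally constant (because $h\mapsto haU$ is a continuous map into the discrete space $H/U$); a locally constant function invariant under the dense subgroup $\Lambda$ is constant, with value at $h=e$ the right-hand side. Granting this, $h\cdot\phi(aU):=\phi(haU)$ is a well-defined action of $H$ on $\mathcal O$ by bijective isometries --- well-defined since $\phi(aU)=\phi(a'U)$ forces, via the identity with parameter $h$, that $\phi(haU)=\phi(ha'U)$ --- which restricts to the original $\Lambda$-action on $\mathcal O$ and is continuous. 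A further feature I would isolate: for every finite $F\subseteq\mathcal O$ and every $h\in H$ there is $\lambda\in\Lambda$ with $\lambda.q=h\cdot q$ for all $q\in F$, because, writing $q=\phi(g_qU)$, the set $h\bigcap_{q\in F}g_qUg_q^{-1}$ is a nonempty open subset of $H$ and hence meets $\Lambda$.

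It remains to extend this isometric $H$-action from $\mathcal O$ to the closed convex subspace $Y:=\overline{\mathrm{conv}}(\mathcal O)\subseteq X$, which is $\Lambda$-invariant for the original action. For this I would use barycentres of finitely supported probability measures on complete \cat spaces (see~\cite{Bridson-Haefliger}), which are isometry-equivariant and lie in the closed convex hull of their support. Iterating the barycentre construction from $\mathcal O$ gives an increasing family of subsets whose union $B$ is convex, contains $\mathcal O$, is contained in $Y$, and is therefore dense in $Y$; moreover the property ``$h$ agrees on every finite subset with some $\lambda\in\Lambda$'' propagates from $\mathcal O$ to $B$ along the construction. Using that property, for $z=\mathrm{bar}(\mu)$ one gets $h\cdot z:=\mathrm{bar}(h_*\mu)=\mathrm{bar}(\lambda_*\mu)=\lambda.\mathrm{bar}(\mu)$, independent of the choices of $\mu$ and of $\lambda$; this yields an $H$-action on $B$ by bijective isometries, still extending the $\Lambda$-action and still continuous (same local-constancy argument, applied level by level). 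Since $B$ is dense in the complete space $Y$, each isometry $h\cdot$ of $B$ extends uniquely to an isometry of $Y$; uniqueness preserves the homomorphism property and the compatibility with $\Lambda$, and a uniform approximation by the locally constant maps on $B$ gives continuity of $H\to\Isom(Y)$. This produces the required subspace and its $H$-action.

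The main obstacle is exactly the last step. A priori only $\Lambda$, not $H$, acts on the ambient space $X$, so the $H$-action on $\mathcal O$ is of a purely combinatorial nature and cannot be extended to the convex hull by naively taking limits of isometries of $X$. The barycentre construction circumvents this by reducing, on any finitely supported measure, to a genuine element of $\Lambda$; the delicate point is then to check that the point $h\cdot z$ so defined depends only on $z$ (and not on the measure $\mu$ with $\mathrm{bar}(\mu)=z$ nor on the approximating $\lambda$), which is where the ``finite subsets see a single $\lambda$'' property does the work.
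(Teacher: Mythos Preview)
Your proof is correct and rests on the same core observation as the paper's: the pointwise stabiliser in $\Lambda$ of any finite subset of $\Lambda x_0$ is open in the topology induced from $H$, being a finite intersection of $\Lambda$-conjugates of $\Gamma=\Lambda\cap U$. However, your route from the orbit to the convex hull $Y$ is considerably more elaborate than necessary. You build the $H$-action on $Y$ level by level through iterated barycentres, checking well-definedness at each stage. The paper bypasses all of this with a single remark: any isometry fixing a finite set $Fx_0$ pointwise automatically fixes its closed convex hull $Y_F$ pointwise. Hence the pointwise fixator of each $Y_F$ in $\Lambda$ is already open, so every orbital map $\Lambda\to Y_\infty:=\bigcup_F Y_F$ is locally constant for the $H$-topology. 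Continuity of the $\Lambda$-action on the dense subset $Y_\infty$, together with density of $\Lambda$ in $H$ and completeness of $Y=\overline{Y_\infty}$, then gives the extension at once --- no explicit barycentre machinery is needed. In effect, your barycentre argument is reproving, one finitely supported measure at a time, the elementary fact that an isometry fixing a set fixes its convex hull.
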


\begin{proof}
Let $X$ be the \cat space and $x_0\in X$ a $\Gamma$-fixed point. For any finite subset $F\se \Lambda$, let $Y_F\se X$
be the closed convex hull of $F x_0$. The closed convex hull $Y$ of $\Lambda x_0$ is the closure of the union $Y_\infty$ of the
directed family $\{Y_F\}$. Therefore, since the action is isometric and $Y$ is complete, it suffices to show that the
$\Lambda$-action on $Y_\infty$ is continuous for the topology induced on $\Lambda$ by $H$.
Equivalently, it suffices to prove that all orbital maps $\Lambda\to Y_\infty$ are continuous at $1\in \Lambda$.
This is the case even for the discrete topology on $Y_\infty$ because the pointwise fixator of each $Y_F$ is an
intersection of finitely many conjugates of $\Gamma$, the latter being open by definition.
\end{proof}

The same arguments as below show that Theorem~\ref{thm:superrigidity} holds for any lattice of a higher-rank
semi-simple Lie group which is boundedly generated by distorted elements (and accordingly Theorem~\ref{thm:superrigidity_bis}
generalises to suitable (S-)arithmetic groups).

\begin{proof}[Proof of Theorems~\ref{thm:superrigidity} and~\ref{thm:superrigidity_bis}]
We start with the case $\Gamma= \SL_n(\ZZ)$.
By Theorem~\ref{thm:GeodesicallyComplete}, we obtain a closed convex subspace $X'$ which splits as a direct
product
$$X' \cong X_1 \times \dots \times X_p \times Y_0 \times Y_1 \times \dots \times Y_q$$
in an $\Isom(X')$-equivariant way, where $Y_0 \cong \RR^n$ is the Euclidean factor. Each totally disconnected
factor $D_i$ of $\Isom(X')^*$ acts by semi-simple isometries on the corresponding factor $Y_i$ of $X'$ by
Corollary~\ref{cor:BasicTotDisc}. Therefore, by Lemma~\ref{lem:FixedPointSemisimpleActions} for each $i=0,
\dots, q$, the induced $\Gamma$-action on $Y_i$ has a global fixed point, say $y_i$. In other words $\Gamma$
stabilises the closed convex subset
$$Z := X_1 \times \dots \times X_p \times \{y_0\} \times \dots \times \{y_q\} \se X.$$
Note that the isometry group of $Z$ is an almost connected semi-simple real Lie group $L$. Combining
Lemma VII.5.1 and Theorems VII.5.15 and VII.6.16 from~\cite{Margulis}, it follows that the Zariski closure of the image
of $\Gamma$ in $L$ is a commuting product $L_1 . L_2$, where $L_1$ is compact, such that the corresponding
homomorphism $\Gamma \to L_2$ extends to a continuous homomorphism $G \to L_2$. We define $Y \subseteq Z$ as the
fixed point set of $L_1$. Now $L_2$, and hence $\Gamma$, stabilises $Y$. Therefore the continuous homomorphism
$G \to L_2$ yields a $G$-action on $Y$ which extends the given $\Gamma$-action, as desired.

Applying Theorem~\ref{thm:algebraic} point~\eqref{pt:algebraic:sub} to the pair $L_2<L$ acting on $Z$, we find in
particular that $L_2$ has no fixed point at infinity in $Y$. Thus, upon replacing $Y$ by a subspace, it is
$L_2$-minimal. Now Theorem~\dref{thm:density} (which is completely independent of the present considerations)
implies that the $\Gamma$- and $G$-actions on $Y$ are minimal and
without fixed point in $\bd Y$ (although there might be fixed points in $\bd X$).

\medskip

Turning to Theorem~\ref{thm:superrigidity_bis}, the only change is that one replaces
Lemma~\ref{lem:FixedPointSemisimpleActions} by Proposition~\ref{prop:superrigidity:SLnQp}.
\end{proof}

\subsection{\cat superrigidity for irreducible lattices in products}
The aim of this section is to state a version of the superrigidity theorem~\cite[Theorem~6]{Monod_superrigid}
with \cat targets. The original statement from \emph{loc.\ cit.} concerns actions of lattices on
\emph{arbitrary} \cat spaces, with reduced unbounded image. The following statement shows that, when the
underlying \cat space is nice enough, the assumption on the action can be considerably weakened.

We recall for the statement that any isometric action on a proper \cat space without globel fixed point
at infinity admits a \emph{canonical} minimal non-empty closed convex invariant subspace, see Remarks~39
in~\cite{Monod_superrigid}.

\begin{thm}\label{thm:Monod:superrigidity}
Let $\Gamma$ be an irreducible uniform (or square-integrable weakly cocompact) lattice in a product $G = G_1
\times \dots  \times G_n$ of $n \geq 2$ locally compact $\sigma$-compact groups. Let $X$ be a proper \cat space
with finite-dimensional boundary.

Given any $\Gamma$-action on $X$ without fixed point at infinity, if the
canonical $\Gamma$-minimal subset $Y \subseteq X$
has no Euclidean factor, then the $\Gamma$-action on $Y$ extends to a
continuous $G$-action by isometries.
\end{thm}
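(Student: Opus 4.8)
The plan is to reduce the statement to the superrigidity theorem of \cite{Monod_superrigid}, whose standing hypothesis---a \emph{reduced} action with unbounded image---we shall arrange to hold on each irreducible factor of a suitable subspace. First I would work directly with the canonical $\Gamma$-minimal subspace $Y$: since $\Gamma$ has no fixed point at infinity in $X$ it has none in $\bd Y\se\bd X$, so the $\Gamma$-action on $Y$ is minimal and fixed-point-free at infinity, and $\bd Y$ is finite-dimensional. As $\Gamma$-minimality of $Y$ entails $\Isom(Y)$-minimality, Proposition~\ref{prop:LargeCircumradius} shows $Y$ is boundary-minimal; since $Y$ has no Euclidean factor, Theorem~\ref{thm:deRham} then gives a canonical splitting $Y\cong Y_1\times\cdots\times Y_m$ into irreducible minimal factors $Y_i\neq\RR^0,\RR^1$, which $\Isom(Y)$, hence $\Gamma$, preserves up to permutation.

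I would then pass to the finite-index normal subgroup $\Gamma^*\lhd\Gamma$ acting trivially on the set of factors (still an irreducible uniform, resp.\ square-integrable weakly cocompact, lattice in $G$, as is any finite-index subgroup). The crucial geometric point is that $\Gamma^*$ still acts minimally on $Y$: in the dichotomy of Theorem~\ref{thm:dichotomy} applied to $\Gamma^*\lhd\Gamma$, alternative~(A.ii) is excluded because $\Gamma=\norma_\Gamma(\Gamma^*)$ has no fixed point at infinity, so $\Gamma^*$ admits minimal invariant sets; by~(B.i)--(B.ii) their union is $\Gamma$-invariant, hence equal to $Y$ by $\Gamma$-minimality, and $Y\cong Y_0\times C$ with $Y_0$ $\Gamma^*$-minimal and $C$ a complete \cat space on which $\Gamma/\Gamma^*$ acts; as $\Gamma/\Gamma^*$ is finite it fixes a point $c_0\in C$, and the fibre $Y_0\times\{c_0\}$ is then $\Gamma$-invariant, forcing $C=\{c_0\}$, i.e.\ $Y=Y_0$. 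Thus $\Gamma^*$ acts minimally on $Y$, so fixes no point of $\bd Y$, and the $\Gamma^*$-equivariant splitting $Y=Y_1\times\cdots\times Y_m$ shows that $\Gamma^*$ acts minimally and without fixed point at infinity on each irreducible $Y_j$. Corollary~\ref{cor:ReducedAction} now gives that the $\Gamma^*$-action on $Y_j$ is reduced, and its image is unbounded as $Y_j$ is minimal and not a point.

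With these hypotheses verified, I would invoke \cite[Theorem~6]{Monod_superrigid} for $\Gamma^*$ acting on each $Y_j$ to extend it to a continuous isometric $G$-action on $Y_j$, and take the product over $j$ to obtain a continuous isometric $G$-action $\rho$ on $Y$ extending the $\Gamma^*$-action. To promote this to $\Gamma$: by Corollary~\ref{cor:NoEuclideanFactor}, since $Y$ has no Euclidean factor and $\Gamma^*$ acts minimally without fixed point at infinity, the image of $\Gamma^*$ in $\Isom(Y)$ has trivial centraliser; a routine conjugation argument (for $\gamma\in\Gamma$ and $\gamma^*\in\Gamma^*$ one has $\gamma\gamma^*\gamma^{-1}\in\Gamma^*$, so $\rho(\gamma)^{-1}$ composed with the original action of $\gamma$ centralises $\rho(\Gamma^*)$, hence is trivial) shows that $\rho|_\Gamma$ coincides with the original $\Gamma$-action, which therefore extends to $\rho$.

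The step I expect to be the main obstacle is the second one: guaranteeing that, after contracting to $Y$, splitting off the irreducible factors, and restricting to $\Gamma^*$, one really is left on each $Y_j$ with an action that is both minimal and without fixed point at infinity, so that Corollary~\ref{cor:ReducedAction} applies. This forces one to combine the dichotomy of Section~\ref{sec:dichotomy}, the canonicity of the de~Rham decomposition, and the finite-quotient fixed-point trick, and to check carefully that the canonical subspace and its splitting are compatible with the finite permutation action of $\Gamma/\Gamma^*$; the appeal to \cite{Monod_superrigid} and the final centraliser argument are by comparison routine.
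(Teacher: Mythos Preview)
Your approach is essentially the paper's own: pass to the canonical $\Gamma$-minimal subspace $Y$, split it via the de~Rham decomposition, verify that the action on each irreducible factor is reduced using Corollary~\ref{cor:ReducedAction}, and then invoke \cite[Theorem~6]{Monod_superrigid} factor by factor. The paper's proof is considerably terser---it applies Corollary~\ref{cor:ReducedAction} directly to ``the induced $\Gamma$-action on each irreducible factor'' without discussing the possible permutation of factors---whereas you spell out the passage to $\Gamma^*$, the minimality of $\Gamma^*$ on $Y$ via Theorem~\ref{thm:dichotomy}, and the final centraliser argument to descend back to $\Gamma$; these are exactly the details one would supply when unpacking the paper's argument.

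One small point: the clause ``$\Gamma^*$ acts minimally on $Y$, so fixes no point of $\bd Y$'' is not an immediate implication. You should rather argue that the fixed set $(\bd Y)^{\Gamma^*}$ is a $\Gamma$-invariant closed convex subset of the finite-dimensional $\bd Y$; if non-empty, either its intrinsic circumradius is at most $\pi/2$ and Proposition~\ref{prop:BalserLytchak} produces a $\Gamma$-fixed point (contradiction), or one runs the derived-subgroup argument from the proof of Theorem~\ref{thm:geometric_simplicity} using that $[\Gamma^*,\Gamma^*]\lhd\Gamma$ and that $Y$ has no Euclidean factor. With this filled in, your proof is complete and matches the paper's strategy.
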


\begin{remark}
Although the above condition on the Euclidean factor in the $\Gamma$-minimal subspace $Y$ might seem awkward, it
cannot be avoided, as illustrated by Example~64 in~\cite{Monod_superrigid}. Notice however that if $\Gamma$ has
the property that any isometric action on a finite-dimensional Euclidean space has a global fixed (for example
if $\Gamma$ has Kazhdan's property (T)), then any minimal $\Gamma$-invariant subspace has no Euclidean factor.
\end{remark}

\begin{proof}[Proof of Theorem~\ref{thm:Monod:superrigidity}]
Let $Y \subseteq X$ be the canonical subspace recalled above. Then $\Isom(Y)$
acts minimally on $Y$, without fixed point at infinity. In particular we may apply
Theorem~\ref{thm:Decomposition} and Addendum~\ref{addendum}. In order to show that the $\Gamma$-action on $Y$
extends to a continuous $G$-action, it is sufficient to show that the induced $\Gamma$-action on each
irreducible factor of $Y$ extends to a continuous $G$-action, factoring through some $G_i$. But the induced
$\Gamma$-action on each irreducible factor of $Y$ is reduced by Corollary~\ref{cor:ReducedAction}. Thus the
result follows from~\cite[Theorem~6]{Monod_superrigid}.
\end{proof}

\def\cprime{$'$}
\providecommand{\bysame}{\leavevmode\hbox to3em{\hrulefill}\thinspace}


\end{document}